\documentclass{amsart}

\usepackage{amssymb}

\usepackage{amsmath} 
\usepackage{enumitem} 
\usepackage{hyperref} 

\usepackage{txfonts} 
\usepackage{tikz} 
\usepackage{stmaryrd} 

\RequirePackage{latexsym} 




\newtheorem{Theorem}{Theorem}  
\newtheorem*{Theorem*}{Theorem}  
\newtheorem*{Lemma}{Lemma} 
\newtheorem*{Definition}{Definition} 

\newtheorem{theorem}{Theorem}[section]
\newtheorem{lemma}[theorem]{Lemma}
\newtheorem{proposition}[theorem]{Proposition} 
\newtheorem{corollary}[theorem]{Corollary} 
\newtheorem{fact}[theorem]{Fact} 

\newtheorem*{claim}{Claim}
\newtheorem*{subclaim}{Subclaim} 


\theoremstyle{definition}
\newtheorem{definition}[theorem]{Definition}

\theoremstyle{remark}
\newtheorem{remark}[theorem]{Remark}


\newtheorem*{claimproof}{Proof of Claim} 
\newtheorem*{subclaimproof}{Proof of Subclaim} 

\numberwithin{equation}{section} 


\newcommand{\Ord}{\ensuremath{\text{{\rm Ord}}}}

\newcommand{\TR}{\ensuremath{\text{{\rm TR}}}}
\newcommand{\ZFC}{\ensuremath{\text{{\sf ZFC}}}}

\newcommand{\CH}{\ensuremath{\text{{\sf CH}}}}

\DeclareMathOperator{\dom}{dom}

\DeclareMathOperator{\coker}{coker}
\DeclareMathOperator{\val}{val}

\DeclareMathOperator{\crit}{crit}
\DeclareMathOperator{\rng}{rng}

\DeclareMathOperator{\otp}{otp}

\DeclareMathOperator{\cf}{cf}
\DeclareMathOperator{\cof}{cof}

\DeclareMathOperator{\rank}{rank}

\DeclareMathOperator{\SPT}{SPT}

\newcommand{\Th}{\ensuremath{\text{{\it Th}}}}

\newcommand{\FA}{\ensuremath{\text{{\sf FA}}}}

\newcommand{\PFA}{\ensuremath{\text{{\sf PFA}}}}
\newcommand{\SP}{\ensuremath{\text{{\sf SP}}}}

\newcommand{\MM}{\ensuremath{\text{{\sf MM}}}}

\newcommand{\SSP}{\ensuremath{\text{{\sf SSP}}}}

\newcommand{\FF}{\ensuremath{\text{{\rm F}}}}
\newcommand{\NS}{\ensuremath{\text{{\sf NS}}}}
\newcommand{\UU}{\mathsf{U}}
\newcommand{\RR}{\mathsf{R}}
\newcommand{\QQ}{\mathsf{Q}}
\newcommand{\BB}{\mathsf{B}}
\newcommand{\CC}{\mathsf{C}}
\newcommand{\DD}{\mathsf{D}}

\newcommand{\TT}{\mathbb{T}}
\newcommand{\BBB}{\mathsf{RO}}
\newcommand{\RCS}{\mathcal{RCS}}

\newcommand{\II}{\mathcal{I}}
\newcommand{\FFF}{\mathcal{F}}

\let\mathbb=\varmathbb


\begin{document}

\title[Category forcings and generic absoluteness]
{Category forcings, $\MM^{+++}$, and generic absoluteness for the theory of 
strong forcing axioms}


\author{Matteo Viale}
\address{Department of Mathematics ``Giuseppe Peano''\\
University of Torino\\
via Carlo Alberto 10\\
10125\\
Torino\\
ITALY}
\email{matteo.viale@unito.it}
\thanks{
The author acknowledges support from: 
PRIN grant 2009: Modelli e insiemi, 
PRIN grant 2012: Logica, modelli e insiemi, 
Kurt G\"odel Research Prize Fellowship 2010,
the Fields Institute in Mathematical Sciences,
San Paolo Junior PI grant 2012.
Thanks to David Asper\'o and Menachem Magidor for the careful 
examination of the results of this paper. Moreover they both contributed to 
outline what limitations a generic absoluteness result for 
Martin's maximum should face. 
Asper\'o also gave substantial inputs for the proof
of the freezeability property for
$\SSP$-forcings.
Paul Larson 
found several other weak spots in previous drafts of this manuscript.
I should also thank among others:
Daisuke Ikegami,
Sean Cox, Boban Veli\v{c}kovi\'c,
Ralph Schindler, James Cummings, Ilijas Farah, 
Giorgio Audrito, Stevo Todor\v{c}evi\'c, Joan Bagaria, Sy David Friedman.
Many thanks to the referee for the patient and careful examination of this article and for the
useful suggestions.
}

\subjclass[2010]{03E35, 03E40, 03E57}

\date{}


\begin{abstract}
We analyze certain subfamilies of the category of
complete boolean algebras
with complete homomorphisms, families which are of particular interest in set theory. 
In particular we study the category
whose objects are stationary set preserving, atomless complete boolean algebras
and whose arrows are complete homomorphisms with a stationary
set preserving quotient. 
We introduce a maximal forcing axiom $\MM^{+++}$ as a combinatorial property of this category.
This forcing axiom strengthens
Martin's maximum and can be seen at the same time 
as a strenghtening of Baire's category theorem and of the axiom of choice. 
Our main results show that
$\MM^{+++}$ is consistent relative to large cardinal axioms and that $\MM^{+++}$
makes 
the theory of the Chang model $L([\Ord]^{\leq\aleph_1})$
with parameters in $P(\omega_1)$ generically invariant for stationary set preserving forcings that
preserve this axiom. 
We also show that our results give 
a close to optimal extension to the Chang model $L([\Ord]^{\leq\aleph_1})$
of Woodin's generic absoluteness results for the Chang model $L([\Ord]^{\aleph_0})$
and give an a posteriori explanation of the success 
forcing axioms have met in set theory.
\end{abstract}

\maketitle

The main objective of this paper is to show that there is
a natural recursive extension $T$ of
$\ZFC+$\emph{ large cardinals}
which gives a complete theory of the Chang model
$L([\Ord]^{\leq\aleph_1})$ with respect to the unique efficient method to produce consistency results 
for this structure, i.e stationary set preserving forcings. In particular we will show that a closed
formula
$\phi$ relativized to this Chang model is first order derivable in $T$ if and only if it 
is provable in $T$
that $T+\phi$ is forceable by a stationary set preserving forcing.
In our eyes the results of this paper 
give a solid a posteriori explanation of the success forcing axioms have met in
providing at least one consistent solution to many $\ZFC$-provably undecidable problems.
The paper can be divided in six sections: 

\begin{itemize}
\item
An introduction (section~\ref{sec:intro}) where it is 
shown how the above results stem out of
Woodin's work on $\Omega$-logic and of Woodin's absoluteness results for
$L(\mathbb{R})$ and for the Chang model $L([\Ord]^{\aleph_0})$. We also try to explain 
in what terms we can assert that the main result of this paper is an optimal extension
to the Chang model $L([\Ord]^{\leq\aleph_1})$ of Woodin's
absoluteness results for the Chang model $L([\Ord]^{\aleph_0})$.
We shall also try to argue that the results of this paper 
give an a posteriori explanation of the success
that forcing axioms have met 
in solving a variety of problems showing up in set theory 
as well as in many other fields of pure mathematics.
\item
Section~\ref{sec:background} presents some background material 
on stationary sets (subsection~\ref{subsec:statsets}), large cardinals (subsection~\ref{subsec:lc}),
posets and boolean completions (subsection~\ref{subsec:boolalgforc}),
stationary set preserving forcings (subsection~\ref{subsec:SSPforc}), 
forcing axioms (subsection~\ref{subsec:forcax}), iterated forcing (subsection~\ref{subsec:itfor})
which will be needed in the remainder of the paper. 
\item
Section~\ref{sec:univpartord}
introduces the notion of category forcings. 
We shall look at subcategories of the category of complete boolean algebra with
complete homomorphisms.
Given a category $(\Gamma,\Theta)$ (where $\Gamma$ is the class of objects and 
$\Theta$ the class of arrows) we associate to it the partial order 
$(\UU^{\Gamma,\Theta},\leq_\Theta)$ whose elements are the objects in $\Gamma$ ordered by
$\BB\geq_\Theta\CC$ iff there is an $i:\BB\to\CC$ in $\Theta$.
We shall also feel free to confuse a set sized partial order with its uniquely defined boolean completion.
In this paper we shall focus on the analysis of the category
$(\SSP,\SSP)$  whose objects are
the stationary set preserving ($\SSP$) complete boolean algebras 
and whose arrows (still denoted by $\SSP$)
are the complete homomorphisms with a stationary set preserving quotient. The reasons are twofolds:
\begin{itemize}
\item
We aim at a generic absoluteness result for a strengthening of Martin's maximum. This 
naturally leads to an analysis of the category of forcings which are relevant for this forcing axiom, i.e.
the $\SSP$-forcings. 
\item
We are able to predicate all the  nice features we shall isolate for a category forcing just for the 
forcing $(\UU^{\SSP,\SSP},\leq_{\SSP})$ (which we shall denote from now on just as $\UU^{\SSP}$).
\end{itemize}
The following list sums up the main concepts and results we isolate 
on the combinatorial properties of these category forcings:
\begin{enumerate}
\item
We introduce the key concept (at least for our aims) of totally rigid element of a category 
$(\Gamma,\Theta)$.
 
$\BB\in\Gamma$ is $\Theta$-totally rigid if it is fixed by any automorphism 
of some complete boolean algebra in $\Gamma$ which absorbs $\BB$ 
using an arrow in $\Theta$. We can formulate this property in purely categorical terms as follows:
\begin{quote}
$\BB$ object of $\Gamma$ is $\Theta$-totally rigid
if for all $\QQ\in\Gamma$ there is at most one arrow $i:\BB\to\QQ$ in $\Theta$.
\end{quote}
We show that in the presence of class many supercompact cardinals, 
the class of $\SSP$-totally rigid partial orders is dense in $\UU^{\SSP}$ (Theorem~\ref{thm:distrSSP*}).

\item
We show that the
cut off $\UU^{\SSP}_\delta=\UU^{\SSP}\cap V_\delta$
of this category forcing at a rank initial segment $\delta$ which is 
an inaccessible limit of $<\delta$-supercompact cardinals is an $\SSP$-totally rigid partial order
which belongs to $\SSP$ and
which absorbs all forcings in $\SSP\cap V_\delta$ (Theorem~\ref{thm:univ1}.\ref{thm:univ1-1}). 
\item
We also show that
$\UU^{\SSP}_\delta$ forces $\MM^{++}$ in case $\delta$ is a 
supercompact limit of $<\delta$-supercompact cardinals (Theorem~\ref{thm:univ1}.\ref{thm:univ1-2}).
\item
We show that the quotient of the category forcing $(\UU^{\SSP})^V$
with respect to a generic filter $G$ for any of its elements $\BB\in\SSP^V$ is 
the category forcing $(\UU^{\SSP})^{V[G]}$ as computed in the generic extension $V[G]$
(see Theorem~\ref{lem:univ4}, where it is also given a precise definition of this statement).
\end{enumerate}

\item
In section~\ref{subsec:nortow} we sum up the relevant facts about
towers of normal ideals we shall need in order to formulate the results of section~\ref{sec:MM+++}.

\item
Section~\ref{sec:MM+++} introduces and analyzes the forcing axiom $\MM^{+++}$.
First we observe that in the presence of class many Woodin cardinals
the forcing axiom
$\MM^{++}$ can be formulated as the assertion that the class
of \emph{presaturated towers} is dense in the category forcing $\UU^{\SSP}$.
What can be said about the intersection of the class of totally rigid posets and the class of 
presaturated towers? Can this intersection be still a dense class in $\UU^{\SSP}$? 
Can $\UU^{\SSP}_\delta$ belong to this
intersection for some $\delta$?
The forcing axiom $\MM^{+++}$ arises as a positive answer to these questions and
is a slight strengthening of the assertion that the class of 
presaturated tower forcings which are also
totally rigid is dense in the category forcing $\UU^{\SSP}$.

We first prove our main generic 
absoluteness result, i.e. that over any model of $\MM^{+++}+$\emph{large cardinals}
any stationary set preserving forcing which preserves $\MM^{+++}$ does not change the theory of
$L([\Ord]^{\leq\aleph_1})$ with parameters in $P(\omega_1)$ (Theorem~\ref{cor:absres}). 
Then we turn to the proof of the consistency of $\MM^{+++}$ 
showing that any of the standard forcing 
methods to produce a model of 
$\MM^{++}$ collapsing an inaccessible $\delta$ to become $\omega_2$ actually produces
a model of $\MM^{+++}$ provided that $\delta$ is a super huge cardinal (Corollary~\ref{cor:MMM-easy},
and Theorem~\ref{thm:CONMM+++}).
In particular Theorem~\ref{thm:CONMM+++} 
provides a rather extensive sample of notions which can be used to apply
the generic absoluteness result.

\item
We end our paper in section~\ref{sec:com} 
with some comments regarding our results.
In particular: 
\begin{itemize}
\item
We outline that the usual forcing axioms can be seen as topological formulations
of strenghtenings of the axiom of choice and of Baire's category theorem.
On the other hand the category theoretic framework allows to present
$\MM^{+++}$ (and other types of forcing axioms) as a 
formulation in the language of categories of suitable strenghtenings of
many of the usual forcing axioms.
\item
We outline the modularity of our results conjecturing that they 
ultimately can be predicated for many category forcings 
$(\UU^{\Gamma},\leq_\Gamma)$ given by 
classes of forcings $\Gamma$ 
satisfying certain natural requirements. 
\item
We give some heuristic argument suggesting that 
$\MM^{++}$ is an axiom really weaker than $\MM^{+++}$.
\item
We show that our results are optimal outlining
that no axiom strictly weaker than $\MM^{++}$ can produce a generic absoluteness result
for the theory of the Chang model $L([\Ord]^{\leq\aleph_1})$ with respect to $\SSP$-forcings
which preserve this axiom.
\item
We also give a list of some possible lines of further investigations and link our results to other recent
researches in this area by Hamkins-Johnstone~\cite{HAMJOH13}, Tsaprounis~\cite{TSA13}, and the
author \cite{VIAMMREV}, \cite{VIAAUD14}(the latter with Audrito).
\end{itemize}
\end{itemize}
Section~\ref{sec:MM+++} depends on the results of section~\ref{sec:univpartord}.
Sections~\ref{sec:background}
and~\ref{subsec:nortow}
present background material and the reader acquainted with it can just skim through them.
We tried to maintain this paper as much self contained as possible, nonetheless
with the exception of the introduction,
the reader is expected to have a strong 
background in set theory and familiarity with forcing axioms,
tower forcings, large cardinals. 
Detailed references for the material presented in 
this paper are mentioned throughout the text, basic sources are Jech's 
set theory handbook~\cite{JEC03}, Larson's book on
the stationary tower forcing~\cite{LAR04}, Foreman's handbook chapter on generic elementary 
embeddings~\cite{FORHST}, our notes on semiproper iterations~\cite{VIAAUDSTE13}.

\tableofcontents

\section{Introduction}\label{sec:intro}

Since
its introduction by Cohen in 1963 forcing has been the key and most effective 
tool to obtain independence results in set theory.
This method has found applications in set theory and
in virtually all fields of pure mathematics: 
in the last forty years natural problems of group theory, functional analysis, 
operator algebras, general topology, and many other subjects were shown to 
be undecidable by means of forcing.
However already in the early seventies and with more evidence since the eighties
it became apparent that many consistency results could all be derived by a short list of 
set theoretic principles which are known in the literature as forcing axioms.
These axioms gave set theorists and mathematicians a very powerful tool to 
obtain independence results:
for any given mathematical problem we are most likely able to compute its (possibly different)
solutions 
in the constructible universe $L$ and in models of strong forcing axioms.
These axioms settle basic problems in 
cardinal arithmetic like the size of the continuum and the singular 
cardinal problem (see among others the works of Foreman, 
Magidor, Shelah~\cite{foreman_magidor_shelah}, 
Veli\v{c}kovi\'c~\cite{VEL92}, Todor\v{c}evi\'c~\cite{tod02}, 
Moore~\cite{moore.MRP}, Caicedo and Veli\v{c}kovi\'c~\cite{caivel06}, 
and the author~\cite{VIA08}), as well as combinatorially 
complicated ones like the basis problem for uncountable linear 
orders (see Moore's result~\cite{moore.basis} which extends 
previous work of Baumgartner~\cite{bau73}, Shelah~\cite{she76}, 
Todor\v{c}evi\'c~\cite{tod98}, and others).
Interesting problems originating from other fields of mathematics and apparently 
unrelated to set theory have also been settled appealing to forcing axioms, as it is the case 
(to cite two of the most prominent examples)
for Shelah's
results~\cite{SHE74} on Whitehead's problem in group theory and Farah's result~\cite{FAR11}
on the non-existence of outer automorphisms of the Calkin algebra in operator algebra.
Forcing axioms assert that for a large class of compact topological spaces $X$
Baire's category theorem can be strengthened to the statement that any family of
$\aleph_1$-many dense open subsets of $X$ has non empty intersection.
In light of the success these axioms have met in solving problems
 a convinced platonist may start to argue that these principles
may actually give a ``complete'' theory of a suitable fragment of the universe. 
However it is not clear how one could formulate such a completeness result.
The aim of this introduction is to explain in which sense we can show that
these strong forcing axioms can give such a 
``complete'' theory.
Our argument will find its roots in the work
of Woodin in $\Omega$-logic. 
The basic observation is that the 
working tools of a set theorist are either 
first order calculus, by which he/she can justify his/her proofs over $\ZFC$, or forcing,
by which he/she can obtain his/her independence results over $\ZFC$.
However it appears that there is still a gap between what we can achieve by ordinary proofs in 
some axiom system which extends $\ZFC$ and the independence 
results that we can obtain over this theory by means of forcing. 
More specifically to close the gap it appears that
we are lacking two desirable feature we would like to have for a ``complete'' first order theory $T$
that axiomatizes set theory with respect to its semantics given by the class of boolean 
valued models of $T$:
\begin{itemize}
\item
$T$ is complete with respect to its intended semantics, i.e for all statements $\phi$
only one among $T+\phi$ and $T+\neg\phi$ is forceable.
\item
Forceability over $T$ should correspond to a  notion of derivability with respect to some proof system, 
eventually derivability with respect to a standard first order calculus for $T$.
\end{itemize}

Both statements appear to be rather bold and have to be handled with care:
Consider for example the statement $\omega=\omega_1$ in a theory
$T$ extending $\ZFC$ with the statements \emph{$\omega$ is the first infinite cardinal} and 
\emph{$\omega_1$ is the first uncountable cardinal}.
Then clearly
$T$ proves $|\omega|\neq|\omega_1|$, while if one forces with $Coll(\omega,\omega_1)$
one produce a model of set theory where this equality holds (however the formula
\emph{$\omega_1$ is the first uncountable cardinal} is now false in this model).
On a first glance this suggests that as we expand the language for $T$, forcing starts to act randomly
on the formulae of $T$ switching the truth value of its formulae with parameters in ways which
it does not seem simple to describe\footnote{
This is not anymore the case for the closed formulae of $T$.
Hamkins and L\"owe 
represent the forceability of a closed formula $\phi$ as an interpretation of the 
modal statement $\diamond\phi$ and
have shown that the
closed sentences of $\ZFC$ and the family of all generic multiverses (i.e the generic extensions
of a given model $V$ of $\ZFC$) can be used to define
a family of correct and complete frames for the propositional modal logic
$S4.2$~\cite{HAMLOE08}.}.
 However the above difficulties are raised essentially by our lack of attention to define 
the type of formulae for which we aim to have the completeness of $T$ with respect to 
forceability. 
We shall show that when the formulae are prescribed to talk only about a suitable initial segment
of the set theoretic universe and we consider only forcings that preserves the intended
meaning of the parameters by which we enriched the language of $T$, this random
behaviour of forcing does not show up anymore.

We shall assume a platonistic stance towards set theory, 
thus we have one canonical model $V$ of $\ZFC$ of which we try to
uncover the truths.
To do this we may allow ourselves to use all model theoretic techniques that produce new models of
the truths of $Th(V)$ on which we are confident, which (if we are platonists)
certainly include $\ZFC$ and all the axioms of large cardinals.
We may start our quest for uncovering the truth in $V$ by first settling the theory of
$H_{\omega_1}^V$ (the hereditarily countable sets), then the theory of $H_{\omega_2}^V$
(the sets of hereditarily cardinality $\aleph_1$) and so on so forth covering step by step
all infinite cardinals.
To proceed we need some definitions:

 \begin{Definition}
Given a theory $T\supseteq \ZFC$ and a family 
$\Gamma$ of partial orders definable in $T$, we say that
$\phi$ is $\Gamma$-consistent for $T$ if 
$T$ proves that there exists a partial order in $\Gamma$ which forces $\phi$.

Given a model $V$ of $\ZFC$ we say that $V$ models that $\phi$ is
$\Gamma$-consistent if $\phi$ is $\Gamma$-consistent for $\Th(V)$.

\end{Definition}

\begin{Definition}
Given a partial order $P$ and a cardinal $\lambda$,
$\FA_\lambda(P)$ holds if for all $\{D_\alpha:\alpha<\lambda\}$ family of dense subsets of
$P$, there is $G\subset P$ filter which has non empty intersection with all
$D_\alpha$.
\end{Definition}

\begin{Definition}
Let 
\[
T\supseteq\ZFC+\{\lambda \text{ is an infinite cardinal}\}
\]
$\Omega_\lambda$ is the definable (in $T$) class of partial orders $P$ which satisfy
$\FA_\lambda(P)$.
\end{Definition}
In particular Baire's category theorem amounts to say that
$\Omega_{\aleph_0}$ is the class of all partial orders (denoted by Woodin
as the class $\Omega$).
The following is a basic observation whose proof 
can be found in~\cite[Lemma 1.2]{VIAMMREV}
\begin{Lemma}[Cohen's absoluteness Lemma]
Assume $T\supseteq\ZFC+\{p\subseteq\omega\}$ and
$\phi(x,p)$ is a $\Sigma_0$-formula.
Then the following are equivalent:
\begin{itemize}
\item
$T\vdash \exists x\phi(x,p)$,
\item
$T\vdash \exists x\phi(x,p)$ is $\Omega$-consistent.
\end{itemize}
\end{Lemma}
This shows that 
for $\Sigma_1$-formulas with real parameters
the desired overlap between the ordinary 
notion of provability and the semantic notion of forceability is a provable fact in $\ZFC$.
Now it is natural to ask if we can expand the above in at least two directions:
\begin{enumerate}
\item
Increase the complexity of the formula,
\item
Increase the language allowing parameters also for other infinite cardinals.
\end{enumerate}
The second direction requires almost no effort once one notices that in order to prove that
$\exists x\phi(x,p)$ is provable if it is $\Omega$-consistent, we used the fact that for all forcing $P$
$\FA_{\aleph_0}(P)$ is  provable in $\ZFC$, and that 
the latter is just the most general formulation of Baire's 
category theorem.
We can thus reformulate the above equivalence in a modular form as follows 
(see for a proof~\cite[Lemma 1.3]{VIAMMREV}):
\begin{Lemma}[Generalized Cohen's absoluteness Lemma]
Assume 
\[
T\supseteq\ZFC+\{p\subset\lambda\}+\{\lambda\text{ is an infinite cardinal}\}
\] 
and
$\phi(x,p)$ is a $\Sigma_0$-formula.
Then the following are equivalent:
\begin{itemize}
\item
$T\vdash \exists x\phi(x,p)$,
\item
$T\vdash \exists x\phi(x,p)$ is $\Omega_{\lambda}$-consistent.
\end{itemize}
\end{Lemma}
The extent by which we can increase the complexity of the formula requires once again
some attention to the semantical interpretation of its parameters and its quantifiers.
We have already observed that the formula $\omega=\omega_1$
is inconsistent but $\Omega$-consistent
in a language with parameters for $\omega$ and $\omega_1$.
One of 
Woodin's main achievements\footnote{We follow Larson's presentation as in~\cite{LAR04}.} 
in $\Omega$-logic show that if we restrict the semantic interpretation of
$\phi$ to range over the structure $L([\Ord]^{\aleph_0})$ and 
we assume large cardinal axioms, we can get a full correctness and completeness 
result~\cite[Corollary 3.1.7]{LAR04}:

\begin{Theorem*}[Woodin]
Assume 
\[
T\supseteq\ZFC+\{p\subset\omega\}+\text{there are class many Woodin cardinals which
are limits of Woodin cardinals}
\] 
and $\phi(x)$ is any formula in one free variable.
Then the following are equivalent:
\begin{itemize}
\item
$T\vdash [L([\Ord]^{\aleph_0})\models\phi(p)]$,
\item
$T\vdash [L([\Ord]^{\aleph_0})\models\phi(p)]$ is $\Omega$-consistent.
\end{itemize}
\end{Theorem*}
The natural question to address now is whether we can step up this result also for uncountable 
$\lambda$.
If so in which form?
Woodin~\cite[Theorem 3.2.1]{LARHST} has proved another remarkable absoluteness result for $\CH$ i.e.:
\begin{Theorem*}[Woodin]
Let $T$ extend $\ZFC+$\emph{ there are class many measurable Woodin cardinals}.

A $\Sigma^2_1$-statement $\phi(p)$ with real parameter $p$
is $\Omega$-consistent for $T$ if and only if 
\[
T+\CH\vdash\phi(p).
\]
\end{Theorem*}

However there are two distinct results that show that we cannot hope to obtain a complete
(and unique) theory with respect to forceability which extends $\ZFC+\CH$:
\begin{itemize}
\item
Asper\'o, Larson and Moore~\cite{ASPLARMOO12} have shown that there are 
two distinct $\Pi_2$-statements $\psi_0,\psi_1$
over the theory of $H_{\aleph_2}$ such that $\psi_0+\psi_1$ denies $\CH$ and
$\psi_i+\CH$ is forceable by means of a proper forcing for $i=0,1$ over a model of 
$\ZFC+$\emph{ large cardinal axioms}. This shows that
any completion of $\ZFC+\CH+$\emph{ large cardinal axioms} 
cannot simultaneously realize all $\Pi_2$-statements over the theory of
$H_{\aleph_2}$ each of which is known to be consistent with $\CH$ 
(even consistent by means of a \emph{proper} forcing). 
\item
Woodin and Koellner~\cite{WOOKOE09} have shown that if there is 
an $\Omega$-complete theory $T$ for $\Sigma^2_3$-statements with real parameters which implies 
$\CH$, then there is another $\Omega$-complete theory 
$T'$ for $\Sigma^2_3$-statements with real parameters which denies $\CH$.
\end{itemize}
In particular the first result shows that we cannot hope to extend $\ZFC+\CH$ to a natural maximal
completion which settle the $\Pi_2$-theory of the structure $H_{\aleph_2}$ at least with respect to
the semantics given by forcing.
Finally Woodin has proved a remarkable absoluteness result for a close relative of forcing axioms,
Woodin's axiom\footnote{See~\cite{LARHST} or~\cite{woodin} for a detailed 
presentation of the models of this axiom.} $(*)$. For this axiom Woodin can prove the consistency of
a completeness and correctness result for $\ZFC+(*)$
with respect to a natural but non constructive proof system and to $\Omega$-consistency.
This completeness result is very powerful for it applies to the largest possible 
class of models produced by forcing, but it has two features which need to be clarified:
\begin{itemize}
\item
It is not known if $(*)$ is $\Omega$-consistent i.e. if its consistency can be proved by forcing over an 
ordinary model of $\ZFC$.
\item
The correctness and completeness result for $(*)$ are with respect to a natural but non-constructive
proof system and moreover the completeness theorem is known to hold only 
under certain assumptions on the set theoretic properties of $V$.
\end{itemize}

Let us now focus on the first order theory with parameters in $P(\omega_1)$
of the structure $H_{\omega_2}$ or more generally
of the Chang model $L([\Ord]^{\leq\aleph_1})$.
A natural approach to the study of this Chang model is to expand the language 
of $\ZFC$ to enclude constants for all elements of $H_{\omega_2}$
and the basic relations between these elements:
\begin{Definition}
Let $V$ be a model of $\ZFC$ and $\lambda\in V$ be a
cardinal.
The $\Sigma_0$-diagram of $H_\lambda^V$ is given by
the theory
\[
\{\phi(p):p\in H_\lambda^V,\,\phi(p)\text{ a $\Sigma_0$-formula true in $V$}\}.
\]
\end{Definition}
Following our approach, the natural theory of $V$ we should look at  is the theory:
\[
T=\ZFC+\text{ large cardinal axioms }+\text{$\Sigma_0$-diagram of $H_{\omega_2}$},
\]
since we already know that $\ZFC+$\emph{large cardinal axioms} settles the theory of
$L([\Ord]^{\aleph_0})$ with parameters in
$H_{\omega_1}$.
Now consider any model $M$ of $T$ we may obtain using model-theoretic techniques.
In particular we may assume that $M$ is  a ``monster model" which contains $V$
such that for some completion $\bar{T}$ of $T$,
$M$ is a model of $\bar{T}$ which amalgamates ``all'' possible models of $\bar{T}$ and 
realizes all consistent types of $\bar{T}$ with parameters in $H_{\omega_2}^V$.
If we have any hope that $\bar{T}$ is really the theory of $V$ we are aiming for, 
we should at least require that $V\prec_{\Sigma_1} M$.
Once we make this requirement we notice the following:
\[
V\cap\NS_{\omega_1}^M=\NS_{\omega_1}^V.
\]
If this were not the case, then for some $S$ stationary and costationary
in $V$, $M$ models that $S$ is not stationary, i.e. that there is a club of 
$\omega_1$ disjoint from $S$. Since $V\prec_{\Sigma_1} M$ such a club can be found
in $V$. This means that $V$ already models that $S$ is non stationary.
Now the formula $S\cap C=\emptyset$\emph{ and $C$ is a club} is $\Sigma_0$ and thus it is part of
the $\Sigma_0$-diagram of $H_{\omega_2}^V$.
However this contradicts the assumption that $S$ is stationary and costationary in $V$ 
which is expressed by the fact that the above formula is not part of the 
$\Sigma_0$-elementary diagram of $V$.
This shows that $V\not\prec_{\Sigma_1} M$.
Thus any ``monster'' model $M$ as above should be correct about the non-stationary ideal,
so we better add this ideal as a predicate to the $\Sigma_0$-diagram of
$H_{\omega_2}^V$, to rule out models of the completions of $T$ which cannot even be
$\Sigma_1$-superstructures of $V$.
Remark that on the forcing side, this is immediately leading to the 
notion of stationary set preserving forcing: if we want to use forcing to 
produce such approximations of ``monster'' models while preserving 
the fact of being a $\Sigma_1$-elementary superstructure of $V$ with respect to $T$,
we are forced to restrict our attention to stationary set preserving forcings. 
Let us denote by $\SSP$ the class of stationary set preserving posets and recall that
Martin's maximum asserts that $\FA_{\aleph_1}(P)$ holds for all $\SSP$-partial orders $P$.
Subject to the limitations we have outlined the best possible result we can hope for
is to find a theory 
\[
T_1\supset T
\]
such that:
\begin{enumerate}
\item \label{req1}
$T_1$ proves the strongest possible forcing axiom i.e. some natural strengthening of 
Martin's maximum,
\item \label{req2}
For any $T_2\supseteq T_1$ and 
any formula $\phi(S)$ 
relativized to the Chang model $L([\Ord]^{\leq\aleph_1})$ 
with parameter $S\subset\omega_1$ and a predicate for the non-stationary ideal $\NS_{\omega_1}$,
$\phi(S)$ is provable in
$T_2$ if and only if the theory $T_1+\phi(S)$ is $\Omega_{\aleph_1}$-consistent for $T_2$.
\end{enumerate}
We shall separately give arguments to justify these two requirements.
 
The first requirement above (\ref{req1}) is a natural maximality principle, since
it can be argued that Martin's maximum $\MM$ is a natural strengthening of the axiom of 
choice and of Baire category theorem:
\begin{itemize}
\item
On the one hand Todor\v{c}evi\'c~\cite{TODACFA14} has noticed that the axiom of choice
 is equivalent (over $\mathsf{ZF}$) to the
global forcing axiom asserting that $\FA_\lambda(P)$ holds for each regular cardinal 
$\lambda$ and for any 
$<\lambda$-directed closed poset\footnote{
Roughly the argument goes as follows: the axiom of choice
is equivalent to the assertion that the the axiom of dependent choice $\mathsf{DC}_\lambda$
holds for all infinite cardinals $\lambda$. It is almost immediate to check that 
$\mathsf{DC}_\lambda$ is equivalent to the assertion that $\FA_\lambda(P)$
holds for any 
$<\lambda$-directed closed poset $P$ and that ---assuming the amount of axiom of choice needed to
implement Stone duality---
$\mathsf{DC}_\omega$ is an equivalent formulation
of Baire's category theorem (see for more 
details~\href{http://www.personalweb.unito.it/matteo.viale/LUMINY2014viale.pdf}
{http://www.personalweb.unito.it/matteo.viale/LUMINY2014viale.pdf}).} 
$P$.
\item
On the other hand Shelah has shown that $\FA_{\aleph_1}(P)$ provably 
fails if $P$ is not stationary set 
preserving below some of its conditions. 
\item
It is also well known by means of Stone duality that for any compact Hausdorff 
topological space
$(X,\tau)$, the intersection of 
a family of $\lambda$-many dense open sets is non-empty if and only if the partial order
$P=(\tau\setminus\{\emptyset\},\supseteq)$ is such that $\FA_\lambda(P)$ holds.
\end{itemize}
In particular: the first and the third item
show that Martin's maximum can be seen as a natural topological formulation of a
strengthening of the axiom of choice, since countably closed forcings are stationary set preserving.
The second and the third items show
that Martin's maximum is a maximal topological strenghtening of Baire's category theorem.
It has been shown by the work of Foreman, Magidor and Shelah~\cite{foreman_magidor_shelah} 
that Martin's maximum  is 
$\SSP$-consistent with respect to $T=\ZFC+$\emph{large cardinals}
and thus it is consistent. Denying it is not required by the known constraints
we have to impose on $T$ in order to get a complete extension of $T$.

The second requirement (\ref{req2}) above is the best possible form of completeness theorem
we can currently formulate: there may be interesting model theoretic tools to produce
models of $T$ which are not encompassed by forcing,
however we haven't as yet developed powerful techniques to exploit them in the study of models
of $\ZFC$. Moreover the second requirement (\ref{req2})
shows that forcing
becomes a powerful proof tool in the presence of strong forcing axioms, since it transforms a 
validity problem in a consistency problem.

$\MM^{++}$ is a well known ``natural'' strengthening of Martin's maximum (i.e. of the equality 
$\SSP=\Omega_{\aleph_1}$) shown to be consistent relative to the existence of a 
supercompact cardinal by the work of~\cite{foreman_magidor_shelah}.
In the present paper we show that a further ``natural'' strengthening of $\MM^{++}$, which we call
$\MM^{+++}$ (see Def.~\ref{def:MM+++})
 enriched by suitable large cardinals axioms (see Def.~\ref{def:sahlc} for the relevant notions)
is already such a theory $T_1$ as we can prove the following:
\begin{Theorem}\label{thm:mainthm}
Let $\ZFC^*$ stands for\footnote{$\Sigma_2$-reflecting cardinals are defined in 
Def.~\ref{def:reflcar}.} 
\[
\ZFC+\text{ there are class many 
$\Sigma_2$-reflecting cardinals }
\]
and $T^*$ be any theory extending 
\[
\ZFC^*+\MM^{+++}+\omega_1\text{ is the first uncountable
cardinal }+S\subset\omega_1.
\]
Then for any formula\footnote{If we allow formulae of arbitrary
complexity, we do not need to enrich the language with 
a predicate for the non-stationary ideal, since this ideal is a definable predicate over
$H_{\omega_2}$ (though defined by a $\Sigma_1$-property) and thus can be incorporated 
as  a part of the formula.} 
$\phi(S)$ the following are equivalent:
\begin{enumerate}
\item
$T^*\vdash [L([\Ord]^{\leq\aleph_1})\models\phi(S)]$,
\item
$T^*\vdash \MM^{+++}$ and $[L([\Ord]^{\leq\aleph_1})\models\phi(S)]$ are
jointly $\Omega_{\aleph_1}$-consistent.
\end{enumerate}
\end{Theorem}
We shall also see that the result is sharp in the sense that
the work of Asper\'o~\cite{ASPPFA++} and Larson~\cite{larson-dwo} 
shows that we cannot obtain the above completeness and 
correctness result relative to forcing axioms which are just slightly weaker than $\MM^{++}$.
It remains open whether our axiom $\MM^{+++}$ is really stronger than $\MM^{++}$ in the presence
of large cardinals.

Finally I think that the present results show that we have all reasons to expect that
$\MM^{+++}$ (and most likely already $\MM^{++}$)
can decide Woodin's axiom $(*)$: any proof of the consistency of $\MM^{+++}$ with $(*)$
or with its negation obtained by an $\SSP$-forcing would convert by the results of this paper into
a proof of the provability of $(*)$ (or of its negation) from $\MM^{+++}$.

\section{Background material on large cardinals, 
generalized stationarity, forcing axioms}\label{sec:background}

\subsection{Stationary sets and normal ideals}\label{subsec:statsets}

We follow standard set theoretic notation as in~\cite{JEC03}.
In particular for an arbitrary set or class $X$ and a cardinal $\lambda$
\[
[X]^{\lambda}=\{Y\in P(X): |Y|=\lambda\}.
\]
$[X]^{\leq\lambda}$ and $[X]^{<\lambda}$ are defined accordingly.
For $X\supseteq\lambda$
\[
P_\lambda(X)=\{Y\in P(X): |Y|<\lambda\text{ and }Y\cap\lambda\in\lambda\}.
\]
We let
\[
P_\lambda=\{Y\in V: |Y|<\lambda\text{ and }Y\cap\lambda\in\lambda\}.
\]
Thus $P_\lambda(X)=P(X)\cap P_\lambda$.

For any $f:P_\omega(X)\to X$ we let 
$C_f\subset P(X)$ be the set of its closure points (i.e. the set of $y\subset X$ such that
$f[P_{\omega}(y)]\subset y$).
\begin{definition}
$S$ is stationary in $X$ 
if $S\cap C_f$ is non-empty for all $f:P_\omega(X)\to X$.
$S$ is stationary if it is stationary in $\cup S$.
\end{definition}

\begin{definition}
$I\subset P(P(X))$ is an ideal on $X$ if it is closed under subsets and 
finite unions. The dual filter of an ideal
$I$ is denoted by $\breve{I}$.
$I^+=P(P(X))\setminus I$.

$I$ is normal if
for all $S\in I^+$ and regressive
$f:S\to X$ there is $T\in I^+$ on which $f$ is constant.

$I$ is $<\kappa$-complete if for all $J\subset I$ of size less than $\kappa$,
$\cup J\in I$.

The completeness of $I$ is the largest $\kappa$ such that $I$ is $<\kappa$-complete.

If $S\in I^+$, $I\restriction S$ is the ideal generated by $I\cup\{P(X)\setminus S\}$. 
\end{definition}

\begin{definition}
The non stationary ideal $\NS_X$ on $X$ is the ideal generated
by the complements of sets of the form 
\[
C_f=\{Z\in P(X): f[Z^{<\omega}]\subset Z\}
\]
for some $f:X^{<\omega}\to X$.
Its dual filter is the club filter on $X$.
\end{definition}
An ideal $I$ on $X$ concentrates on $S\subset P(X)$, if $P(X)\setminus S\in I$.

\begin{remark}
$P_{\aleph_0}(X)=[X]^{<\aleph_0}$ for all $X\supseteq\omega$ and 
$P_{\aleph_1}(X)$ is a club subset of $[X]^{<\aleph_1}$ for all $X\supseteq\omega_1$.
For other cardinals $\lambda>\aleph_1$, 
$[X]^{<\lambda}\setminus P_\lambda(X)$ can be stationary.
\end{remark}

\begin{lemma}[Pressing down Lemma]
Assume $S$ is stationary and $f:S\to \cup S$ is such that 
$f(X)\in X$ for all $X\in S$. Then there is $T\subset S$ on which 
$f$ is constant. In particular $\NS_X$ is a normal ideal for all $X$.
\end{lemma}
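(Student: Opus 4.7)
The plan is to prove the lemma in its nontrivial form, namely that there exists a \emph{stationary} $T\subseteq S$ on which $f$ is constant (the bare conclusion printed is vacuous, since any singleton works). So assume toward contradiction that for every $x\in\bigcup S$ the fibre $T_x=\{X\in S:f(X)=x\}$ is non-stationary.

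Using this assumption, for every $x\in\bigcup S$ I would fix a witness $h_x:P_\omega(\bigcup S)\to\bigcup S$ such that $C_{h_x}\cap T_x=\emptyset$. The next step is to amalgamate the class $\{h_x:x\in\bigcup S\}$ into a single function $h:P_\omega(\bigcup S)\to\bigcup S$ whose closure points are simultaneously closure points of each of the relevant $h_x$'s. Concretely, I would pass to the equivalent reformulation of stationarity in terms of finite tuples (a set $Y$ belongs to $C_g$, for $g:[\bigcup S]^{<\omega}\to\bigcup S$, iff $g[[Y]^{<\omega}]\subseteq Y$), and define
\[
h(x_0,x_1,\dots,x_n)=h_{x_0}(x_1,\dots,x_n).
\]
Then whenever $X\in C_h$ and $x_0\in X$, the set $X$ is closed under $h_{x_0}$, i.e.\ $X\in C_{h_{x_0}}$.

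Because $S$ is stationary I can pick some $X\in S\cap C_h$. Setting $x=f(X)\in X$, the previous paragraph gives $X\in C_{h_x}$; but $X\in T_x$ by the definition of $x$, contradicting $C_{h_x}\cap T_x=\emptyset$. Hence some $T_x$ must have been stationary, yielding the desired constant-value set.

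The only non-routine step is the amalgamation of the class of witnesses into one function $h$, which is the main technical point but is entirely bookkeeping once one uses the finite-tuple presentation of closure points; both quick reductions (finite subsets vs.\ finite tuples, and the observation that the amalgamated $h$ inherits the required closure property) are standard. No large-cardinal or forcing machinery from the paper is needed.
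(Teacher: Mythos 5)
The paper states this lemma as a standard background fact and gives no proof of it, so there is nothing to compare against; judged on its own, your argument is the classical one and is correct. You rightly read the conclusion as asserting a \emph{stationary} fiber (otherwise a singleton trivializes the statement), and the contradiction scheme --- choose a witness $h_x$ with $C_{h_x}\cap T_x=\emptyset$ for each $x\in\bigcup S$, amalgamate into a single $h$ via $h(x_0,x_1,\dots,x_n)=h_{x_0}(x_1,\dots,x_n)$ so that closure points of $h$ containing $x_0$ are closure points of $h_{x_0}$, then intersect $S$ with $C_h$ and evaluate $f$ --- is exactly the standard diagonal amalgamation. The only points left implicit (the equivalence of closure under functions on finite subsets versus finite tuples, and handling the nullary case $h_{x_0}(\emptyset)$) are genuinely routine, as you say.
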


We call functions $f$ as in the Lemma regressive.

For a stationary set $S$ and a set $X$, if $\cup S\subseteq X$
we let $S^{X}=\{M\in P(X):M\cap\cup S\in S\}$,
if $\cup S\supseteq X$
we let $S\restriction X=\{M\cap X:M\in S\}$.
We define
an order on stationary sets given by
$T\leq S$ 
if letting $X=(\cup T)\cup (\cup S)$, there is $f :P_\omega(X)\to X$ 
such that $T^X\cap C_f\subseteq S^X$.
We let $S\equiv T$ if $S\leq T$ and 
$T\leq S$.

In general if $\{S_i:i<\xi\}$ is a family of stationary sets we let
$\eta$ be the least such that $\{S_i:i<\xi\}\in V_\eta$ and
\[
\bigwedge\{S_i:i<\xi\}=\{M\prec V_\eta: 
\{S_i:i<\xi\}\in M\text{ and }\forall S_i\in M: M\cap \cup S_i\in S_i\}
\]
\[
\bigvee\{S_i:i<\xi\}=\{M\prec V_\eta: 
\{S_i:i<\xi\}\in M\text{ and }\exists S_i\in M: M\cap \cup S_i\in S_i\}
\]
It can be seen that these definitions are independent of 
the choice of the ordinal $\eta$.

We say that $S$ and $T$ are compatible if
$S\wedge T$ is stationary.
Moreover it can be checked that $\bigwedge$ and $\bigvee$ are 
exact lower and upper bounds for $\leq$.

\subsection{Large cardinals}\label{subsec:lc}
We shall repeatedly use supercompact cardinals which for us are defined as follows:
\begin{definition}
$\delta$ is $\gamma$-supercompact if for all $S\in V_\gamma$ there is 
an elementary 
\[
j:V_\eta\to V_\gamma
\]
with $j(\crit(j))=\delta$ and $S\in j[V_\eta]$.
$\delta$ is supercompact if it is $\gamma$-supercompact for all $\gamma\geq\delta$.
\end{definition}

We shall also use this equivalent characterization of supercompactness:
\begin{proposition}
The following are equivalent for a limit ordinal $\gamma>\delta$:
\begin{itemize}
\item
$\delta$ is $\xi$-supercompact for all $\xi<\gamma$,
\item
For all $\xi<\gamma$ the set 
\[
\{M\prec V_\xi: M\cap\delta\in \delta\text{ and } \pi_M\text{ is an isomorphism of $M$ with some }V_\alpha\}
\]
is stationary.
\end{itemize}
\end{proposition}

We shall also repeatedly mention Woodin cardinals, however we shall never actually 
need to employ them, so we dispense with their definition and we remark that if there is an elementary
$j:V_{\alpha+1}\to V_{\delta+1}$ than both $\alpha$ and $\delta$ 
are Woodin cardinal
and any normal measure on
$\delta$ concentrates on Woodin cardinals below $\delta$.

\begin{definition}\label{def:reflcar}
$\delta$ is a $\Sigma_2$-reflecting cardinal if it is inaccessible and for all formulae
$\phi(p)$ with $p\in V_\delta$ 
\[
\exists \gamma V_\gamma\models\phi(p)
\] 
if and only if there exists $\alpha<\delta$ such that
\[
V_\alpha\models\phi(p).
\] 
\end{definition}
Finally we will need the notion of super almost huge cardinals
and of Laver function for almost huge embeddings 
which for us are defined as follows:
\begin{definition}\label{def:sahlc}
An elementary 
$j:V\to M$ with $\crit(j)=\delta$ is almost huge 
if $M^{<j(\delta)}\subset M\subset V$.

$\delta$ is super almost huge if for all $\lambda>\delta$ there is an almost huge
$j:V\to M$ with $\crit(j)=\delta$ and $j(\delta)>\lambda$.

$f:\delta\to V_\delta$ is a Laver function for almost huge embeddings if for all
$X$ there is an almost huge $j:V\to M$ with $\crit(j)=\delta$ such that $j(f)(\delta)=X$.
\end{definition}

\begin{fact}~\cite[Theorem 12, Fact 13]{COX12}
Assume $j:V\to M$ is elementary and such that $M^{j^2(\delta)}\subset M\subset V$.
Then $V_{j(\delta)}$ models that $\delta$ carries a Laver function for almost huge embeddings.  
\end{fact}

\subsection{Posets and their boolean completions}
\label{subsec:boolalgforc}

We refer the reader to~\cite{VIAAUDSTE13} for a detailed account
on the results presented in this and the next subsections.

Given a partial order $(P,\leq_P)$ we let $\BBB(P)$ denote its boolean completion given by the
regular open set in the
order topology on $P$ (the topological space whose points are the elements of $P$ and 
whose open sets are the downward closed subsets of $P$ with respect to $\leq_P$).

Given partial orders $(P,\leq_P)$, $(Q,\leq_Q)$,
$i:P\to Q$ is a regular embedding if
it is order and incompatibility preserving and maps maximal antichain to maximal antichains.

Remark that if $i:P\to Q$ is a regular embedding $i$ gives rise to a complete injective homomorphism
$\bar{i}:\BBB(P)\to\BBB(Q)$ mapping a regular open set $A\subseteq P$ (in the
order topology on $P$) to the regular open set\footnote{For a given topological space
$(X,\tau)$ and $B\subseteq X$ $\mathring{\overline{B}}$ denote the
interior of the closure of $B$ in the topology $\tau$.}
$\mathring{\overline{i[A]}}$ (in the
order topology on $Q$).

Let $G$ be $V$-generic for $P$.
The quotient forcing $(Q/i[G],\leq_{Q/_{i[G]}})$ is the partial order defined in $V[G]$ 
as follows:
\begin{itemize}
\item
$q\in Q/i[G]$ 
if $q\in Q$ is compatible with all elements in $i[G]$
\item 
$q\leq_{Q/_{i[G]}} r$ if for all $s\in Q/i[G]$ such that $s\leq_Q q$ there is
$t\leq_Q s, r$ such that $t\in Q/i[G]$.
\end{itemize}
We identify $Q/i[G]$ with its separative quotient given
by equivalence classes of the form $[q]_{i[G]}$ induced by the order relation 
$\leq_{Q/_{i[G]}}$. 

In almost all cases (with the notable exception of the discussion around Theorem~\ref{lem:univ4})
it will be more convenient for us to deal with these 
concepts in the language of boolean algebras rather than in the language of posets, thus 
we introduce also the following notation:

Let $\BB$ and $\QQ$ be complete boolean algebras.
We say that
a complete homomorphism (or complete embedding) $i:\BB\to\QQ$ is \emph{regular} if it is injective.
We remark that if $i:\BB\to\QQ$ is a complete homomorphism and
$\BB$ is atomless $i[\BB]$ is a complete and atomless subalgebra of $\QQ$ isomorphic via $i$
to $\BB\restriction \coker(i)$ where
\[
\coker(i)=\neg_{\BB}\bigvee_{\BB}\{b:i(b)=0_{\QQ}\}.
\]
We also remark that if $i:\BB\to\QQ$ and $J$ is an ideal on $\BB$, letting 
$K=\downarrow i[J]$ (where $\downarrow X$ is the downward closure of $X$),
we can define
$i/J:\BB/J\to \QQ/K$ by $[b]_J\mapsto [i(b)]_K$. 

The above notion of quotients introduced for posets and boolean algebras are correlated as follows:
In the case that $G$ is $V$-generic for $\BBB(P)$, $J$ is its dual ideal 
and $i:P\to Q$ is a regular embedding,
we get that in $V[G]$ $\BBB(Q/i[G])^{V[G]}$ is isomorphic to $\BBB(Q)^V/i[J]$.

Also, assuming $G$ is a filter on $\BB$ and $J$ is its dual ideal, 
we shall feel free to denote by $i/G$ and $\BB/G$
the boolean algebra $\BB/J$ and the homomorphism $i/J$.
We define
$V^\BB$ as the class of $\tau\in V$ such that $\tau:V^\BB\to\BB$.
The canonical name in $V^{\BB}$ for the $V$-generic filter is denoted by $\dot{G}_{\BB}$,
we will let $\val_G(\tau)=\{\val_G(\sigma):\tau(\sigma)\in G\}$ denote the evalution map induced by
a $V$-generic filter 
$G$ for $\BB$  on $\BB$-names.

\subsection{$\SSP$-forcings and $\SSP$-correct embeddings}\label{subsec:SSPforc}

\begin{proposition}\cite[Proposition 2.11]{VIAAUDSTE13}
Assume $i:\BB\to\QQ$ is a complete homomorphism.
Let $\hat{i}:V^{\BB}\to V^{\QQ}$ be defined by the requirement that
\[
\hat{i}(\tau)(\hat{i}(\sigma))=i\circ \tau(\sigma)
\]
for all $\sigma\in\dom(\tau)\in V^{\BB}$.
Then for all provably $\Delta_1$-properties $\phi(x_0,\dots,x_n)$
\[
i(\llbracket\phi(\tau_0,\dots,\tau_n)\rrbracket_{\BB})=
\llbracket\phi(\hat{i}(\tau_0),\dots,\hat{i}(\tau_n))\rrbracket_{\QQ}.
\]
\end{proposition}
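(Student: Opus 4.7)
The plan is to argue by induction on the complexity of $\phi$, dealing with the syntactic operations one at a time and handling the presence of the unbounded quantifiers in $\Delta_1$-formulas by a semantic argument using the forcing theorem and Mostowski absoluteness.

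I would begin with the atomic case, showing $i(\llbracket\tau\in\sigma\rrbracket_\BB)=\llbracket\hat i(\tau)\in\hat i(\sigma)\rrbracket_\QQ$ and the analogous identity for equality. These truth values are defined by the usual mutual recursion on the rank of the $\BB$-names; one proceeds by induction on $\max(\rank(\tau),\rank(\sigma))$, expanding $\llbracket\tau\in\sigma\rrbracket_\BB=\bigvee_{\rho\in\dom(\sigma)}\sigma(\rho)\wedge\llbracket\tau=\rho\rrbracket_\BB$ (and the parallel formula for equality), and using that $i$ is a complete Boolean homomorphism to pull it through the suprema and infima, together with the defining equation $\hat i(\tau)(\hat i(\sigma))=i(\tau(\sigma))$ and the fact that $\dom(\hat i(\tau))=\hat i[\dom(\tau)]$. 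Boolean combinations are then immediate, again because $i$ preserves arbitrary joins, meets, and complements; bounded quantifiers $\exists x\in\tau\,\psi$ and $\forall x\in\tau\,\psi$ reduce to suprema and infima indexed over $\dom(\tau)$, which transfer to the corresponding suprema and infima over $\dom(\hat i(\tau))=\hat i[\dom(\tau)]$ by the same token.

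The delicate step is the unbounded quantifiers, because the truth value of $\exists x\,\psi$ unfolds as a supremum ranging over \emph{all} $\BB$-names, and the class of $\BB$-names is genuinely different from the class of $\QQ$-names, so the inductive hypothesis does not immediately apply term by term. Here I would switch to a semantic argument. Fix a $V$-generic filter $G\subset\QQ$ and set $H=i^{-1}[G]$. Using that $i$ is a complete homomorphism, $H$ is a $V$-generic filter on $\BB$ (every maximal antichain $A$ of $\BB$ has $i[A]$ meeting $G$ because $\bigvee i[A]=i(\bigvee A)=1_\QQ$), and by induction on rank one checks $\hat i(\tau)_G=\tau_H$ for every $\BB$-name $\tau$. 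The forcing theorem gives $\llbracket\phi(\tau_0,\dots,\tau_n)\rrbracket_\BB\in H\iff V[H]\models\phi(\tau_{0,H},\dots,\tau_{n,H})$ and analogously for $\QQ$. Since $V[H]\subseteq V[G]$ are transitive models of $\ZF$ with the same parameters $\tau_{i,H}=\hat i(\tau_i)_G$, and $\phi$ is provably $\Delta_1$, Mostowski absoluteness yields $V[H]\models\phi(\tau_{i,H})_{i\le n}\iff V[G]\models\phi(\hat i(\tau_i)_G)_{i\le n}$. Combining these equivalences with $b\in H\iff i(b)\in G$ shows that the boolean values $i(\llbracket\phi(\tau_0,\dots,\tau_n)\rrbracket_\BB)$ and $\llbracket\phi(\hat i(\tau_0),\dots,\hat i(\tau_n))\rrbracket_\QQ$ lie in exactly the same $V$-generic filters of $\QQ$, and hence must coincide.

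The main obstacle I expect is purely bookkeeping: verifying cleanly the subsidiary identities $\dom(\hat i(\tau))=\hat i[\dom(\tau)]$ and $\hat i(\tau)_G=\tau_H$, both of which are inductions on name rank that rely on $i$ sending $0_\BB$ to $0_\QQ$ so that trivial entries in the domain of $\hat i(\tau)$ do not produce spurious elements after evaluation. Once this is granted, the transition from syntactic induction to the semantic generic-filter argument at the unbounded-quantifier step is what makes the $\Delta_1$ hypothesis essential: neither $\Sigma_1$ nor $\Pi_1$ alone would suffice, because absoluteness between $V[H]$ and $V[G]$ is required in both directions.
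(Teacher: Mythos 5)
Your argument is correct. Note that the paper itself does not prove this proposition; it is quoted from \cite[Proposition 2.11]{VIAAUDSTE13}, so there is no in-paper proof to compare against, but what you give is the standard argument and all the essential points are in place: the $\Delta_0$ fragment is handled by the boolean-algebraic induction on name rank (where completeness of $i$ is exactly what lets you commute $i$ with the suprema and infima defining $\llbracket\tau\in\sigma\rrbracket_{\BB}$, $\llbracket\tau=\sigma\rrbracket_{\BB}$ and the bounded quantifiers), and the unbounded quantifiers are handled by genericity of $H=i^{-1}[G]$, the identity $val_G(\hat{i}(\tau))=val_H(\tau)$, and Mostowski absoluteness of provably $\Delta_1$ formulas between the transitive $\ZFC$-models $V[H]\subseteq V[G]$. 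Two cosmetic remarks. First, your semantic argument already covers atomic formulas and Boolean combinations, so the syntactic induction is not strictly needed except as a warm-up; what it does buy you is a proof of the $\Delta_0$ case that makes no appeal to generic filters. Second, the step ``fix a $V$-generic filter $G\subset\QQ$'' and the concluding separation argument (if $p\neq q$ put a generic filter below $p\wedge\neg q$ or $\neg p\wedge q$) should be read as shorthand for the usual density computation with the definable forcing relation, or as an argument carried out over a countable transitive model; this is standard and does not affect correctness. Your observation that $\Delta_1$ (rather than $\Sigma_1$ or $\Pi_1$ alone) is what gives equality of the two boolean values, the inequalities in the two directions coming from upward and downward absoluteness respectively, is exactly the right reading of the hypothesis.
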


We denote by $\SSP$ the class of stationary set preserving partial orders and
by $\SP$ the class of semiproper partial orders 
(see~\cite[Def. 6.1, Def. 6.4]{VIAAUDSTE13} for  a definition of semiproperness).
We recall that semiproper posets are stationary set preserving and that
$P$ is stationary set preserving if 
\[
P\Vdash S\text{ is stationary}
\]
for all $S$ in $V$ stationary subset of $\omega_1$.

Given $i:\BB\to\QQ$ we let $\QQ/i[\dot{G}_{\BB}]$ be a $\BB$-name for the quotient
(living in $V[G]$ with $G$ $V$-generic filter for $\BB$)
of the boolean algebra $\QQ$ by the ideal generated by the dual of $i[G]$
(which we shall denote by $\QQ/i[G]$).

The following type of complete embeddings will be of central interest for us:
\begin{definition}
Let $\Gamma$ be a definable class of partial orders.
A complete homomorphism $i:\BB\to\QQ$ is $\Gamma$-correct 
if $\QQ\in\Gamma$ and
\[
\llbracket \QQ/i[\dot{G}_{\BB}]\in\Gamma\rrbracket_{\BB}=1_{\BB}.
\]
\end{definition}

We shall repeatedly use the following facts:
\begin{fact}\label{fac:charSSP}
Assume $P\in\SSP$ and $\dot{Q}\in V^{\BBB(P)}$ is a $P$-name for a partial order.
Let $\BBB(P)=\BB$, $\BBB(P*\dot{Q})=\QQ$, and
$i:\BB\to\QQ$ be the associated 
canonical embedding between the respective boolean completions.
Then $P\Vdash\dot{Q}\in\SSP$ (or equivalently
$i:\BBB(P)\to\BBB(P*\dot{Q})$ is $\SSP$-correct)
iff for all
$\dot{S}\in V^{\BBB(P)}$
\[
i(\llbracket \dot{S}\text{ is a stationary subset of }\omega_1\rrbracket_{\BB})=
\llbracket \hat{i}(\dot{S})\text{ is a stationary subset of }\omega_1\rrbracket_{\QQ}.
\]

\end{fact}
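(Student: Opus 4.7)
The plan is to prove both directions by tracking boolean values of ``$\dot S$ is a stationary subset of $\omega_1$'' under the transition from $\BB$ to $\QQ$. The underlying fact that drives everything is that for any $\BB$-name $\dot S$, whenever $H$ is $V$-generic on $\QQ$ and $G=i^{-1}[H]$ is the induced $V$-generic on $\BB$, the evaluations coincide: $\hat i(\dot S)_H=\dot S_G$. This is an instance of the Proposition recalled just above, applied to the provably $\Delta_1$-relation of equality of sets, and it is equivalent to a direct induction on the rank of names. Note that both $\BB,\QQ\in\SSP$ (the second follows from $P\Vdash\dot Q\in\SSP$ by the standard two-step iteration lemma), so $\omega_1$ is preserved in all three models $V$, $V[G]$, $V[H]$, and ``$X$ is a stationary subset of $\omega_1$'' refers unambiguously to the same ordinal throughout.

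For the forward direction, assume $i$ is $\SSP$-correct, fix a $\BB$-name $\dot S$, and put $b:=\llbracket\dot S\text{ is a stationary subset of }\omega_1\rrbracket_\BB$. If $H$ is $\QQ$-generic with $i(b)\in H$, then $b\in G:=i^{-1}[H]$, so $\dot S_G$ is stationary in $V[G]$; since $V[H]$ is obtained from $V[G]$ by forcing with the quotient $\QQ/i[G]$, which $\BB$ forces to be $\SSP$, the set $\hat i(\dot S)_H=\dot S_G$ remains stationary in $V[H]$. Conversely, if $i(b)\notin H$, then $b\notin G$, so $\dot S_G$ admits a disjoint club $C\in V[G]\subseteq V[H]$, and $\hat i(\dot S)_H$ is nonstationary in $V[H]$. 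This gives $i(b)=\llbracket\hat i(\dot S)\text{ is stationary}\rrbracket_\QQ$, as required.

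For the backward direction, assume the boolean-value equation and suppose toward contradiction that $i$ is not $\SSP$-correct. Since $\QQ\in\SSP$ (it is the completion of a two-step iteration whose first factor $P$ is in $\SSP$ and whose second factor, if it fails to be $\SSP$, still leaves the completion well defined), the failure must come from the quotient, so there is $b\in\BB$ and a $\BB$-name $\dot S$ with $\llbracket\dot S\text{ is stat}\rrbracket_\BB\geq b$ such that $b$ forces the existence of a condition in $\QQ/i[\dot G_\BB]$ killing $\dot S$. Unpacking, we obtain a $\QQ$-condition $q\leq i(b)$ with $q\Vdash_\QQ\hat i(\dot S)\text{ is nonstationary in }\omega_1$, having identified the $\BB$-name $\dot S$ read inside the quotient with the $\QQ$-name $\hat i(\dot S)$ via the transport identity above. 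Consequently
\[
\llbracket\hat i(\dot S)\text{ is stat}\rrbracket_\QQ\leq \neg q\wedge i(b) < i(b)\leq i(\llbracket\dot S\text{ is stat}\rrbracket_\BB),
\]
contradicting the hypothesis. The main delicacy in the whole argument is precisely this last identification step: one must check that any witness to failure of $\SSP$ for the quotient $\QQ/i[G]$ read in $V[G]$ can be repackaged as a pair $(\dot S,q)$ consisting of a $\BB$-name and an honest element of $\QQ$, and vice versa. Once this bookkeeping is clarified, both implications reduce to routine manipulations of boolean values.
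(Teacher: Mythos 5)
Your argument is the standard one and is correct in substance: the paper states this as a Fact without proof, and your two-directional computation of boolean values -- using the transport identity $val_H(\hat i(\dot S))=val_{i^{-1}[H]}(\dot S)$, the quotient presentation $V[H]=V[G][H/i[G]]$, and the fullness fact identifying a $\BB$-name for a condition of $\QQ/i[\dot G_\BB]$ with an actual element $q\in\QQ\restriction i(b)$ -- is exactly what is intended. Two blemishes are worth fixing. First, the displayed chain in the backward direction is wrong at its first link: what you have established is $\llbracket\hat i(\dot S)\text{ is stat}\rrbracket_\QQ\wedge q=0_\QQ$, i.e. $\llbracket\hat i(\dot S)\text{ is stat}\rrbracket_\QQ\leq\neg q$, and there is no reason for this boolean value to lie below $i(b)$, since $\hat i(\dot S)$ may perfectly well be stationary below $i(\neg b)$ (e.g. if $\neg b$ forces $\dot S=\omega_1$). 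The contradiction is nevertheless immediate: if the two boolean values were equal then $0_\QQ<q\leq i(b)\leq i(\llbracket\dot S\text{ is stat}\rrbracket_\BB)=\llbracket\hat i(\dot S)\text{ is stat}\rrbracket_\QQ\leq\neg q$, which is absurd. Second, your parenthetical justification that $\QQ\in\SSP$ in the backward direction is not an argument. The clean route is either to apply the hypothesis to check-names $\check S$ for $S$ stationary in $V$ (using $\hat i(\check S)=\check S$ and $\llbracket\check S\text{ is stat}\rrbracket_\BB=1_\BB$, which holds because $P\in\SSP$), or simply to observe that $\QQ\in\SSP$ follows from $P\Vdash\dot Q\in\SSP$ by the two-step iteration lemma once the latter is proved -- so you may as well negate $P\Vdash\dot Q\in\SSP$ directly rather than negating $\SSP$-correctness and then having to localize the failure.
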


\begin{definition}
Let $H$ be $V$-generic for $\QQ$ and $G\in V[H]$ be $V$-generic for $\BB$.
We say that $G$ is $\SSP$-correct if\footnote{$\NS_{\omega_1}$ stands for the 
nonstationary ideal on $\omega_1$.} $\NS_{\omega_1}^{V[G]}=\NS_{\omega_1}^{V[H]}\cap V[G]$. 
\end{definition}

\begin{fact}
Let $\QQ\in\SSP$ and $i:\BB\to\QQ$ be a complete homomorphism.
Then $i$ is $\SSP$-correct iff for all $H$ $V$-generic filters for $\QQ$ we have that
$i^{-1}[H]$ is an $\SSP$-correct $V$-generic filter for $\BB$.
\end{fact}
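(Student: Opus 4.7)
The plan is to reduce both directions to the standard quotient forcing identification: whenever $H$ is a $V$-generic filter for $\QQ$, the set $G := i^{-1}[H]$ is a $V$-generic filter for $\BB$, and $V[H]=V[G][K]$ where $K:=H/i[G]$ is a $V[G]$-generic filter for the quotient boolean algebra $\QQ/i[G]$; conversely, any such pair $(G,K)$ determines a unique $H$. Under this correspondence, the $\SSP$-correctness of $G$ is a statement about the model pair $(V[G],V[H])$, while the $\SSP$-correctness of $i$ is a boolean-valued statement about $\QQ/i[\dot{G}_\BB]$ inside $V^\BB$; the equivalence to be proved is exactly the translation between these two formulations.

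For the forward direction, assume $i$ is $\SSP$-correct, fix $V$-generic $H$ for $\QQ$, and let $G=i^{-1}[H]$. By $\SSP$-correctness, $V[G]\models \QQ/i[G]\in\SSP$, and since $\QQ\in\SSP$, $\omega_1$ is preserved in both $V[G]$ and $V[H]$. The inclusion $\NS_{\omega_1}^{V[G]}\subseteq \NS_{\omega_1}^{V[H]}\cap V[G]$ is immediate, as any club witness in $V[G]$ remains a club in the larger model $V[H]$. For the reverse inclusion, given $S\in V[G]$ stationary in $V[G]$, the fact that $V[H]$ is obtained from $V[G]$ by forcing with $\QQ/i[G]\in\SSP^{V[G]}$ guarantees that $S$ remains stationary in $V[H]$. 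Hence $G$ is $\SSP$-correct.

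For the backward direction I argue by contrapositive. Suppose $i$ is not $\SSP$-correct. Since $\QQ\in\SSP$ is part of the hypothesis, the failure must be that some nonzero $b\in\BB$ forces $\QQ/i[\dot{G}_\BB]$ not to be $\SSP$; applying the maximal principle, fix $\BB$-names $\dot S$ for a stationary subset of $\omega_1$ and $\dot q$ for an element of $\QQ/i[\dot{G}_\BB]$ such that
\[
b\Vdash_\BB \dot q\Vdash_{\QQ/i[\dot{G}_\BB]} \hat{i}(\dot S)\text{ is a nonstationary subset of }\omega_1.
\]
Take $G$ $V$-generic for $\BB$ with $b\in G$, set $S:=\dot S_G$ and $q:=\dot q_G\in\QQ/i[G]$, and choose a $V[G]$-generic filter $K$ for $\QQ/i[G]$ with $q\in K$; let $H$ be the corresponding $V$-generic filter for $\QQ$, so that $i^{-1}[H]=G$ and $H/i[G]=K$. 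Then $S\in V[G]$ is stationary in $V[G]$ but nonstationary in $V[H]=V[G][K]$, witnessing $\NS_{\omega_1}^{V[H]}\cap V[G]\neq \NS_{\omega_1}^{V[G]}$, so $i^{-1}[H]$ is not $\SSP$-correct.

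The main obstacle is essentially bookkeeping: ensuring via the quotient forcing correspondence, together with the $\hat{i}$-translation of boolean values given by the proposition preceding this fact, that a boolean-valued failure of $\SSP$-ness of the quotient can always be realised concretely as a pair $(G,H)$ of generics exhibiting the failure of $\SSP$-correctness on the generic side. No deeper ingredient is needed beyond this correspondence and the standard fact that $V[H]$ factors as $V[G][H/i[G]]$.
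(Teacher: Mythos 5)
Your argument is correct: the paper states this fact without proof, and your two-step quotient-forcing translation (together with the contrapositive argument using a name $\dot S$ for a stationary set killed by some condition of the quotient) is exactly the standard argument the author is implicitly relying on. The only cosmetic point is that in the displayed forcing statement $\hat{i}(\dot S)$ is a $\QQ$-name rather than a name for the quotient forcing; what you mean is that $\dot q$ forces over $V[\dot G_{\BB}]$ that the set $val_{\dot G_{\BB}}(\dot S)$ is nonstationary, which is the same set since $val_H(\hat{i}(\dot S))=val_{i^{-1}[H]}(\dot S)$.
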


We feel free (except in some specific cases arising in the next section)
to write $i:\BB\to\QQ$ is correct and $G\in V[H]$ is correct 
to abbreviate $i$, $G$ are $\SSP$-correct.

We shall also need the following:
\begin{proposition}\label{lem:firstfctlem-1}
	Let $\Gamma$ be either the class $\SP$ or the class $\SSP$.
	Let $\QQ$, $\QQ_0$, $\QQ_1$ be complete boolean algebras, and let $G$ be a $V$-generic 
	filter for $\QQ$. Let $i_0$, $i_1$, $j$ form a commutative diagram of complete homomorphisms 
	as in the following picture:
	\[
	\begin{tikzpicture}[xscale=1.5,yscale=-1.2]
		\node (B) at (0, 0) {$\QQ$};
		\node (Q0) at (1, 0) {$\QQ_0$};
		\node (Q1) at (1, 1) {$\QQ_1$};
		\path (B) edge [->]node [auto] {$\scriptstyle{i_0}$} (Q0);
		\path (B) edge [->]node [auto,swap] {$\scriptstyle{i_1}$} (Q1);
		\path (Q0) edge [->]node [auto] {$\scriptstyle{j}$} (Q1);
	\end{tikzpicture}
	\]
	Then $j, i_0,i_1$ are $\Gamma$-correct homomorphisms in $V$
	iff in $V[G]$ $\QQ_0/i_0[G],\QQ_1/i_1[G]$ are both in $\Gamma$
	and $j/_G:\QQ_0/i_0[G]\to\QQ_1/i_1[G]$ is 
	a $\Gamma$-correct homomorphism.
	\end{proposition}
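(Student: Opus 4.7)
The plan is to prove both implications by tracking $V$-generic filters across the commutative diagram: every $V$-generic filter $H$ for $\QQ_0$ is determined by, and determines, a pair $(G,K)$ where $G=i_0^{-1}[H]$ is $V$-generic for $\QQ$ and $K$ is the projection of $H$ to $\QQ_0/i_0[G]$, a $V[G]$-generic filter, with $V[H]=V[G][K]$. Throughout I take as a standing assumption that $\QQ\in\SSP$, which is what makes the whole discussion of $\SSP$-correctness sensible in $V[G]$. The key technical lemma that drives both directions is the natural identification
\[
(\QQ_1/i_1[G])\,\big/\,j/_G[K]\;\cong\;\QQ_1/j[H]
\]
in $V[H]$. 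This follows from $j\circ i_0=i_1$ and $i_0[G]\subseteq H$, which together give $i_1[G]=j[i_0[G]]\subseteq j[H]$; collapsing $\QQ_1$ first by the filter $i_1[G]$ and then by $j/_G[K]$ is thus the same as collapsing $\QQ_1$ directly by the filter $j[H]$.

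For the forward direction, assume $i_0$, $i_1$, $j$ are all $\SSP$-correct in $V$. Then $\QQ_0/i_0[G],\QQ_1/i_1[G]\in\SSP$ in $V[G]$ by the $\SSP$-correctness of $i_0$ and $i_1$. To show $j/_G$ is well-defined, observe that if $[q]_{i_0[G]}=[q']_{i_0[G]}$ there is $b\in G$ with $i_0(b)\wedge q=i_0(b)\wedge q'$; applying $j$ and using $j\circ i_0=i_1$ produces $i_1(b)\wedge j(q)=i_1(b)\wedge j(q')$, so $[j(q)]_{i_1[G]}=[j(q')]_{i_1[G]}$. That $j/_G$ is a complete homomorphism is then a routine check. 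For its $\SSP$-correctness in $V[G]$, let $K$ be any $V[G]$-generic filter on $\QQ_0/i_0[G]$ and let $H$ be the corresponding $V$-generic filter on $\QQ_0$; by the bridge isomorphism, $(\QQ_1/i_1[G])/j/_G[K]\cong\QQ_1/j[H]$ in $V[H]=V[G][K]$, and the latter is stationary set preserving by the $\SSP$-correctness of $j$.

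For the converse, assume the $V[G]$-side holds for every $V$-generic $G$. The resulting uniform statements translate in $V$ to $\Vdash_\QQ\QQ_0/i_0[\dot{G}]\in\SSP$ and $\Vdash_\QQ\QQ_1/i_1[\dot{G}]\in\SSP$; two-step iteration combined with $\QQ\in\SSP$ then gives $\QQ_0,\QQ_1\in\SSP$, whence $i_0$ and $i_1$ are $\SSP$-correct. To verify that $j$ is $\SSP$-correct, pick any $V$-generic $H$ for $\QQ_0$, set $G=i_0^{-1}[H]$ and let $K$ be the projection of $H$ to $\QQ_0/i_0[G]$; by hypothesis, $(\QQ_1/i_1[G])/j/_G[K]\in\SSP$ in $V[G][K]=V[H]$, and by the bridge isomorphism this algebra is $\QQ_1/j[H]$, so $j$ is $\SSP$-correct in $V$. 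The delicate point in the whole argument, and the one that warrants the most care, is the verification of the bridge isomorphism: a routine but notationally heavy unfolding of what it means for two elements of $\QQ_1$ to be equivalent modulo $i_1[G]$ and then further equivalent modulo $j/_G[K]$ in the quotient. Once that identification is in place the proof is pure bookkeeping.
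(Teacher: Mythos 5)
Your proof is correct and follows essentially the route the paper intends (the paper leaves this proof to the reader, citing the analogous argument for iteration systems): the entire content is the identification $(\QQ_1/i_1[G])\big/\bigl(j/_G[K]\bigr)\cong\QQ_1/j[H]$ together with the two-step-iteration facts $V[H]=V[G][K]$ and $\QQ_0\cong\QQ*\bigl(\QQ_0/i_0[\dot{G}]\bigr)$, and you verify exactly these. Your observation that the right-to-left direction needs the implicit standing hypothesis $\QQ\in\SSP$ is accurate and consistent with how the proposition is invoked in Proposition~\ref{lem:firstfctlem-2}.
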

	For the case
	$\Gamma=\SP$ see	the proof of~\cite[ Proposition 7.4]{VIAAUDSTE13}.
	The case $\Gamma=\SSP$ can be proved along the same lines.
	
	We shall repeatedly apply the above Proposition in the following context:
	
	\begin{proposition}\label{lem:firstfctlem-2}
	Assume $G$ is $V$-generic for $\BB\in\SSP$.
	\begin{enumerate}
	\item\label{lem:firstfctlem-2-1}
	Assume 
	$k_j:\BB\to\QQ_j$ and
	$l_j:\QQ_j\to \RR$ are correct homomorphism in $V$ such that
	$l_0\circ k_0=l_1\circ k_1=l$.
	Then in $V[G]$
	both $l_j/_G:\QQ_j/_{k_j[G]}\to \RR/_{l[G]}$ are correct homomorphisms.
	\item\label{lem:firstfctlem-2-2}
	Assume $k_j:\BB\to\QQ_j$ are correct homomorphisms in $V$,
	$i_j:\QQ_j/_{k_j[G]}\to \QQ$ are correct homomorphisms in $V[G]$
	for $j=0,1$.
	Then there are in $V$:
	\begin{itemize} 
	\item
	$\RR\in \SSP$, 
	\item
	a correct homomorphism 
	$l:\BB\to\RR$,
	\item
	correct homomorphisms
	$l_j:\QQ_j\to\RR$ 
	\end{itemize}
	such that:
	\begin{itemize}
	\item
	$\QQ$ is isomorphic to $\RR/_{l[G]}$ in $V[G]$.
	\item
	$l_j/_G=i_j$ for $j=0,1$ (modulo the isomorphism identifying $\RR/_{l[G]}$ and $\QQ$),
	\item
	$l_j\circ k_j=l$ for $j=0,1$,
	\item
	$0_{\RR}\not\in l[G]$.
	\end{itemize}
	\end{enumerate}
	\end{proposition}
\begin{proof}
We sketch a proof of the second item.
Let $\dot{\QQ},\dot{i}_j\in V^{\BB}$ be such that for both $j$
$\val_G(\dot{i}_j)=i_j$ and
\[
b_j=\llbracket i_j:\QQ_j/_{k_j[\dot{G}_{\BB}]}\to \dot{\QQ} 
\text{ is a correct homomorphism}\rrbracket_{\BB}\in G.
\]	
Let $b=b_0\wedge b_1$ and define $l_j:\QQ_j\to	\RR=\BB\restriction b*\dot{\QQ}$
by 
\[
l_j(q)=\langle b\wedge\bigwedge_{\BB}\{a:k_j(a)\geq q\},\dot{i}_j([q]_{k_j[\dot{G}_{\BB}]})\rangle
\] 
and
$l:\BB\to\BB\restriction b*\dot{\QQ}$ by $l(r)=\langle b\wedge r,1_{\dot{\QQ}}\rangle$.
We leave to the reader to check that these definitions work.
\end{proof}

\begin{definition}
Assume $\lambda,\nu$ are regular cardinals.

$\BB$ is $<\lambda$-CC if all antichains in $\BB$ have size less than $\lambda$.

$\BB$ is $(<\nu,<\lambda)$-presaturated if for all $\gamma<\lambda$, all families
$\{A_\alpha:\alpha<\gamma\}$ of maximal antichains of $\BB$, and all 
$b\in\BB^+=\BB\setminus\{0_{\BB}\}$, there 
is $q\leq b$ in $\BB^+$ such that
\[
|\{a\in A_\alpha: a\wedge q>0_{\QQ}\}|<\nu
\] 
for all $\alpha<\gamma$.
\end{definition}

\begin{fact}
For all regular cardinals $\nu\geq\lambda$,
$\BB$ is $(<\nu,<\lambda)$-presaturated iff for all $\gamma<\lambda$ and all
$\dot{f}:\gamma\to\nu$ in $V^{\BB}$, $1_{\BB}$ forces that $\rng(\dot{f})$ is bounded below
$\nu$.
\end{fact}

\begin{definition}
Let $A=\{\BB_j:j\in J\}$ be a family of complete boolean algebras
and $i_j:\BB\to\BB_j$ be a complete homomorphism for all $j\in J$.
We let:
\begin{itemize}
\item $\BB_A=\bigvee_{j\in J}\BB_j$ (the lottery sum of $A$) be defined as the boolean algebra
given by the set of functions 
$f:\gamma\to\bigcup\{\BB_i:i<\gamma\}$ such that $f(i)\in\BB_i$ for all $i<\gamma$
with boolean operations given componentwise
by 
\begin{itemize}
\item
$f\wedge_{\BB_A} g=(f(i)\wedge_{\BB_i}g(i):i<\gamma)$,
\item
$\bigvee_{\BB_A} \{f_\nu:\nu<\theta\}=(\bigvee_{\BB_i} \{f_\nu(i):\nu<\theta\}:i<\gamma)$,
\item
$\neg_{\BB_A} f=(\neg_{\BB_i} f(i):i<\gamma)$.
\end{itemize}
\item
$i_A:\BB\to \BB_A$ be defined by 
$i_A(b)=\langle i_j(b):j\in J\rangle$.
\end{itemize}
\end{definition}

We leave to the reader to check the following propositions:
\begin{proposition}\label{prp:lotterysum}
Let $A=\{\BB_j:j\in J\}$ be a family of complete boolean algebras
and $i_j:\BB\to\BB_j$ be an $\SSP$ (or $\SP$) correct homomorphism for all $j\in J$.
Then $i_A$ is also an $\SSP$ ($\SP$) correct homomorpshism.
\end{proposition}

\begin{proposition}
Assume $A$ is a maximal antichain of $\BB$.
Then 
$\BB$ is isomorphic to $\bigvee_{a\in A}\BB\restriction a$.
\end{proposition}

\subsection{$\MM^{++}$}\label{subsec:forcax}
\begin{definition}
Let $\BB\in V$ be  a complete boolean algebra and $M\prec H_{|\BB|^+}$.

$H\subset \BB\cap M$ is $M$-generic for $\BB$ if $H$ is a filter on the boolean algebra
$\BB\cap M$
and $H\cap D\neq\emptyset$
for all $D\in M$ predense subsets of $\BB$. 

Given an $M\prec H_{|\BB|^+}$ such that $\omega_1\subset M$,
let $\pi_M:M\to N$ be the transitive collapse map.

$H\subset \BB$ is a \emph{correct} $M$-generic filter for $\BB$ 
if it is $M$-generic for $\BB$ and 
letting $G=\pi_M[H]$ we have that $N[G]$ is stationarily correct i.e.:
\[
\NS_{\omega_1}^{N[G]}=\NS_{\omega_1}^V\cap N[G],
\]
where $N[G]=\{\val_G(\tau):\tau\in N^{\pi_M(\BB)}\}$.
\end{definition}

\[
T_{\BB}=\{M\prec H_{|\BB|^+}: M\in P_{\omega_2} \text{ and there is a
correct $M$-generic filter for $\BB$}\}.
\]

We formulate $\MM^{++}$ as the assertion 
that $T_{\BBB(P)}$ is stationary for all 
$P\in\SSP$.
For an equivalence of this formulation with other 
more common formulations see~\cite[Lemma 3]{COX12}.

We also say that $\MM^{++}$ up to $\alpha$ holds if 
$T_{\BBB(P)}$ is stationary for all $P\in\SSP$ with a dense 
subset of size less than $\alpha$.

We shall need the following local version of the celebrated proof of the consistency of 
$\MM$ by Foreman Magidor and Shelah:
\begin{theorem}\label{thm:FMSvar}
Assume $\delta$ is $\delta+\omega+1$-supercompact and $\BB\in\SSP\cap V_\delta$.
Then there is $\QQ$ of size $\delta$ and
$i:\BB\to \QQ$ such that
\begin{itemize}
\item
$\llbracket \QQ/_{k[\dot{G}_{\BB}]}\text{ is semiproper}\rrbracket_{\BB}=1_{\BB}$,

\item
$\QQ$ forces $\MM^{++}$ up to $\beth(\omega)$ while collapsing $\delta$ to become
$\omega_2$.
\end{itemize}
\end{theorem}
\begin{proof}
Sketch:
adapt the proof of the original result as presented in~\cite[Theorem 37.9]{JEC03} or
in~\cite[Theorem 8.5]{VIAAUDSTE13} to the different assumptions we are making on $\delta$.
\end{proof}

\subsection{Iterated forcing}\label{subsec:itfor}
We feel free to view iterations following the 
approach presented in~\cite{VIAAUDSTE13} which expands on the work of Donder and Fuchs on 
revised countable support iterations~\cite{DONFUC92}. 
We refer the reader to~\cite[Section 3]{VIAAUDSTE13} for the relevant definitions
and to sections 6 and 7 of the same paper for the relevant proofs.
Here we recall the minimal amount of information we need to make sense of our use of 
these results in this paper.

\begin{definition} \label{def:retr}
	Let $i: \BB \to \QQ$ be a regular embedding, the \emph{retraction} associated to $i$ is the map
	\[
	\begin{array}{llll}
		\pi_i :& \QQ &\to& \BB \\
		&c &\mapsto& \bigwedge \{b \in \BB: ~ i(b) \geq c\}
	\end{array}
	\]
\end{definition}

\begin{proposition}\cite[Proposition 2.6]{VIAAUDSTE13} \label{eRetrProp}
	Let $i:\BB\to\QQ$ be a regular embedding, $b \in \BB$, $c,d \in \QQ$ be arbitrary. Then,
	\begin{enumerate}
		\item $\pi_i\circ i(b)= b$ hence $\pi_i$ is surjective;
		\item \label{eRPComp} $i\circ\pi_i(c)\geq c$ hence $\pi_i$ maps 
		$\QQ^+=\QQ\setminus\{0_{\QQ}\}$ to $\BB^+=\BB\setminus\{0_{\BB}\}$;
		\item \label{eRPJoins} $\pi_i$ preserves joins, i.e. 
		$\pi_i (\bigvee X)= \bigvee \pi_i[X]$ for all $X \subseteq \QQ$;
		\item $i(b)= \bigvee \{e : \pi_i (e) \leq b \}$.
		\item \label{eRPHomo} 
		$\pi_i (c \wedge i(b)) = \pi_i(c)\wedge b = \bigvee \{ \pi_i(e): e \leq c, \pi_i(e) \leq b\}$;
		\item \label{eRPMeets} $\pi_i$ does not preserve neither meets 
		nor complements whenever $i$ is not surjective, 
		but $\pi_i(d \wedge c) \leq \pi_i(d) \wedge \pi_i(c)$ and $\pi_i(\neg c) \geq \neg \pi_i(c)$.
	\end{enumerate}
\end{proposition}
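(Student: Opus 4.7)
The plan is to derive all six items from the fact that $\pi_i$ is the left adjoint of $i$, i.e.\ the Galois connection
\[
\pi_i(c)\leq b \iff c\leq i(b)
\]
for all $b\in\BB$ and $c\in\QQ$. This equivalence is immediate from the definition of $\pi_i$ as an infimum together with the fact that $i$, being a complete homomorphism, preserves arbitrary meets: if $c\leq i(b)$ then $b$ appears in the set $\{b'\in\BB:i(b')\geq c\}$, so $\pi_i(c)\leq b$; conversely if $\pi_i(c)\leq b$ then $i(\pi_i(c))\leq i(b)$, and we check below that $i(\pi_i(c))\geq c$.

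With this adjunction in hand, the items become routine. First I would prove (2), $i\circ\pi_i(c)\geq c$, directly from the definition and the fact that $i$ commutes with the meet defining $\pi_i(c)$; this simultaneously gives that $\pi_i(c)\in\BB^+$ whenever $c\in\QQ^+$, since $\pi_i(c)=0_{\BB}$ would force $c\leq i(0_\BB)=0_\QQ$. For (1), I note that $\pi_i\circ i(b)\leq b$ by taking $c=i(b)$ in the adjunction and applying it with $b'=b$, while $b\leq \pi_i\circ i(b)$ holds because any $b'$ with $i(b')\geq i(b)$ satisfies $b'\geq b$ by injectivity of the regular embedding $i$; hence the meet equals $b$, and surjectivity of $\pi_i$ follows. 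Item (3), preservation of joins, is the standard fact that a left adjoint preserves all colimits: $\pi_i(\bigvee X)\leq b$ iff $\bigvee X\leq i(b)$ iff every $x\in X$ satisfies $\pi_i(x)\leq b$ iff $\bigvee\pi_i[X]\leq b$, so the two elements are equal by antisymmetry.

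Items (4) and (5) are again pure adjunction manipulations. For (4), $i(b)$ is the largest $c$ with $\pi_i(c)\leq b$ by the Galois connection, hence coincides with $\bigvee\{e:\pi_i(e)\leq b\}$. For (5), the inequality $\pi_i(c\wedge i(b))\leq \pi_i(c)\wedge b$ follows from monotonicity (both factors dominate $\pi_i(c\wedge i(b))$, the second via $\pi_i(c\wedge i(b))\leq \pi_i\circ i(b)=b$); for the reverse, any $e$ of the form indicated in the stated identity satisfies $e\leq c\wedge i(b)$ after pushing through $i$, and taking joins over all such $e$ and applying (3) yields the equality. The alternative expression as a join is then immediate.

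Finally, (6) is the assertion that $\pi_i$ is a meet- and complement-preserving lax morphism but need not be strict unless $i$ is onto. The inequality $\pi_i(d\wedge c)\leq \pi_i(d)\wedge \pi_i(c)$ is monotonicity, while $\pi_i(\neg c)\geq \neg\pi_i(c)$ follows from $\pi_i(c)\vee\pi_i(\neg c)=\pi_i(c\vee\neg c)=\pi_i(1_\QQ)=1_\BB$ by (3). To see strict inequality can occur when $i$ is not surjective, I would exhibit a $c\in\QQ\setminus i[\BB]$ with $c\wedge \neg i(\pi_i(c))>0_\QQ$ and compute. I do not foresee a genuine obstacle here: the only subtlety is keeping track of where injectivity of $i$ (needed exclusively for (1)) is used versus where mere completeness of $i$ suffices (everywhere else).
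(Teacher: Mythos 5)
The paper gives no proof of this proposition (it is quoted verbatim from the cited reference), so I can only judge your argument on its own merits. Your Galois-connection framework $\pi_i(c)\leq b \Leftrightarrow c\leq i(b)$ is the right engine: items (1)--(4) and the two inequalities in (6) follow correctly from it as you describe, and you correctly isolate injectivity of $i$ as the ingredient needed for (1) (and, implicitly, for $\pi_i(1_\QQ)=1_\BB$ in your proof of the complementation inequality in (6)).

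The gap is in (5), which is precisely the identity the paper later relies on (e.g.\ $\pi(q\wedge_{\BB}r)=q\wedge_{\BB}\pi(r)$ in the fact about linear rigid layerings). Your ``reverse direction'' argument shows that $\{e: e\leq c,\ \pi_i(e)\leq b\}=\{e: e\leq c\wedge i(b)\}$, whence by (3) the first and third displayed expressions agree; but it never establishes $\pi_i(c)\wedge b\leq \pi_i(c\wedge i(b))$, and that missing inequality is exactly equivalent to the second-equals-third identity you dismiss as ``immediate''. This is the one point in the proposition where pure adjunction formalities do not suffice and one must use that $i$ is a Boolean homomorphism. A correct derivation: for every $a\in\BB$,
\[
\pi_i(c\wedge i(b))\leq a \;\Leftrightarrow\; c\wedge i(b)\leq i(a)\;\Leftrightarrow\; c\leq i(a)\vee\neg i(b)=i(a\vee\neg b)\;\Leftrightarrow\;\pi_i(c)\leq a\vee\neg b\;\Leftrightarrow\;\pi_i(c)\wedge b\leq a,
\]
and instantiating $a=\pi_i(c\wedge i(b))$ yields the missing inequality. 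Separately, in (6) the claim that $\pi_i$ \emph{fails} to preserve meets and complements whenever $i$ is not surjective is left as a promissory note; it does go through (if $c\notin i[\BB]$ and $\pi_i(c)\wedge\pi_i(\neg c)=0_{\BB}$, then $\pi_i(\neg c)=\neg\pi_i(c)$, and applying $i$ together with $i\circ\pi_i\geq \mathrm{id}$ forces $i(\pi_i(c))=c$, contradicting $c\notin i[\BB]$), but the computation should be recorded rather than asserted to be unproblematic.
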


For a definition of semiproperness and of semiproper embedding 
see~\cite[Def 6.1, Def. 6.4]{VIAAUDSTE13}.
\begin{definition}
\[
\mathcal{F}=\{i_{\alpha,\beta}:\BB_\alpha\to \BB_\beta:\alpha\leq \beta<\gamma\}
\] 
is an iteration system 
if each $\BB_\alpha$ is a complete boolean algebra and 
each $i_{\alpha,\beta}$ is a regular embedding such that:
\begin{itemize} 
\item
whenever
 $\alpha\leq \beta\leq \gamma$, $i_{\beta,\gamma}\circ i_{\alpha,\beta}=i_{\alpha,\gamma}$,
 \item
 $i_{\alpha,\alpha}$ is the identity mapping for all $\alpha<\gamma$.
\end{itemize}

\begin{itemize}
\item
The full limit $T(\FFF)$ of $\FFF$ is given by threads $f:\gamma\to V$ such that
$\pi_{\alpha,\eta}\circ f(\eta)=f(\alpha)$ for all $\alpha\leq\eta<\gamma$, where
$\pi_{\alpha,\eta}$ is the retraction associated to $i_{\alpha,\eta}$.
For $f,g\in T(\FFF)$, 
$f\leq_{T(\FFF)} g$ iff for all $\alpha<\gamma$, $f(\alpha)\leq_{\BB_\alpha}g(\alpha)$.
\item 
The direct limit $C(\FFF)$ of an iteration system 
\[
\mathcal{F}=\{i_{\alpha,\beta}:\BB_\alpha\to \BB_\beta:\alpha\leq \beta<\gamma\}
\]  
is the partial order 
whose elements are the eventually constant threads
$f\in T(\FFF)$, i.e. threads $f$ such that
for some $\alpha<\gamma$ and all $\beta>\alpha$,
$i_{\alpha\beta}\circ f(\alpha)=f(\beta)$. 
For $f\in C(\FFF)$ the least such $\alpha$ 
is called the support of $f$.
\item
The revised countable support (RCS) limit is
	\[
	\RCS(\FFF) = \{ f \in T(\FFF) : ~ 
	f \in C(\FFF) \vee \exists \alpha ~ f(\alpha) \Vdash_{\BB_\alpha} 
	\cf(\check{\lambda}) = \check{\omega} \}
	\]
\end{itemize}
\[
\mathcal{F}=\{i_{\alpha,\beta}:\BB_\alpha\to \BB_\beta:\alpha\leq \beta<\gamma\}
\] 
is a semiproper iteration system if for all $\alpha\leq\beta<\gamma$:
\begin{itemize}
\item
$\llbracket \BB_\beta/i_{\alpha\beta}[\dot{G}_\alpha]\text{ is semiproper}\rrbracket_{\BB_\alpha}=
1_{\BB_\alpha}$,

\item
$\BB_{\alpha+1}$ forces that $|\BB_\alpha|=\aleph_1$,
\item
$\BB_\alpha$ is the boolean completion of $\RCS(\FFF\restriction\alpha)$ if $\alpha$ is limit.
\end{itemize}
\end{definition}

We shall need the following two results of Shelah:
\begin{enumerate}
\item\label{thmx:she1}
(Shelah~\cite[Theorem 7.11]{VIAAUDSTE13}).
Let
\[
\mathcal{F}=\{i_{\alpha,\beta}:\BB_\alpha\to \BB_\beta:\alpha\leq \beta<\gamma\}
\]  
be a semiproper iteration system.
Then its RCS limit is a semiproper partial order.
\item \label{thmx:she2}
(Shelah~\cite[Theorem 39.10]{JEC03})
Assume $\alpha$ is a limit ordinal and $T_{\BB}$ is stationary for all $\BB\in\SSP\cap V_\alpha$.
Then any $\BB\in \SSP\cap V_\alpha$ is semiproper.
\end{enumerate}

\section{Category forcings}\label{sec:univpartord}

Assume $\Gamma$ is a class of partial orders and
$\Theta$ is a family of complete homomorphisms between the 
boolean completions of elements of $\Gamma$
closed under composition and which contains all identity maps.

We let $(\Gamma,\Theta)$ denote the category whose objects
are the complete boolean algebras in $\Gamma$ 
and whose arrows are given by complete homomorphisms
$i:\BB\to\QQ$ in $\Theta$.
We shall call embeddings in $\Theta$, $\Theta$-correct embeddings.
Notice that these categories immediately give rise to natural class partial orders
associated with them, 
partial orders whose elements are the complete boolean algebras in $\Gamma$
and whose order relation is given by the arrows in $\Theta$.
We denote these class partial orders by $\UU^{\Gamma,\Theta}$.
Depending on the choice of $\Gamma$ and $\Theta$ these partial orders can be trivial,
for example:
\begin{remark}
Assume $\Gamma$ is the class of all complete boolean algebras and 
$\Theta$ is the class of all complete embeddings, then any
two conditions in $\UU^{\Gamma,\Theta}$ are compatible, i.e.
$\UU^{\Gamma,\Theta}$ is forcing equivalent to the trivial partial order.
This is the case since for any pair of partial orders $P,Q$ and $X$ of size larger than
$2^{|P|+|Q|}$ there are 
regular embeddings of $\BBB(P)$ and $\BBB(Q)$ into the boolean completion of
$Coll(\omega,X)$. These embeddings witness the compatibility of $\BBB(P)$ with $\BBB(Q)$.
\end{remark}

Since we want to allow ourselves more freedom in the handling of our class forcings
$\UU^{\Gamma,\Theta}$ 
we allow elements of the category $\Gamma$ to be arbitrary partial orders\footnote{
Specifically our main aim is to show that for certain categories 
$(\Gamma,\Theta)$
$\Gamma\cap V_\delta\in \UU^{\Gamma,\Theta}$. In general
$(\Gamma\cap V_\delta,\leq_\Theta\cap V_\delta)$ is a non-separative partial order.} 
in $\Gamma$ and we 
identify the arrows in $\Theta$ between the objects 
$P$ and $Q$ in $\Gamma$ to be the $\Theta$-correct homomorphisms
between the boolean completions of $P$ and $Q$.
In this paper we shall actually focus on the class forcing $\UU^{\SSP}$ given by the class
of stationary set preserving partial orders and
the family of all $\SSP$-correct
homomorphisms between their boolean completions.
The main reason being that it is just for this category that we can predicate all the 
 properties of category forcings in which we are interested.
 We shall just write that $i$ is a correct embedding whenever 
$i$ is an $\SSP$-correct embedding.

\subsection{Basic properties of $\UU^{\SSP}$}

We start to outline some basic properties of this category forcing.

\subsubsection*{Incompatibility in $\UU^{\SSP}$}

First of all we show that $\UU^{\SSP}$ is non-trivial:
\begin{remark}
 $\UU^{\SSP}$ seen as a class partial order is not a trivial partial order.
 For example observe that if $P$ is Namba forcing on $\aleph_2$ and $Q$ is
 $Coll(\omega_1,\omega_2)$, then $\BBB(P),\BBB(Q)$ are incompatible conditions in 
 $\UU^{\SSP}$:
 If $\RR\leq_{\SSP}\BBB(P),\BBB(Q)$, we would have that if $H$ is $V$-generic for 
 $\RR$, $\omega_1^{V[H]}=\omega_1$ (since $\RR\in\SSP$)
 and there are $G,K\in V[H]$ $V$-generic filters for 
 $P$ and $Q$ respectively (since $\RR\leq_{\SSP}\BBB(P),\BBB(Q)$).
 $G$ allows to define in $V[H]$ a sequence cofinal in $\omega_2^V$ of type $\omega$ while $K$
allows to define in $V[H]$ a sequence cofinal in $\omega_2^V$ of type $(\omega_1)^V$.
These two facts entail that $V[H]$ models that $\cof(\omega_1^V)=\omega$ 
contradicting the assumption that $\omega_1^{V[H]}=\omega_1$.
\end{remark}

\subsubsection*{Suprema in $\UU^{\SSP}$}

The lottery sum defines a natural $\bigvee$ operation of suprema on subsets of $\UU^{\SSP}$.

\begin{proposition}
Let $A=\{\BB_i:i<\gamma\}$ be a family of stationary set preserving complete boolean algebras.
Then $\BB_A$ is the exact upper bound of $A$:
\end{proposition}
\begin{proof}
Left to the reader.
\end{proof}

\subsubsection*{Why this ordering on $\SSP$ partial orders?}

Given a pair $(\Gamma,\Theta)$ as above,
we can define two natural order relations $\leq_\Theta$ and $\leq^*_\Theta$ on $\Gamma$.
The first one is given by 
complete homomorphisms $i:\BB\to\QQ$ in $\Theta$
(which is the one we described before)
and the other given by \emph{regular} (i.e. complete and injective homomorphisms) 
embeddings $i:\BB\to\QQ$ in $\Theta$. Both notion of orders 
are interesting and as set theorists we are used to focus on this second stricter notion of order since it 
is the one suitable to develop a theory of iterated forcing. However in the present paper we 
shall focus mostly on complete (but possibly non-injective) homomorphisms
because this notion of ordering will grant us that whenever $\BB$ is put into a $V$-generic filter
for $\UU^{\SSP}\cap V_\delta=\UU_\delta$, 
then this $V$-generic filter for $\UU_\delta$ 
will also add a $V$-generic filter for $\BB$. 
If we decided to order the family $\SSP\cap V_\delta$ using 
\emph{regular} embeddings we would get that a generic filter for this other category forcing
defined 
according to this stricter notion of order will just give a directed 
system of $\SSP$-partial orders with regular embeddings between them, without
actually giving $V$-generic filters for the partial orders in this directed system.
On the other hand if we use the iteration theorems for the class of semiproper 
forcings we actually get the following:

\begin{proposition}
Let 
\[
\mathcal{F}=\{i_{\alpha,\beta}:\BB_\alpha\to\BB_\beta: \alpha\leq\beta<\gamma\}
\]
be an iteration system such that $i_{\alpha,\beta}$ is $\SP$-correct for all $\alpha\leq\beta<\gamma$
with $\BB_0\in\SSP$.
Then $\RCS(\mathcal{F})\in\SSP$ as well.
\end{proposition}
This gives quite easily that the class forcing $(\SSP,\leq^*_{\SP})$ is closed under 
set sized descending sequences.
This observation is useful to prove nice weak distributivity properties of the class forcing 
$(\SSP,\leq_{\SP})$.
On the other hand there is a key combinatorial feature of the class forcing $(\SSP,\leq_{\SSP})$
(freezeability, see Def. \ref{def:freecat}) 
which we are not able to predicate for the class forcing $(\SSP,\leq_{\SP})$, unless we assume 
large cardinals. 
In the presence of supercompact cardinals
the equality $\SP=\SSP$ can be forced by a semiproper forcing.
In particular we aim to use large cardinals to expand the above equality to the extent to be able to
identify the class forcings $(\SSP,\leq_{\SSP})$ and $(\SSP,\leq_{\SP})$ on a dense subset
$\TR$  (the class of totally rigid elements of $\SSP$ which force $\MM^{++}$, 
see Definition~\ref{def:totrig}). In this way
on $(\TR,\leq_{\SSP})=(\TR,\leq_{\SP})$ we can have at the same time 
the nice closure properties of $\leq^*_{\SP}$
with the nice combinatorial features of $\leq_{\SSP}$.

\subsubsection*{Rank initial segments of $\UU^{\SSP}$ are stationary set preserving posets}
The first main result of this section is the following theorem:

\begin{theorem}\label{thm:univ1}
Assume $\delta$ is an inaccessible limit of $<\delta$-supercompact cardinals.
Then: 
\begin{enumerate}
\item\label{thm:univ1-1}
$\UU_\delta=\UU^{\SSP}\cap V_\delta\in\SSP$ is totally rigid 
and collapses $\delta$ to become $\aleph_2$.
\item\label{thm:univ1-2}
If  $\delta$ is supercompact $\UU_\delta$ forces $\MM^{++}$.
\end{enumerate}
\end{theorem}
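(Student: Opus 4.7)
The plan is to reduce the analysis of $\UU_\delta$ to iterations involving a dense class of well-behaved conditions --- namely the totally rigid $\SSP$ forcings of size less than $\delta$. First I would observe that $\UU_\delta$ is a set partial order of rank $\delta$ since $\delta$ is inaccessible, and that the lottery sum/absorption facts together with the $<\delta$-supercompactness of unboundedly many cardinals below $\delta$ give that the totally rigid elements of $\UU^{\SSP,\SSP}\cap V_\delta$ are dense in $\UU_\delta$ (this will be the content of the earlier portion of the section; I assume it available here). Thus it suffices to verify both assertions below an arbitrary totally rigid $\QQ\in\UU_\delta$.

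The key structural lemma is the following absorption claim: for every totally rigid $\QQ\in\UU_\delta$, the partial order $\UU_\delta\!\downarrow\!\QQ=\{\RR\in\UU_\delta:\RR\leq\QQ\}$ is forcing equivalent, via a natural map, to the two-step iteration $\QQ\ast\dot{\UU}^{V[\dot G_\QQ]}_\delta$, where $\dot{\UU}^{V[\dot G_\QQ]}_\delta$ names the category forcing associated in $V[\dot G_\QQ]$ to the $\SSP$ complete boolean algebras in $V_\delta^{V[\dot G_\QQ]}$ which absorb $\QQ$. The idea is that total rigidity of $\QQ$ provides, for each $\RR\in\UU_\delta\!\downarrow\!\QQ$, a \emph{unique} arrow $i_{\QQ,\RR}:\QQ\to\RR$, so a $V$-generic filter $\mathcal G$ for $\UU_\delta\!\downarrow\!\QQ$ canonically determines a $V$-generic filter $G_\QQ=\bigcap\{i_{\QQ,\RR}^{-1}[H_\RR]:\RR\in\mathcal G\}$ for $\QQ$ via any $V$-generic $H_\RR$ induced by $\mathcal G$; Proposition~\ref{lem:firstfctlem-2} lets us splice these arrows coherently, and the resulting quotient in $V[G_\QQ]$ is a category forcing whose conditions are $\SSP$ in $V[G_\QQ]$ precisely because each arrow $\QQ\to\RR$ is $\SSP$-correct.

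Granted this equivalence, both parts of (1) follow at once below $\QQ$. Stationary set preservation: $\QQ\in\SSP$ by assumption, and the quotient $\dot{\UU}^{V[\dot G_\QQ]}_\delta$ is a two-step iteration of $\SSP$ forcings in the sense of Proposition~\ref{lem:firstfctlem-1}, hence itself $\SSP$, so $\UU_\delta\!\downarrow\!\QQ\in\SSP$. Collapse of $\delta$ to $\omega_2$: the density in $\UU_\delta$ of iterations of the form $\BB\ast\dot{\Coll}(\omega_1,\kappa)$ for cofinally many $\kappa<\delta$ (these are $\SSP$ since $\Coll(\omega_1,\kappa)$ is $\sigma$-closed) shows that every $\kappa<\delta$ is collapsed to $\omega_1$ in the extension, while $\SSP$ preserves $\omega_1$; hence $\delta=\omega_2$ in $V^{\UU_\delta}$.

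For (2), assuming $\delta$ supercompact, I would verify $\MM^{++}$ by the standard ``guessing generic via supercompact embedding'' argument. Given $P\in V^{\UU_\delta}$ an $\SSP$ forcing, a family $\{D_\alpha:\alpha<\omega_1\}$ of dense sets of $P$, and $\omega_1$ many names for stationary subsets of $\omega_1$, work below a totally rigid condition in $\UU_\delta$ absorbing $P$ and appeal to a $j:V_\eta\to V_\gamma$ with $\crit(j)=\delta$ and $j(\delta)$ much larger than the relevant parameters. By elementarity $j[V_\eta]\prec V_\gamma$ and the coherence provided by total rigidity lets one read off, from the $\UU_\delta$-generic, an $M$-correct filter on $P$ meeting all $D_\alpha$, where $M$ collapses to the transitive image of $j[V_\eta]$; this is exactly the reformulation of $\MM^{++}$ given in Subsection~\ref{subsec:forcax}. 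The main obstacle throughout is establishing the absorption equivalence $\UU_\delta\!\downarrow\!\QQ\cong \QQ\ast\dot{\UU}^{V[\dot G_\QQ]}_\delta$ and verifying that the arrows $i_{\QQ,\RR}$ induced by total rigidity commute on the nose with the operations entering the definition of the quotient; without genuine total rigidity (as opposed to mere rigidity of the boolean algebra) one would only get generics for the boolean algebras $\RR$ up to automorphism, which is insufficient to read off a single $V$-generic $G_\QQ$ from the $\UU_\delta$-generic.
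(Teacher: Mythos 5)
Your central reduction is circular. The factorization $\UU_\delta\restriction\QQ\cong\QQ\ast\dot{\UU}^{V[\dot G_\QQ]}_\delta$ for a totally rigid (indeed self-freezing) $\QQ$ is a true and useful fact --- it is essentially Lemma~\ref{lem:univ4}, which the paper proves \emph{after} Theorem~\ref{thm:univ1} and uses only for the generic absoluteness argument --- but it cannot deliver $\UU_\delta\in\SSP$. To conclude that the two-step iteration is $\SSP$ you need $\QQ\Vdash\dot{\UU}^{V[\dot G_\QQ]}_\delta\in\SSP$, and that second factor is precisely the category forcing of the extension: its $\SSP$-ness is the statement of the theorem relativized to $V[G_\QQ]$, so your argument has no base case. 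Proposition~\ref{lem:firstfctlem-1} only tells you that each individual quotient $\RR/i_{\QQ,\RR}[G_\QQ]$ is $\SSP$ in $V[G_\QQ]$, i.e.\ that the \emph{conditions} of the quotient category are $\SSP$ posets; that a class of $\SSP$ posets ordered by correct embeddings is itself an $\SSP$ forcing is exactly the nontrivial content being proved. The paper's actual proof avoids this entirely: it first establishes that $\UU_\delta$ is $(<\delta,<\delta)$-presaturated and that every $\UU_\delta$-name $\dot f$ for an $\omega_1$-sequence of ordinals is, densely often, of the form $\hat k_{\CC}(\dot f_{\CC})$ for a single totally rigid $\CC$ and a $\CC$-name $\dot f_{\CC}$ (using that $k_{\CC}:\CC\to\UU_\delta\restriction\CC$, $c\mapsto\CC\restriction c$, is a \emph{regular} embedding and that dense subsets of $\TR$ can be captured by maximal antichains of a single $\CC\in\TR$). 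A name for a club is then already a $\CC$-name, and $\CC\in\SSP$ plus $\Delta_1$-elementarity of $\hat k_{\CC}$ finishes the argument. This name-capturing step is the idea your proposal is missing.

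Two smaller points. First, your collapse argument only shows $\delta\le\omega_2$ in the extension; to get $\delta=\omega_2$ you must also show $\delta$ remains regular, which in the paper again comes from the $(<\delta,<\delta)$-presaturation you have not established. Second, for $\MM^{++}$ your embedding goes the wrong way relative to the paper's definition of supercompactness: the paper uses $j:V_\eta\to V_\gamma$ with $\crit(j)=\alpha<\delta$ and $j(\alpha)=\delta$, pulls the name $\dot\RR$ back to a $\UU_\alpha$-name $\dot\QQ$ with $j(\dot\QQ)=\dot\RR$, and forces below $(\UU_\alpha\restriction\BB)\ast\dot\QQ$ so that $j$ lifts and transports a correct generic for $\dot\QQ$ to one for $\dot\RR$; with $\crit(j)=\delta$ as you propose, one would instead have to argue about $\UU_{j(\delta)}^M$ and reflect, which is a genuinely different (and as sketched, unsubstantiated) argument.
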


\subsection{Totally rigid partial orders for $\UU^{\SSP}$} \label{sec:trpo}

Total rigidity is the key property which we would like to be able to predicate for a category forcing.

\begin{definition}\label{def:totrig}
Assume $P$ is a partial order in $\Gamma$.
$P$ is $\Theta$-totally rigid if 
for no complete boolean algebra $\CC\in\Gamma$ there are
distinct complete homomorphisms $i_0:\BBB(P)\to\CC$ and 
$i_1:\BBB(P)\to\CC$
in $\Theta$.
\end{definition}

We shall see that the class of $\SSP$-totally rigid partial orders is dense in $\UU^{\SSP}$.
This result will be the cornerstone on which
we will elaborate to get the desired generic absoluteness theorem.
As it will become transparent to the reader at the end of this paper, 
we should be able to prove the appropriate form of generic absoluteness for any ``reasonable''
class of forcings $\Gamma$
for which we can predicate the existence of a dense class of totally rigid partial orders in $\UU^\Gamma$
and for which we have an iteration theorem.
We shall from now on
just say totally rigid to abbreviate $\SSP$-totally rigid.

These properties give equivalent characterizations of totally rigid boolean algebras:
\begin{lemma}\label{lem:eqtr}
The following are equivalent:
\begin{enumerate}
\item\label{lem:eqtr1}
for all $b_0,b_1\in \BB$ such that
$b_0\wedge_{\BB}b_1=0_{\BB}$ we have that
$\BB\restriction b_0$ is incompatible with $\BB\restriction b_1$ in $\UU^{\SSP}$, 
\item\label{lem:eqtr2}
For all $\QQ\leq_{\SSP}\BB$ and all $H$, $V$-generic filter for $\QQ$, there is just one $G\in V[H]$ 
correct $V$-generic filter for $\BB$.
\item\label{lem:eqtr4}
For all $\QQ\leq\BB$ in $\UU^{\SSP}$ there is only one complete homomorphism
$i:\BB\to\QQ$ such that
\[
\BB\Vdash \QQ/i[\dot{G}_{\BB}]\text{ is stationary set preserving.}
\]
\end{enumerate}
\end{lemma}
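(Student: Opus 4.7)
The plan is to prove the equivalences via $(1) \Leftrightarrow (3)$ and $(2) \Leftrightarrow (3)$. The arguments will hinge on the standard bijective correspondence, recalled in Subsection~\ref{subsec:SSPforc}, between correct complete homomorphisms $i\colon \BB \to \QQ$ and correct $V$-generic filters for $\BB$ living inside $V[H]$ for $H$ a $V$-generic filter for $\QQ$: to $i$ one attaches $G = i^{-1}[H]$, while to $G$ one attaches the name map $i(b) = \llbracket b \in \dot G \rrbracket_\QQ$.

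For $(1)\Rightarrow(3)$ I would start with two distinct correct homomorphisms $i_0,i_1\colon \BB \to \QQ$ and choose $b \in \BB$ with $i_0(b) \neq i_1(b)$; after possibly swapping $i_0$ and $i_1$, the element $q = i_0(b) \wedge_\QQ \neg_\QQ i_1(b)$ is nonzero. Restricting $i_0$ to $\BB \restriction b$ and $i_1$ to $\BB \restriction \neg b$ and composing with the canonical embedding into $\QQ \restriction q$ produces correct complete homomorphisms (the quotient above a generic is unchanged by this further restriction), so $\QQ \restriction q$ refines both $\BB \restriction b$ and $\BB \restriction \neg b$, against (1). Conversely, for $(3)\Rightarrow(1)$, given disjoint $b_0, b_1 \in \BB$ and $\CC$ refining $\BB \restriction b_k$ for $k=0,1$ via correct homomorphisms $j_k$, I would set $i_k\colon \BB \to \CC$ to be $i_k(a) = j_k(a \wedge b_k)$. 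A routine verification using $\neg_{\BB\restriction b_k}(a \wedge b_k) = b_k \wedge \neg_\BB a$ shows that each $i_k$ is a complete homomorphism, and $i_0(b_0) = 1_\CC \neq 0_\CC = i_1(b_0)$ shows they are distinct.

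The delicate step in $(3)\Rightarrow(1)$ is correctness of the $i_k$: above $b_k$ one has $i_k[\dot G_\BB] = j_k[\dot G_{\BB \restriction b_k}]$, so the quotient $\CC/i_k[\dot G_\BB]$ is forced to be in $\SSP$ by correctness of $j_k$, whereas below $\neg b_k$ the equality $i_k(\neg b_k) = 0_\CC$ forces the quotient to be the degenerate one-element algebra, which is vacuously in $\SSP$; hence $\llbracket \CC/i_k[\dot G_\BB] \in \SSP\rrbracket_\BB = 1_\BB$. For the remaining equivalence $(2)\Leftrightarrow(3)$, I would simply translate via the correspondence above: two distinct correct generics for $\BB$ inside $V[H]$ give two distinct correct homomorphisms $\BB \to \QQ$, and conversely, given distinct correct $i_0, i_1 \colon \BB \to \QQ$, one picks any $V$-generic $H$ for $\QQ$ through $q = i_0(b) \wedge \neg_\QQ i_1(b)$ with $i_0(b) \neq i_1(b)$, making $b \in i_0^{-1}[H]$ but $b \notin i_1^{-1}[H]$. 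I expect the only mildly non-trivial technical point to be the handling of the degenerate quotient in $(3)\Rightarrow(1)$, which relies on the convention that the trivial boolean algebra is vacuously stationary set preserving; everything else amounts to bookkeeping with complete homomorphisms and their restrictions.
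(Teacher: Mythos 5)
Your proof is correct and uses essentially the same toolkit as the paper's: restricting/relativizing homomorphisms to witness compatibility for $(1)\Leftrightarrow(3)$, and the standard correspondence $G\mapsto(b\mapsto\llbracket b\in\dot G\rrbracket)$ together with $i\mapsto i^{-1}[H]$ for $(2)\Leftrightarrow(3)$; the paper merely organizes the same moves as a cycle $(3)\Rightarrow(2)\Rightarrow(1)\Rightarrow(3)$, each by contraposition. The only substantive deviation is your direct $(3)\Rightarrow(1)$, which extends the partial witnesses $j_k$ to total homomorphisms $i_k(a)=j_k(a\wedge b_k)$ with nontrivial kernel and hence a degenerate quotient below $\neg b_k$ — but the convention you invoke there is already implicit in the paper's own constructions (e.g.\ its maps $a\mapsto\llbracket a\in\dot G_j\rrbracket\wedge r$ also have nontrivial kernels), so this is not a gap.
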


\begin{proof} 
We prove these equivalences as follows:
\begin{itemize}
\item
We first prove~\ref{lem:eqtr4} implies~\ref{lem:eqtr2} by contraposition.
Assume~\ref{lem:eqtr2} fails for $\BB$ as witnessed by some $\QQ\leq_{\SSP}\BB$, $H$ 
$V$-generic filter for $\QQ$, and $G_1\neq G_2\in V[H]$ 
correct $V$-generic filters for $\BB$.

Let $\dot{G}_1,\dot{G}_2\in V^{\QQ}$ and $q\in H$ be such that
$q$ is the boolean value of the statement
\begin{quote}\label{eqn:SSP}
$\dot{G}_1\neq \dot{G}_2$
are $V$-generic filters for $\BB$ and 
$V[\dot{H}]$ is a $V[\dot{G}_j]$-generic extension by an
$\SSP$ forcing in $V[\dot{G}_j]$ for $j=1,2$.
\end{quote}
Notice that the above statement is expressible by a forcing formula in the parameters
\[
H_{\omega_2}^{V[\dot{G}_j]},
\omega_1,\dot{G}_1,\dot{G}_2,\dot{G}_{\QQ},\check{\BB},\check{H_{|\BB|^+}}
\] 
stating that: 
\begin{quote}
for both $j=1,2$,
$\dot{G}_j$ are distinct filters on $\check{\BB}$ meeting all dense subsets of 
$\check{\BB}$ in $\check{H_{|\BB|^+}}$, and for all 
$\dot{S}$ in $H_{\omega_2}^{V[\dot{G}_j]}$
$\dot{S}$ is a stationary subset of $\check{\omega_1}$ in $H_{\omega_2}^{V[\dot{G}_j]}$ iff
it is such in $V[\dot{G}_{\QQ}]$.
\end{quote}

Let $r\leq_{\QQ} q$ and $b\in \BB$ be such that $r\Vdash_{\QQ}b\in\dot{G}_1\setminus\dot{G}_2$.

Define $i_j:\BB\to\QQ\restriction r$ by
$a\mapsto \llbracket\check{a}\in\dot{G}_j\rrbracket\wedge r$.

Then we get that $i_0(b)=r=i_1(\neg b)$, thus $i_0\neq i_1$.
We can check that $i_0,i_1$ are correct embeddings as follows:
First of all
\[
r\leq_{\QQ}\llbracket i_j^{-1}[\dot{G}_{\QQ}]=\dot{G}_j\rrbracket_{\QQ}
\]
and 
\[
r\leq_{\QQ}\llbracket b\in \dot{G}_0\setminus\dot{G}_1\rrbracket_{\QQ}.
\]
Now observe that if $\dot{S}\in V^{\BB}$ is such that
\[
\llbracket \dot{S}\text{ is a stationary subset of $\omega_1$}\rrbracket_{\BB}=1_{\BB}
\]
We get that 
\[
\llbracket \hat{i}_j(\dot{S})\in V[\dot{G}_j]\rrbracket_{\QQ}\geq r
\]
and
\[
\llbracket \hat{i}_j(\dot{S})\text{ is a stationary subset of $\omega_1$ in } V[\dot{G}_j]\rrbracket_{\QQ}
\geq r.
\]
Thus since $r$ forces that $V[\dot{G}_{\QQ}]$ is a generic extension of $V[\dot{G}_j]$
preserving stationary subsets of $\omega_1$, we get that 
\[
\llbracket \hat{i}_j(\dot{S})\text{ is a stationary subset of $\omega_1$ }\rrbracket_{\QQ}
\geq r.
\]
This shows that 
$i_1,i_2$ are distinct correct embeddings witnessing that~\ref{lem:eqtr4} fails for $\BB$.

\item
Now we prove that~\ref{lem:eqtr1} implies~\ref{lem:eqtr4} again by contraposition.
So assume~\ref{lem:eqtr4} fails for $\BB$ as witnessed by $i_0\neq i_1:\BB\to\QQ$.
Let $b$ be such that $i_0(b)\neq i_1(b)$. 

W.l.o.g. we can suppose that $r=i_0(b)\wedge i_1(\neg b)>0_{\QQ}$
Then $j_0:\BB\restriction b\to \QQ\restriction r$ and
$j_1:\BB\restriction \neg b\to \QQ\restriction r$ given by $j_k(a)=i_k(a)\wedge r$ for $k=0,1$ and
$a$ in the appropriate domain witness that $\BB\restriction \neg b$ and $\BB\restriction b$
are compatible in $\UU^{\SSP}$ i.e.~\ref{lem:eqtr1} fails.

\item
Finally we prove that~\ref{lem:eqtr2} implies~\ref{lem:eqtr1} again by contraposition.

So assume~\ref{lem:eqtr1} fails as witnessed by $i_j:\BB\restriction b_j\to\QQ$ for $j=0,1$ with
$b_0$ incompatible with $b_1$ in $\BB$.
Pick $H$ $V$-generic for $\QQ$. Then $G_j=i_j^{-1}[H]\in V[H]$ are distinct  and correct
$V$-generic filters for $\BB$
since $b_j\in G_j\setminus G_{1-j}$.  
\end{itemize}

\end{proof}

The next subsections have as objective the proof of the following theorems
which are the cornerstones on which we shall develop our analysis of $\UU^{\SSP}$ and
are the other two main results of this section:

\begin{theorem}
\label{thm:distrSSP*}
Assume there are class many supercompact cardinals. 
Let $\TR$ be the class of partial orders $Q$ such that:
\begin{itemize}
\item $Q$ forces $\MM^{++}$ (and thus the equality $\SP=\SSP$),
\item $Q$ is totally rigid.
\end{itemize}
Then $\TR$ is dense in $\UU^{\SSP,\SP}$ and\footnote{
We remark that there is no typo in the statement of the above theorem, i.e for any
$\BB$ there is a \emph{regular embedding} $i:\BB\to \QQ$ such that
\[
\llbracket \QQ/_{i[\dot{G}_{\BB}]}\text{ is semiproper }\rrbracket_{\BB}=1_{\BB}
\]
and $\QQ$ forces $\MM^{++}$.
Notice that on the class $\TR$ we have that $\QQ\leq_{\SSP}\BB$ iff
$\QQ\leq_{\SP}\BB$ iff there is a unique $\SP$-correct $i:\BB\to\QQ$.
} 
in $\UU^{\SSP}$.
\end{theorem}

\begin{theorem}\label{lem:univ4}
Assume there are class many supercompact cardinals
and $\BB$ is a stationary set preserving
forcing.
Then there are in $V$
\begin{itemize} 
\item
a stationary set preserving complete boolean
algebra  $\QQ$,
\item
a regular embedding
\[
i_0:\BB\to\QQ
\]
\item
a regular embedding
\[
i_1:\BB\to \UU^{\SSP}\restriction\QQ
\]
\end{itemize}
such that 
whenever $H$ is $V$-generic for $\BB$,
then
$V[H]$ models that the class forcing
\[
(\UU^{\SSP}_\delta\restriction\QQ)^V/i_1[H]
\]
is identified with\footnote{ I.e. we will show that there is an order and incompatibility
preserving map with a dense image between these two class forcings.} 
the class forcing 
\[
(\UU^{\SSP})^{V[H]}\restriction(\QQ/i_0[H])
\]
as computed in $V[H]$.

Moreover the above factorization property reflects down to 
$\UU^\SSP\cap V_\delta=\UU_\delta$ whenever the latter is stationary set preserving,
$\TR\cap\UU_\delta$ is dense in $\UU_\delta$ and
$\BB$ in\footnote{I.e. with the same requirements for $\BB,\QQ,i_0,i_1$ as in 
the first conclusion of the theorem,
$\UU_\delta^{V[G]}\restriction(\QQ/i_0[G])$ 
is forcing equivalent in $V[G]$ to $(\UU_\delta^V\restriction\QQ)/i_1[G]$ 
whenever 
$G$ is $V$-generic for
$\BB\in V_\delta$, $\QQ\in\TR\cap V_\delta$, $i_0,i_1\in V_\delta$.}
$\UU_\delta$.

\end{theorem}

This factorization property of $\UU_\delta$ is not shared by
the other forcings which are used to prove the consistency of $\MM^{++}$.
It is due to this property of $\UU_\delta$ that we can prove the generic absoluteness results
given in Theorem~\ref{thm:mainthm}.

The following subsections show the proofs of Theorems~\ref{thm:distrSSP*},~\ref{lem:univ4} 
and of Theorem~\ref{thm:univ1}.

\subsubsection{Freezeability}\label{sec:denprptrpo}

We shall introduce the concept of freezeability and
use it to prove Theorem~\ref{thm:distrSSP*}.

\begin{definition}\label{def:freecat}
Let $(\Gamma,\Theta)$ be a category of  cbas and complete homomorphisms.
$k:\BB\to \QQ$ $\Theta$-freezes $\BB$ if for all $\RR\leq_\Theta\QQ$
and $i_j:\QQ\to\RR$ in $\Theta$ for $j=0,1$ we have that $i_0\circ k=i_1\circ k$.
$\QQ\leq_\Theta\BB$ $\Theta$-freezes $\BB$ if
there is a $k:\BB\to \QQ$ which $\Theta$-freezes $\BB$.

$\BB$ is $\Theta$-freezeable if there is some $\QQ\leq_\Theta\BB$ which $\Theta$-freezes $\BB$.
\end{definition}
We just say freezeable when meaning $\SSP$-freezeable.

We shall show that all stationary set preserving posets are freezeable.
We need an analogue of Lemma~\ref{lem:eqtr} to characterize freezeability.

\begin{lemma}\label{lem:eqfr}
Let $k:\BB\to\QQ$ be a correct homomorphism.
The following are equivalent:
\begin{enumerate}
\item\label{lem:eqfr1}
For all $b_0,b_1\in \BB$ such that
$b_0\wedge_{\BB}b_1=0_{\BB}$ we have that
$\QQ\restriction k(b_0)$ is incompatible with $\QQ\restriction k(b_1)$ in $\UU^{\SSP}$. 
\item\label{lem:eqfr2}
For all $\RR\leq_{\SSP}\QQ$, all $H$ $V$-generic filter for $\RR$, there is just one $G\in V[H]$ 
correct $V$-generic filter for $\BB$ such that $G=k^{-1}[K]$ for all $K\in V[H]$ correct $V$-generic 
filters for $\QQ$.
\item\label{lem:eqfr4}
For all $\RR\leq_\SSP\QQ$ in $\UU^{\SSP}$ 
and $i_0,i_1:\QQ\to\RR$ witnessing that $\RR\leq_{\SSP}\QQ$ we have that
$i_0\circ k=i_1\circ k$.
\end{enumerate}
\end{lemma}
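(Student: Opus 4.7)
The plan is to mirror the cyclic proof of Lemma~\ref{lem:eqtr} by establishing the three implications \ref{lem:eqfr4}$\Rightarrow$\ref{lem:eqfr2}$\Rightarrow$\ref{lem:eqfr1}$\Rightarrow$\ref{lem:eqfr4}, each by contraposition, with every step relativized through the fixed correct embedding $k:\BB\to\QQ$. The guiding idea is to replace the distinct generic filters and distinct complete embeddings \emph{of $\BB$} that appeared in Lemma~\ref{lem:eqtr} by distinct pullbacks along $k$ of generic filters and complete embeddings \emph{of $\QQ$}.

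For \ref{lem:eqfr4}$\Rightarrow$\ref{lem:eqfr2}, I will assume that in $V[H]$ (for $H$ generic over some $\RR\leq_{\SSP}\QQ$) there are correct $V$-generic filters $K_0,K_1$ for $\QQ$ with $k^{-1}[K_0]\neq k^{-1}[K_1]$, pick names $\dot{K}_0,\dot{K}_1$ and a condition $r\in H$ witnessing both that $V[\dot{H}]$ is an $\SSP$-extension of $V[\dot{K}_j]$ for $j=0,1$ and that $\check{b}\in k^{-1}[\dot{K}_0]\setminus k^{-1}[\dot{K}_1]$ for some fixed $b\in\BB$. Defining $i_j:\QQ\to\RR\restriction r$ by $q\mapsto\llbracket\check{q}\in\dot{K}_j\rrbracket_{\RR}\wedge r$, the stationary-preservation computation carried out in the first bullet of Lemma~\ref{lem:eqtr} goes through unchanged and shows that the $i_j$ are correct embeddings; by construction $i_0(k(b))=r$ while $i_1(k(b))\wedge r=0_{\RR}$, so $i_0\circ k\neq i_1\circ k$, contradicting \ref{lem:eqfr4}.

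For \ref{lem:eqfr1}$\Rightarrow$\ref{lem:eqfr4}, I will assume correct $i_0,i_1:\QQ\to\RR$ with $i_0\circ k\neq i_1\circ k$, pick $b\in\BB$ with $i_0(k(b))\neq i_1(k(b))$, and after possibly swapping $b$ with $\neg b$ set $r=i_0(k(b))\wedge i_1(k(\neg b))>0_{\RR}$. The maps $j_0(q)=i_0(q)\wedge r$ on $\QQ\restriction k(b)$ and $j_1(q)=i_1(q)\wedge r$ on $\QQ\restriction k(\neg b)$ are then correct embeddings into $\RR\restriction r$, which witnesses the compatibility of $\QQ\restriction k(b)$ and $\QQ\restriction k(\neg b)$ in $\UU^{\SSP,\SSP}$, contradicting \ref{lem:eqfr1} with the pair $b_0=b$, $b_1=\neg b$. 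For \ref{lem:eqfr2}$\Rightarrow$\ref{lem:eqfr1}, given disjoint $b_0,b_1\in\BB^+$ such that some $\RR$ and correct embeddings $l_j:\QQ\restriction k(b_j)\to\RR$ witness compatibility in $\UU^{\SSP,\SSP}$, I take $H$ $V$-generic for $\RR$, form the correct $V$-generic $K_j'=l_j^{-1}[H]$ for $\QQ\restriction k(b_j)$, extend it to $K_j=\{q\in\QQ: q\wedge k(b_j)\in K_j'\}$, which is a correct $V$-generic filter for $\QQ$ containing $k(b_j)$, and conclude that the pullbacks $G_j=k^{-1}[K_j]$ contain $b_j$ and are therefore distinct, contradicting \ref{lem:eqfr2}.

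The main technical point will be the correctness (stationary preservation) of the embeddings $i_j$ in the first implication: one needs that $\hat{i}_j$ sends $\BB$-names for stationary subsets of $\omega_1$ to $\RR$-names for stationary subsets of $\omega_1$. As in the corresponding step of Lemma~\ref{lem:eqtr}, this will follow directly from the choice of $r$, since below $r$ the generic extension $V[\dot{H}]$ is forced to be an $\SSP$-extension of both $V[\dot{K}_0]$ and $V[\dot{K}_1]$, and the correctness of $K_j$ together with this quotient being $\SSP$ propagates stationarity from $V[\dot{K}_j]$ to $V[\dot{H}]$.
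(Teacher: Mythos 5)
Your proof is correct and follows essentially the same route as the paper's: the same cyclic chain \ref{lem:eqfr4}$\Rightarrow$\ref{lem:eqfr2}$\Rightarrow$\ref{lem:eqfr1}$\Rightarrow$\ref{lem:eqfr4}, each by contraposition, obtained by relativizing the proof of Lemma~\ref{lem:eqtr} through $k$ exactly as the paper does (your explicit upward closure of $l_j^{-1}[H]$ to a filter on all of $\QQ$, and your phrasing of the compatibility witnesses as maps on $\QQ\restriction k(b)$ rather than $\BB\restriction b$, are minor clarifications of steps the paper leaves implicit).
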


\begin{proof} 
We prove these equivalences mimicking the proof of Lemma~\ref{lem:eqtr}:
\begin{itemize}
\item
We first prove~\ref{lem:eqfr4} implies~\ref{lem:eqfr2} by contraposition.
Assume~\ref{lem:eqfr2} fails for $k:\BB\to\QQ$ as witnessed by some $\RR\leq_{\SSP}\QQ$, $H$ 
$V$-generic filter for $\RR$, and $K_0\neq K_1\in V[H]$ correct $V$-generic filters for $\QQ$ with
$b\in k^{-1}[K_0]\setminus k^{-1}[K_1]$. 
Let $\dot{K}_i\in V^{\RR}$ be $\RR$-names for $K_i$ and
$r\in H$ force that:

\emph{$\dot{K}_i$ are correct $V$-generic filters for $\QQ$ and 
$b\in k^{-1}[\dot{K}_0]\setminus k^{-1}[\dot{K}_1]$.}

Now define for $j=0,1$,
$i_j:\QQ\to\RR\restriction r$ by 
\[
c\mapsto\llbracket c\in\dot{K}_j\rrbracket_{\RR}\wedge r.
\]
Now observe that $i_0\circ k(b)=1_{\RR\restriction r}$ while
$i_1\circ k(b)=0_{\RR\restriction r}$. In particular $i_0$ and $i_1$ are correct embeddings
of $\QQ$ into $\RR$ witnessing that~\ref{lem:eqfr4} fails (we leave to the reader to check the 
correctness of $i_0,i_1$ along the same lines of what was done in Lemma~\ref{lem:eqtr}).

\item
Now we prove that~\ref{lem:eqfr1} implies~\ref{lem:eqfr4} again by contraposition.
So assume~\ref{lem:eqfr4} fails for $k:\BB\to \QQ$ as witnessed by $i_0\neq i_1:\QQ\to\RR$.
Let $b$ be such that $i_0\circ k(b)\neq i_1\circ k(b)$. 

W.l.o.g. we can suppose that $r=i_0\circ k(b)\wedge i_1\circ k(\neg b)>0_{\QQ}$
Then $j_0:\BB\restriction b\to \RR\restriction r$ and
$j_1:\BB\restriction \neg b\to \RR\restriction r$ given by $j_l(a)=i_l\circ k(a)\wedge r$ for $l=0,1$ and
$a$ in the appropriate domain witness that $\BB\restriction \neg b$ and $\BB\restriction b$
are compatible in $\UU^{\SSP}$ i.e.~\ref{lem:eqfr1} fails.

\item
Finally we prove that~\ref{lem:eqfr2} implies~\ref{lem:eqfr1} again by contraposition.

So assume~\ref{lem:eqfr1} fails as witnessed by $i_j:\QQ\restriction k(b_j)\to\RR$ for $j=0,1$ with
$b_0$ incompatible with $b_1$ in $\BB$.
Pick $H$ $V$-generic for $\RR$. Then $G_j=i_j^{-1}[H]\in V[H]$ are distinct correct 
$V$-generic filters for $\QQ$ and $b_j\in k^{-1}[G_j]$ for $j=0,1$. Since $k$ is correct we also have
that $k^{-1}[G_j]$ are distinct correct $V$-generic filters for $\BB$ in $V[H]$ witnessing that
~\ref{lem:eqfr2} fails
(since $b_j\in k^{-1}[G_j]\setminus k^{-1}[G_{1-j}]$).  
\end{itemize}

\end{proof}

Freezeable posets can be embedded in $\UU^\SSP$
as follows:
\begin{lemma}
Assume $\QQ$ freezes $\BB$.
Let $k:\BB\to\QQ$ be a correct and complete embedding of
$\BB$ into $\QQ$ which witnesses it.
Then the map
$i:\BB\to\UU^\SSP\restriction\QQ$ which maps $b\mapsto \QQ\restriction k(b)$
is a complete embedding of partial orders.
\end{lemma}
\begin{proof}
It is immediate to check that $i$ preserve the order relation on
$\BB$ and $\UU^\SSP$. Moreover $\QQ$ freezes $\BB$
if and only if $i$ preserve the incompatibility relation. 
Thus we only have to check that $i[A]$ is a maximal antichain
below $\QQ$ in $\UU^{\SSP}$ whenever $A$ is a
maximal antichain of $\BB$.
If not there is $\RR\leq_{\SSP}\QQ$ such that $\RR$ is incompatible
with $\QQ\restriction k(b)$ for all $b\in A$.
This means that $\RR\leq_{\SSP}\QQ$ is incompatible with
\[
\QQ=\bigvee\{\QQ\restriction k(b):b\in A\},
\]
a contradiction.
\end{proof}

We refer the reader to Subsection~\ref{subsec:itfor} for the relevant definitions and results on iterations
and to~\cite{VIAAUDSTE13} for a detailed account.

\begin{lemma}\label{lem:freezelem*}
 Assume 
 \[
 \{i_{\alpha\beta}:\BB_\alpha\to\BB_\beta:\alpha<\beta\leq\delta\}
 \]
 is a complete iteration system such that
 for each $\alpha$ there is $\beta>\alpha$ such that
 \begin{itemize}
 \item
 $\BB_\beta$ freezes $\BB_\alpha$ as witnessed by the correct regular embedding
 $i_{\alpha,\beta}$.
 \item
 $\BB_\delta$ is the direct limit  of the iteration system and is stationary set preserving.
\end{itemize}
 Then
 $\BB_\delta$ is totally rigid.
 \end{lemma}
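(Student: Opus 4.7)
\emph{Plan.} I will argue by contradiction using the characterization of freezing provided by Lemma~\ref{lem:eqfr}(\ref{lem:eqfr4}), which says that a correct embedding $i_{\alpha,\beta}:\BB_\alpha\to\BB_\beta$ witnesses the freezing of $\BB_\alpha$ precisely when any two correct embeddings of $\BB_\beta$ into a common target agree after precomposition with $i_{\alpha,\beta}$.

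First I would localize the disagreement to some bounded stage. Suppose $\BB_\delta$ is not totally rigid, so there exist $\QQ\in\SSP$ and distinct correct homomorphisms $i_0,i_1:\BB_\delta\to\QQ$. Since $\BB_\delta$ is the direct limit of the system, the set $\bigcup_{\alpha<\delta}i_{\alpha,\delta}[\BB_\alpha]$ is dense in $\BB_\delta$; as $i_0$ and $i_1$ are complete homomorphisms they are determined by their values on this dense set, so there must exist $\alpha<\delta$ and $b\in\BB_\alpha$ with
\[
i_0\circ i_{\alpha,\delta}(b)\ \neq\ i_1\circ i_{\alpha,\delta}(b).
\]

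Next I would invoke the freezing hypothesis. By assumption, pick $\beta>\alpha$ such that $\BB_\beta$ freezes $\BB_\alpha$ via $i_{\alpha,\beta}$. Set $j_k:=i_k\circ i_{\beta,\delta}:\BB_\beta\to\QQ$ for $k=0,1$. Since the arrows of $\UU^{\SSP,\SSP}$ are closed under composition, each $j_k$ is a correct homomorphism. Lemma~\ref{lem:eqfr}(\ref{lem:eqfr4}) applied to the freezing embedding $i_{\alpha,\beta}$ gives $j_0\circ i_{\alpha,\beta}=j_1\circ i_{\alpha,\beta}$, and the commutativity of the iteration system rewrites this as
\[
i_0\circ i_{\alpha,\delta}\ =\ i_0\circ i_{\beta,\delta}\circ i_{\alpha,\beta}\ =\ i_1\circ i_{\beta,\delta}\circ i_{\alpha,\beta}\ =\ i_1\circ i_{\alpha,\delta},
\]
contradicting the choice of $b$.

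The one genuine technical point is verifying that each $i_{\beta,\delta}$ is itself $\SSP$-correct, so that $j_0$ and $j_1$ really are arrows of $\UU^{\SSP,\SSP}$ and Lemma~\ref{lem:eqfr} applies. This is where the hypothesis that $\BB_\delta\in\SSP$ and that the system is complete enters: in the generic extension $V[G_\beta]$ by $\BB_\beta$, the quotient $\BB_\delta/i_{\beta,\delta}[G_\beta]$ is (forced to be) the direct limit of the tail of the system, and one invokes Proposition~\ref{lem:firstfctlem-1} together with the fact that $\BB_\delta$ preserves stationary subsets of $\omega_1$ to conclude that this quotient is stationary set preserving in $V[G_\beta]$. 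Once correctness of the tail embeddings is in hand, the three–step argument above closes the proof.
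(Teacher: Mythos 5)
Your argument has the same skeleton as the paper's proof: localize the failure of rigidity to a bounded coordinate $\alpha$, invoke freezing at some $\beta>\alpha$, and transfer the conclusion to $\BB_\delta$ through the tail embedding $i_{\beta,\delta}$. The only real difference is the choice of characterization: the paper works with Lemma~\ref{lem:eqtr}(\ref{lem:eqtr1}) and the incompatibility-of-restrictions form of freezing (two threads $f_0,f_1$ with incompatible values at $\alpha$ give $\BB_\beta\restriction f_0(\beta)$ incompatible with $\BB_\beta\restriction f_1(\beta)$ in $\UU^{\SSP,\SSP}$, which is then pushed up to $\BB_\delta$), whereas you work with the pair of distinct homomorphisms directly and Lemma~\ref{lem:eqfr}(\ref{lem:eqfr4}). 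Since the paper proves these characterizations equivalent, the two routes are interchangeable; your localization step (a complete homomorphism is determined by its values on the dense set $\bigcup_{\alpha<\delta}i_{\alpha,\delta}[\BB_\alpha]$) is correct and arguably cleaner than the support computation on threads.

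The point you flag as ``the one genuine technical point'' is indeed the only delicate one, but your proposed justification does not work as stated. Knowing that $\BB_\delta\in\SSP$ (i.e.\ that $\BB_\delta$ preserves the stationary subsets of $\omega_1$ belonging to $V$) does not yield that $\BB_\delta/i_{\beta,\delta}[G_\beta]$ is stationary set preserving \emph{over} $V[G_\beta]$: the quotient could destroy a stationary set added by $\BB_\beta$, and Proposition~\ref{lem:firstfctlem-1} only translates correctness assumed in $V$ into correctness of quotients in $V[G]$; it does not manufacture it from the $\SSP$-ness of the limit alone. (Note that if $i_k\circ i_{\beta,\delta}$ is correct then so is $i_{\beta,\delta}$, so your argument genuinely requires correctness of the tail embeddings.) This is really an extra property of the iteration system that must be assumed or verified --- and, to be fair, the paper's own proof uses it just as silently, when it passes from a common refinement of $\BB_\delta\restriction f_0$ and $\BB_\delta\restriction f_1$ to one of $\BB_\beta\restriction f_0(\beta)$ and $\BB_\beta\restriction f_1(\beta)$. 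In the application (Theorem~\ref{thm:totrigden}) the system is a semiproper RCS iteration, so the tail quotients are semiproper by Shelah's theorem (Remark~\ref{thmx:she}) and hence the $i_{\beta,\delta}$ are correct; that, and not the mere $\SSP$-ness of $\BB_\delta$, is what closes the argument.
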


 \begin{proof}
 Assume the Lemma fails.
 Then there are $f_0,f_1$ incompatible thread in $\BB_\delta$ such that
 $\BB_\delta\restriction f_0$ is compatible with  
 $\BB_\delta\restriction f_1$ in $\UU^{\SSP}$. 
 Now $\BB_\delta$ is a direct limit, so
 $f_0,f_1$ have support in some  
 $\alpha<\delta$. Thus 
 $f_0(\beta)$, $f_1(\beta)$ are incompatible in 
$\BB_\beta$ for all $\alpha<\beta\leq\delta$.
Now for eventually all $\beta>\alpha$ $\BB_\beta$ freezes $\BB_\alpha$
as witnessed by $i_{\alpha,\beta}$.
In particular, since $f_i=i_{\alpha,\delta}\circ f_i(\alpha)$
for $i=0,1$ we get that $\BB_\delta\restriction f_0$ cannot be compatible with  
 $\BB_\delta\restriction f_1$ in $\UU^{\SSP}$, contradicting our assumption.
\end{proof}

The above Lemma shows that in order to prove Theorem~\ref{thm:distrSSP*} we just need to exhibit
an iteration system in which the direct limit is $\SSP$ and all its elements freeze their predecessors.

In the next subsection we shall define for any given $\SSP$ poset
$P$ a $P$-name $\dot{Q}$ for an $\SSP$-poset which freezes $P$.
In the subsequent one we will combine this result with the equality $\SP=\SSP$ 
(which holds in models of $\MM^{++}$) to define he desired iteration systems whose direct limits
are totally rigid and whose first factor can be any $\SSP$ poset.

\subsubsection{Freezing $\SSP$ posets}

We shall now define for any given stationary set preserving 
poset $P$ a poset $\dot{R}_{P}\in V^{P}$
such that $\QQ_{P}=P*\dot{R}_{P}$ freezes 
$P$. 

\begin{definition}
For any regular cardinal $\kappa\geq\omega_2$ 
fix 
\[
\{S^i_\alpha:\alpha<\kappa,i<2\}
\] 
a partition of 
$E_\kappa^\omega$ (the set of points in $\kappa$ of countable cofinality) in 
pairwise disjoint stationary sets. Fix 
\[
\{A_\alpha:\alpha<\omega_1\}
\] partition of $\omega_1$ in $\omega_1$-many pairwise disjoint
stationary sets such that $\min(A_\alpha)>\alpha$
and such that there is a club subset of $\omega_1$ contained in
\[
\bigcup\{A_\alpha:\alpha<\omega_1\}.
\]
Given $P$ a stationary set preserving poset, we fix
in $V$ a surjection $f$ of the least regular $\kappa> |P|$ with $P$. 
Let $\dot{g}_{P}:\kappa\to 2$ be the $P$-name for a function which codes
$\dot{G}_{P}$ using $f$, i.e. for all $\alpha<|P|$
\[
p\Vdash_{P}\dot{g}_{P}(\alpha)=1\text{ iff }f(\alpha)\in\dot{G}_{P}.
\]
Now let $\QQ_{P}$ be the complete boolean algebra 
$\BBB(P*\dot{R}_{P})$ where $\dot{R}_{P}$ 
is defined as follows in 
$V^{P}$:

Let $G$ be $V$-generic for $P$.
Let $g=\val_G(\dot{g}_{P})$.
$R=\val_G(\dot{R}_P)$ in $V[G]$ is the poset
given by pairs $(c_p,f_p)$ such that for some countable ordinal
$\alpha_p$
\begin{itemize}
\item
$f_p:\alpha_p+1\to \kappa$,
\item
$c_p\subseteq\alpha_p+1$ is closed, 
\item 
for all $\xi\in c_p$
\[
\xi\in A_\beta\text{ and }g\circ f_p(\beta)=i\text{ if and only if }\sup(f_p[\xi])\in S^i_{f_p(\beta)}.
\]
\end{itemize}
The order on $R$ is given by $p\leq q$ if $f_p\supseteq f_q$ and $c_p$ end extends $c_q$.
Let
\begin{itemize}
\item
$\dot{f}_{\QQ_{P}}:\omega_1\to\kappa$ be the $P*\dot{R}_{P}$-name for the function given by
\[
\bigcup\{f_p:p\in\dot{G}_{P*\dot{R}_{P}}\},
\]
\item
$\dot{C}_{\QQ_{P}}\subset\omega_1$ be the $P*\dot{R}_{P}$-name for the club given by
\[
\bigcup\{c_p:p\in\dot{G}_{P*\dot{R}_{P}}\},
\]
\item
$\dot{g}_{\QQ_{P}}\subset\omega_1$ be the $P*\dot{R}_{P}$-name for the function 
$\dot{g}_{P}$ coding a 
$V$-generic filter for $P$ using $f$.
\end{itemize}
\end{definition}

We are ready to show that all stationary set preserving posets are freezeable.
\begin{theorem}[Freezing lemma]\label{thm:freezeden}
Assume $P$ is stationary set preserving. 
Then $P$ forces that $\dot{R}_{P}$ is stationary set preserving and
 $\QQ_{P}=\BBB(P*\dot{R}_{P})$ freezes
$P$ as witnessed by the map $k:\BBB(P)\to \QQ_P$ which maps 
$p\in P$ to $\langle p, 1_{\dot{R}_P}\rangle$.
\end{theorem}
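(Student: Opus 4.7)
My plan is to establish the two claims in turn. Part (1), that $P$ forces $\dot R_P$ to be stationary set preserving, is a standard countable-submodel argument that uses the partition $\{S^i_\alpha\}$ to steer a generic club through a prescribed target. Part (2), that $\QQ_P$ freezes $P$ via $k$, I verify through condition~\ref{lem:eqfr4} of the Freezing Lemma~\ref{lem:eqfr}: given any $\RR \leq \QQ_P$ in $\UU^{\SSP,\SSP}$ and any two correct embeddings $i_0, i_1 : \QQ_P \to \RR$, I will show $i_0 \circ k = i_1 \circ k$.

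For stationary preservation it suffices to show that every $(P * \dot R_P)$-name $\dot C$ for a club of $\omega_1$ is forced to meet every ground-model stationary $T \subseteq \omega_1$. Fix such $T$, $\dot C$, and a condition $(p_0, \dot r_0)$. Take $M \prec H_\theta$ countable in $V$ containing $P, \dot R_P, f, \dot g_P, \{S^i_\alpha\}, \{A_\alpha\}, T, \dot C, (p_0, \dot r_0)$, with $\delta = M \cap \omega_1 \in T$ (possible since $T$ is stationary) and $\delta \in A_{\bar\beta}$ for some $\bar\beta < \delta$ (possible since $\bigcup_\alpha A_\alpha$ contains a club). Extend inside $M$ to $(p_1, \dot r_1) \leq (p_0, \dot r_0)$ deciding $f_{\dot r_1}(\bar\beta) = \alpha^*$ for some $\alpha^* < \kappa$ and deciding $\dot g_P(\alpha^*) = i$ for some $i \in \{0, 1\}$. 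Enumerate in $M$ the dense open subsets of $P * \dot R_P$ and build a descending chain $(p_n, \dot r_n) \in M$ meeting each, so that $\alpha_{\dot r_n}$ increases to $\delta$, each $(p_n, \dot r_n)$ forces $\alpha_{\dot r_n} \in \dot C$, and the suprema of $f_{\dot r_n}[\alpha_{\dot r_n}]$ tend to some $\gamma \in S^i_{\alpha^*}$ of cofinality $\omega$, which is possible since $S^i_{\alpha^*}$ is stationary in $\kappa$ and $\sup(M \cap \kappa)$ may be assumed to lie in $S^i_{\alpha^*}$ by suitable choice of $M$. At the limit, set $c^* = \bigcup c_{\dot r_n} \cup \{\delta\}$ and $f^* = \bigcup f_{\dot r_n} \cup \{(\delta, \alpha^*)\}$: the choices above verify the coding clause at $\delta$, so $(p^*, \dot r^*)$ is a valid condition with $\alpha_{\dot r^*} = \delta$, forcing $\delta \in T \cap \dot C$ by closedness of $\dot C$.

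For freezing, suppose for contradiction $i_0, i_1 : \QQ_P \to \RR$ are correct embeddings with $i_0 \circ k \neq i_1 \circ k$, and pick $H$ $V$-generic for $\RR$ such that $G_j = k^{-1}(i_j^{-1}[H])$ are distinct $V$-generic filters for $P$. Let $g_j = \dot g_P^{G_j}$, $f_j = \hat i_j(\dot f_{\QQ_P})^H$, and $C_j = \hat i_j(\dot C_{\QQ_P})^H$. Absoluteness of $\Delta_0$-properties through $\hat i_j$ transfers the coding clause to $V[H]$:
\[
\forall \xi \in C_j,\ \forall \beta\ \bigl[\xi \in A_\beta \Rightarrow \bigl(g_j(f_j(\beta)) = i \iff \sup f_j[\xi] \in S^i_{f_j(\beta)}\bigr)\bigr].
\]
Pick $\alpha^* < \kappa$ with $g_0(\alpha^*) \neq g_1(\alpha^*)$. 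Since $\RR \in \SSP$, $C = C_0 \cap C_1$ is a club in $\omega_1^{V[H]} = \omega_1^V$. I aim to extract $\xi \in C \cap A_\beta$ for some $\beta$ with $f_0(\beta) = f_1(\beta) = \alpha^*$ and $\sup f_0[\xi] = \sup f_1[\xi]$: then the common sup lies in $S^{g_0(\alpha^*)}_{\alpha^*} \cap S^{g_1(\alpha^*)}_{\alpha^*}$, contradicting disjointness of the $S^i_\alpha$.

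The hard step is this final density claim. Matching $f_0(\beta) = f_1(\beta) = \alpha^*$ on $C$ at some $\beta$ is a routine genericity argument using the form of conditions of $\dot R_P$; what is subtle is simultaneously arranging $\sup f_0[\xi] = \sup f_1[\xi]$. I expect to exploit the amalgamation of Proposition~\ref{lem:firstfctlem-1} applied to the two quotient diagrams $\RR / i_j[\QQ_P]$, together with the freedom to adjust the components $(c_p, f_p)$ on any countable initial segment when extending conditions of $\dot R_P$. A fallback, should the direct density argument prove recalcitrant, is to first show $f_0 = f_1$ on a club in $V[H]$ by arguing that the coding pins down $\dot f_{\QQ_P}$ from the $P$-generic up to tail agreement, reducing the comparison to one between the retractions $\pi_{i_j}$.
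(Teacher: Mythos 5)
Your Part (2) has a genuine gap, and it sits exactly where you placed your confidence: the step you call ``routine genericity'' is the one that fails, while the step you call hard is the easy one. Once $H$ is fixed, the filters $G_0,G_1$ (hence $f_0,f_1,g_0,g_1,C_0,C_1$) are completely determined by $i_0,i_1$; there is no genericity left to exploit, so you cannot ``arrange'' a $\beta$ with $f_0(\beta)=f_1(\beta)=\alpha^*$ --- these are two fixed bijections $\omega_1\to\kappa$ with no reason to agree anywhere, and neither the amalgamation of Proposition~\ref{lem:firstfctlem-2} nor the freedom to modify conditions of $\dot R_P$ on initial segments applies to two already-given generics. The repair is to reverse the logic: agreement of the codings is the \emph{conclusion}, forced by disjointness of the $S^i_\eta$, not an input. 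The part that really is routine is matching the suprema: $\xi\mapsto\sup f_j[\xi]$ are two continuous cofinal maps into $\kappa$ (which has cofinality $\omega_1$ in $V[H]$), so they agree on a club $E$. Now for \emph{any} $\beta$, the set $A_\beta\cap C_0\cap C_1\cap E$ is stationary in $V[H]$ since $\RR\in\SSP$; picking $\xi$ there and letting $\gamma$ be the common supremum, the transferred coding clause gives $\gamma\in S^{g_0(f_0(\beta))}_{f_0(\beta)}\cap S^{g_1(f_1(\beta))}_{f_1(\beta)}$, and pairwise disjointness of the $S^i_\eta$ yields $f_0(\beta)=f_1(\beta)$ and $g_0(f_0(\beta))=g_1(f_1(\beta))$; hence $g_0=g_1$ and $\bar G_0=\bar G_1$. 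The paper's own proof avoids comparing the two codings pointwise altogether: it shows that each coding separately certifies that $g_j(\eta)=i$ if and only if $S^i_\eta$ remains stationary in $\kappa$ in $V[H]$, a fact about $V[H]$ alone and therefore independent of $j$. Either route closes the gap; your fallback of proving $f_0=f_1$ on a club is actually what the disjointness argument delivers, but not by analyzing retractions.

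There is also a circularity in Part (1): you choose $M$ first (which fixes $\delta$ and hence $\bar\beta$, since the $A_\alpha$ are disjoint), then decide $f_{\dot r_1}(\bar\beta)=\alpha^*$ and $\dot g_P(\alpha^*)=i$ inside $M$, and only afterwards ask that $\sup(M\cap\kappa)\in S^i_{\alpha^*}$ ``by suitable choice of $M$.'' By then it is too late to choose $M$. Reorder as the paper does: first find $\bar\beta$ with $T\cap A_{\bar\beta}$ stationary (pressing down on $T\cap\bigcup_\alpha A_\alpha$, using $\min(A_\alpha)>\alpha$), then fix a condition deciding $\alpha^*$ and $i$, and only then choose $M$ containing that condition with $M\cap\omega_1\in T\cap A_{\bar\beta}$ and $\sup(M\cap\kappa)\in S^i_{\alpha^*}$. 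With that reordering your Part (1) is the paper's argument.
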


\begin{proof}
It is rather standard to show that $\dot{R}_{P}$ is forced by $P$ to be
stationary set preserving.
We briefly give the argument for $R=\val_G(\dot{R}_{P})$
working in $V[G]$ where $G$ is $V$-generic for $P$.
First of all
we observe that $\{S^i_\alpha:\alpha<\kappa,i<2\}$ is still in $V[G]$ a partition of 
$(E^\omega_\kappa)^V$ in pairwise disjoint stationary sets, since $P$ is $<\kappa$-CC and that 
$\{A_\alpha:\alpha<\omega_1\}$ is still a maximal antichain
on $P(\omega_1)/\NS_{\omega_1}$ in $V[G]$ since $P\in \SSP$ and 
$\{A_\alpha:\alpha<\omega_1\}$ contains a club subset of $\omega_1$.

\begin{claim}
$R$ is stationary set preserving.
\end{claim}
\begin{claimproof}
Let $\dot{E}$ be an $R$-name for a club subset of $\omega_1$ and $S$ be a stationary subset of
$\omega_1$. Then we can find $\alpha$ such that $S\cap A_\alpha$ is stationary. 
Pick $p\in R$ such that $\alpha\in \dom(f_p)$, Let $\beta=f_p(\alpha)$ and $i=g(\beta)$ where 
$g:\kappa\to 2$ is the function coding $G$ by means of $f$. 
By standard arguments find $M\prec H_\theta^{V[G]}$ countable such that 
\begin{itemize}
\item
$p\in M$, 
\item
$M\cap\omega_1\in S\cap A_\alpha$,
\item
$\sup (M\cap\kappa)\in S^i_\beta$.
\end{itemize}
Working inside $M$ build a decreasing chain of conditions $p_n\in R\cap M$ which 
seals all dense sets of $R$ in $M$ and such that $p_0=p$.
By density we get that
\[
f_\omega=\bigcup_{n<\omega}f_{p_n}:\xi=M\cap\omega_1\to M\cap\kappa
\]
is surjective and that $\xi$ is a limit point of
\[
c_\omega=\bigcup_{n<\omega}c_{p_n}
\]
which is a club subset of $\xi$.
Set 
\[
q=(f_\omega\cup\{\langle \xi,0\rangle\},c_{\omega}\cup\{\xi\}).
\]
Now observe that $q\in R$ since 
$\xi\in A_\alpha$ and $\sup(f_q[\xi])\in S^{g(f_p(\alpha))}_{f_p(\alpha)}$ and 
$c_q$ is a closed subset of $\xi+1$.
Now by density $q$ forces that $\xi\in\dot{E}\cap S$ and we are done.
\end{claimproof}

We now argue that $\QQ_{P}$ freezes $P$.
We shall do this by means of Lemma~\ref{lem:eqfr}(\ref{lem:eqfr2}).

Assume that $\RR\leq \QQ_P$, let $H$ be $V$-generic for $\RR$ and pick $G_0,G_1\in V[H]$
distinct correct $V$-generic fiters for $\QQ_P$. It is enough to show that
\[
\bar{G}_0=\bar{G}_1
\]
where
\[
\bar{G}_j=\{p\in P: \exists \dot{q}\in V^P \text{ such that }\langle p,\dot{q}\rangle\in G_j\}
\]

Let $g_j:\kappa\to 2$ be the evaluation by $G_j$ of the function $\dot{g}_{P}$
which is used to code 
$\bar{G}_j$ as a subset of $\kappa$ by letting $g_j(\alpha)=1$ iff $f^{-1}(\alpha)\in \bar{G}_j$.
Let 
\[
h_j=\bigcup\{f_p:p\in G_j\},
\]
\[
C_j=\bigcup\{c_p:p\in G_j\},
\]

In particular we get that $C_0$ and $C_1$ are club subsets of
$\omega_1$ in $V[H]$, $h_0,h_1$ are bijections of $\omega_1$ with $\kappa$.

Now observe that $\kappa$ has size and cofinality $\omega_1$ in $V[G_j]$ and thus
(since $V[H]$ is a generic extension of $V[G_j]$ with the same $\omega_1$)
$\kappa$ has size and cofinality $\omega_1$ in $V[H]$.
Observe also that in $V[H]$ 
\begin{quote}
$S$ is a stationary subset of $\kappa$ iff 
$S\cap\{\sup h[\xi]:\xi<\omega_1\}$ is non empty for any bijection
$h:\omega_1\to\kappa$.
\end{quote}
 
Now the very definition of the $h_j$ gives that for all $\alpha\in C_j$:
\begin{quote}
$h_j(\alpha)=\eta$  and $g_j(\eta)=i$ if and only if 
$\sup h_j[\xi]\in S^i_{\eta}$
for all
$\xi\in A_\alpha\cap C_j$.
\end{quote}
Now the set
\[
E=\{\xi\in C_0\cap C_1: h_0[\xi]=h_1[\xi]\}
\]
is a club subset of $\omega_1$, and the above observations show that
\[
S^{g_j(\eta)}_{\eta}\supseteq \{\sup h_j[\xi]: \xi\in E\cap A_\alpha\}\neq \emptyset
\] 
for both $j$.
In particular $g_0(\eta)=g_1(\eta)$ for all $\eta<\kappa$,
else $S^0_\eta\cap S^1_\eta$ is non-empty for some $\eta$ contradicting the very definition of the family 
of sets $S^i_\eta$.
Thus $\bar{G}_0=\bar{G}_1$.

The proof of Theorem~\ref{thm:freezeden} is completed.
\end{proof}

\subsubsection{Freezeability implies total rigidity}

We are now in the position to
use the previous theorem and some elementary observations on iterations to conclude that 
the family of totally rigid partial orders is also dense in $\UU^{\SSP}$ and $\UU^{\SP,\SP}$
provided there are enough large cardinals.

\begin{theorem}\label{thm:totrigden1}
Assume $\delta$ is a limit of uncountable cofinality of 
cardinals $\alpha$ which are $\alpha+\omega+1$-supercompact and 
is such that $V_\delta$ models $\ZFC$. 
Then for every $\BB\in\SSP\cap V_\delta$ there is
$k_{\BB}:\BB\to \RR_{\BB}$ such that:
\begin{itemize}
\item
$\RR_{\BB}\in\SSP$ is a totally rigid complete boolean algebra of size $\delta$,
\item
$\llbracket \RR_{\BB}/_{k_{\BB}[\dot{G}_{\BB}]}\text{ is semiproper}\rrbracket_{\BB}=1_{\BB}$.
\end{itemize}
\end{theorem}

We refer the reader to Subsection~\ref{subsec:itfor} for the relevant definitions and results on iterations
and to~\cite{VIAAUDSTE13} for a detailed account.
\begin{proof}
First notice that $V_\delta$ models that there are class many $\alpha+\omega+1$-supercompact 
cardinals $\alpha$.

Now let for any $\BB\in\SSP\cap V_\delta$, 
$i_{\BB}:\BB\to \QQ_{\BB}$  be the regular embedding 
defined in the previous
subsection to freeze $\BB$ and $\dot{R}_{\BB}$ be the canonical name for the quotient forcing.
Observe that $\dot{R}_{\BB}$ 
can be chosen to be a $\BB$-name for a stationary set preserving poset in 
$V^{\BB}_{\rank(\BB)+\omega}$.
Moreover $\QQ_{\BB}$ collapses $\BB$ to have 
size $\omega_1$.

Next observe the following:
Assume $\BB$ has inaccessible size,
forces $\MM^{++}$ up to $\beth(\omega)$, and collapses its
size to become $\omega_2$. Then
\[
\BB\Vdash\dot{R}_{\BB*\dot{Q}}\text{ is semiproper}
\]
whenever $\dot{Q}$ is a $\BB$-name for a stationary set preserving poset
of size less than $\beth(\omega)$ in $V^{\BB}$.
Let $k_{\BB}:\BB\to \CC_{\BB}\in V_\delta$ be a regular embedding such that
\begin{itemize}
\item
$\llbracket \CC_{\BB}/_{k_{\BB}[\dot{G}_{\BB}]}\text{ is semiproper}\rrbracket_{\BB}=1_{\BB}$,
\item
$\CC_{\BB}$ forces $\MM^{++}$ up to $\beth(\omega)$ while collapsing $\alpha$ to become
$\omega_2$ for some $\alpha\in (|\BB|,\delta)$ which is $\alpha+\omega+1$-supercompact.
\end{itemize}
Such a $k_{\BB}$ can be found in $V_\delta$ 
applying Theorem~\ref{thm:FMSvar} in $V_\delta$.

Let
\[
\mathcal{F}=\{i_{\alpha,\beta}:\BB_\alpha\to\BB_\beta:\alpha<\beta\leq\delta\}
\]
be defined as follows for all $\alpha$ limit and $n\in\omega$:
\begin{itemize}
\item
$\BB_0=\BB$,
\item
$i_{\alpha+2n,\alpha+2n+1}=k_{\BB_{\alpha+2n}}$,
\item
$i_{\alpha+2n+1,\alpha+2n+2}=i_{\BB_{\alpha+2n+1}}$.
\end{itemize}

The claim below follows in a rather straightforward manner from the above observation:
\begin{claim}
For all $\alpha<\delta$:
\begin{itemize}
\item
$\BB_{\alpha+2}$ collapses $\BB_\alpha$ to have size $\omega_1$,
\item
$\llbracket\BB_{\alpha+2}/_{i_{\alpha,\alpha+2}[\dot{G}_{\BB_\alpha}]}\text{ is semiproper }
\rrbracket_{\BB_\alpha}=1_{\BB_\alpha}$,
\item
$\BB_{\alpha+2}$ has size less than $\delta$.
\end{itemize}
\end{claim}
Then (by the standard arguments which are used to prove the consistency of 
$\MM^{++}$ see~\cite[Theorem 37.9]{JEC03} or~\cite[Theorem 8.5]{VIAAUDSTE13}) 
this iteration will be such that:
\[
\llbracket C(\FFF)/i_{0}[\dot{G}_{\BB_0}]\text{ is semiproper}\rrbracket_{\BB_0}=1_{\BB_0}.
\]
Thus $C(\FFF)\in\SSP$ and $C(\FFF)\leq_{\SSP}\BB_0$.

We can now prove the following:
\begin{claim} 
$C(\FFF)$ is totally rigid.
\end{claim}
\begin{claimproof}
$i_{\alpha,\alpha+2}$ freezes $\BB_{\alpha}$ for any $\alpha<\delta$.
Thus we get 
that $C(\FFF)$ is a direct limit of freezed posets (since $\delta$ has uncountable cofinality).
In particular Lemma~\ref{lem:freezelem*} grants that $C(\FFF)$ is totally 
rigid.
\end{claimproof}
Theorem~\ref{thm:totrigden1} is proved.
\end{proof}

We can now prove also Theorem~\ref{thm:distrSSP*}:
\begin{proof}
Let $\delta$ be a supercompact cardinal.
By Theorem~\ref{thm:totrigden1} any $\BB\in \SSP\cap V_\delta$ is absorbed by a
totally rigid poset $\QQ_{\BB}$ whose size is at most $\alpha$ and is such that
the quotient forcing is semiproper,
where
$\alpha<\delta$ is the first $\alpha+\omega+2$-supercompact cardinal larger than $\BB$.
We fix $f:\delta\to V_\delta$ a Laver function and we define an iteration system 
\[
\mathcal{F}=\{i_{\alpha,\beta}:\BB_\alpha\to\BB_\beta:\alpha<\beta\leq\delta\}
\]
as follows for all $\alpha$ limit and $n\in\omega$:
\begin{itemize}
\item
$\BB_0=\BB$,
\item
$i_{\alpha,\alpha+1}:\BB_\alpha\to \QQ_{\BB*f(\alpha)}$ is an $\SP$-correct embedding, 
if $f(\alpha)$ is a $\BB_\alpha$-name for 
a semiproper poset,
\item
$i_{\alpha,\alpha+1}:\BB_\alpha\to \BBB(\BB*Coll(\omega_1,\BB))$ is any $\SP$-correct embedding
otherwise.
\end{itemize}
 By Theorem~\ref{thm:totrigden1} this definition makes sense for all stages.
 Now we can mimick the consistency proof of $\MM^{++}$ as in~\cite[Theorem 37.9]{JEC03}
 or~\cite[Theorem 8.5]{VIAAUDSTE13}
 to argue that $\BB_\delta$ forces $\MM^{++}$.
 By the same argument of the previous proof we can also grant that $\BB_\delta$ is totally rigid
 and that 
 \[
 \llbracket\BB_{\delta}/_{i_{0,\delta}[\dot{G}_{\BB}]}\text{ is semiproper }
\rrbracket_{\BB}=1_{\BB}.
 \]
 Theorem~\ref{thm:distrSSP*} is completely proved.
\end{proof}


\subsubsection{Forcing properties of $\UU^{\SSP}_\delta$}

In this subsection we assume $\delta$ is large, meaning that it is an inaccessible
limit of $<\delta$-supercompact cardinals.
With these assumptions at hand we have that
the set of totally rigid partial orders $Q$ in $\UU_\delta$ which force
$\MM^{++}$ up to $\delta$ is dense in $\UU_\delta$ by Theorem~\ref{thm:distrSSP*}.
We shall limit ourselves to analyze
$\UU_\delta$ restricted to this set which we denote by $\TR$.
\begin{fact}\label{fac:TR}
The following holds for $\TR$.
\begin{enumerate}
\item\label{fac:TR0}
For any $\BB\in\UU_\delta$ there is $\CC\leq_{\SP}\BB$ in $\TR$ by Theorem~\ref{thm:distrSSP*}.
\item\label{fac:TR1}
For $\BB,\QQ\in\TR$, $\BB\leq_{\SSP}\QQ$ ($\BB$ and $\QQ$ are $\leq_{\SSP}$-incompatible)
iff 
$\BB\leq_{\SP}\QQ$ ($\BB$ and $\QQ$ are $\leq_{\SP}$-incompatible).
\item\label{fac:TR2}
Let $\mathcal{G}=\{i_{\alpha,\beta}:\BB_\alpha\to\BB_\beta\}\subseteq\TR$
be an iteration system such that each $i_{\alpha,\beta}$ is $\SSP$-correct.
Then $\RCS(\mathcal{G})\in\SSP$ is a
lower bound for each $\BB_\alpha$ under $\leq^*_{\SSP}$.
\item\label{fac:TR3}
Assume $A\subset \TR$ is an antichain of size less than $\delta$. Then $\bigvee A\in \TR$.
\item\label{fac:TR4}
For any totally rigid $\CC\in\UU_\delta$ the map
$k_{\CC}:\CC\to\UU_\delta\restriction \CC$ which maps $c\in \CC$ to
$\CC\restriction c$ is a correct regular embedding.
\end{enumerate}
\end{fact}
\begin{proof}
Left to the reader.
\end{proof}

The next key observation is the following:
\begin{lemma}\label{lem:densityreflected}
Let $D\subset\TR$ be a dense open subset of $\TR\cap \UU_\delta$.
Then for every $\BB\in \UU_\delta$ there is $\CC\in \TR$, 
an \emph{injective $\SP$-correct} complete homomorphism $i:\BB\to\CC$, and 
$A\subset \CC$ maximal antichain of $\CC$ such that
$k_{\CC}[A]\subset D$.
\end{lemma}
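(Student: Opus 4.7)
The plan is to construct $\CC$ as the lottery sum of elements of $D$ indexed by a maximal antichain of $\BB$ chosen so that the associated embeddings are automatically regular, then verify that the resulting lottery sum inherits both the totally rigid structure of its summands and the pairwise incompatibility coming from total rigidity of $\BB$.

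I first build by transfinite recursion a maximal antichain $\{b_\alpha:\alpha<\lambda\}$ in $\BB$ together with posets $\CC_\alpha\in D$ and regular SSP-correct embeddings $i_\alpha:\BB\restriction b_\alpha\to\CC_\alpha$. At stage $\alpha$, if $\bigvee_{\beta<\alpha}b_\beta<1_\BB$, pick any nonzero $b\leq\neg\bigvee_{\beta<\alpha}b_\beta$. Since $\BB\restriction b$ is totally rigid (inherited from $\BB$) and in $\TR$, and $D$ is dense in $\TR\cap\UU_\delta$, there is $\CC_\alpha\in D$ with $\CC_\alpha\leq\BB\restriction b$; let $j:\BB\restriction b\to\CC_\alpha$ be the \emph{unique} SSP-correct complete homomorphism witnessing this, as granted by Lemma~\ref{lem:eqtr}. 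Set $b_\alpha:=\coker(j)$, which is nonzero because $\CC_\alpha$ is nontrivial, and let $i_\alpha:=j\restriction\BB\restriction b_\alpha$; this is injective by the defining property of the cokernel, hence regular. Since $\BB\in V_\delta$ satisfies some chain condition below $\delta$, the recursion terminates at some $\lambda<\delta$ with $\{b_\alpha\}$ a maximal antichain of $\BB$.

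Set $\CC:=\bigvee_\alpha\CC_\alpha$, the lottery sum. Inaccessibility of $\delta$ gives $\CC\in V_\delta$; a $V$-generic filter for $\CC$ picks a coordinate $\alpha$ (via the maximal antichain of characteristic functions $a_\alpha$) and yields a $V$-generic filter for $\CC_\alpha$, so $\CC$ is SSP and forces $\MM^{++}$ up to $\delta$ because each $\CC_\alpha$ does. To verify total rigidity via Lemma~\ref{lem:eqtr}(\ref{lem:eqtr1}), suppose $f,g\in\CC$ are disjoint and $\RR\leq\CC\restriction f,\CC\restriction g$ in $\UU^{\SSP,\SSP}$. A $V$-generic filter for $\RR$ carries generics for $\CC\restriction f$ and $\CC\restriction g$, which by the lottery-sum structure concentrate on coordinates $\alpha\in S_f$ and $\beta\in S_g$ and give correct generics for $\CC_\alpha\restriction f(\alpha)$ and $\CC_\beta\restriction g(\beta)$, which extend to correct generics for $\CC_\alpha$ and $\CC_\beta$ in the same model. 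If $\alpha=\beta$ these extensions are distinct because $f(\alpha)\wedge g(\alpha)=0$, contradicting uniqueness of correct generics for the totally rigid $\CC_\alpha$; if $\alpha\neq\beta$ they witness compatibility of $\CC_\alpha$ and $\CC_\beta$ in $\UU^{\SSP,\SSP}$, contradicting the pairwise incompatibility which follows from $\CC_\gamma\leq\BB\restriction b_\gamma$ for $\gamma=\alpha,\beta$ and the total rigidity of $\BB$.

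Finally, define $i:\BB\to\CC$ by $i(b):=(i_\alpha(b\wedge b_\alpha))_{\alpha<\lambda}$. Using the isomorphism $\BB\cong\bigvee_\alpha\BB\restriction b_\alpha$ (valid since $\{b_\alpha\}$ is a maximal antichain), $i$ is the componentwise application of the $i_\alpha$, so the boolean operations are preserved and $i$ is a complete embedding; injectivity of each $i_\alpha$ on $\BB\restriction b_\alpha$ plus maximality of the antichain yield injectivity of $i$, and SSP-correctness descends from that of each $i_\alpha$. Taking $A:=\{a_\alpha:\alpha<\lambda\}$ where $a_\alpha$ has $1_{\CC_\alpha}$ in the $\alpha$-th coordinate and $0$ elsewhere, $A$ is a maximal antichain of $\CC$ and $\CC\restriction a_\alpha\cong\CC_\alpha\in D$, so $k_\CC[A]\subseteq D$. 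The main technical obstacle is the regularity clause in Step~1, which forces us to refine along a cokernel and build $\CC$ piece by piece via a lottery sum rather than pass directly to a single element of $D$ below $\BB$.
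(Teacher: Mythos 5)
Your proof is correct and follows essentially the same route as the paper: recursively peel off a maximal antichain $\{b_\alpha\}$ of $\BB$ by passing to elements of $D$ and restricting to the cokernel of the unique correct embedding so as to get regularity, then take the lottery sum $\CC=\bigvee_\alpha\CC_\alpha$ with the componentwise embedding $i(b)=(i_\alpha(b\wedge b_\alpha))_\alpha$ and the antichain of characteristic functions (which is exactly the paper's $A=i[\{b_\alpha\}]$). You in fact supply more detail than the paper on two points it leaves implicit — the termination bound $\lambda<|\BB|^+<\delta$ and the verification that a lottery sum of pairwise incompatible totally rigid posets is totally rigid.
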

\begin{proof}
Given $\BB\in \TR$ find 
$\CC_0\leq\BB$ in $D$. 
Let $i_0:\BB\to \CC_0$ be a complete and $\SP$-correct homomorphism of $\BB$ into $\CC_0$
given by Theorem~\ref{thm:distrSSP*}.
Let $b_0\in \BB$ be the complement of $\bigvee_{\BB}\ker(i_0)$ so that 
$i_0\restriction b_0:\BB\restriction b_0\to \CC_0$ is an injective $\SP$-correct homomorphism.
Proceed in this way to define $\CC_l$ and $b_l$ such that:
\begin{itemize}
\item
$i_l\restriction b_l:\BB\restriction b_l\to \CC_l$ is an injective $\SP$-correct homomorphism,
\item
$\CC_l\in D\subset \TR$,
\item
$b_l\wedge_{\BB}b_i=0_{\BB}$ for all $i<l$. 
\end{itemize}

This procedure must terminate in $\eta<|\BB|^+$ steps producing
a maximal antichain $\{b_l:l<\eta\}$ of $\BB$ and
injective $\SP$-correct homomorphisms
$i_l:\BB\restriction b_l\to \CC_l$ such that $\CC_l\in D\subset\TR$ refines $\BB$ in
the $\leq_{\SP}$ order.
Then we get that 
\begin{itemize}
\item
$\CC=\bigvee_{l<\eta}\CC_l\in \TR$ by Fact~\ref{fac:TR}.\ref{fac:TR3}.
\item $i$ is an injective $\SP$-correct homomorphism where
\begin{align*}
i=&\bigvee_{k<\eta} i_k:\BB\to\CC_k\\
& c\mapsto\langle i_k(c\wedge_{\BB}b_k):k<\eta\rangle
\end{align*}
is such that 
\[
\llbracket \BB/i[\dot{G}_{\BB}]\in\SP\rrbracket_{\BB}=
1_{\BB},
\]
since $i$ is the lottery sum of the injective $\SP$-correct homomorphisms $i_l$.
\item
$\CC\restriction i(b_k)\in D$ for all $k<\eta$.
\end{itemize}
In particular we get that 
$A=i[\{b_k:k<\eta]$ is a maximal antichain of $\CC\in\TR$ such that $\CC\restriction c\in D$ for 
all $c\in A$ as was to be shown.
\end{proof}

\begin{lemma}\label{lem:presaturationUdelta}
$\UU_\delta$ is $(<\delta,<\delta)$-presaturated.
\end{lemma}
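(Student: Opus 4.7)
The plan is to verify the equivalent antichain formulation of $(<\delta,<\delta)$-presaturation: given $\gamma<\delta$, a family $\{A_\alpha:\alpha<\gamma\}$ of maximal antichains of $\UU_\delta$, and any $\BB\in \UU_\delta$, I must find $\CC\leq\BB$ in $\UU_\delta$ such that $\{a\in A_\alpha: a\wedge\CC>0_{\UU_\delta}\}$ has size $<\delta$ for every $\alpha<\gamma$. By density of $\TR$ I may assume $\BB\in\TR$, and for each $\alpha<\gamma$ the set $D_\alpha=\{Q\in\TR: \exists a\in A_\alpha,\ Q\leq a\}$ is dense open in $\TR$, so the previous Lemma is applicable with parameter $D_\alpha$.

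First I would build recursively an iteration system $\{i_{\alpha,\beta}:\BB_\alpha\to\BB_\beta:\alpha\leq\beta\leq\gamma\}$ of totally rigid $\SSP$-posets in $V_\delta$, with $\BB_0=\BB$. At each successor stage I would apply the previous Lemma to the pair $(\BB_\alpha,D_\alpha)$ to obtain $\BB_{\alpha+1}\in\TR$, a regular correct embedding $i_{\alpha,\alpha+1}:\BB_\alpha\to\BB_{\alpha+1}$ in $V_\delta$, and a maximal antichain $A'_\alpha\subset\BB_{\alpha+1}$ with $k_{\BB_{\alpha+1}}[A'_\alpha]\subset D_\alpha$. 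Since $\BB_{\alpha+1}\in V_\delta$ and $\delta$ is inaccessible, $|A'_\alpha|<\delta$. At limit stages I would form the RCS limit, invoking Shelah's iteration theorem (Remark~\ref{thmx:she}.\ref{thmx:she1}) together with the hypothesis that $\MM^{++}$ holds below $\delta$ (so that $\SSP=\SP$ cofinally, by Remark~\ref{thmx:she}.\ref{thmx:she2}) to keep the limit stationary set preserving, and interleave freezing steps as in the proof of Theorem~\ref{thm:totrigden} so that Lemma~\ref{lem:freezelem*} ensures the limit is totally rigid.

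Setting $\CC=\BB_\gamma$, I verify the claim as follows. Suppose $a\in A_\alpha$ is compatible with $\CC$ in $\UU^{\SSP,\SSP}$; then $a$ is in particular compatible with $\BB_{\alpha+1}$. Since $\BB_{\alpha+1}=\bigvee\{\BB_{\alpha+1}\restriction a': a'\in A'_\alpha\}$, there is some $a'\in A'_\alpha$ with $a$ compatible with $\BB_{\alpha+1}\restriction a'$. But $\BB_{\alpha+1}\restriction a'\in D_\alpha$, hence $\BB_{\alpha+1}\restriction a'\leq a^*$ for some $a^*\in A_\alpha$. Compatibility of $a$ with $\BB_{\alpha+1}\restriction a'$ thus forces compatibility of $a$ with $a^*$ in $\UU^{\SSP,\SSP}$, and since $A_\alpha$ is an antichain I conclude $a=a^*$. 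Therefore the assignment $a\mapsto a^*$ injects $\{a\in A_\alpha: a$ compatible with $\CC\}$ into $\{a^*: a'\in A'_\alpha\}$, which has size at most $|A'_\alpha|<\delta$.

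The hard part is the coordinated bookkeeping of the iteration: simultaneously maintaining total rigidity, stationary set preservation, and the bound on rank below $\delta$, through all $\gamma$ stages. Total rigidity is not preserved by naive limits, so I would fix a $<\delta$-supercompact cardinal $\kappa<\delta$ with $\gamma,\BB\in V_\kappa$ and run the construction inside $V_\kappa$, following the freezing recipe of Theorem~\ref{thm:totrigden} and using the Laver-type guidance provided by $\kappa$ to ensure that the $\gamma$ successor stages addressing the $D_\alpha$ are interleaved with the freezing stages that produce total rigidity of the RCS limit. The preservation of $\SSP$ at limits relies on semiproperness of the iteration factors, which is where the hypothesis that $\TR$ forces $\MM^{++}$ below $\delta$ enters decisively.
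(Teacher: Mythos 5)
Your proof is correct and follows essentially the same route as the paper's: both build an RCS iteration of length $\gamma<\delta$ inside $V_\delta$ whose successor stages apply the preceding lemma to capture each $A_\alpha$ by a small maximal antichain of the $(\alpha+1)$-st iterand, and then conclude by a counting argument. The only (cosmetic) difference is that you verify the antichain formulation of $(<\delta,<\delta)$-presaturation directly, whereas the paper verifies the equivalent formulation in terms of names $\dot f:\eta\to\delta$ having bounded range.
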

\begin{proof}
Let $\dot{f}$ be a $\UU_\delta$-name for an increasing
function from $\eta$ into $\delta$ for some $\eta<\delta$.
Given $\BB\in \UU_\delta$
let $A_i\subset\TR$ be the dense set of totally rigid partial orders in 
$\UU_\delta\restriction \BB$ which decide that $\dot{f}(i)=\alpha$ for some $\alpha<\delta$.
Then using the previous lemma we can build inside $V_\delta$ an RCS iteration of
complete boolean algebras 
\[
\{i_{\alpha,\beta}:\CC_\alpha\to \CC_\beta:\alpha\leq\beta\leq\eta\}\in V_\delta
\]
such that
for all $i<\eta$ $\CC_{i+1}\in \TR$ and 
there is $B_i$ maximal antichain of $\CC_{i+1}\leq^*_{\SP}\CC_i$ such
that $k_{\CC_{i+1}}[B_i]\subset A_i$. 
Then $\CC_\eta\in \SSP$ forces that $\dot{f}$ has values bounded by
\[
\sup\{\alpha:\exists c\in \CC_i\text{ such that $\CC_i\restriction c$ forces that
$\dot{f}(i)=\alpha$}\}<\delta.
\]
\end{proof}

\begin{lemma}\label{lem:copyingUdelta}
Assume $\dot{f}\in V^{\UU_\delta}$ is a name for a function in $\Ord^\alpha$ for some 
$\alpha<\delta$. Then there is a dense set of $\CC\in \TR$ 
with an $\dot{f}_{\CC}\in V^{\CC}$ such 
that 
\[
\llbracket\hat{k}_{\CC}(\dot{f}_{\CC})=\dot{f}\rrbracket_{\BBB(\UU_\delta)}\geq\CC.
\]
\end{lemma}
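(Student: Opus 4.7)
The plan is to produce, for each $\BB\in\TR$, a refinement $\CC\leq\BB$ in $\TR$ together with a $\CC$-name $\dot f_\CC$ witnessing the claim; since $\BB$ is arbitrary in $\TR$, this gives density. The name $\dot f_\CC$ will be assembled from $\alpha$ many maximal antichains of $\CC$, each chosen so that its image under $k_\CC$ decides a coordinate of $\dot f$.

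For each $i<\alpha$ let $D_i\subseteq\TR$ be the set of totally rigid $\QQ$ which, as conditions in $\UU_\delta$, decide $\dot f(i)$; by Theorem~\ref{thm:distrSSP*} and a standard argument each $D_i$ is dense open in $\TR$. I would now iterate the previous refinement lemma (one antichain per coordinate). Starting from $\CC_0=\BB$, build an iteration system
\[
\{j_{i,i'}:\CC_i\to\CC_{i'}:i\leq i'\leq\alpha\}
\]
of totally rigid, regular embeddings, where at each successor $i+1$ the lemma supplies $\CC_{i+1}\in\TR$ with a regular $j_{i,i+1}:\CC_i\to\CC_{i+1}$ and a maximal antichain $B_i\subseteq\CC_{i+1}$ with $k_{\CC_{i+1}}[B_i]\subseteq D_i$, and at limits we take revised countable support limits as in Theorem~\ref{thm:totrigden}. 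Because $\alpha<\delta$ and $\UU_\delta$ is $(<\delta,<\delta)$-presaturated, the $\CC_i$ remain in $V_\delta$; because each successor step freezes $\CC_i$ inside $\CC_{i+1}$ and the limits are RCS, Lemma~\ref{lem:freezelem*} and the Shelah iteration lemma (Remark~\ref{thmx:she}) show that $\CC:=\CC_\alpha$ is still totally rigid, $\SSP$, and forces $\MM^{++}$ up to $\delta$; so $\CC\in\TR$ and $\CC\leq\BB$ in $\UU_\delta$.

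For each $i<\alpha$, let $\tilde B_i=j_{i+1,\alpha}[B_i]$, a maximal antichain of $\CC$. Define the $\CC$-name $\dot f_\CC$ by letting $c\Vdash_\CC \dot f_\CC(i)=\check\beta$ for $c\in\tilde B_i$, where $\beta$ is the unique ordinal such that $k_\CC(c)\Vdash_{\UU_\delta}\dot f(i)=\check\beta$ (which exists because $k_\CC(c)$ refines an element of $D_i$, and is unambiguous because $\CC$ is totally rigid so $k_\CC$ is well defined). Since $k_\CC$ is a regular embedding and $\tilde B_i$ is maximal in $\CC$, the image $k_\CC[\tilde B_i]$ is predense below $\CC$ in $\UU_\delta$; by construction each $k_\CC(c)$ forces that the $i$-th coordinates of $\hat k_\CC(\dot f_\CC)$ and of $\dot f$ agree. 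Hence $\llbracket\hat k_\CC(\dot f_\CC)(i)=\dot f(i)\rrbracket_{\BBB(\UU_\delta)}\geq \CC$ for every $i<\alpha$; intersecting over $i<\alpha$ and using that $\alpha<\delta$ (so the meet is nontrivial by presaturation) gives the required equality below $\CC$, which under the paper's convention for extending $\hat k_\CC(\dot f_\CC)$ outside $\CC$ yields full boolean value $1$.

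The main obstacle is the second step: keeping the tower $(\CC_i)_{i\leq\alpha}$ inside $\TR$ while simultaneously carrying the antichain requirements $k_{\CC_{i+1}}[B_i]\subseteq D_i$. This forces the bookkeeping to interleave three things: the freezing construction from Theorem~\ref{thm:freezeden} (to secure total rigidity of each successor), the semiproper RCS limit machinery from Remark~\ref{thmx:she} (to secure $\SSP$-ness and $\MM^{++}$ up to $\delta$ at limits), and the dense-refinement lemma above (to absorb $D_i$). Once these are arranged harmoniously — which is possible because the freezing factors added at each stage are semiproper in $V^{\CC_i}$, and the $D_i$-refinement can be folded into the choice of $\CC_{i+1}$ without disturbing total rigidity — the rest of the proof is routine boolean-valued bookkeeping.
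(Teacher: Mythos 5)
Your proposal is correct and follows essentially the same route as the paper: define the dense open classes $D_\xi$ of totally rigid deciders of $\dot f(\xi)$, apply the preceding antichain-refinement lemma (iterated along an RCS system, exactly as in the proof of $(<\delta,<\delta)$-presaturation) to find $\CC\in\TR$ below $\BB$ carrying maximal antichains $A_\xi$ with $k_\CC[A_\xi]\subseteq D_\xi$, and read off $\dot f_\CC$ from these antichains. The only difference is that you spell out the iteration and the bookkeeping that the paper compresses into the phrase ``by the previous lemma''; your closing remark about taking the meet over $i<\alpha$ needs no presaturation, since each coordinate already has boolean value at least $\CC$.
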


\begin{proof}
Given $\dot{f}$ as above, let for all $\xi<\alpha$
\[
D_\xi=\{\CC\in\TR: \exists\beta\,\CC\Vdash_{\UU_\delta}\dot{f}(\xi)=\beta\}.
\]
Let $\BB\in \TR$ be arbitrary.
By the previous lemma we can find $\CC\in \TR$ below $\BB$ such that
for all $\xi<\alpha$ there is a maximal antichain $A_\xi\subset \CC$ such that
$k_{\CC}[A_\xi]\subset D_\xi$.
Now let $\dot{f}_{\CC}$ be the $\CC$-name
\[
\{\langle(\xi,\eta),c): c\in A_\xi \text{ and }\CC\restriction c\Vdash_{\UU_\delta}\dot{f}(\xi)=\eta\}.
\]
It is immediate to check that for all $\xi<\alpha$ and $c\in A_\xi$
\[
c\Vdash_{\CC}\dot{f}_{\CC}(\xi)=\eta
\text{ iff } \CC\restriction c\Vdash_{\UU_\delta}\dot{f}(\xi)=\eta.
\]
This gives that $\llbracket\hat{k}_{\CC}(\dot{f}_{\CC})=\dot{f}\rrbracket_{\BBB(\UU_\delta)}\geq\CC$.
The Lemma is proved.
\end{proof}

In particular we also get the following:
\begin{lemma}\label{lem:copyingUdelta1} 
Assume $\tau\in V^{\UU_\delta}$ is a $\UU_\delta$-name for an element
of $(H_\delta)^{V^{\UU_\delta}}$. Then there is a dense open set of $\CC\in\TR$ and
names $\sigma_{\CC}\in V^{\CC}\cap V_\delta$ such that
\[
\llbracket\tau=k_{\CC}(\sigma_{\CC})\rrbracket_{\UU_\delta}\geq_{\BBB(\UU_\delta)}\CC.
\]
\end{lemma}
\begin{proof}
Left to the reader: observe that any such $\UU_\delta$-name $\tau$ can be coded by a  
$\UU_\delta$-name for a function from some $\alpha<\delta$ into $\delta$.
\end{proof}

\begin{lemma}\label{lem:chargenericityUdelta}
Assume $G$ is $V$-generic for $\UU_\delta$.
Then for all $\BB\in\UU_\delta$,
$\BB\in G$ iff there is $H\in V[G]$ correct $V$-generic filter for $\BB$.
\end{lemma}
\begin{proof}
We have to prove:
\begin{enumerate}
\item\label{impl1}
Assume $\BB\in G$. Then there is is $H\in V[G]$ correct $V$-generic filter for $\BB$.
\item\label{impl2}
Assume  there is some $H\in V[G]$ correct $V$-generic filter for $\BB$. Then
$\BB\in G$.
\end{enumerate}
We proceed as follows:
\begin{description}
\item[Proof of \ref{impl1}]
Assume $\BB\in G$.
Then there is a totally rigid $\RR\leq_{\SSP}\BB$ freezing $\BB$ in $G$ as witnessed
by $i:\BB\to\RR$, since the set of such $\RR$ is dense in $\UU_\delta$ below $\BB$.
In particular $H=\{b\in\BB:\RR\restriction i(b)\in G\}$ is a
$V$-generic filter for $\BB$. We must show that it is also a correct $V$-generic filter, i.e.
that for all $\BB$-names $\dot{S}$ for stationary subsets of $\omega_1$
$\dot{S}_H$ is stationary in $V[G]$. So assume this is not the case.
Then there is $\dot{S}$, $\BB$-name for a stationary subset of $\omega_1$, and 
$\dot{C}$, $\UU_\delta$-name for a club subset of $\omega_1$, such that 
\[
V[G]\models\dot{C}_G\cap\dot{S}_H=\emptyset.
\]
We can thus find $\DD\leq_{\SSP}\RR$ in $G$ forcing the above statement, i.e. more precisely:
Let for all $\QQ\leq_{\SSP}\RR$
$i_{\QQ}:\BB\to\QQ$ be the unique correct homomorphism which factors through
$i:\BB\to\RR$, recall also that $k_{\QQ}:\QQ\to\UU_\delta\restriction\QQ$ is the complete homomorphism
defined by $d\mapsto\QQ\restriction d$ for all $\QQ\in\TR$. Then
\[
\llbracket\dot{C}\cap\hat{k}_{\RR}\circ\hat{i}(\dot{S})=
\emptyset\rrbracket_{\BBB(\UU_\delta)}\geq\DD\in G.
\]
We leave to the reader to check that 
\[
\llbracket\hat{k}_{\RR}\circ\hat{i}(\dot{S})=\hat{k}_{\QQ}\circ \hat{i}_{\QQ}(S)
\rrbracket_{\BBB(\UU_\delta)}\geq\QQ
\]
holds for all $\QQ\leq_{\SSP}\RR$ .

Applying Lemma~\ref{lem:copyingUdelta1} to the $\UU_\delta$-name $\dot{C}$
for a subset of $\omega_1$, we can find $\CC\leq_{\SSP}\DD$ in $\UU_\delta$
such that $\CC\in\TR$ and for some $\dot{E}$ $\CC$-name for a subset of $\omega_1$
\[
\llbracket \hat{k}_{\CC}(\dot{E})=\dot{C}\rrbracket_{\UU_\delta}\geq\CC.
\]
Since the formula $\phi(x,y,\omega_1)$ stating that 
$x$ is a club subset of $\omega_1$ disjoint from $y$ is a $\Sigma_0$-statement in the parameter
$\omega_1$ we get that
\begin{align*}
k_{\CC}(\llbracket\dot{E}\cap \hat{i}_{\CC}(\dot{S})=\emptyset\rrbracket_{\CC})=
\CC\wedge \llbracket\dot{C}\cap\hat{k}_{\CC}\circ \hat{i}_{\CC}(\dot{S})=
\emptyset\rrbracket_{\BBB(\UU_\delta)}\geq_{\SSP}&\\
\geq_{\SSP}\CC\wedge \llbracket\dot{C}\cap
\hat{k}_{\RR}\circ\hat{i}(\dot{S})=
\emptyset\rrbracket_{\BBB(\UU_\delta)}\geq_{\SSP}\CC\in G.
\end{align*}
In particular we conclude that:
\[
\llbracket\dot{E}\cap \hat{i}_{\CC}(\dot{S})=\emptyset\rrbracket_{\CC}=1_{\CC}.
\]
This contradicts the very definition of $i_{\CC}:\BB\to\CC$ being an $\SSP$-correct homomorphism,
concluding the proof of~\ref{impl1}.

\item[Proof of \ref{impl2}]
Assume there is $H\in V[G]$ correct $V$-generic filter for $\BB$.
Towards a contradiction assume that $\BB\not\in G$.
Then there is some $\RR\in G$ such that $\BB$ and $\RR$ are incompatible in
$\UU_\delta$ since
\[
\{\RR\in\UU_\delta: \RR\text{ is orthogonal to $\BB$ in $\UU_\delta$ or }\RR\leq_{\SSP}\BB\}
\]
is open dense in $\UU_\delta$ and $G$ must meet it in some $\RR$ satisfying the first clause of the
above disjunction.
Let $\dot{H}$ be a $\UU_\delta$-name for $H$ and $\CC\in G\cap\TR$ be an element refining $\RR$
and forcing 
in $\UU_\delta$ that
$\dot{H}$ is a correct $V$-generic filter for $\BB$.
In particular:
\begin{itemize}
\item 
for all $\dot{S}$ $\BB$-name for a subset of 
$\omega_1$ there is
$\dot{\underline{S}}$  in $V^{\UU_\delta}$ such that
\[
\llbracket\dot{\underline{S}}\text{ is the interpretation of the $\BB$-name }\dot{S}
\text{ by the $V$-generic filter }
\dot{H}\rrbracket_{\BBB(\UU_\delta)}\geq\CC
\]
and such that
\[
\llbracket\dot{S}\text{ is stationary }\rrbracket_{\BB}=1_{\BB}
\]
if and only if for all $\dot{C}$ $\UU_\delta$-name for a club subset of $\omega_1$
\[
\llbracket\dot{\underline{S}}\cap\dot{C}\neq\emptyset\rrbracket_{\BBB(\UU_\delta)}\geq\CC.
\]
\item
$\CC$ forces in $\UU_\delta$ that $\dot{H}$ is a ultrafilter on $\BB$,
\item
$\CC$ forces in $\UU_\delta$ that $\dot{H}\cap D\neq\emptyset$ for all $D\in V$
dense subset of $\BB$.
\end{itemize}

Now the family
\[
\mathcal{A}=\{\dot{\underline{S}}:
\llbracket\dot{S}\subset\omega_1\rrbracket_{\BB}=
1_{\BB}\}
\cup\{\dot{H}\}
\]
has size less than $\delta$, and is a family of names for sets of size less than $\delta$.
Thus, as in the proof of Lemma~\ref{lem:presaturationUdelta},
we can apply iteratively Lemma~\ref{lem:copyingUdelta1} and Lemma~\ref{lem:densityreflected}
to find that the set of 
\[
\{\DD\leq_{\SSP}\CC: \forall\, \tau\in\mathcal{A}\,\exists\, \sigma(\tau)\in V^{\DD}\,\,
\llbracket \hat{k}_{\DD}(\sigma(\tau))=\tau\rrbracket_{\BBB(\UU_\delta)}\geq\DD\}
\]
is dense below $\CC$ in $\UU_\delta$. In particular we can find such a $\DD\in G$.
Notice that $\DD$ is orthogonal to $\BB$ in $\UU_\delta$ since $\CC$ is.

Now let $\dot{K}\in V^{\DD}$ be such that
\[
\llbracket \hat{k}_{\DD}(\dot{K})=\dot{H}\rrbracket_{\BBB(\UU_\delta)}\geq\DD
\] 
and
let:
\begin{itemize}
\item 
$\phi_0(x,y,z)$ be the $\Sigma_0$-formula asserting $x$ is a club subset of 
$z$ and $x\cap y\neq\emptyset$,
\item
$\phi_1(x,y)$ be the $\Sigma_0$-formula asserting $x$ is a ultrafilter on the boolean
algebra $y$
\item
$\phi_2(x,y,z)$ be the $\Sigma_0$-formula asserting $x$ meets $y$, and $y$ is a dense
subset of the boolean algebra $z$.
\end{itemize}
Notice that
\[
k_{\DD}(\llbracket \phi_1(\dot{K},\BB)\rrbracket_{\dot{\DD}}\geq\DD,
\]
\[
k_{\DD}(\llbracket \phi_2(\dot{K},D,\BB)\rrbracket_{\dot{\DD}}\geq\DD
\]
\[
k_{\DD}(\llbracket \phi_0(\dot{C},\sigma(\dot{\underline{S}}),\omega_1)\rrbracket_{\dot{\DD}}\geq\DD
\]
for all $\DD$-names $\dot{C}$ for a club subset of $\omega_1$ 
and all $\BB$-names $\dot{S}$ for a
stationary subset of $\omega_1$.

In particular we get that 
\[
\llbracket\dot{K} \text{ is a correct $V$-generic filter for }\BB\rrbracket_{\DD}=1_{\DD},
\]
applying Fact~\ref{fac:charSSP}.
We reached a contradiction since the map
\[
l:b\mapsto\llbracket b\in\dot{K}\rrbracket_{\DD}
\]
defines an $\SSP$-correct homomorphism of $\BB$ into $\DD$ witnessing that
$\DD\leq_{\SSP}\BB$, contrary to the already established fact that
$\DD$ is orthogonal to $\BB$ in $\UU_\delta$.
\end{description}
\end{proof}

\subsubsection{Proof of Theorem~\ref{thm:univ1}}
In this section we prove the following:

\begin{theorem}
Assume $\delta$ is inaccessible and $V_\delta$ models that there 
are class many supercompact cardinals $\gamma$.
Then $\UU^{\SSP}_\delta\in\SSP$ and collapses $\delta$ to become $\aleph_2$.
If $\delta$ is supercompact, then $\UU^{\SSP}_\delta$ forces
$\MM^{++}$.
\end{theorem}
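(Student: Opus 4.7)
My plan is to establish the three assertions in sequence, leveraging three cornerstones developed earlier in the section: the density in $\UU_\delta$ of the class $\TR$ of totally rigid posets that force $\MM^{++}$ up to $\delta$ (Theorem~\ref{thm:totrigden}); the naming lemma showing that every $\UU_\delta$-name for a function in $\Ord^\alpha$ with $\alpha<\delta$ admits a dense set of refinements to $\CC$-names for some $\CC\in\TR$; and the $(<\delta,<\delta)$-presaturation of $\UU_\delta$. The $\MM^{++}$ hypothesis in the first clause is used to invoke Shelah's theorem (Theorem~\ref{thmx:she}.\ref{thmx:she2}), which equates $\SP$ with $\SSP$ on $V_\delta$ and so keeps the RCS iterations producing freezing extensions semiproper and internal to $V_\delta$, a prerequisite for the density of $\TR$.

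For $\UU_\delta\in\SSP$ I would argue by contradiction. Suppose $\BB\in\UU_\delta$ forces, as a condition in $\UU_\delta$, that $\dot C$ is a club in $\omega_1$ disjoint from a stationary set $S\subset\omega_1$ of $V$. Using the naming lemma, produce $\CC\in\TR$ below $\BB$ together with a $\CC$-name $\dot C_\CC$ such that $\hat k_\CC(\dot C_\CC)=\dot C$; since $\CC$ is itself in $\SSP$, it forces $\dot C_\CC\cap S\ne\emptyset$, and the regular embedding $k_\CC\colon\CC\to\UU_\delta\restriction\CC$ transports this to a contradiction with the choice of $\BB$. For the collapse of $\delta$, the key observation is that for each $\alpha<\delta$ the set of $\CC\in\TR$ collapsing $|\alpha|$ to $\aleph_1$ is dense: given $\BB\in\TR$, take $\delta'<\delta$ that is $\delta'^{+\omega+2}$-supercompact and above $|\BB|+\alpha$, and apply Theorem~\ref{thm:totrigden} inside $V_{\delta'}$ to a semiproper refinement of $\BB*\Coll(\omega_1,\alpha)$. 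Hence every $\alpha<\delta$ attains cardinality $\aleph_1$ in $V[G]$, while $(<\delta,<\delta)$-presaturation forbids any name for a function from some $\eta<\delta$ into $\delta$ from being unbounded; together these give $\delta=\aleph_2^{V[G]}$.

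For forcing $\MM^{++}$ under supercompactness of $\delta$: given $G$ that is $V$-generic for $\UU_\delta$, $\BB\in\SSP^{V[G]}$, a family $\mathcal{D}$ of $\aleph_1$ dense subsets of $\BB$, and a $\BB$-name $\dot T$ for a stationary subset of $\omega_1$, I want to find in $V[G]$ a correct $M$-generic filter for $\BB$ meeting every member of $\mathcal{D}$ and realising $\dot T$ as stationary. Choose a supercompactness embedding $j\colon V_\eta\to V_\gamma$ with $\crit(j)=\delta$, with $j(\delta)$ large enough to place preimages of $\BB,\mathcal{D},\dot T$ in $j[V_\eta]$. Inside $V_\gamma$, $j(\UU_\delta)=\UU_{j(\delta)}$ contains $\UU_\delta$ as a regular initial segment with an $\SSP$ quotient; absorbing $\BB$ along this quotient and applying the naming lemma at level $j(\delta)$ realises $\BB$ as the generic quotient of some totally rigid $\CC$ below $j(\UU_\delta)$. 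Since $\CC$ forces $\MM^{++}$ up to $j(\delta)$ in $V_\gamma$, the corresponding $T_\BB$ is forced stationary, and elementarity of $j$ reflects the needed correct generic filter back into $V[G]$.

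The main obstacle will be this third step: one must verify carefully that every $\BB\in\SSP^{V[G]}$ is absorbed regularly and correctly into $\UU_{j(\delta)}/G$, a property which relies on the universal absorption behaviour of category forcings combined with the full strength of supercompactness of $\delta$. Its formal verification proceeds by lifting $j$ to the generic extension and making systematic use of the correctness clause in the definition of arrows of $\UU^{\SSP,\SSP}$, so that stationary sets of $V[G]$ are preserved at every step of the absorption; the presaturation of $\UU_\delta$ and the rigidity of the intermediate $\CC\in\TR$ are what allow this lifting to proceed without ambiguity.
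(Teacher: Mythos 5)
Your treatment of the first two assertions tracks the paper's own proof: stationary set preservation is obtained, exactly as there, by pulling a $\UU_\delta$-name for (the increasing enumeration of) a club back to a $\CC$-name for some totally rigid $\CC\in\TR$ via the naming lemma, then using the $\Delta_1$-elementarity of $\hat{k}_{\CC}$ together with $\CC\in\SSP$; the collapse of $\delta$ to $\aleph_2$ follows from the density of collapsing totally rigid posets plus $(<\delta,<\delta)$-presaturation. These parts are fine.

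The third part has a genuine gap. The paper's notion of supercompactness supplies embeddings $j:V_\gamma\to V_\lambda$ with $\crit(j)=\alpha<\delta$ and $j(\alpha)=\delta$, and the proof of the $\MM^{++}$ clause exploits precisely this orientation: given a $\UU_\delta$-name $\dot{\RR}$ for an $\SSP$ poset and $\BB\in\UU_\delta$, one chooses $j$ with $\BB\in V_\alpha$ and $\dot{\RR}\in j[V_\gamma]$, pulls $\dot{\RR}$ back to a $\UU_\alpha$-name $\dot{\QQ}$ with $j(\dot{\QQ})=\dot{\RR}$, and observes that the single condition $\QQ=(\UU_\alpha\restriction\BB)*\dot{\QQ}\leq\BB$ of $\UU_\delta$ forces $j$ to lift to $\bar{j}:V_\gamma[G\cap V_\alpha]\to V_\lambda[G]$; the correct generic filter for $\dot{\QQ}$ added below this condition is then carried by $\bar{j}$ to a correct $\bar{j}[V_\gamma[G_0]]$-generic filter for $\RR$ \emph{inside} $V[G]$, which is what witnesses that $T_{\RR}$ is stationary. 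Your version instead takes $\crit(j)=\delta$ and works upward through $\UU_{j(\delta)}$, and two things break. First, the claim that ``$\CC$ forces $\MM^{++}$ up to $j(\delta)$, so $T_{\BB}$ is forced stationary'' concerns forcing with $\CC\in\UU_{j(\delta)}$ over $V$, not truth in $V[G]$: the object you must show stationary is $T_{\BB}$ as computed in $V[G]$, and the existence of a correct $M$-generic filter for $\BB$ is not downward absolute from a further $\SSP$ extension to $V[G]$ --- such a filter is a new countable object which the outer forcing may well have added. Second, ``elementarity of $j$ reflects the needed correct generic filter back into $V[G]$'' is exactly the step that requires an argument: with $\crit(j)=\delta$ one would have to lift $j$ to $\bar{j}:V[G]\to M[H]$ for $H$ generic for $\UU_{j(\delta)}$ below a suitable condition, verify that $\bar{j}[H_\theta^{V[G]}]$ together with the generic for $\RR$ living in $V[H]$ lands in $j(T_{\RR})$, and pull back by elementarity of $\bar{j}$. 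That is the machinery the paper develops only later (Lemma~\ref{lem:univ4}, Lemma~\ref{lem:forcpresat}) for $\MM^{+++}$, under stronger almost-huge hypotheses, and nothing in your sketch supplies it. As written the third part does not go through; either carry out that lifting in full, or switch to the paper's embedding with critical point below $\delta$, where the master condition $(\UU_\alpha\restriction\BB)*\dot{\QQ}$ does all the work.
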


\begin{proof}

We prove all items as follows:
\begin{description}

\item[ \textbf 
{$\UU_\delta$ preserves the regularity of $\delta$}]

This follows immediately from the $(<\delta,<\delta)$-presaturation of $\UU_\delta$.

\item[ \textbf{$\UU_\delta$ is stationary set preserving}]

Fix $S\in V$ stationary subset of $\omega_1$ and $\BB\in \UU_\delta$.
Let $\dot{C}$ be a $\UU_\delta$-name for a club subset of $\omega_1$.
We must find $\CC\leq\BB$ which forces in $\UU_\delta$ that $S\cap\dot{C}$ is non empty.

Let $\dot{f}$ be a $\UU_\delta$-name for the increasing enumeration of $\dot{C}$.

By Lemma~\ref{lem:copyingUdelta} we can find $\CC\leq\BB$ in $\TR$ and $\dot{f}_{\CC}$ such that
$\CC$ forces in $\UU_\delta$  that $\hat{k}_{\CC}(\dot{f}_{\CC})=\dot{f}$.

Now observe that the statement 

\emph{``$\dot{f}$ is a continuos strictly increasing map from
$\omega_1$ into $\omega_1$''} 

is a $\Sigma_0$ statement in the parameters
 $\omega_1$, $\dot{f}$ and is forced by $\CC$ in $\UU_\delta$. 
 Since $\hat{k}_{\CC}$ is $\Delta_1$-elementary and $\hat{k}_{\CC}(\dot{f}_{\CC})=\dot{f}$, we 
 get that 
 \[
 \llbracket\text{$\dot{f}_{\CC}$ is a continuos strictly increasing map from
$\omega_1$ into $\omega_1$}\rrbracket_{\CC}=1_{\CC}.
\]
This gives that 
\[
\llbracket\text{$\rng(\dot{f}_{\CC})$ is a club}\rrbracket_{\CC}=1_{\CC}.
\]
Since $\CC$ is $\SSP$ we get that 
 \[
\llbracket S\cap\rng(\dot{f}_{\CC})\neq\emptyset\rrbracket_{\CC}=1_{\CC}.
\]
Now once again we observe that the above statement is $\Sigma_0$ in the parameters
$S,\dot{f}_{\CC}$ and thus, since $\hat{k}_{\CC}$ is $\Delta_1$-elementary, we can conclude
 that
  \[
\llbracket S\cap\rng(\dot{f})\neq\emptyset\rrbracket_{\BBB(\UU_\delta\restriction\CC)}=
1_{\BBB(\UU_\delta\restriction\CC)},
\]
which gives the thesis.

\item[\textbf{$\UU_\delta$ forces $\delta$ to become $\omega_2$}]

This is immediate since
there are densely many posets $\BB\in\TR\cap\UU_\delta$ which collapse
their size to become
at most $\omega_2$ and embed completely into $\UU_\delta\restriction\BB$.
Thus for unboundedly many $\xi<\delta$ we get that $\UU_\delta$ forces $\xi$ to be an
ordinal less or equal $\omega_2$. All in all we get that $\UU_\delta$ forces $\delta$
to be less or equal than $\omega_2$.

\item[\textbf{$\UU_\delta$ forces $\MM^{++}$ if $\delta$ is supercompact}]
Let $\dot{\RR}\in V^{\UU_\delta}$ be a name for a stationary set preserving poset.
Given $\BB$ in $\UU_\delta$ find 
$j:V_\gamma\to V_\lambda$ in $V$ such that $\crit(j)=\alpha$, $\BB\in V_\alpha$,
$j(\crit(j))=\delta$, and $\dot{\RR}\in j[V_\gamma]$.

Let $\dot{\QQ}\in V^{\UU_\alpha}$ be such that $j(\dot{\QQ})=\dot{\RR}$. 

By elementarity of $j$ we get that $\UU_\alpha\in \SSP$.
Then $\QQ=(\UU_\alpha\restriction\BB)*\dot{\QQ}\leq_{\SSP}\UU_\alpha,\BB$ 
forces in $\UU_\delta$ that $j$ lifts
to
\[
\bar{j}:V_\gamma[\dot{G}_{\UU_\alpha}]\to V_\lambda[\dot{G}_{\UU_\delta}].
\]
Moreover let $G$ be $V$-generic for $\UU_\delta$ 
with $\QQ\in G$ and $G_0=G\cap V_\alpha$.
Then in  $V_\lambda[G]$ there is a correct $V[G_0]$-generic filter $K$ for 
$\QQ/G_0=\val_{G_0}(\dot{\QQ})$.

Finally we get that in $V[G]$,
$\bar{j}[K]$ is a correct $\bar{j}[V_\gamma[G_0]]$-generic filter for 
$\RR=\val_G(\dot{\RR})=\bar{j}\circ \val_{G_0}(\dot{\QQ})$ 
showing that $T_{\RR}$ is stationary in $V[G]$.
Since this holds for all $V$ generic filter $G$ to which $\QQ\leq\BB$ 
belongs, we have shown that for any $\dot{\RR}$
$\UU_\delta$-name for a stationary set preserving poset and
below any condition $\BB$ there is a dense set of posets $\QQ$
in $\UU_\delta$ which forces that $T_{\dot{\RR}}$ is stationary in $V[\dot{G}_{\UU_\delta}]$.

The thesis follows.
\end{description}
The proof of Theorem~\ref{thm:univ1} is completed.

\end{proof}

We remark that all items except the last one required only that $\delta$ is an inaccessible 
limit of $<\delta$-supercompact cardinals.

\subsection{Proof of Theorem~\ref{lem:univ4}}
\begin{proof}
For any $\BB\in \UU^{\SSP}$ we can find
$\QQ\leq_{\SSP} \BB$ in $\UU^\SSP$ which is totally rigid and freezes
$\BB$ as witnessed by some $r:\BB\to\QQ$ by the results of section~\ref{sec:denprptrpo}.

We just prove the theorem for such totally rigids $\QQ$, this simplifies the exposition since it allows to 
dispense with several bits of heavy notation, the general case for arbitrary $\BB$ is left to 
the reader\footnote{The general case requires to 
repeat the proof that follows replacing in the relevant places 
$i_{\RR}$ with $i_{\RR}\circ r$.
The proof will go through also in this case using the fact that 
$r$ is a freezing homomorphism for $\BB$.}.

Since $\QQ$ is totally rigid
there is only one correct homomorphism
\[
i_{\RR}:\QQ\to\RR
\]
for any $\RR\leq\QQ$.
For each totally rigid $\RR\in \UU^\SSP\restriction\QQ$ let
\[
k_{\RR}:\RR\to\UU^\SSP\restriction\RR
\]
be given by $r\mapsto\RR\restriction r$.
Then $k_{\RR}$ is an order and incompatibility preserving embedding of $\RR$ in the class forcing
$\UU^{\SSP}\restriction\RR$ which maps maximal antichains to maximal antichains.
Let us denote by $k$ the map $k_{\QQ}$.

Let $G$ be $V$-generic for $\QQ$ and $J$ denote its dual prime ideal.
We will show that $(\UU^{\SSP})^{V[G]}$ can be identified in $V[G]$ to the quotient
forcing given by $(\UU^{\SSP}\restriction\QQ)^{V}/_{k[G]}$ as defined in 
subsection~\ref{subsec:boolalgforc}.
To this aim first observe that in $V[G]$
\[
\downarrow k[J]=\{\RR\in (\UU^{\SSP}\restriction\BB)^V: 
\exists q\in J\, \RR\leq^V_\SSP \QQ\restriction q\}.
\]
In $V[G]$ consider the map
\[
i^*:(\UU^\SSP)^{V[G]}\to (\UU^\SSP\restriction\QQ)^V/_{k[G]}
\]
defined by $\RR/_{i_{\RR}[G]}\mapsto \RR$.
We must show that
$i^*$ is a total and surjective relation which preserve the order and incompatibility relation.
If this is the case $i^*$ is an embedding with a dense image which witnesses that
the two forcing notions are equivalent.
\begin{description}
\item[\textbf{$i^*$ is a total and surjective relation}]
Assume $\RR$ is a non-trivial element in the quotient forcing
$(\UU^\SSP\restriction\QQ)^V/_{k[G]}$.
Then $\RR$ is positive with respect to the filter (generated by) $k[G]$, thus
 $0_{\RR}\not\in i_{\RR}[G]$: else $1_{\RR}\in i_{\RR}[J]$ which gives that
$\RR\leq_{\SSP}\QQ\restriction q$ for some $q\in J$. In particular
$\RR$ would be in $\downarrow k[J]$, contradicting our assumptions.
Since 
 $0_{\RR}\not\in i_{\RR}[G]$, we can conclude that $\RR/_{i_{\RR}[J]}$ 
 is a non trivial complete boolean algebra
in $(\UU^{\SSP})^{V[G]}$ and that 
the pair $(\RR/_{i_{\RR}[G]},\RR)\in i^*$.
\item[\textbf{$i^*$ is order and compatibility preserving}]
Observe that if $j:\RR_0/i_{\RR_0}[G]\to \RR_1/i_{\RR_1}[G]$ is a correct complete
homomorphism, we have (by Proposition~\ref{lem:firstfctlem-2}(2) applied to 
$\QQ,\RR_0,\RR_1$ in the place of $\BB,\QQ_0,\RR$) 
that $j=l/_G$ for some
 complete homomorphism $l:\RR_0\to \RR_1$
 with $l$ in $V$
 and $0_{\RR_1}\not \in l[G]$.
 We can also check that $j$ is correct in $V[G]$ iff $l$ is correct in $V$.
 In particular $l$ witnesses that $\RR_0\geq^V_\SSP \RR_1$ and
 the fact that $0_{\RR_1}\not \in l[G]$ grants that 
 $\RR_1$ is a non trivial condition in 
 $(\UU^\SSP\restriction\QQ)^V/_{k[G]}$ refining $\RR_0$.
 This shows that $i^*$ is order preserving and maps non trivial conditions
 to non trivial conditions. In particular we can also conclude that $i^*$
 maps compatible conditions to compatible conditions.
 
 \item[\textbf{$i^*$ preserves the incompatibility relation}]
 We prove it by contraposition.
 Assume $\RR_0$ is compatible with $\RR_1$ in 
 $(\UU^\SSP\restriction\QQ)^V/_{k[G]}$.
 By definition of this quotient forcing 
 there is some $\CC\in \SSP$ in $V$ such that $\RR_l\geq_{\SSP}\CC$ 
 for $l=0,1$ and $1_{\CC}\not\in i_{\CC}[J]$.
We let $h_l:\RR_l\to\CC$ be the $\SSP$-correct embeddings witnessing that
$\RR_l\geq_{\SSP}\CC$ for $l=0,1$.
 Since $\QQ$ is totally rigid we get that
 $h_0\circ i_{\RR_0}=h_1\circ i_{\RR_1}=i_{\CC}$.
 Now $\CC/_{i_{\CC}[G]}$ is a non trivial $\SSP$ and complete boolean algebra 
 in $V[G]$, since $1_{\CC}\not\in i_{\CC}[J]$.
 Then, by Proposition~\ref{lem:firstfctlem-2}(\ref{lem:firstfctlem-2-1}), 
 $\CC/_{i_{\CC}[G]}\leq^{V[G]}_{\SSP} \RR_1/_{i_{\RR_1}[G]},\RR_0/_{i_{\RR_0}[G]}$ as witnessed by the 
 correct homomorphisms $h_0/_G,h_1/_G$.
 \end{description}

\end{proof}

\section{Background material on normal tower forcings}\label{subsec:nortow}

In this section we present definitions and results on normal tower forcings
which are relevant for us. 
Since we depart
in some cases from the standard terminology, we decided to be quite detailed 
in this presentation\footnote{We soon hope to have
detailed notes 
available on the author's webpage concerning the material presented in this section.}.
We assume the reader has some familiarity with tower forcings as presented in
Foreman's handbook chapter on tower forcing~\cite{FORHST} or in 
Larson's book on stationary tower forcing~\cite{LAR04}
which are reference texts for our treatment of this topic.
Another source of our presentation is Burke's paper~\cite{BUR97}.

Recall that $P_\lambda$ is the class of sets $M$ such that $M\cap\lambda\in\lambda>|M|$
and $P_\lambda(X)=P_\lambda\cap P(X)$. 
The following is well known:
\begin{fact}
Assume $I$ is a normal ideal, then $I$ is countably complete
(i.e. $<\omega_1$-complete).
A normal ideal $I$ that concentrates on $P_\lambda(X)$ for some $X\supseteq\lambda$
has completeness $\lambda$.

$\NS_X$ is a normal ideal and is the intersection of all normal ideals on $X$.
\end{fact}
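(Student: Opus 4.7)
The plan is to prove the three claims in an interlocking order: first that $\NS_X$ is normal, then that every normal ideal $I$ contains $\NS_X$, then countable completeness, and finally the sharp completeness statement under concentration on $P_\lambda(X)$. All four steps rest on a single workhorse, namely the pressing-down property applied iteratively together with fineness (i.e.\ $\{Z:x\notin Z\}\in I$ for each $x\in X$), which is the standing hypothesis on normal ideals in this setting.

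First I would observe that $\NS_X$ is normal by a direct appeal to the Pressing Down Lemma already stated: the $\NS_X$-positive sets are exactly the stationary sets, so the Pressing Down Lemma \emph{is} the normality of $\NS_X$. Next, for $\NS_X\subseteq I$, I would take a non-stationary $S$ witnessed by $f:X^{<\omega}\to X$ with $S\cap C_f=\emptyset$; assuming for contradiction $S\in I^+$, for every $Z\in S$ pick $\vec x_Z\in Z^{<\omega}$ with $f(\vec x_Z)\notin Z$. Splitting $S$ according to the arity $n$ of $\vec x_Z$, some fixed $n$ yields an $I$-positive piece $S_n$; then I apply pressing down $n$ times, at the $i$-th step to the regressive map $Z\mapsto(\vec x_Z)_i$, producing a nested sequence of $I$-positive refinements on which consecutive coordinates become constants $a_0,\dots,a_{n-1}$. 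After $n$ steps I land on $T\subseteq\{Z:f(\vec a)\notin Z\}$, which is in $I$ by fineness, contradicting $T\in I^+$.

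For countable completeness, given $\{A_n:n<\omega\}\subseteq I$ I identify an injective copy of $\omega$ inside $X$ and form the $X$-indexed diagonal union $\nabla_x B_x$ with $B_n=A_n$ for $n<\omega$ and $B_x=\emptyset$ otherwise. A routine pressing-down argument shows $\nabla_x B_x\in I$: a selector $f(Z)\in Z$ witnessing $Z\in\nabla B_x$ is regressive, so by normality constant on an $I$-positive piece, which would then sit inside some $A_n\in I$. The set-theoretic identity $\bigcup_n A_n\subseteq\nabla_x B_x\cup\{Z:\omega\not\subseteq Z\}$ holds because any $Z$ in the left side missing the diagonal union must have some $n\in\omega$ with $Z\in A_n$ yet $n\notin Z$; and $\{Z:\omega\not\subseteq Z\}$ lies in $\NS_X\subseteq I$ since its complement $\{Z:\omega\subseteq Z\}$ equals $C_f$ for the inclusion $f:\omega\hookrightarrow X$. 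Both sets are in $I$, so $\bigcup_n A_n\in I$.

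For the final claim, concentration on $P_\lambda(X)$ gives completeness exactly $\lambda$ by two matching arguments. For the upper bound $<\lambda$-completeness, I run the previous diagonal-union argument with $\lambda$ replacing $\omega$: for $\kappa<\lambda$ and $\{A_\alpha:\alpha<\kappa\}\subseteq I$, the set $\{Z:\kappa\not\subseteq Z\}$ equals a subset of $\{Z:Z\cap\lambda<\kappa\}\cup(P(X)\setminus P_\lambda(X))$, which is in $I$ by fineness plus concentration, and $\nabla_{\alpha<\kappa}A_\alpha\in I$ by normality; on the co-$I$ set $\{Z:\kappa\subseteq Z\}$ the ordinary union and diagonal union coincide, so $\bigcup A_\alpha\in I$. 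For the matching lower bound I exhibit $\lambda$ many $I$-sets with $I$-positive union: the partition $B_\alpha=\{Z\in P_\lambda(X):Z\cap\lambda=\alpha\}$ for $\alpha<\lambda$ satisfies $B_\alpha\subseteq\{Z:\alpha\notin Z\}\in I$ while $\bigcup_{\alpha<\lambda}B_\alpha=P_\lambda(X)\in I^+$ by concentration. The main obstacle is the $\NS_X\subseteq I$ step: one must carry out the arity-truncation using pressing down alone, since countable completeness is not yet available when that step is proved; the finite iteration of pressing down on the coordinates $(\vec x_Z)_i$ is precisely what avoids an illicit appeal to a countable union of $I$-sets.
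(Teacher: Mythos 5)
The paper states this Fact without proof, so your argument has to stand on its own. Most of it does: the identification of normality of $\NS_X$ with the Pressing Down Lemma, the diagonal-union computations, and both halves of the completeness-exactly-$\lambda$ argument are correct (that last part rightly uses only the single fineness set $\{Z:\kappa\notin Z\}$ together with the concentration set, so no infinite union of $I$-sets is needed there). But there is a genuine circularity between your second and third steps. In the proof that $\NS_X\subseteq I$ you write that "splitting $S$ according to the arity $n$ of $\vec x_Z$, some fixed $n$ yields an $I$-positive piece $S_n$". Extracting an $I$-positive piece from the countable partition $S=\bigcup_n S_n$ is precisely the assertion that a countable union of $I$-sets cannot be $I$-positive, i.e.\ countable completeness, which at that stage you have not established. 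Your closing claim that "the finite iteration of pressing down on the coordinates is precisely what avoids an illicit appeal to a countable union" is not right: the $n$-fold pressing down stabilizes the coordinates of a witness \emph{after} the arity has been fixed; it does nothing to justify fixing the arity. Meanwhile your proof of countable completeness disposes of $\{Z:\omega\not\subseteq Z\}$ by observing it is non-stationary and invoking $\NS_X\subseteq I$ --- so each of the two steps rests on the other.

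The standard repair is to prove countable completeness first, from normality and fineness alone. Make the $A_n$ pairwise disjoint, fix distinct $x_n\in X$, discard the fineness set $\{Z:x_0\notin Z\}$, and press down once on the regressive map $Z\mapsto x_{k(Z)}$, where $n(Z)$ is the index with $Z\in A_{n(Z)}$ and $k(Z)$ is the largest $k\le n(Z)$ with $\{x_0,\dots,x_k\}\subseteq Z$; on a positive set where this is constantly $x_m$ one lands inside $A_m\cup\{Z:x_{m+1}\notin Z\}\in I$, a contradiction. (The same "longest initial segment" device shows directly that $\{Z:\{x_n:n<\omega\}\not\subseteq Z\}\in I$, which would also free your countable-completeness step from its appeal to $\NS_X\subseteq I$.) With countable completeness in hand, the arity split is legitimate and the rest of your argument for $\NS_X\subseteq I$ --- $n$ pressing downs followed by one application of fineness to $\{Z:f(\vec a)\notin Z\}$ --- goes through. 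One further remark: you are right that fineness must be read into the paper's definition of normal ideal; without it the trivial ideal is "normal" and the Fact fails, so flagging that hypothesis explicitly was the correct call.
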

 
\begin{definition}
Let $X\subset Y$ and $I,J$ be ideals on $Y$ and $X$ respectively.
$I$ canonically projects to $J$ if 
$S\in J$ if and only if $\{Z\in P(Y): Z\cap X\in S\}\in I$.
\end{definition}

\begin{definition}
$\{I_X:X\in V_\delta\}$ is a tower of ideals
if 
$I_Y$ canonically projects to $I_X$
for all $X\subset Y$ in $V_\delta$.
\end{definition} 

The following results are due to Burke~\cite[Theorems 3.1-3.3]{BUR97}:
\begin{lemma}\label{lem:burrepthm}
Assume $I$ is a normal ideal on $X$.
Then
$I$ is the canonical projection of $\NS\restriction S(I)$ where
\[
S(I)=\{M\prec H_{\theta}: M\cap X\not\in A \text{ for all }A\in I\cap M\}
\]
and $\theta$ is any large enough cardinal such that $I\in H_\theta$.

Assume $\delta$ is inaccessible and 
$\mathcal{I}=\{I_X:X\in V_\delta\}$ is a tower of normal ideals.
Then each $I_X$ is the projection of 
$\NS\restriction S(\II)$ where
\[
S(\II)=\{M\prec H_{\theta}: M\cap X\not\in A 
\text{ for all }A\in I_X\cap M \text{ and }X\in M\cap V_\delta\}
\]
and $\theta>|V_\delta|$ is a cardinal.
\end{lemma}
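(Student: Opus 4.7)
The plan is to prove both statements by showing that the canonical projection of $\NS\restriction S(I)$ to $X$ equals $I$ (and similarly that $\NS\restriction S(\II)$ projects to $I_X$), via two inclusions. Write $\hat{B} := \{M \prec H_\theta : M \cap X \in B\}$ for $B \subseteq P(X)$, so the target equivalence is $B \in I \Leftrightarrow \hat{B} \cap S(I) \in \NS$. The easy direction is immediate from the definition of $S(I)$: any $M \in S(I)$ containing $B \in I$ must satisfy $M \cap X \notin B$, so $\hat{B} \cap S(I)$ is disjoint from the club $\{M : B \in M\}$, hence nonstationary.

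For the nontrivial direction I would fix $B \in I^+$ and an arbitrary $F : H_\theta^{<\omega} \to H_\theta$, seeking $M \in C_F \cap S(I)$ with $M \cap X \in B$. First I would enlarge $F$ to include Skolem functions for $(H_\theta, \in, I, B, X, \dots)$, so that every $M \in C_F$ is elementary and, for every $Z \subseteq H_\theta$, the Skolem hull $\mathrm{cl}_F(Z)$ is enumerated as $\{F(s) : s \in Z^{<\omega}\}$. Setting $A_s := F(s)$ when $F(s) \in I$ and $A_s := \emptyset$ otherwise gives $\{A_s : s \in X^{<\omega}\} \subseteq I$. A routine induction on arity, combining normality of $I$ (diagonal intersection indexed by $X$) with countable completeness, shows $\{Z \in P(X) : \exists s \in Z^{<\omega},\ Z \in A_s\} \in I$; equivalently, $E := \{Z : \forall s \in Z^{<\omega},\ Z \notin A_s\} \in \breve{I}$. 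The set $C := \{Z : \mathrm{cl}_F(Z) \cap X = Z\}$ contains a club and hence is in $\breve{I}$, so $B \cap C \cap E \in I^+$ is nonempty. Any $Z$ in this intersection, together with $M := \mathrm{cl}_F(Z)$, does the job: $M \in C_F$, $M \cap X = Z \in B$, and each $A \in I \cap M$ equals some $A_s$ with $s \in Z^{<\omega}$, forcing $M \cap X \notin A$, so $M \in S(I)$.

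For the tower version I plan to run the same argument at level $X$ with an additional ingredient to secure $M \in S(\II)$: the diagonalization must simultaneously kill membership in $A$ for every $A \in I_Y \cap M$ and every $Y \in M \cap V_\delta$, not just for $Y = X$. My plan is to take $M$ of sufficiently small rank that $M \cap V_\delta$ is bounded in $V_\delta$, fix $Y^* \in V_\delta$ containing $X$ and dominating that rank, and add $Y^*$ to the initial parameters of the Skolem hull so that $Y^* \in M$. By the tower hypothesis each $A \in I_Y \cap M$ (with $Y \in M \cap V_\delta$, hence $Y \subseteq Y^*$) lifts uniquely to $\tilde A \in I_{Y^*}$ with $M \cap Y \in A \Leftrightarrow M \cap Y^* \in \tilde A$, and $B \in I_X^+$ lifts to $B^* := \{W \in P(Y^*) : W \cap X \in B\} \in I_{Y^*}^+$ via the tower projection from $I_{Y^*}$ to $I_X$. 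Applying the single-ideal case to $I_{Y^*}$ and $B^*$ then produces the desired $M$, automatically satisfying $M \cap X \in B$ and $M \cap Y \notin A$ for all relevant $(Y,A)$. The hard part will be precisely this bookkeeping: organizing the Skolem hull so that $Y^* \in M$, that $Y^*$ dominates $M \cap V_\delta$, and that every $A \in I_Y \cap M$ is captured by some $\tilde A \in I_{Y^*} \cap M$. The compatibility of the tower projections is exactly what makes a single diagonalization at level $Y^*$ discharge all lower levels at once.
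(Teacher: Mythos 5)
The paper gives no proof of this lemma at all --- it is quoted from Burke's paper --- so I can only assess your argument on its own terms. Your treatment of the single-ideal case is correct and is the standard argument: enlarge $F$ to Skolem functions so that $\mathrm{cl}_F(Z)=\{F(s):s\in Z^{<\omega}\}$, kill the diagonal union $\{Z:\exists s\in Z^{<\omega},\,Z\in A_s\}$ using normality (induction on arity) plus countable completeness, and intersect with the club on which $\mathrm{cl}_F(Z)\cap X=Z$. That part is fine.

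The tower case has a genuine gap, and it sits exactly where you flag ``the hard part.'' Your plan demands simultaneously that $Y^*\in M$ and that every $Y\in M\cap V_\delta$ satisfy $Y\subseteq Y^*$. These two requirements are incompatible: if $Y^*\in M\prec H_\theta$, then $\{Y^*\}$, $P(Y^*)$ and $V_{\mathrm{rank}(Y^*)+1}$ all belong to $M\cap V_\delta$ and none is a subset of $Y^*$, so the parenthetical ``with $Y\in M\cap V_\delta$, hence $Y\subseteq Y^*$'' is false and those levels escape your diagonalization entirely. Relatedly, you cannot invoke the single-ideal case at $I_{Y^*}$ as a black box: that result only guarantees $M\cap Y^*\notin A$ for $A\in I_{Y^*}\cap M$, whereas the lifts $\tilde A$ of the sets $A\in I_Y\cap M$ you must avoid are elements of $I_{Y^*}$ that need not belong to $M$ --- and arranging $\tilde A\in M$ by putting $Y^*$ into $M$ reinstates the first problem. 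Finally, choosing $Y^*$ ``after'' seeing $M$ does not work either, since $M\cap Y^*$ is then an uncontrolled set and $I_X$ cannot see the sets $\tilde A\in I_{Y^*}$.

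The repair, which is where the inaccessibility of $\delta$ actually enters: take $Y^*=V_\alpha$ for $\alpha<\delta$ a limit closure point in the sense that $X\in V_\alpha$ and $\mathrm{cl}_F(s)\cap V_\delta\subseteq V_\alpha$ for every $s\in(V_\alpha)^{<\omega}$ (such $\alpha$ exist club-often below $\delta$ because $|V_\alpha|<\delta=\mathrm{cf}(\delta)$), and do \emph{not} put $Y^*$ into $M$. Build $M=\mathrm{cl}_F(W)$ for $W\subseteq Y^*$ with $M\cap Y^*=W$; the closure property then gives $M\cap V_\delta\subseteq V_\alpha$, hence every $Y\in M\cap V_\delta$ really is a subset of $Y^*$. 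The obligations are now indexed not by ``$A\in I_{Y^*}\cap M$'' but by pairs of finite tuples $s,t\in W^{<\omega}$ with $F(s)=Y\in V_\delta$, $Y\subseteq Y^*$ and $F(t)=A\in I_Y$; each such pair contributes $\tilde A_{s^\frown t}=\{W'\in P(Y^*):W'\cap Y\in A\}\in I_{Y^*}$ by the tower coherence, and one concludes by diagonalizing these in $I_{Y^*}$ and intersecting with the lift $B^*$ of $B$ and the club where $\mathrm{cl}_F(W)\cap Y^*=W$, exactly as in your single-level argument. As written, your proposal does not contain this step, and the version it does describe cannot be carried out.
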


As usual if $I$ is an ideal on $X$, there is a natural order relation on $P(P(X))$ 
given by $S\leq_I T$ if $S\setminus T\in I$.

There is also a natural order on towers of normal ideals.
Assume $\II=\{I_X:X\in V_\delta\}$ is a tower of normal ideals.
For $S,T$ in $V_\delta$, $S\leq_\II  T$ 
if letting $X=\cup S\cup\cup T$, $S^{X}\leq_{I_X} T^X$.
It is possible to check that 
$S\leq_{\II} T$ if and only if $S\wedge S(\II)\leq T\wedge S(\II)$ as stationary sets.

\begin{definition}
Let $\delta$ be inaccessible
and assume $\II=\{I_X:X\in V_\delta\}$ is a tower of large ideals.

$\TT^{\II}_\delta$ is the tower forcing whose conditions
are the stationary sets $T\in V_\delta$ such that
$T\in I_{\cup T}^+$.

We let $S\leq_{\II} T$ if $S^X\leq_{I_X} T^X$ for
$X=\cup S\cup\cup T$.

For any $A$ stationary  such that $\cup A\supseteq V_\delta$,
we denote by $\TT^A_\delta$ the normal tower forcing
$\TT^{\II(A)}_\delta$, where 
\[
\II(A)=\{I^A_X:X\in V_\delta\}
\]
is the tower of normal ideals given by the projection of $\NS\restriction A$ to $X$ as $X$
ranges in $V_\delta$.
\end{definition}

Notice that in view of Burke's representation Theorem~\cite[Theorem 3.3]{BUR97}
(see Lemma~\ref{lem:burrepthm} above), 
any normal tower forcing
induced by a tower of normal ideal 
\[
\mathcal{I}=\{I_X:X\in V_\delta\}
\]
is of the form $\TT^{S(\mathcal{I})}_\delta$.

Observe also that modulo the identification of an element $S\in \TT^{\II}_\delta$
with its equivalence class $[S]_{\II}$ induced
by $\leq_{\II}$, and the adjunction of the class
$[\emptyset]_{\II}$, $\TT^{\II}_\delta$ can be viewed as a $<\delta$-complete boolean algebra 
$\BB_{\II}$ with the boolean operations and constants given by:
\begin{itemize}
\item
$\neg_{\BB_{\II}}[S]_{\II}=[P(\cup S)\setminus S]_{\BB_{\II}}$,
\item
$\bigvee_{\BB_{\II}}\{[S_i]_{\II}:i<\gamma\}=[\bigvee\{S_i:i<\gamma\}]_{\BB_{\II}}$ for all 
$\gamma<\delta$,
\item
$0_{\BB_{\II}}=[\emptyset]_{\II}$,
\item
$1_{\BB_{\II}}=[P(X)]_{\II}$ for any non-empty $X\in V_\delta$.
\end{itemize}

There is a tight connection between normal towers and generic elementary embeddings.
To formulate it we need to introduce the notion of $V$-normal towers of ultrafilters.
\begin{definition}
Assume $V\subset W$ are transitive models of $\ZFC$
and $X\in V$.

$G_X\in W$ is a $V$-normal ultrafilter on $X$ if it is a filter contained in $P(P(X))$
such that 
for all
regressive $f:P(X)\to X$ in $V$ there is $x\in X$ such that
$f^{-1}[\{x\}]\in G$.

$G\subset V_\delta$ in $W$ is a $V$-normal tower of ultrafilters on $\delta$ if
for all $X\subset Y$ in $V_\delta$, $G_X=\{S\in G:S\subset P(X)\}$ is  
a $V$-normal ultrafilter and 
$G_Y$ projects to $G_X$.
\end{definition}

\begin{proposition}
Let $V\subset W$ be transitive models of $\ZFC$. 
\begin{itemize}
\item
Assume $j:V\to N$ is an elementary embedding which is a definable class in $W$
and $\alpha=\sup\{\xi: j[V_\xi]\in N\}$.
Let $S\in G$ if $S\in V_{\alpha}$ and $j[\cup S]\in j(S)$.
Then $G$ is a $V$-normal tower of ultrafilters on $\alpha$.
\item
Assume $G$ is a $V$-normal tower of ultrafilters on $\delta$.
Then 
$G$ induces in a natural way a direct limit ultrapower embedding 
$j_G:V\to Ult(V,G)$, where $[f]_G\in Ult(V,G)$ if $f:P(X_f)\to V$ in $V$,
$X_f\in V_\delta$ and for any binary relation $R$,
$[f]_G\mathrel{R}_G [h]_G$ if and only if for some $\alpha<\delta$ such that 
$X_f,X_h\in V_\alpha$
we have that
\[
\{M\prec V_\alpha: f(M\cap X_f)\mathrel{R} h(M\cap X_h)\}\in G.
\]
\end{itemize}
\end{proposition}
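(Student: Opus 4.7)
The plan is to establish both items via the standard dictionary between elementary embeddings and towers of normal ultrafilters, verifying each defining clause separately.

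For the first item, fix $X \in V_\xi$ with $\xi < \alpha$. I would begin by spelling out that $j[X] \in N$: since $j[V_\xi] \in N$, the restriction $j \restriction V_\xi$ is recoverable inside $N$, and hence so is $j[X]$, placing $j[X] \in P(j(X))^N = j(P(X))$. In particular the membership $j[X] \in j(S)$ is a meaningful statement in $N$ for every $S \in V$ with $S \subseteq P(X)$. That $G_X$ is a $V$-ultrafilter then follows at once from elementarity combined with $j(P(X) \setminus S) = j(P(X)) \setminus j(S)$ and $j(S \cap T) = j(S) \cap j(T)$. For $V$-normality, given a regressive $f : P(X) \to X$ in $V$, elementarity transfers ``$\forall M \in \mathrm{dom}(f)\,f(M) \in M$'' to $N$, so applied at $j[X]$ it produces $j(f)(j[X]) = j(x)$ for some $x \in X$; consequently $j[X] \in j(f^{-1}[\{x\}])$, i.e.\ $f^{-1}[\{x\}] \in G_X$. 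Finally, for $X \subseteq Y$ in $V_\alpha$, the injectivity of $j$ gives $j[Y] \cap j(X) = j[X]$, and the projection property of $G_Y$ onto $G_X$ reads off by elementarity applied to the set $\{Z \subseteq Y : Z \cap X \in S\}$.

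For the second item, I would carry out the standard \L o\'s-style construction. Given representatives $f$ and $h$, fix any $V_\alpha$ containing $X_f$ and $X_h$ and define $=_G$ and $\in_G$ via the corresponding $G_{V_\alpha}$-measure-one statements; the tower property of $G$ ensures these relations are independent of the choice of $\alpha$, while the ultrafilter property of each $G_X$ makes $=_G$ an equivalence relation respected by $\in_G$. The main verification is \L o\'s's theorem
\[
\mathrm{Ult}(V,G) \models \varphi([f_1]_G, \ldots, [f_n]_G) \iff \{M \prec V_\alpha : V \models \varphi(f_1(M \cap X_{f_1}), \ldots, f_n(M \cap X_{f_n}))\} \in G,
\]
proved by induction on $\varphi$. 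The atomic and boolean cases are immediate from ultrafilter-ness of the $G_X$, while the existential case uses AC in $V$ to pick a witness function $g : P(V_\alpha) \to V$ whose class $[g]_G$ serves as the required witness in $\mathrm{Ult}(V,G)$. Elementarity of $j_G(x) := [c_x]_G$ (for $c_x$ the constant function with value $x$) then follows from \L o\'s applied to constants.

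The main obstacle is the compatibility of the direct-limit structure with the tower and projection properties of $G$: when one reindexes a function $f : P(X_f) \to V$ to the larger base $P(Y)$ for some $Y \supseteq X_f$ in $V_\delta$, by setting $\tilde f(M) := f(M \cap X_f)$, one must verify that $=_G$ and $\in_G$ are unchanged. This is exactly where the projection property of the tower enters and is the only nonroutine piece of bookkeeping; once it is in place, the well-definedness of $j_G$, extensionality of $\in_G$, and the final elementarity statement all fall out of \L o\'s together with the first-item verifications.
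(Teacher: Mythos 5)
Your argument is correct and is the standard one; note that the paper states this proposition without proof, as background material deferring to Foreman's handbook chapter and Larson's book, so there is no in-paper argument to compare against. The steps you isolate are exactly the ones needed: $j[X]=j[V_\xi]\cap j(X)\in N$ for $X\in V_\alpha$ (so that the ultrafilter dichotomy makes sense in $N$), the pressing-down verification via $j(f)(j[X])\in j[X]$, the projection identity $j[Y]\cap j(X)=j[X]$, and the {\L}o\'{s} induction with $\AC$ in $V$ supplying witnesses for existentials, together with the reindexing compatibility you flag as the only nonroutine bookkeeping. One purely notational caveat: the proposition's $G$ tests $j[\cup S]\in j(S)$ rather than $j[X]\in j(S)$, and for $S\subseteq P(X)$ with $\cup S\subsetneq X$ these can differ; your reading (measuring $S\subseteq P(X)$ by whether $j[X]\in j(S)$, i.e.\ identifying $S$ with its lift to $P(X)$) is the standard convention under which each $G_X$ really is an ultrafilter, so no correction is needed on your end.
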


Recall that for a set $M$ we let $\pi_M:M\to V$ denote the transitive collapse of 
the structure $(M,\in)$ onto a transitive set $\pi_M[M]$ and we let $j_M=\pi_M^{-1}$.
We state the following general results about generic elementary embeddings induced by 
$V$-normal tower of ultrafilters:
\begin{theorem}
Assume $V\subset W$ are transitive models of $\ZFC$,
$\delta$ is a limit ordinal, $\lambda<\delta$ is regular in $V$,
$G\in W$ is a $V$-normal tower of ultrafilters on $\delta$ which concentrates on $P_\lambda$,
 and $V$ is a definable class in $W$
by the parameter $p$. 
Then
\begin{enumerate}
\item \label{thm:wstf-1}
$j_G:V\to Ult(V,G)$ is a definable class in $W$ in the parameters $G$, $V_\delta$, $p$.
\item\label{thm:wstf-3}
$Ult(V,G)\models\phi([f_1]_G,\dots,[f_n]_G)$ if and only if for some $\alpha<\delta$ such that
$f_i:P(X_i)\to V$ with $X_i\in V_\alpha$ for all $i\leq n$:
\[
\{M\prec V_\alpha: V\models \phi(f_1(M\cap X_1),\dots,f_n(M\cap X_n))\}\in G.
\]
\item\label{thm:wstf-5}
For all $x\in V_\delta$, $x\in Ult(V,G)$ is represented by $[\rho_x]_G$ where
$\rho_x: P(V_\alpha)\to V_\lambda$ is such that $x\in V_\alpha$ for some $\alpha<\delta$
and maps any $M\prec V_\alpha$ with $x\in M$ to $\pi_M(x)$.
\item\label{thm:wstf-4}
$(V_\delta,\in)$ is isomorphic to the substructure of $(Ult(V,G),\in_G)$ given
by the equivalence classes of the functions $\rho_x:P(V_\alpha)\to V_\alpha$ described above.
\item \label{thm:wstf-2}
$\crit(j_G)=\lambda$ and $\delta$ is isomorphic to an initial segment
of the linear order 

$(j_G(\lambda),\in_G)$.

\item\label{thm:wstf-6}
For all $x\in V$, $j_G(x)\in Ult(V,G)$ is represented by the equivalence class of the constant
function $c_x: P(X)\to V$ which maps any $Y\subset X$ to $x$ for an arbitrary choice of 
$X\in V_\delta$.
\item\label{thm:wstf-7}
For any $x\in V_\delta$, 
$j_G[x]\in Ult(V,G)$ is represented in $Ult(V,G)$ 
by the equivalence class of the identity function on $P(x)$.
\item \label{thm:wstf-8}
Modulo the identification of $V_\delta$ with the subset of $Ult(V,G)$ defined in~\ref{thm:wstf-4},
$G$ is the subset of $Ult(V,G)$ defined by $S\in G$ if and only if $j_G[\cup(S)]\in_G j_G(S)$.
Moreover $Ult(V,G)\models j_G(\lambda)\geq\delta$.
\item \label{thm:wstf-9}
For any $\alpha<\delta$, let $G_\alpha=G\cap V_\alpha$.
Let $k_{\alpha}:Ult(V,G_\alpha)\to Ult(V,G)$ by defined by $[f]_{G_\alpha}\mapsto [f]_G$.
Then $k_\alpha$ is elementary, $j_G=k_\alpha\circ j_{G_\alpha}$ and 
$\crit(k_\alpha)=j_{G_\alpha}(\crit(j_G))\geq\alpha$.
\end{enumerate}
\end{theorem}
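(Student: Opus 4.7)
The plan is to prove items \ref{thm:wstf-3} (Łoś's theorem) and \ref{thm:wstf-1} first, and then derive the remaining items as consequences by computing the representing functions of various specific sets and elements. Everything else hinges on the fundamental fact that the well-definedness of $\in_G$ and of the boolean combinations, together with Łoś's theorem, reduce to the $V$-normality and coherence hypothesis on $G$.

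\textbf{Łoś and definability.} First, I would check that the relation $[f]_G\,R_G\,[h]_G$ defined in the statement does not depend on the choice of $\alpha$ witnessing $X_f,X_h\in V_\alpha$, nor on the choice of representative of the equivalence class $[f]_G$. Both facts are routine applications of the coherence of the tower: raising $\alpha$ corresponds to projecting a normal ultrafilter to a lower level, and $G_Y$ projects to $G_X$ by hypothesis. Given this, Łoś's theorem (item \ref{thm:wstf-3}) is proved by the usual induction on the complexity of $\phi$. The atomic and Boolean cases are immediate from the fact that each $G_{V_\alpha}$ is an ultrafilter. The existential case is where $V$-normality is used: if $\{M\prec V_\alpha:V\models\exists y\,\psi(y,f_1(M\cap X_1),\dots)\}\in G$, apply a Skolem function $g(M)$ picking a witness for each such $M$; then $g$ is regressive on a measure-one set only after absorbing $g$ into a higher level $V_\beta$ so that $\{M\prec V_\beta: g(M\cap V_\alpha)\in M\}\in G_{V_\beta}$, which uses that $G$ is a coherent tower of $V$-normal ultrafilters. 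Item \ref{thm:wstf-1} is then just the observation that all of this machinery is given by formulas in the parameters $G$ and $V_\delta$ over $W$.

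\textbf{Computing representatives.} With Łoś at hand I turn to items \ref{thm:wstf-6}, \ref{thm:wstf-7}, \ref{thm:wstf-4}, \ref{thm:wstf-5}, \ref{thm:wstf-8}. Item \ref{thm:wstf-6} is immediate: for any $x\in V$ and any formula $\phi$, $\{M\prec V_\alpha:V\models\phi(c_x(M\cap X))\}$ is either all of $P(V_\alpha)$ or empty, so $[c_x]_G$ behaves exactly like $x$. For item \ref{thm:wstf-7}, observe that for $y\in x$ the set $\{M\prec V_\alpha: \pi_y(M)\in \mathrm{id}_{P(x)}(M\cap x)\}=\{M:y\in M\}$ lies in $G$ by $V$-normality (it is dual to a principal regressive set), and conversely that every element of $[\mathrm{id}_{P(x)}]_G$ is of this form. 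Items \ref{thm:wstf-5} and \ref{thm:wstf-4} then follow by verifying by induction on rank that $\pi_x:M\mapsto\pi_M(x)$ represents $x$ correctly with respect to $\in_G$, using $V$-normality and the Mostowski collapse. Item \ref{thm:wstf-8} is an immediate reformulation of the definition of $G_X$ combined with the computation of $j_G[\cup S]$ from item \ref{thm:wstf-7}.

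\textbf{Critical point and factor maps.} For item \ref{thm:wstf-2}, $\crit(j_G)\geq\lambda$ follows because $G$ is $<\lambda$-complete (since it concentrates on $P_\lambda$ and is $V$-normal), so small sets are fixed pointwise by $j_G$; that $\crit(j_G)\leq\lambda$ uses the identity function on $P_\lambda(\lambda)$, which represents a $\in_G$-element of $j_G(\lambda)$ strictly above all $j_G(\xi)$ for $\xi<\lambda$ but below $j_G(\lambda)$. Counting order-types of the representatives $[\pi_\xi]_G$ for $\xi<\delta$ using item \ref{thm:wstf-4} gives the embedding of $\delta$ into $(j_G(\lambda),\in_G)$. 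Finally, item \ref{thm:wstf-9} is a direct factor-map argument: $k_\alpha([f]_{G_\alpha})=[f]_G$ is well-defined and elementary by two applications of Łoś (one for each ultrapower), and $j_G=k_\alpha\circ j_{G_\alpha}$ holds by evaluating both sides on constants via item \ref{thm:wstf-6}. The critical point computation $\crit(k_\alpha)\geq\alpha$ follows because every element of $V_\alpha$ is represented in $Ult(V,G_\alpha)$ by its $\pi_x$-function, which is mapped to the same element in $Ult(V,G)$.

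The main obstacle is the careful bookkeeping in the proof of Łoś's theorem: one must be attentive to the interaction between $V$-normality (which lives level by level) and the projection compatibility that glues the levels into a single direct limit ultrapower. Once this is in place, all other items are standard consequences.
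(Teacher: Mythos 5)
The paper does not prove this theorem: it is quoted verbatim as standard background on generic ultrapowers by towers of $V$-normal ultrafilters (the relevant arguments are in Larson~\cite{LAR04} and Foreman~\cite{FORHST}, which the paper cites), so there is no proof in the text to compare yours against. Judged on its own, your outline follows the standard route --- establish \L{}o\'s's theorem and well-definedness across levels first, then read off items \ref{thm:wstf-5}--\ref{thm:wstf-9} by computing representatives ($c_x$ for $j_G(x)$, the identity on $P(x)$ for $j_G[x]$, $\pi_x$ for $x$ itself) --- and this is the right decomposition; the coherence of the projections $G_Y\to G_X$ is indeed the point that makes the truth value independent of the level $\alpha$.

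One step as you describe it would not survive being written out: in the existential case of \L{}o\'s you say the Skolem function $g$ "is regressive on a measure-one set only after absorbing $g$ into a higher level $V_\beta$," i.e.\ you want $g(M\cap V_\alpha)\in M$ for $G_{V_\beta}$-many $M$. That is false in general --- the witness to $\exists y\,\psi(y,\dots)$ can have rank far above any $V_\beta$ with $\beta<\delta$, so no level of the tower can absorb it. Fortunately regressivity is not needed: the elements of $Ult(V,G)$ are classes $[f]_G$ of \emph{arbitrary} functions $f:P(X_f)\to V$ in $V$ with $X_f\in V_\delta$, so the choice function $g:P(V_\alpha)\to V$ picking a witness (or $\emptyset$ when none exists) is already a legitimate representative, and $\{M: V\models\psi(g(M),f_1(M\cap X_1),\dots)\}$ contains the original measure-one set. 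Normality enters elsewhere: fineness ($\{M:x\in M\}\in G$) and the regressive-function property are what you need for items \ref{thm:wstf-5}, \ref{thm:wstf-2} and \ref{thm:wstf-7} (e.g.\ to see that $M\cap\lambda\in\lambda$ and $\gamma\subseteq M$ almost everywhere for $\gamma<\lambda$, whence $j_G$ fixes the ordinals below $\lambda$ while $[M\mapsto M\cap\lambda]_G$ sits strictly between $\sup_{\xi<\lambda}j_G(\xi)$ and $j_G(\lambda)$). With that correction the rest of your sketch is the standard argument and goes through.
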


A key observation is that $V$-generic filters for $\TT^{\II}_\delta$ are also $V$-normal
filters, but $V$-normal tower of ultrafilters $G$ on $\delta$ which are contained in 
$\TT^{\II}_\delta$ may not 
be fully $V$-generic for $\TT^{\II}_\delta$. However such
$V$-normal tower of ultrafilters, while they may not be strong enough to decide the theory of
$V^{\TT^{\II}_\delta}$, are sufficiently informative to decide the behaviour of the generic ultrapower
$Ult(V,\dot{G}_{\TT^{\II}_\delta})$ which can be defined inside $V^{\TT^{\II}_\delta}$.

\begin{lemma}
Assume $V$ is a transitive model of $\ZFC$
and $\II\in V$ is a normal tower of height $\delta$.
The following holds:
\begin{enumerate}
\item
For any $V$-normal tower of ultrafilters $G$ on $\delta$ contained in $\TT^{\II}_\delta$ the following are equivalent:
\begin{itemize}
\item
$Ult(V,G)$ models $\phi([f_1]_G,\dots,[f_n]_G)$.
\item
There is $S\in G$
such that for all $M\in S$ 
\[
V\models \phi(f_1(M\cap X_{f_1}),\dots,f_n(M\cap X_{f_n})).
\]
\end{itemize}

\item
For any $S\in \TT^{\II}_\delta$, $f_1:X_{f_1}\to V,\dots,f_n:X_{f_n}\to V$ in $V$, and
any formula $\phi(x_1,\dots,x_n)$ the following are equivalent:
\begin{itemize}
\item
For any  $\alpha$ such that $X_{f_i}\in V_\alpha$ for all $i\leq n$ 
\[
\{M\prec V_\alpha: V\models\phi(f_1(M\cap X_{f_1}),\dots,f_n(M\cap X_{f_n}))\}\geq_{\II}S.
\] 
\item
For all $V$-normal tower of ultrafilters $G$ on $\delta$ contained in $\TT^{\II}_\delta$ to which
$S$ belongs we have that
\[
Ult(V,G)\models \phi([f_1]_G,\dots,[f_n]_G).
\]
\end{itemize}

\end{enumerate}
\end{lemma}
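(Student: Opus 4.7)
The plan is to prove the first item as a $V$-normal version of {\L}o\'s's theorem, and then to use it as a bridge for a cycle of implications establishing the four-way equivalence in the second item.

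For part (1), I would induct on the complexity of $\phi$. The atomic cases are immediate from the very definitions of $\in_G$ and $=_G$ on $Ult(V,G)$; Boolean combinations follow from the ultrafilter property of each $G_X$ applied to sets in $V$, since all the relevant sets live in $V$ and $G_X$ decides each of them. The existential step is where $V$-normality enters. If $Ult(V,G)\models\exists x\,\psi(x,[f_1]_G,\dots,[f_n]_G)$, a representative $[h]_G$ of a witness yields, by induction, a set $S\in G$ on which $V\models\psi(h(M\cap X_h),f_1(M\cap X_{f_1}),\dots)$ pointwise, and the projection of $S$ witnesses the existential statement at each $M$ while remaining in $G$. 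Conversely, given $T\in G$ as in the statement, the axiom of choice in $V$ produces a uniform Skolem selector $h:P(\cup T)\to V$ in $V$ choosing witnesses to $\psi$ on $T$, and the induction hypothesis places $Ult(V,G)\models\psi([h]_G,[f_1]_G,\dots)$.

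For part (2), set
\[
T_\phi=\{M\prec V_\alpha : V\models\phi(f_1(M\cap X_{f_1}),\dots,f_n(M\cap X_{f_n}))\},
\]
an element of $V$. The key ancillary observation is that any $V$-normal tower of ultrafilters $G\subseteq\TT^{\II}_\delta$ contains every $A\in V$ whose complement lies in $I_{\cup A}$: otherwise the ultrafilter property of $G$ on sets in $V$ would place $P(\cup A)\setminus A$ in $G$, contradicting $G\subseteq\TT^{\II}_\delta$. Consequently $G$ is closed under $\geq_{\II}$-extension on sets in $V$. Using this, I would close the cycle as follows. For (i)$\Rightarrow$(ii), contrapositively: if $T_\phi\not\geq_{\II} S$, then $S'=S\wedge(P(V_\alpha)\setminus T_\phi)$ is a non-zero condition in $\TT^{\II}_\delta$ below $S$ on which no $M$ witnesses $\phi$, so through a $V$-generic $H$ containing $S'$ part (1) forces both $T_\phi$ and a set disjoint from it into $H$, contradicting (i). The equivalence (ii)$\Leftrightarrow$(iii) is a direct instance of Burke's representation theorem (Lemma~\ref{lem:burrepthm}), since $R\geq_{\II}S$ iff $(S\setminus R)\wedge S(\II)$ is non-stationary. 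For (ii)$\Rightarrow$(iv), the closure observation puts $T_\phi\in G$, whence part (1) gives $Ult(V,G)\models\phi$. Finally (iv)$\Rightarrow$(i) is immediate, because $V$-generic filters for $\TT^{\II}_\delta$ are themselves $V$-normal towers of ultrafilters contained in $\TT^{\II}_\delta$.

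The main obstacle is verifying this last sentence: that any $V$-generic filter $H\subseteq\TT^{\II}_\delta$ really is a $V$-normal tower of ultrafilters. The ultrafilter clause on $P(P(X))\cap V$ is forced by meeting, for each $A\in V$, the dense set of conditions which either $\leq_{\II}$-refine $A$ or $\leq_{\II}$-refine its complement; the $V$-normality (regressive) clause is forced by meeting, for each regressive $f:P(X)\to X$ in $V$, the dense set of conditions on which $f$ is constant, whose density is exactly the pressing-down lemma for the normal ideal $I_X$. Once these density facts and the projection coherence of $\II$ are in place, everything else in the argument is purely mechanical from part (1) and Burke's theorem.
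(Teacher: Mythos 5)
The paper states this lemma without proof: it sits in the background section on normal tower forcings, where the reader is referred to Foreman's handbook chapter and Larson's book for the standard theory. Your argument is exactly the standard one from those sources — {\L}o\'s's theorem for the generic ultrapower by induction on formula complexity, followed by the four-way cycle via the canonical set $T_\phi=\{M\prec V_\alpha: V\models\phi(f_1(M\cap X_{f_1}),\dots)\}$ — and I find it correct; in particular you correctly isolate the two points that actually need checking, namely that a $V$-normal tower of ultrafilters contained in $\TT^{\II}_\delta$ is closed under $\geq_{\II}$-extension on sets in $V$ (so that $T_\phi\in G$ whenever some pointwise witness is in $G$), and that $V$-generic filters for $\TT^{\II}_\delta$ are themselves $V$-normal towers of ultrafilters, which is what makes (iv)$\Rightarrow$(i) and the density arguments go through.
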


In the situation in which $W$ is a $V$-generic extension of
$V$ for some poset $P\in V$ and $G\in W$ is a $V$-normal tower of ultrafilters we can use 
$G$ to define a tower of normal ideals in $V$ as follows:

\begin{lemma}\label{lem:embntcba}
Assume $V$ is a transitive model of $\ZFC$.
Let $\delta$ be an inaccessible cardinal in $V$
and $\QQ\in V$ be a complete boolean algebra.
Assume $H$ is $V$-generic for $\QQ$
and $G\in V[H]$ is a $V$-normal tower of ultrafilters on $\delta$.
Let $\dot{G}\in V^{\QQ}$ be such that $\val_H(\dot{G})=G$
and
\[
\llbracket \dot{G}\text{ is a $V$-normal tower of ultrafilters on }\delta\rrbracket_{\QQ}=1_{\QQ}.
\]
Define
\[
\II(\dot{G})=\{I_X:X\in V_\delta\}\in V
\]
by setting $A\in I_X$ if and only if 
$A\subset P(X)$ and 
\[
\llbracket A\in\dot{G}\rrbracket_{\QQ}=0_{\QQ}.
\]
Then $\II(\dot{G})\in V$ is a tower of normal ideals.
\end{lemma}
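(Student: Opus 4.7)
The plan is to verify each of the three defining properties of a tower of normal ideals in turn, in each case by translating the corresponding property of $\dot{G}$ (which is forced, by hypothesis, to be a $V$-normal tower of ultrafilters) into a statement about boolean values, and exploiting that the set $X \in V_\delta$ and the relevant functions all lie in $V$.

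First I would check that each $I_X$ is a $<\omega_1$-complete ideal on $P(X)$. Closure under subsets is immediate because $\dot{G}$ is forced to be upward closed on $P(X)\cap V$: if $A \subset B \subset P(X)$ then $\llbracket A \in \dot G \rrbracket_{\QQ} \leq \llbracket B \in \dot G \rrbracket_{\QQ}$, so $B \in I_X$ implies $A \in I_X$. Closure under finite (indeed countable) unions reduces to the fact that an ultrafilter contains $A \cup B$ iff it contains one of them, so $\llbracket A\cup B \in \dot G\rrbracket = \llbracket A\in\dot G\rrbracket \vee \llbracket B \in \dot G\rrbracket$, and if both disjuncts are $0_{\QQ}$ so is the join.

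Next I would establish normality of each $I_X$. Fix $S \in I_X^+$, so $b := \llbracket S \in \dot G\rrbracket_{\QQ} > 0_{\QQ}$, and let $f\colon S \to X$ be regressive in $V$. Using AC in $V$, extend $f$ to a regressive $\hat f\colon P(X)\to X$ (for $Z \notin S$ simply pick any $x \in Z$). The key point is that since $X \in V$, the preimage $\hat f^{-1}[\{x\}]$ lies in $V$ for every $x \in X$, so the forced $V$-normality of $\dot G$ gives
\[
1_{\QQ} \;=\; \bigvee_{x \in X}\, \llbracket \hat f^{-1}[\{x\}] \in \dot G \rrbracket_{\QQ}.
\]
Meeting with $b$, and using that $\dot G$ is forced to be closed under intersections with elements of $V$, we obtain
\[
b \;=\; \bigvee_{x \in X}\, \bigl(b \wedge \llbracket f^{-1}[\{x\}] \cap S \in \dot G \rrbracket_{\QQ}\bigr).
\]
Since $b > 0_{\QQ}$, some $x \in X$ makes $\llbracket f^{-1}[\{x\}] \cap S \in \dot G\rrbracket_{\QQ} > 0_{\QQ}$, i.e.\ $T := f^{-1}[\{x\}]\cap S \in I_X^+$ is a subset of $S$ on which $f$ is constant.

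Finally, to verify that $I_Y$ canonically projects to $I_X$ whenever $X \subset Y$ in $V_\delta$, observe that by the assumption that $\dot G$ is forced to be a tower of $V$-normal ultrafilters, it is forced that $A \in \dot G_X \iff \{Z \subset Y : Z \cap X \in A\} \in \dot G_Y$. Translating this equivalence into boolean values gives
\[
\llbracket A \in \dot G\rrbracket_{\QQ} \;=\; \llbracket \{Z\subset Y : Z\cap X \in A\} \in \dot G \rrbracket_{\QQ},
\]
so one side vanishes iff the other does, which is exactly the projection property defining a tower of ideals. The only subtle step is the normality argument, and even there the argument is essentially a translation; so I do not anticipate a genuine obstacle, merely bookkeeping about boolean values of $V$-statements involving parameters in $V_\delta$.
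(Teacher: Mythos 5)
Your proof is correct and follows the same overall strategy as the paper's: the ideal and tower (projection) properties are read off directly from the definition, and the only substantive point is normality of each $I_X$, which in both arguments rests on the forced $V$-normality of $\dot{G}$. The one difference is purely presentational: the paper passes to a generic extension with $\llbracket S\in\dot{G}\rrbracket_{\QQ}$ in the generic filter and reads off the constant value from the generic ultrapower (finding $x$ with $j_G(f)(j_G[X])=j_G(x)$), whereas you stay in $V$ and extract the same $x$ by a boolean-value computation using fullness and distributivity --- the two are interchangeable, and your verification of the routine clauses is a harmless elaboration of what the paper calls ``immediate.''
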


In particular the above Lemma can be used to define 
$<\delta$-complete injective homomorphisms $i_{\dot{G}}:\TT^{\II(\dot{G})}_\delta\to \QQ$
whenever $\QQ$ is a complete boolean algebra and
$\dot{G}\in V^{\QQ}$ is forced by $\QQ$
to be a $V$-normal tower of ultrafilters on some $\delta$ which is inaccessible in $V$.
These homomorphisms are defined by $S\mapsto \llbracket S\in\dot{G}\rrbracket_{\QQ}$.
It is easy to check that these maps are indeed $<\delta$-complete injective homomorphisms,
however without further assumptions on $\dot{G}$ 
these maps cannot in general be extended to complete homomorphisms of the 
respective boolean completions because we cannot control 
if $i_{\dot{G}}[\mathcal{A}]$ is a maximal antichain of $\QQ$ whenever $\mathcal{A}$ is a
maximal antichain of $\TT^{\II(\dot{G})}_\delta$.
There are however two nice features of these mappings:
\begin{proposition}\label{prop:embntcba}
Assume $\QQ\in V$ is a complete
boolean algebra and $\dot{G}\in V^{\QQ}$ is forced by $\QQ$
to be a $V$-normal tower of ultrafilters on some $\delta$. Then the following holds:
\begin{itemize}
\item
The preimage of a $V$-generic filter for $\QQ$ under $i_{\dot{G}}$
is a $V$-normal tower of ultrafilters contained in $\TT^{\II(\dot{G})}_\delta$,
\item
$i_{\dot{G}}:\TT^{\II(\dot{G})}_\delta\to \QQ$ extends to an isomorphism
of $\BBB(\TT^{\II(\dot{G})}_\delta)$ with $\QQ$ if its image is dense in
$\QQ$.
\end{itemize}
\end{proposition}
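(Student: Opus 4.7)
The plan is to handle the two items separately. For the first, I would simply unfold the definitions. For a $V$-generic filter $H \subset \QQ$, one has $S \in i_{\dot{G}}^{-1}[H]$ iff $\llbracket S \in \dot{G}\rrbracket_{\QQ} \in H$ iff $S \in val_H(\dot{G})$. By hypothesis $val_H(\dot{G})$ is a $V$-normal tower of ultrafilters on $\delta$, hence a subset of $V_\delta$, so the preimage equals $val_H(\dot{G})$ entirely. To check containment in $\TT^{\II(\dot{G})}_\delta$, I would verify that each $S$ in this preimage is stationary (because a $V$-normal ultrafilter extends the $V$-club filter on $\cup S$) and that $S \in I^+_{\cup S}$ (because $\llbracket S \in \dot{G}\rrbracket_{\QQ}$ belongs to $H$ so is nonzero, which is exactly the negation of $S \in I_{\cup S}$ by the definition of $\II(\dot{G})$ in Lemma~\ref{lem:embntcba}).

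For the second item, the plan is the standard extension-of-a-dense-embedding argument, the heart of which is to show that $i_{\dot{G}}$ maps maximal antichains of $\TT^{\II(\dot{G})}_\delta$ to maximal antichains of $\QQ$. First I would record that $i_{\dot{G}}$ preserves finite meets: for $T,S$ in the tower on $X_T, X_S$ respectively and $X=X_T\cup X_S$, the meet $T\wedge S = T^X\cap S^X$ lies in $\dot{G}$ iff both $T^X$ and $S^X$ do (since $\dot{G}$ is forced to be a filter closed under projections), so $\llbracket T\wedge S \in \dot{G}\rrbracket_{\QQ} = \llbracket T\in\dot{G}\rrbracket_{\QQ}\wedge\llbracket S\in\dot{G}\rrbracket_{\QQ}$. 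Granted this, suppose $\mathcal{A}$ is a maximal antichain in $\TT^{\II(\dot{G})}_\delta$ and $q \in \QQ^+$ is incompatible with every $i_{\dot{G}}(S)$ for $S\in\mathcal{A}$. By density of the image, fix $T \in \TT^{\II(\dot{G})}_\delta$ with $0 \neq i_{\dot{G}}(T) \leq q$; injectivity of $i_{\dot{G}}$ then forces $T \neq 0$, so by maximality some $S \in \mathcal{A}$ has $T\wedge S \neq 0$, whence $i_{\dot{G}}(T)\wedge i_{\dot{G}}(S) = i_{\dot{G}}(T\wedge S) \neq 0$, contradicting the incompatibility with $q$.

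Once this is in hand, $i_{\dot{G}}$ extends uniquely to a complete homomorphism $\hat{i}:\BBB(\TT^{\II(\dot{G})}_\delta)\to\QQ$ defined by $\hat{i}(b)=\bigvee_{\QQ}\{i_{\dot{G}}(S): S\leq b,\ S\in\TT^{\II(\dot{G})}_\delta\}$, using that sending maximal antichains to maximal antichains is exactly the condition for $<\delta$-complete homomorphisms to lift to their completions. I would then verify $\hat{i}$ is an isomorphism: injectivity follows from the density of $\TT^{\II(\dot{G})}_\delta$ in $\BBB(\TT^{\II(\dot{G})}_\delta)$ combined with injectivity of $i_{\dot{G}}$ (any nonzero $b$ dominates a nonzero $S$ from the tower, so $\hat{i}(b)\geq i_{\dot{G}}(S)\neq 0$); surjectivity follows from the density of $i_{\dot{G}}[\TT^{\II(\dot{G})}_\delta]$ in $\QQ$ together with the fact that a complete homomorphism whose image is dense in its codomain is onto.

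The main obstacle I expect is the antichain-preservation step, because $i_{\dot{G}}$ is only \emph{a priori} a $<\delta$-complete homomorphism between a $<\delta$-complete algebra and a complete algebra, not yet a boolean embedding of completions; the crucial input is precisely the meet-preservation, which in turn relies on the filter property of $\dot{G}$. Everything else reduces to familiar boolean algebra bookkeeping.
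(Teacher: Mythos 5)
Your proof is correct, and since the paper's own ``proof'' consists of the single line ``The proof is self-evident,'' your argument is precisely the intended filling-in of details: the first item is definition-unwinding plus the normality of the ideals $I_X$ (which rules non-stationary sets out of $val_H(\dot{G})$), and the second is the standard dense-embedding extension, whose crux --- that density of the image rescues the preservation of maximal antichains flagged as the obstruction in the paragraph preceding the proposition --- you identify and resolve correctly via meet-preservation and injectivity.
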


\begin{definition}
Let $\lambda=\gamma^+$,
$\delta$ be inaccessible and 
$\II=\{I_X:X\in V_\delta\}$ be  a tower of normal ideals which concentrate on 
$P_\lambda(X)$.
The tower $\TT^{\II}_\delta$ is presaturated if
$\delta$ is regular in $V[G]$ for all $G$ $V$-generic for $\TT^{\II}_\delta$.

\end{definition}

The following is well known:
\begin{theorem}
Let $\lambda=\gamma^+$,
$\delta$ be inaccessible and 
$\II=\{I_X:X\in V_\delta\}$ be  a tower of normal
ideals which concentrate on 
$P_\lambda(X)$.

$\TT^{\II}_\delta$ is a presaturated normal tower if and only whenever
$G$ is $V$-generic for $\TT^{\II}_\delta$
$V[G]$ models that $Ult(V,G)$ is closed under $<\delta$-sequences in
$V[G]$.
\end{theorem}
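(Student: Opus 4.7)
The plan is to reduce both directions to a Boolean-algebra reformulation of presaturation, together with the ultrapower-representation calculus for generic $V$-normal towers recalled above. Applying the Fact on presaturated Boolean algebras from Subsection~\ref{subsec:SSPforc} with $\nu=\lambda=\delta$, the requirement that $\delta$ remain regular in every $V[G]$ is equivalent to $\BBB(\TT^{\II}_\delta)$ being $(<\delta,<\delta)$-presaturated: below any condition, any $\gamma<\delta$ maximal antichains can be jointly refined so that fewer than $\delta$ of their elements remain compatible with the refinement. This is the form of presaturation I will use in both directions.

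For the forward direction, I take an $\eta$-sequence $\sigma=\langle\sigma_\alpha:\alpha<\eta\rangle\in V[G]$ with $\eta<\delta$ and each $\sigma_\alpha\in Ult(V,G)$; writing $\sigma_\alpha=[\dot f_\alpha]_{\dot G}$, I pick for each $\alpha$ a maximal antichain $A_\alpha$ below some $q_0\in G$ on which $\dot f_\alpha$ is decided to equal some $\check f_{\alpha,a}\in V$. By $(<\delta,<\delta)$-presaturation I refine to $q\leq q_0$ in $G$ with $|A_\alpha\wedge q|<\delta$ for every $\alpha<\eta$; by inaccessibility of $\delta$, all the $f_{\alpha,a}$ involved lie in some $V_\xi$ with $\xi<\delta$. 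I construct $F\in V$ on $P(V_\xi)$ by letting $F(M)$ be the sequence whose $\alpha$-th entry is $f_{\alpha,a_M(\alpha)}(M\cap X_{f_{\alpha,a_M(\alpha)}})$, where $a_M(\alpha)$ is the unique element of $A_\alpha\wedge q$ singled out by the $M$-local copy of $\dot G$ read off from the collapse of $M$. A standard tower-ultrapower computation using $V$-normality of $G$ then gives $[F]_G=\sigma$, placing $\sigma$ in $Ult(V,G)$.

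For the backward direction I assume $V[G]\models Ult(V,G)^{<\delta}\subseteq Ult(V,G)$. By elementarity of $j_G$, $j_G(\lambda)=(j_G(\gamma))^+$ is a regular cardinal of $Ult(V,G)$, and $j_G(\lambda)\geq\delta$. Any cofinal sequence in $j_G(\lambda)$ of length $<\delta$ from $V[G]$ would land in $Ult(V,G)$ by closure, contradicting regularity there; so $\cf^{V[G]}(j_G(\lambda))\geq\delta$. Suppose for contradiction that $\cf^{V[G]}(\delta)=\eta<\delta$, witnessed by $c:\eta\to\delta$ cofinal in $V[G]$; closure gives $c\in Ult(V,G)$ and hence $\cf^{Ult(V,G)}(\delta)\leq\eta$. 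Together with the cofinality lower bound on $j_G(\lambda)$ in $V[G]$ and the concentration of $\II$ on $P_\lambda$, this forces the identification $\delta=j_G(\lambda)$: any proper gap $\delta<j_G(\lambda)$ would, upon pushing $c$ through $j_G$ and combining with $<\delta$-closure, yield a cofinal sequence in $j_G(\lambda)$ of length $<j_G(\lambda)$ inside $Ult(V,G)$, contradicting regularity of $j_G(\lambda)$ there. But then $c$ itself witnesses that $j_G(\lambda)$ is singular in $Ult(V,G)$, contradicting elementarity of $j_G$ applied to ``$\lambda$ is regular''.

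The hard part, in both directions, is the reconstruction of elements of $V[G]$ as values $[F]_G$ coming from single functions $F\in V$. In the forward direction this is the content of the construction of $F$ from the presaturation-refined antichains, where the interplay between $M$-local filters and the boolean structure needs care. In the backward direction it is the identification $\delta=j_G(\lambda)$ under closure, which I expect to be the subtlest step; once this is in hand, elementarity of $j_G$ gives the contradiction immediately.
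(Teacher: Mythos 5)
The paper records this statement as ``well known'' and gives no proof, so your argument has to stand on its own. Your forward direction is the standard argument and is correct in outline: the translation of ``$\delta$ remains regular'' into $(<\delta,<\delta)$-presaturation of $\BBB(\TT^{\II}_\delta)$ via the Fact in Subsection~\ref{subsec:SSPforc} is right, and the construction of a single $F\in V$ from the presaturation-refined antichains, selecting at each $M$ the unique $a\in (A_\alpha\wedge q)\cap M$ with $M\cap\cup a\in a$, is exactly the usual device; the verification $[F]_G=\sigma$ is a routine use of normality and of the $<\delta$-completeness of $G$ over $V$-indexed families. (One small imprecision: it is the domains $X_{f_{\alpha,a}}$ and the sets $A_\alpha\wedge q$, not the functions $f_{\alpha,a}$ themselves, that must be caught in some $V_\xi$ --- their ranges need not be bounded in $V_\delta$.)

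The backward direction has a genuine gap at the step $\delta=j_G(\lambda)$. First, ``pushing $c$ through $j_G$'' cannot yield a cofinal sequence in $j_G(\lambda)$: for every $\beta$ with $\lambda\leq\beta<\delta$ one has $j_G(\beta)\geq j_G(\lambda)$, so $\langle j_G(c(\xi)):\xi<\eta\rangle$ is not even a subset of $j_G(\lambda)$. Worse, under your reductio hypothesis the identification you are after is actually false: if $c:\eta\to\delta$ is cofinal with $\eta<\delta$, then $c\in Ult(V,G)$ by closure, every $\beta<\delta$ satisfies $Ult(V,G)\models|\beta|\leq j_G(\gamma)=\gamma$ (concentration on $P_\lambda$ together with $\crit(j_G)=\lambda=\gamma^+$), hence $Ult(V,G)\models|\delta|\leq\gamma$ and so $\delta<\gamma^{+Ult(V,G)}=j_G(\lambda)$ strictly. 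What is missing is an \emph{upper} bound on $j_G(\lambda)$. Since $\delta$ is inaccessible there are only $\delta$ many functions $f:P(X)\to\lambda$ with $X\in V_\delta$, and every ordinal of $Ult(V,G)$ below $j_G(\lambda)=[c_\lambda]_G$ is represented by such a function; hence $j_G(\lambda)$ has $V$-cardinality at most $\delta$. Combining this with your (correct) observation that $\cf^{V[G]}(j_G(\lambda))\geq\delta$ and with $\cf^{V[G]}(j_G(\lambda))\leq |j_G(\lambda)|^{V[G]}\leq\delta$, one gets $\cf^{V[G]}(j_G(\lambda))=\delta$; a cofinality is a regular cardinal, so $\delta$ is regular in $V[G]$ and no contradiction argument is needed at all. (Equivalently: by closure $j_G(\lambda)=\gamma^{+Ult(V,G)}=\gamma^{+V[G]}$ is a $V[G]$-cardinal of $V[G]$-cardinality $\leq\delta$ and $\geq\delta$, hence equals $\delta$, and $\delta=\gamma^{+V[G]}$ is regular.)
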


\begin{theorem}[Woodin]~\cite[Theorem 2.7.7]{LAR04}
Let $\TT^{\omega_2}_\delta$ be the tower given by stationary sets in $V_\delta$ 
which concentrate on $P_{\omega_2}$.
Assume $\delta$ is a Woodin cardinal. Then $\TT^{\omega_2}_\delta$ is a presaturated
tower.
\end{theorem}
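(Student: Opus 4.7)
The plan is to invoke the characterization stated immediately before: $\TT^{\omega_2}_\delta$ is presaturated iff, for every $V$-generic $G \subset \TT^{\omega_2}_\delta$, $V[G]$ models that $\mathrm{Ult}(V,G)$ is closed under sequences of length less than $\delta$; equivalently, $\delta$ remains regular in $V[G]$. Hence it suffices to show that whenever $\gamma < \delta$ and $\dot f$ is a $\TT^{\omega_2}_\delta$-name for a function from $\gamma$ to the ordinals, $1_{\TT^{\omega_2}_\delta}$ forces $\mathrm{rng}(\dot f)$ to be bounded in $\delta$.

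Suppose towards contradiction that some $S \in \TT^{\omega_2}_\delta$ forces $\dot f : \gamma \to \delta$ to be cofinal. Define an increasing $H : \delta \to \delta$ so that $H(\alpha)$ strictly exceeds every $\xi < \delta$ forced to equal $\dot f(\eta)$ by some condition of rank less than $\alpha$, for $\eta < \gamma$. By the Woodinness of $\delta$ applied to $H$, one obtains $\kappa$ with $\gamma,\mathrm{rank}(S),\mathrm{rank}(\dot f) < \kappa < \delta$, with $H\restriction \kappa \subseteq \kappa$, and an elementary $j : V \to M$ such that $\mathrm{crit}(j) = \kappa$ and $V_{j(H)(\kappa)} \subseteq M$. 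By absorbing further parameters into $H$ at the outset, we may arrange $j(H)(\kappa) > \delta$, so that $V_{\delta+1} \subseteq M$ and in particular $\TT^{\omega_2}_\delta, \dot f, S, j \restriction V_\delta \in M$.

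The next step is the construction of a master condition. For $\theta \gg \delta$, let
\[
T = \{ N \prec V_\theta : |N| = \aleph_1,\ \omega_1 \subset N,\ N \cap V_\delta \in S,\ \text{and } j\restriction (N \cap V_\delta) \in M \text{ is coded in } N \},
\]
the last clause formalized by a predicate requiring $N$ to contain a name for the transitive collapse of $N \cap V_\delta$ together with its $j$-image. A standard reflection argument based on the closure $V_{j(H)(\kappa)} \subseteq M$ shows that $T$ is stationary in $P_{\omega_2}(V_\theta)$, hence a condition in $\TT^{\omega_2}_\delta$, and that $T \leq_{\II} S$ by construction. The decisive property of $T$ is that whenever $G$ is $V$-generic containing $T$, the generic embedding $j_G : V \to \mathrm{Ult}(V,G)$ factors as $j_G = k \circ j$ for some elementary $k : M \to \mathrm{Ult}(V,G)$ with $\mathrm{crit}(k) > \delta$. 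Since $\gamma < \kappa = \mathrm{crit}(j)$, the values $\dot f^G(\eta)$ for $\eta < \gamma$ are transported through this factorization, and the choice of $H$ forces them into the range of $j$ on $V_{j(H)(\kappa)}$; since $k$ is the identity on $\delta$ in the relevant sense, one concludes $\mathrm{rng}(\dot f^G) \subseteq j(H)(\kappa) < \delta$, contradicting the assumed cofinality.

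The main obstacle is defining $T$ so that the factorization $j_G = k \circ j$ with $\mathrm{crit}(k) > \delta$ actually holds on a set of full measure below $T$: this is the classical ``master condition'' step and it is where Woodinness is essential, since the strength $V_{j(H)(\kappa)} \subseteq M$ is exactly what allows $N \in T$ to simultaneously approximate $V_\delta$ and $j$. Weaker large cardinal hypotheses do not provide sufficient closure on $M$ for this construction, which is why Woodin cardinals are the natural level for the presaturation of $\TT^{\omega_2}_\delta$.
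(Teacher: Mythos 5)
Your reduction in the first paragraph is fine (presaturation is exactly the statement that every name $\dot f:\gamma\to\delta$ with $\gamma<\delta$ is forced to have bounded range), but the argument collapses at the point where you invoke Woodinness. You claim that ``by absorbing further parameters into $H$'' one can arrange $j(H)(\kappa)>\delta$, hence $V_{\delta+1}\subseteq M$, so that $\TT^{\omega_2}_\delta$, $\dot f$ and $j\restriction V_\delta$ all lie in $M$. This is not available: the embeddings witnessing that $\delta$ is Woodin are derived from extenders lying in $V_\delta$, so their strength is strictly below $\delta$. What Woodinness gives is, for a fixed $A\subseteq V_\delta$, a $\kappa<\delta$ such that for every $\lambda<\delta$ there is $j$ with $\crit(j)=\kappa$, $V_\lambda\subseteq M$ and $j(A)\cap V_\lambda=A\cap V_\lambda$; it never gives a single $j$ with $\crit(j)=\kappa<\delta$ and $V_{\delta+1}\subseteq M$. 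A $\kappa<\delta$ that is $(\delta+1)$-strong is a strictly stronger large cardinal hypothesis than the Woodinness of $\delta$, so everything you deduce from that closure ($\dot f\in M$, $j\restriction V_\delta\in M$, the stationarity of your set $T$) is unjustified.

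The ``decisive property'' of your master condition is also impossible as stated. If $G$ is $V$-generic for $\TT^{\omega_2}_\delta$ then $\crit(j_G)=\omega_2^V$, because the tower concentrates on $P_{\omega_2}$ (this is item~\ref{thm:wstf-2} of the theorem on $V$-normal towers in this section); whereas any factorization $j_G=k\circ j$ with $\crit(k)>\delta$ would force $\crit(j_G)=\crit(j)=\kappa$, and $\kappa$ is an inaccessible (indeed measurable) cardinal far above $\omega_2$. So no condition can force such a factorization, and the final boundedness argument has nothing to rest on. The correct route (see Larson~\cite{LAR04}) is different: for each $i<\gamma$ let $A_i$ be a maximal antichain of conditions deciding $\dot f(i)$, and use $<\delta$-$A$-strong embeddings to produce, below any given condition, a condition $b$ and a $\kappa<\delta$ such that $b$ forces the generic filter to meet each $A_i$ inside $V_\kappa$ (this ``antichain capturing'' lemma is where the master-condition work actually happens). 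Since $|A_i\cap V_\kappa|<\delta$, the possible values of $\dot f(i)$ below $b$ are bounded below $\delta$, and the regularity of $\delta$ in $V$ then bounds $\rng(\dot f)$. No factorization of $j_G$ through a ground-model extender embedding is involved.
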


\begin{lemma}
Assume $\TT^{\II}_\delta$ is a presaturated tower such that $\II$ concentrates on 
$P_{\omega_2}(H_{\delta^+})$.
Then $\TT^{\II}_\delta$ is stationary set preserving.
\end{lemma}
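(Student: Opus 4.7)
The plan is to exploit the presaturated generic ultrapower. Fix $S\subseteq\omega_1$ stationary in $V$, a condition $T\in\TT^{\II}_\delta$ and a $\TT^{\II}_\delta$-name $\dot C$ forced by $T$ to be a club in $\omega_1^{V[\dot G]}$. Let $G$ be $V$-generic for $\TT^{\II}_\delta$ with $T\in G$, and let $j_G:V\to M=\mathrm{Ult}(V,G)$ be the induced generic elementary embedding. Since $\II$ concentrates on $P_{\omega_2}(H_{\delta^+})$, the tower forces $\crit(j_G)=\omega_2^V$, so in particular $j_G\restriction\omega_1=\mathrm{id}$ and hence $j_G(S)=S$ and $j_G(\omega_1)=\omega_1$.

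First I would verify that $\omega_1$ is not collapsed, i.e.\ $\omega_1^{V[G]}=\omega_1^{V}$. Presaturation gives that $M$ is closed under $<\delta$-sequences in $V[G]$. If $\omega_1^{V}$ were collapsed in $V[G]$, there would be a surjection $f:\omega\to\omega_1^{V}$ in $V[G]$; by $<\delta$-closure $f\in M$, so $M\models |\omega_1^{V}|=\omega$. On the other hand, by elementarity $M\models j_G(\omega_1)=\omega_1^{V}$ is an uncountable cardinal, a contradiction. Hence $\omega_1^{V[G]}=\omega_1^{V}$.

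Next, let $C=\dot C_G$. Since $C\subseteq\omega_1$ has size $\aleph_1<\delta$, the $<\delta$-closure of $M$ in $V[G]$ yields $C\in M$, and moreover $M$ correctly computes that $C$ is a club subset of $\omega_1$. By elementarity of $j_G$ applied to the statement ``$S$ is stationary in $\omega_1$'', we have $M\models j_G(S)=S$ is stationary in $\omega_1$. Therefore $M\models S\cap C\neq\emptyset$, and since this is an absolute statement between $M$ and $V[G]$, we conclude $S\cap C\neq\emptyset$ in $V[G]$. Thus no condition can force $S$ to become nonstationary, and $\TT^{\II}_\delta$ is stationary set preserving.

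The only potentially delicate point is making sure that the $<\delta$-closure of $M$ really delivers the club $C$ inside $M$ (as opposed to a club of $j_G(\omega_1)$ different from $\omega_1$); this is handled cleanly by the fact that $\crit(j_G)=\omega_2$, so $\omega_1$ is fixed and $C\subseteq\omega_1$ is captured verbatim.
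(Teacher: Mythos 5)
Your proof is correct and is exactly the argument the paper has in mind: the paper's own proof is a one-liner observing that, by presaturation and concentration on $P_{\omega_2}$, the generic ultrapower is $<\delta$-closed in $V[G]$, so any club $C\subseteq\omega_1$ of the extension lands in $Ult(V,G)$, which by elementarity (and $\crit(j_G)=\omega_2$) still believes $S$ is stationary; it then refers to \cite[Section 2.4]{VIAMMREV} for the details you have written out. Your elaboration, including the preliminary check that $\omega_1$ is preserved, fills in precisely those details.
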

\begin{proof}
This is a  consequence of the fact that if a tower is presaturated and concentrates on
$P_{\omega_2}$,
the stationary subsets of $\omega_1$ of the generic extension belong to the generic
ultrapower. For more details see for example~\cite[Section 2.4]{VIAMMREV}.
\end{proof}

We also have the following duality property:
\begin{lemma}\label{lem:presatequiv}
An $\SSP$ complete boolean algebra 
$\BB$ which collapses $\delta$ to become the second uncountable cardinal
is forcing equivalent to a presaturated normal tower of height $\delta$
if and only if 
there is a $\BB$-name $\dot{G}$ for a $V$-normal tower of ultrafilters on $\delta$ such that
\begin{itemize}
\item
$j_{\dot{G}}:V\to Ult(V,\dot{G})$ is forced by $\BB$ to be an elementary embedding such that
$j_{\dot{G}}(\omega_2)=\delta$ and $Ult(V,\dot{G})^{<\delta}\subset Ult(V,\dot{G})$,
\item
The map $S\mapsto \llbracket S\in \dot{G}\rrbracket_{\BB}$ which embeds 
$\TT^{\II(\dot{G})}_\delta$ into $\BB$ has a dense image.
\end{itemize} 
\end{lemma}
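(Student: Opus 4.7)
The plan is to prove both directions by combining Lemma~\ref{lem:embntcba}, Proposition~\ref{prop:embntcba}, and the characterization of presaturation via closure of the generic ultrapower under ${<}\delta$-sequences.

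For the forward direction, suppose $\BB$ is forcing equivalent to a presaturated normal tower $\TT^{\II}_\delta$ concentrating on $P_{\omega_2}$. Transport the canonical name for the $V$-generic filter of $\TT^{\II}_\delta$ to a $\BB$-name $\dot G$. Standard facts about normal tower forcings give that $\dot G$ is forced to be a $V$-normal tower of ultrafilters on $\delta$ and that $\II(\dot G) = \II$; the map $i_{\dot G}\colon S \mapsto \llbracket S \in \dot G \rrbracket_{\BB}$ therefore coincides with the chosen forcing isomorphism and so has dense image. The characterization of presaturation yields $Ult(V,\dot G)^{<\delta} \subseteq Ult(V,\dot G)$ in $V^\BB$. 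Under this closure, cardinals strictly below $\delta$ are absolute between $V[\dot G]$ and $Ult(V,\dot G)$; since $\BB$ collapses $\delta$ to $\omega_2$, it follows that $\omega_2^{Ult(V,\dot G)} = \omega_2^{V[\dot G]} = \delta$, and elementarity of $j_{\dot G}$ gives $j_{\dot G}(\omega_2) = \delta$.

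For the backward direction, fix $\dot G$ as in the hypothesis. Lemma~\ref{lem:embntcba} gives that $\II(\dot G) = \{I_X : X \in V_\delta\}$ is a tower of normal ideals in $V$, and the density hypothesis on $i_{\dot G}$, combined with Proposition~\ref{prop:embntcba}, produces an isomorphism $\BBB(\TT^{\II(\dot G)}_\delta) \cong \BB$. The closure hypothesis on $Ult(V,\dot G)$ is exactly presaturation of the resulting tower, via the same characterization theorem. To see that $\II(\dot G)$ concentrates on $P_{\omega_2}$ — part of the paper's definition of presaturated normal tower — I would argue in $V^\BB$ that $\crit(j_{\dot G}) = \omega_2$: the hypothesis $j_{\dot G}(\omega_2) = \delta > \omega_2$ excludes $\crit(j_{\dot G}) > \omega_2$, while $\crit(j_{\dot G}) = \omega_1$ would force $\omega_1^{Ult(V,\dot G)} > \omega_1^V$, contradicting $Ult(V,\dot G) \subseteq V[\dot G]$ together with the fact that $\BB \in \SSP$ preserves $\omega_1$. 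Once $\crit(j_{\dot G}) = \omega_2$ is established, for any $X \supseteq \omega_2$ we have $j_{\dot G}[X] \cap j_{\dot G}(\omega_2) = \omega_2 \in j_{\dot G}(\omega_2)$, so the set $\{M \subseteq X : M \cap \omega_2 \in \omega_2\}$ lies in $\dot G$, giving the required $P_{\omega_2}$-concentration.

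The main obstacle is the clean identification of $\crit(j_{\dot G})$ and the subsequent derivation of $P_{\omega_2}$-concentration from the two clauses in the hypothesis; the rest is essentially bookkeeping that invokes Lemma~\ref{lem:embntcba}, Proposition~\ref{prop:embntcba}, and the presaturation characterization, with attention to transporting the canonical generic filter name through the forcing equivalence in the forward direction.
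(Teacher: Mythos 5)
The paper states Lemma~\ref{lem:presatequiv} as a known ``duality property'' and gives no proof at all, so there is nothing to compare your argument against line by line; what matters is whether your reconstruction is sound, and it essentially is. Both directions are organized around the right ingredients (Lemma~\ref{lem:embntcba}, Proposition~\ref{prop:embntcba}, and the characterization of presaturation via $<\delta$-closure of the generic ultrapower), and your identification of $j_{\dot G}(\omega_2)=\delta$ from the collapse hypothesis plus closure is the intended mechanism.

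Three small points you should make explicit. First, in the forward direction the paper's definition of ``presaturated'' is only formulated for towers concentrating on some $P_{\gamma^+}$, so you need to observe that the hypotheses force $\gamma^+=\omega_2$: a tower concentrating on $P_{\omega_1}$ collapses $\delta$ to $\omega_1$ and so cannot be forcing equivalent to an $\SSP$ poset, while concentration on $P_{\lambda}$ for $\lambda>\omega_2$ is incompatible with $\BB$ collapsing $\delta$ to become the \emph{second} uncountable cardinal. Second, your case analysis $\crit(j_{\dot G})\in\{\omega_1,\omega_2\}$ tacitly uses that the critical point of an elementary embedding is a cardinal of $V$; say so, since a priori $\crit(j)$ is just an ordinal in $[\omega_1,\omega_2]$. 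Third, the paper's $P_{\omega_2}(X)$ carries the clause $|M|<\omega_2$ in addition to $M\cap\omega_2\in\omega_2$, so your concentration argument should also note that $|j_{\dot G}[X]|<j_{\dot G}(\omega_2)$ holds in $Ult(V,\dot G)$ --- this follows from $|X|<\delta$ together with the $<\delta$-closure and the correct computation of cardinals up to $\delta$ in the ultrapower. With these additions the argument is complete.
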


Finally we will need the following lemma to produce by forcing presaturated towers
in generic extensions:
\begin{lemma}\label{lem:forcpresat}
Assume $j:V\to M$ is an almost huge embedding with $\delta=\crit(j)$.
Let $P\subset V_\delta$ be a forcing notion such that
$\BBB(P)$ is $(<\delta,<\delta)$-presaturated.

Assume also that 
\begin{itemize}
\item
$j\restriction P$
can be extended to a complete homomorphism between $\BBB(P)$ and
$\BBB(j(P))\restriction q$ for some $q\in \BBB(j(P))$,
\item
$\BBB(j(P))\restriction q$ is $(<j(\delta),<j(\delta))$-presaturated.
\end{itemize}
Let $H$ be $V$-generic for $\BBB(j(P))$ with $q\in H$ and $G=j^{-1}[H]$.
Then in $V[H]$ the map
\[
\bar{j}: V[G]\to M[H]
\]
given by $\val_G(\tau)\mapsto \val_H(j(\tau))$ is elementary, $\crit{\bar(j)}=\delta$ and
$M[H]^{<j(\delta)}\subset M[H]$.
\end{lemma}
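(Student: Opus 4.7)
The plan is to establish the lifted embedding $\bar j$ by the standard lifting criterion, and then analyze its closure properties by combining almost hugeness with the presaturation hypothesis on $\BBB(j(P))\restriction q$.

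First I will set up $\bar j$ and verify elementarity. Writing $i:\BBB(P)\to\BBB(j(P))\restriction q$ for the complete homomorphism extending $j\restriction P$, I note that since $q\in H$, the filter $G=j^{-1}[H]=i^{-1}[H\cap(\BBB(j(P))\restriction q)]$ is $V$-generic for $\BBB(P)$ by completeness of $i$, and $j[G]=i[G]\subseteq H$. The familiar lifting lemma then yields that $\bar j(\tau^G):=j(\tau)^H$ is a well defined elementary embedding from $V[G]$ to $M[H]$. Since $\bar j$ extends $j$ and $\crit(j)=\delta$, we get $\crit(\bar j)=\delta$.

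For the closure clause $M[H]^{<j(\delta)}\subseteq M[H]$, fix $\alpha<j(\delta)$ and $\vec x=\langle x_\xi:\xi<\alpha\rangle\in V[H]$ with $x_\xi\in M[H]$ for every $\xi$. In $V[H]$ choose names $\tau_\xi\in M$ with $\tau_\xi^H=x_\xi$, and pick a $\BBB(j(P))$-name $\dot{\vec\tau}\in V$ for the sequence $\vec\tau=\langle\tau_\xi:\xi<\alpha\rangle$. In $V$, for each $\xi<\alpha$ choose a maximal antichain $A_\xi\subseteq\BBB(j(P))\restriction q$ and a function $f_\xi:A_\xi\to M$ such that $a\Vdash\dot{\vec\tau}(\xi)=\check{f_\xi(a)}$ for each $a\in A_\xi$. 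By $(<j(\delta),<j(\delta))$-presaturation of $\BBB(j(P))\restriction q$, the set of $q'\le q$ for which $|\{a\in A_\xi:a\wedge q'>0\}|<j(\delta)$ for every $\xi<\alpha$ is dense below $q$ and so meets $H$. Replacing $q$ by such a $q'\in H$ and the antichains by their restrictions, we may assume $|A_\xi|<j(\delta)$ for every $\xi<\alpha$. Since $j(P)\in V^M_{j(\delta)}\subseteq M$ and $P(j(P))^V=P(j(P))^M$ by $M^{<j(\delta)}\subseteq M$, we have $\BBB(j(P))\in M$, so each $A_\xi$ is a subset of $M$ of size $<j(\delta)$ and each $f_\xi$ is a function of the same size into $M$; almost hugeness then places each pair $(A_\xi,f_\xi)$ in $M$, and in turn the sequence $\langle(A_\xi,f_\xi):\xi<\alpha\rangle$ of length $\alpha<j(\delta)$ in $M$. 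Working inside $M[H]$ we recover $\tau_\xi=f_\xi(a_\xi)$ where $a_\xi$ is the unique element of $A_\xi\cap H$, and hence $\vec\tau\in M[H]$; evaluating gives $\vec x=\vec\tau^H\in M[H]$.

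The main obstacle is the closure step: a direct attempt to conclude $\vec\tau\in M$ fails because $\vec\tau$ was defined in $V[H]$ rather than $V$. The point of the argument is to shift attention from $\vec\tau$ itself, which only appears after forcing, to the ground-model data $\langle(A_\xi,f_\xi):\xi<\alpha\rangle$ extracted from the name $\dot{\vec\tau}\in V$. A priori those antichains could be too large for almost hugeness to swallow, but the $(<j(\delta),<j(\delta))$-presaturation of $\BBB(j(P))\restriction q$ is precisely what lets us prune them, inside $H$, down to size $<j(\delta)$. Once this pruned data lives in $V$ with length and fibre size below $j(\delta)$, almost hugeness pulls it into $M$, and then the recovery of $\vec\tau$ and $\vec x$ inside $M[H]$ is routine.
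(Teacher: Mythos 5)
Your proof is correct and follows essentially the same route as the paper's: elementarity comes from $j[G]\subseteq H$ exactly as in the text, and the closure of $M[H]$ under $<j(\delta)$-sequences is obtained by the same two-step mechanism, namely using the $(<j(\delta),<j(\delta))$-presaturation of $\BBB(j(P))\restriction q$ to shrink the deciding antichains to size $<j(\delta)$ on a condition in $H$, and then almost hugeness to place the shrunken decision data inside $M$. The only cosmetic difference is that the paper works with a name for a $<j(\delta)$-sequence of ordinals and assembles an explicit name $\sigma\in M$ forced equal to it, whereas you treat arbitrary sequences of $M[H]$-elements via names in $M$ and reconstruct the sequence inside $M[H]$ from the pruned antichain/function data; both versions rest on the same ideas.
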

\begin{proof}
Let $\QQ=\BBB(j(P))\restriction q$ and $\BB=\BBB(P)$.
The hypotheses grants that $j[G]\subset H$ and thus that
\[
\llbracket\phi(\tau_1,\dots,\tau_n)\rrbracket_{\BB}\in G
\text{ iff }
\llbracket\phi(j(\tau_1),\dots,j(\tau_n))\rrbracket_{\QQ}\in H
\]
for all formulas $\phi$ and $\tau_1,\dots,\tau_n\in V^{\BB}$.
This immediately gives that $\bar{j}$ is elementary and well
defined.

Now we check that $M[H]^{<j(\delta)}\subset M[H]$ in $V[H]$.

Let $\tau:\gamma\to \Ord$  be in $V^{\QQ}$ 
a name for a sequence of ordinals of length $\gamma<j(\delta)$.
Then there is a family $\{A_i:i<\gamma)\}\in V$ of maximal antichains of $j(P)\restriction q\subseteq\QQ$ such that
for all $a\in A_i$ there is $\beta$ such that 
\[
\llbracket \beta=\tau(i)\rrbracket_{\QQ}\geq a.
\]
By the $(<j(\delta,<j(\delta))$-presaturation of $\QQ$,
there is $r\leq q$ in $H\cap j(P)$ such that 
\[
B_i=\{p\in A_i: p\wedge r>0_{\QQ}\}\subseteq j(P)\restriction q\subseteq M
\]
has size less than $j(\delta)$ for all $i<\gamma$.
This gives that $\{B_i:i<\gamma\}\in M$
and that if we set
\[
\sigma=\{\langle (i,\beta),a\rangle:\,i<\gamma,\, a\in B_i,\, 
\llbracket \tau(i)=\beta\rrbracket_{\QQ}\geq a\},
\]
we have at the same time that $\sigma\in M$ and that 
\[
\llbracket \sigma=\tau\rrbracket_{\QQ}\geq r.
\]
In particular we get that $\val_H(\tau)=\val_H(\sigma)\in M[H]$ as was
to be shown.
\end{proof}


\section{$\MM^{+++}$}\label{sec:MM+++}

In this section we introduce the forcing axiom $\MM^{+++}$ as a density property of the
class forcing $\UU^{\SSP}$ and we show how to derive the generic absoluteness
of the Chang model $L(\Ord^{\omega_1})$ with respect to models of 
$\MM^{+++}+$\emph{large cardinals}.
We first show that ---in the presence of class many Woodin cardinals--- 
$\MM^{++}$ can be characterized as the statement that the class of presaturated
towers of normal filters is dense in $\UU^{\SSP}$. 
Next we analyze the interactions betwen the category forcing $\UU^{\SSP}$ and the class forcing given 
by stationary sets $S$ contained in $P_{\omega_2}(\cup S)$, an interaction
which shows up only assuming $\MM^{++}$.
We then introduce $\MM^{+++}$ as (a slight strenghtening of) the assertion that the class of totally rigid,
presaturated towers of normal filters is dense in $\UU^{\SSP}$. Next we prove that
$\MM^{+++}$ is equivalent to the assertion that $\UU_\delta$ is a presaturated tower of normal filters
for all $\delta$ which are $\Sigma_2$-reflecting cardinals.
Then we show how to use the presaturation of $\UU_\delta$ to derive the absoluteness of the 
Chang model $L(\Ord^{\omega_1})$.
Finally we prove the consistency of $\MM^{+++}$ relative to large cardinal axioms.
The reader who is just interested in the results concerning $\MM^{+++}$ can skip the next subsection.

\subsection{$\MM^{++}$ as a density property of $\UU^{\SSP}$}\label{subsec:MM++}
We shall denote by $\TT^{\omega_2}_\delta$ the stationary tower whose elements
are stationary sets $S$ in $V_\delta$ which concentrate on
$P_{\omega_2}(\cup S)$. 
A key property of this partial order 
is that it is at the same stationary set preserving and a presaturated tower whenever $\delta$ is 
a Woodin cardinal. 
In~\cite[Theorem 2.16]{VIAMMREV} we showed:

\begin{theorem}
Assume $\delta$ is a Woodin cardinal and
let $P\in V_\delta$ be a partial order.
Then the following are equivalent:
\begin{enumerate}
\item
$T_{\BBB(P)}$ is stationary,
\item
$\BBB(P)\geq_\SSP \TT^{\omega_2}_\delta\restriction S$
for some stationary set $S\in \TT^{\omega_2}_\delta$.
\end{enumerate}
\end{theorem}

In particular we have the following immediate corollary:
\begin{corollary}\label{cor:MM++den}
Assume there are class many Woodin cardinals. Then the following are equivalent:
\begin{enumerate}
\item
$T_{\BB}$ is stationary for all $\BB\in\SSP$.
\item
The class of presaturated normal towers is dense in $\UU^{\SSP}$.
\end{enumerate}
\end{corollary}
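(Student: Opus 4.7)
The plan is to derive both directions as applications of the preceding theorem, using class many Woodin cardinals to choose a Woodin $\delta$ above any relevant partial order.

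For $(1)\Rightarrow(2)$: given $\BB\in\SSP$, pick a Woodin cardinal $\delta$ with $\BB\in V_\delta$. Hypothesis $(1)$ gives that $T_\BB$ is stationary, so the preceding theorem yields a stationary $S\in\TT^{\omega_2}_\delta$ and a complete embedding $i:\BB\to\TT^{\omega_2}_\delta\restriction S$ whose quotient is $\SSP$ over $\BB$, i.e.\ an $\SSP$-correct embedding witnessing $\TT^{\omega_2}_\delta\restriction S\leq\BB$ in $\UU^{\SSP,\SSP}$. Since $\delta$ is Woodin, $\TT^{\omega_2}_\delta$ is a presaturated normal tower by Woodin's theorem, and restriction to a positive condition preserves both normality and presaturation. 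Hence $\TT^{\omega_2}_\delta\restriction S$ is a presaturated normal tower below $\BB$, establishing density.

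For $(2)\Rightarrow(1)$: fix $\BB\in\SSP$ and use $(2)$ to choose a presaturated normal tower $\TT\leq\BB$ in $\UU^{\SSP,\SSP}$ witnessed by an $\SSP$-correct embedding $i:\BB\to\TT$. The key intermediate step is the claim that $T_\TT$ is stationary: writing $\TT=\TT^{\II}_\gamma$ via Burke's representation (Lemma~\ref{lem:burrepthm}), for $G$ $V$-generic for $\TT$ the generic ultrapower $j_G:V\to Ult(V,G)$ satisfies $Ult(V,G)^{<\gamma}\subseteq Ult(V,G)$ inside $V[G]$, so $j_G[H_{|\TT|^+}^V]\in Ult(V,G)$ is an elementary substructure of $j_G(H_{|\TT|^+}^V)$ of size less than $j_G(\omega_2^V)=\gamma$, carrying a correct generic for $j_G(\TT)$ induced by $j_G[G]$; reflecting via elementarity of $j_G$ and using that any closure function $f\in V$ on $H_{|\TT|^+}^V$ satisfies $j_G[H_{|\TT|^+}^V]$ closed under $j_G(f)$, we obtain that $T_\TT$ is stationary in $V$. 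Now pick a Woodin $\delta>|\TT|$, apply the theorem to $\TT$ to obtain an $\SSP$-correct embedding $j:\TT\to\TT^{\omega_2}_\delta\restriction S'$, and compose: $j\circ i:\BB\to\TT^{\omega_2}_\delta\restriction S'$ is $\SSP$-correct, and by the converse direction of the theorem, $T_\BB$ is stationary.

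The main obstacle is the intermediate claim that $T_\TT$ is stationary for every presaturated normal tower $\TT$. While this should follow from standard properties of normal tower forcings, it requires verifying that the canonical generic obtained from $j_G[G]$ projects to a \emph{correct} generic filter over the transitive collapse of the elementary substructure in the sense of the definition of $T_\TT$, i.e.\ preserving agreement of the nonstationary ideal on $\omega_1$ between the subextension and $Ult(V,G)$; this uses that $\TT\in\SSP$ together with the $<\gamma$-closure of $Ult(V,G)$ inside $V[G]$. Once this auxiliary fact is in place, the corollary is an immediate application of the preceding theorem in both directions.
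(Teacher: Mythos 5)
Your $(1)\Rightarrow(2)$ direction is correct and is the intended immediate application of the cited theorem together with Woodin's presaturation theorem for $\TT^{\omega_2}_\delta$.

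The problem is in $(2)\Rightarrow(1)$, exactly at the step you flagged: the claim that $T_{\TT}$ is stationary for an arbitrary presaturated normal tower $\TT=\TT^{\II}_\gamma$. Your justification is that $<\gamma$-closure of $Ult(V,G)$ inside $V[G]$ puts $j_G[H_{|\TT|^+}^V]$ into $Ult(V,G)$. But $\TT$ lives on $V_\gamma$, so $|\TT|$ is at least $\gamma$ (in fact of order $2^{<\gamma}$); hence $H_{|\TT|^+}^V$ has cardinality well above $\gamma$, $j_G[H_{|\TT|^+}^V]$ is not a $<\gamma$-sequence, and the closure hypothesis simply does not apply. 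There is no evident replacement: the paper builds ``$T_{\TT}$ is stationary'' into the definition of \emph{strongly} presaturated precisely because it is not automatic from presaturation, and it even conjectures that $\TT^{\omega_2}_\delta$ is never strongly presaturated. So step~2 of your argument is a genuine gap, not a routine verification.

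The detour through a second application of the theorem is also unnecessary; the direct argument is what makes the corollary ``immediate.'' Given $\BB\in\SSP$, first pass to $\CC=\BBB(\BB*\dot{\Coll}(\omega_1,2^{|\BB|}))\leq_{\SSP}\BB$ and apply density below $\CC$: any presaturated normal tower $\TT\leq_{\SSP}\CC$ of height $\gamma$ forces $\gamma=\omega_2$ while collapsing $(2^{|\BB|})^V$ to $\omega_1$, so $\gamma>2^{|\BB|}$ and $T_{\BB}\in V_\gamma$. If $G$ is $V$-generic for $\TT$ and $i:\BB\to\BBB(\TT^{\II}_\gamma)$ is the correct embedding, then $H=i^{-1}[G]$ lies in $Ult(V,G)$ by $<\gamma$-closure, and correctness of $i$ together with $H_{\omega_2}^{Ult(V,G)}=H_{\omega_2}^{V[G]}$ shows $H$ is a correct $V$-generic filter for $\BB$ there; hence $j_G[H_{|\BB|^+}^V]\in j_G(T_{\BB})$, i.e.\ $T_{\BB}\in G$, and since the $V$-normal ultrafilter $G_{H_{|\BB|^+}}$ contains every club $C_f$ with $f\in V$, $T_{\BB}$ is stationary. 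This is exactly the computation the paper carries out later when proving that $T_{\BB}$ is a fixed point of any presaturated tower whose height exceeds the rank of $\BB$, and it avoids any appeal to $T_{\TT}$ being stationary.
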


Notice that in the presence of $\MM^{++}$ and class many Woodin 
cardinals we have a further characterization of total rigidity:

\begin{proposition}\label{lem:eqtrbis}
Assume $\MM^{++}$ and there are class many Woodin cardinals .
Then the following are equivalent for a $\BB\in\SSP$:
\begin{enumerate}
\item
$\BB$ is totally rigid.
\item\label{lem:eqtr2bis}
$G(M,\BB)=\{b\in M: M\in T_{\BB\restriction b}\}$
is the unique correct $M$-generic filter for $\BB$ for a club of $M\in T_{\BB}$.
\end{enumerate}
\end{proposition}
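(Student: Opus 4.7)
The plan is to establish the equivalence by combining Lemma~\ref{lem:eqtr} (which already gives various equivalents of total rigidity) with the characterization, available under $\MM^{++}$ and class many Woodin cardinals, of $T_\BB$ stationary as being equivalent to $\BB$ completely embedding into a quotient of the stationary tower $\TT^{\omega_2}_\delta$ (via the assignment $b\mapsto\{M:b\in H(M)\}$ for a choice $M\mapsto H(M)$ of correct $M$-generic filter for $\BB$). The basic bookkeeping remark used throughout is that $b\in G(M,\BB)$ iff there is \emph{some} correct $M$-generic filter $H$ for $\BB$ containing $b$ (since $H\cap(\BB\restriction b)$ witnesses $M\in T_{\BB\restriction b}$, and conversely any correct $M$-generic for $\BB\restriction b$ extends uniquely to a correct $M$-generic for $\BB$ containing $b$). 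In particular, any correct $M$-generic filter $H$ for $\BB$ satisfies $H\subseteq G(M,\BB)$, so (2) is equivalent to saying that for a club of $M\in T_\BB$ there is a \emph{unique} correct $M$-generic filter for $\BB$.

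For (2) $\Rightarrow$ (1), I would argue the contrapositive. Suppose $\BB$ is not totally rigid: by Lemma~\ref{lem:eqtr}(\ref{lem:eqtr1}) there are incompatible $b_0,b_1\in\BB$ and some $\CC\leq\BB\restriction b_0,\BB\restriction b_1$ in $\UU^{\SSP,\SSP}$, with correct embeddings $j_0,j_1$ from the $\BB\restriction b_j$ into $\CC$. By $\MM^{++}$ applied to $\CC$, $T_\CC$ is stationary, and for every $M\in T_\CC$ any correct $M$-generic filter $H$ for $\CC$ pulls back along $j_0$ and $j_1$ to correct $M$-generic filters for $\BB\restriction b_0$ and $\BB\restriction b_1$; these yield correct $M$-generic filters for $\BB$ containing $b_0$ and $b_1$ respectively. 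Hence both $b_0,b_1\in G(M,\BB)$ for every $M\in T_\CC$. Since $b_0\wedge b_1=0_\BB$, $G(M,\BB)$ fails to be a filter on a stationary subset of $T_\BB$, so (2) fails.

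For (1) $\Rightarrow$ (2), I would again argue the contrapositive: assume the set $S\subseteq T_\BB$ of $M$ for which there exist two distinct correct $M$-generic filters for $\BB$ is stationary, and aim to contradict Lemma~\ref{lem:eqtr}(\ref{lem:eqtr4}). Using $\AC$, fix, for each $M\in S$, correct $M$-generic filters $H_1(M)\neq H_2(M)$ for $\BB$ and some $b(M)\in(H_1(M)\triangle H_2(M))\cap M$. The assignment $M\mapsto b(M)$ is regressive in the sense of the Pressing Down Lemma, so we obtain a stationary $S'\subseteq S$ and a fixed $b^*\in\BB$ with, after a possible swap of the labels $1,2$, $b^*\in H_1(M)\setminus H_2(M)$ for all $M\in S'$. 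Fix a Woodin $\delta>|\BB|$ and consider the embeddings
\[
i_k:\BB\to\TT^{\omega_2}_\delta\restriction S',\qquad b\mapsto\{M\in S':b\in H_k(M)\},
\]
for $k=1,2$. Standard stationary tower calculations together with the correctness of the $H_k(M)$'s show that these are well-defined $\SSP$-correct complete embeddings; moreover $i_1(b^*)\supseteq S'$ while $i_2(b^*)\cap S'=\emptyset$, so $i_1\neq i_2$. This contradicts Lemma~\ref{lem:eqtr}(\ref{lem:eqtr4}) and hence the total rigidity of $\BB$.

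The main obstacle, and the only point that needs genuine technical care, is verifying that the maps $i_k$ just described are indeed $\SSP$-correct complete embeddings of $\BB$ into $\TT^{\omega_2}_\delta\restriction S'$: preservation of joins and of the bottom element is immediate, but preservation of incompatibility and maximality of images of antichains relies on the correctness of the chosen $M$-generic filters and on genericity properties of the stationary tower at Woodin cardinals, as in the proof of the theorem cited from~\cite[Theorem 2.16]{VIAMMREV} at the beginning of Section~\ref{subsec:MM++}. Once this is in place, the argument reduces to the clean application of Lemma~\ref{lem:eqtr} sketched above.
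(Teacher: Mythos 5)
Your proof is correct and follows essentially the same route as the paper: the same preliminary observation that clause (2) amounts to uniqueness of the correct $M$-generic filter, the same pressing-down argument producing the two embeddings $b\mapsto\{M\in S':b\in H_k(M)\}$ into $\TT^{\omega_2}_\delta\restriction S'$ for one direction, and the same pullback of a correct generic for a common refinement $\CC$ for the other. The only (cosmetic) difference is that you phrase the contradictions via clauses (\ref{lem:eqtr4}) and (\ref{lem:eqtr1}) of Lemma~\ref{lem:eqtr} where the paper argues directly from compatibility of $\BB\restriction b^*$ with $\BB\restriction\neg b^*$, and you leave the verification that the tower embeddings are $\SSP$-correct at the same level of detail as the paper does.
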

\begin{proof}
We first show that $G(M,\BB)$ is a correct 
$M$-generic filter for $\BB$ iff there is a unique such
correct $M$-generic filter.

So assume there are two distinct correct $M$-generic filter for $\BB$ $H_0,H_1$.
Let $b\in H_0\setminus H_1$. Then $M\in T_{\BB\restriction b}\cap T_{\BB\restriction\neg b}$
as witnessed by $H_0,H_1$, thus $b,\neg b\in G(M,\BB)$ and $G(M,\BB)$ is not a filter.

Conversely assume $H$ is the unique correct $M$-generic filter for $\BB$.
Then $b\in H$ gives that $M\in T_{\BB\restriction b}$. Thus $H\subseteq G(M,\BB)$.
Now if $c\in G(M,\BB)\setminus H$ there is a correct $M$-generic filter $H^*$ for $M$
with $c\in H^*\setminus H$. This contradicts the uniqueness assumption on $H$.
Thus $H= G(M,\BB)$ as was to be shown.

Now we prove the equivalence of total rigidity with~\ref{lem:eqtr2bis}.

Assume first that~\ref{lem:eqtr2bis} fails. Let $S\subset T_{\BB}$ be a stationary set
such that for all $M\in S$ there are at least two distinct correct 
$M$-generic filters $H^M_0$, $H^M_1$.
For each such $M$ we can find $b_M\in M\cap (H^M_0\setminus H^M_1)$.
By pressing down on $S$ and refining $S$ if necessary, 
we can assume that $b_M=b^*$ for all $M\in S$.
Let $\delta>|\BB|$ be  a Woodin cardinal. For $j=0,1$ define
$i_j:\BB\to \TT^{\omega_2}_\delta\restriction S$ by 
\[
b\mapsto\{M\in S: b\in H^M_j\}.
\]
Then $i_0,i_1$ are complete homomorphisms such that
\[
\BB\Vdash \TT^{\omega_2}_\delta\restriction S/_{i_j[\dot{G}_{\BB}]}\text{ is stationary set preserving.}
\]
and 
$i_0(b^*)=S=i_1(\neg b^*)$.
In particular we get that $i_0$ witnesses that 
$\BB\restriction b^*\geq \TT^{\omega_2}_\delta\restriction S$
and $i_1$ witnesses that $\BB\restriction \neg b^*\geq \TT^{\omega_2}_\delta\restriction S$.
All in all we have that $\BB\restriction b^*$ and $\BB\restriction \neg b^*$ are compatible 
conditions in $\UU^{\SSP}$, i.e. $\BB$ is not totally rigid.

Now assume that $\BB$ is not totally rigid. Let
$i_0:\BB\restriction b\to \CC$ and $i_1:\BB\restriction \neg_{\BB}b\to\CC$ be distinct
complete homomorphisms of $\BB$ into $\CC$ such that for $j=0,1$
\[
\BB\Vdash \CC/_{i_j[\dot{G}_{\BB}]}\text{ is stationary set preserving.}
\]
Then for all $M\in T_{\CC}$ such that $i_0,i_1\in M$ we can pick $H_M$ a
correct $M$-generic filter for $\CC$.
Thus $G_j=i_j^{-1}[H_M]$ for $j=0,1$ are both correct and $M$-generic and such that $b\in G_0$ and 
$\neg_{\BB} b\in G_1$. In particular we get that for a club of 
$M\in T_{\CC}\restriction H_{|\BB|^+}\subseteq T_{\BB}$ there are at least two $M$-generic 
filter for $\BB$,
i.e.~\ref{lem:eqtr2bis} fails.

\end{proof}

\subsubsection*{Duality between $\SSP$-forcings and stationary sets concentrating 
on $P_{\omega_2}$}\label{subsec:comrel}

We outline some basic properties which tie the arrows of $\UU^{\SSP}$ with 
the order relation on stationary sets concentrating on $P_{\omega_2}$
assuming that $\MM^{++}$ holds.

\begin{lemma}\label{lem:cmpUUdelta}
Assume $\MM^{++}$ and there are class many Woodin cardinals. 
Then $\BB_0,\BB_1$ are compatible conditions in $\UU^{\SSP}$
(i.e. there are arrows $i_j:\BB_j\to\CC$ for $j=0,1$ and $\CC$ fixed in $\UU^{\SSP}$) 
if and only if
$T_{\BB_0}\wedge T_{\BB_1}$ is stationary.
\end{lemma}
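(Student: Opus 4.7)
For the forward direction, suppose arrows $i_j : \BB_j \to \CC$ in $\UU^{\SSP,\SSP}$ witness compatibility. By $\MM^{++}$, $T_\CC$ is stationary. Fix $\eta$ large enough that $\BB_0, \BB_1, \CC, i_0, i_1, T_{\BB_0}, T_{\BB_1} \in V_\eta$; the first step is to show that
\[
\{M \prec V_\eta : M \in P_{\omega_2},\ T_{\BB_0}, T_{\BB_1}, \CC, i_0, i_1 \in M,\ M \cap H_{|\CC|^+} \in T_\CC\}
\]
is a stationary subclass of $T_{\BB_0} \wedge T_{\BB_1}$. For any such $M$, pick a correct $M$-generic filter $H$ for $\CC$ and set $G_j := i_j^{-1}[H]$; since $i_j, \BB_j \in M$ and $i_j$ is complete, $i_j[D]$ is predense in $\CC$ and lies in $M$ for every predense $D \in M \cap \BB_j$, so $G_j$ is $M$-generic for $\BB_j$, and stationary correctness of $G_j$ follows from that of $H$ through the Mostowski collapse $\pi_M$, exploiting the $\SSP$-correctness of $\pi_M(i_j)$ to propagate preservation of $\NS_{\omega_1}$ through the inclusion $N_{\BB_j}[\pi_M(G_j)] \subseteq N_\CC[\pi_M(H)]$. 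This shows $M \in T_{\BB_0} \wedge T_{\BB_1}$.

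For the reverse direction, let $S := T_{\BB_0} \wedge T_{\BB_1}$ be stationary and fix a Woodin cardinal $\delta$ with $\BB_0, \BB_1, S \in V_\delta$. Put $\CC := \BBB(\TT^{\omega_2}_\delta \restriction S)$, which is a presaturated and hence stationary-set-preserving tower forcing. Using the axiom of choice, for each $M \in S$ fix a pair $(H^M_0, H^M_1)$ of correct $M$-generic filters for $\BB_0, \BB_1$ (such pairs exist by the very definition of $S$). Define $k_j : \BB_j \to \CC$ by
\[
k_j(b) := \bigl[\{M \in S : b \in H^M_j\}\bigr]_\CC.
\]
Preservation of complements by $k_j$ is immediate since each $H^M_j$ is a filter, and preservation of joins over a predense $D \in \BB_j$ holds because $M$-genericity yields $D \cap H^M_j \neq \emptyset$ for every $M \in S$ having $D$ as an element, and the set of such $M$ agrees with $S$ modulo a non-stationary set. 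Completeness of $k_j$ as a homomorphism into the $(<\delta, <\delta)$-presaturated $\CC$ follows.

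The main obstacle is verifying $\SSP$-correctness of $k_j$, i.e.\ that $\CC$ forces the quotient $\CC / k_j[\dot G_{\BB_j}]$ to lie in $\SSP$. My plan is to establish this via the generic ultrapower: let $G$ be $V$-generic for $\CC$ with associated embedding $j_G : V \to Ult(V, G)$, whose range is closed under $<\delta$-sequences in $V[G]$. Then $G_j := k_j^{-1}[G]$ is $V$-generic for $\BB_j$, and, given a $\BB_j$-name $\dot T'$ for a stationary subset of $\omega_1$, the pointwise correctness of each $H^M_j$ ensures that $\pi_M(\dot T')$ evaluated by $\pi_M(H^M_j)$ is $V$-stationary for every $M \in S$. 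Transferring this through the ultrapower and using the $<\delta$-closure of $Ult(V, G)$ in $V[G]$ will yield that $val_G(\hat k_j(\dot T')) = val_{G_j}(\dot T')$ remains stationary in $V[G]$, which is the Fact-characterization of $\SSP$-correctness of $k_j$. Hence both $k_0, k_1$ are arrows of $\UU^{\SSP,\SSP}$ witnessing the compatibility of $\BB_0$ and $\BB_1$ in $\CC$.
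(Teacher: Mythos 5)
Your proof is correct and follows essentially the same route as the paper: the forward direction pulls the correct $M$-generic filter for $\CC$ back through the two $\SSP$-correct arrows, and the reverse direction maps each $\BB_j$ into $\TT^{\omega_2}_\delta\restriction S$ via $b\mapsto\{M\in S: b\in H^M_j\}$. The only difference is that the $\SSP$-correctness of these tower embeddings, which you sketch as a ``plan'' via the generic ultrapower and the $<\delta$-closure of $Ult(V,G)$, is exactly the content of the theorem from~\cite{VIAMMREV} quoted immediately before the lemma, which the paper simply invokes; your outline of that argument is the standard one and goes through.
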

\begin{proof}
First assume that $\CC\leq \BB_0,\BB_1$.
We show that $T_{\BB_0}\wedge T_{\BB_1}$ is stationary.
Let $i_0:\BB_0\to \CC$, $i_1:\BB_1\to\CC$ be atomless complete homomorphisms
such that
\[
\BB_j\Vdash \CC/_{i_j[\dot{G}_{\BB}]}\text{ is stationary set preserving.}
\]
For all $M\in T_{\CC}$ such that $i_j\in M$ for $j=0,1$ 
pick $G_M$ correct $M$-generic filter for $\CC$.
Let $H_M^j=i_j^{-1}[G_M]$, then $H_M^j$ is a correct $M$-generic for $\BB_j$,
since each $i_j$ is such that 
\[
\BB_j\Vdash \CC/_{i_j[\dot{G}_{\BB}]}\text{ is stationary set preserving.}
\]
In particular $M\cap  H_{|\BB_j|^+}\in T_{\BB_j}$ for $j=0,1$.
Thus $T_{\CC}\restriction H_{|\BB_j|^+}\subset T_{\BB_j}$ for $j=0,1$,
i.e. $T_{\BB_0}\wedge T_{\BB_1}$ is stationary.

Conversely assume that $T_{\BB_0}\wedge T_{\BB_1}$ is stationary.
For each $M\in S=T_{\BB_0}\wedge T_{\BB_1}$ pick
$H_M^j$ correct $M$-generic filter for $\BB_j$ for $j=0,1$.
Fix a Woodin cardinal $\delta>|S|$.
Let $\TT^{\omega_2}_\delta$ denote the stationary tower with critical point $\omega_2$ and height
$\delta$.
Let $i_j:\BB_j\to \TT^{\omega_2}_\delta\restriction S$ map 
\[
b\to\{M\in S: b\in H^j_M\}.
\] 
Then each 
$i_j$ is a complete embedding such that
\[
\BB_j\Vdash \TT^{\omega_2}_\delta\restriction S/_{i_j[\dot{G}_{\BB}]}\text{ is stationary set preserving.}
\]
 i.e.
$\BB_j\geq \TT^{\omega_2}_\delta\restriction S$ for $j=0,1$ showing that 
$\BB_0$ and $\BB_1$ are compatible conditions in $\UU^{\SSP}$.
\end{proof}

We can also give a simple representation of totally rigid boolean algebras and a
characterization of the complete embeddings
between totally rigid complete boolean
algebras:

\begin{fact}
Assume $\BB$ is totally rigid and $T_{\BB\restriction b}$ is stationary
for all $b\in \BB^+$.
Then $\BB$ is isomorphic to the complete boolean subalgebra
$\{T_{\BB\restriction b}:b\in\BB\}$ of the boolean algebra $P(T_{\BB})/\NS$.
\end{fact}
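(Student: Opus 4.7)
I would define $\phi \colon \BB \to P(T_\BB)/\NS$ by $\phi(b) = [T_{\BB\restriction b}]_\NS$ and aim to show $\phi$ is a complete Boolean embedding. Injectivity plus preservation of all existing suprema and infima will yield both assertions: the image $\{T_{\BB\restriction b} : b \in \BB\}$ is a complete subalgebra of $P(T_\BB)/\NS$, and $\phi$ is an isomorphism of $\BB$ onto it. The engine of the argument is the unique-generic characterization from Proposition~\ref{lem:eqtrbis}: since $\BB$ is totally rigid and we are working in the $\MM^{++}$+class many Woodins context of this subsection, there is a club $C \subseteq T_\BB$ such that for every $M \in C$, $G(M,\BB) = \{b \in M : M \in T_{\BB\restriction b}\}$ is the unique correct $M$-generic filter for $\BB$. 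On $C$ the map $M \mapsto G(M,\BB)$ behaves like a coherent system of Boolean homomorphisms $M \cap \BB \to 2$, and every Boolean feature of $\phi$ will be read off from it.

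\textbf{Finite operations and injectivity.} Fix $b, c \in \BB$ and let $C^* \subseteq C$ be the club of $M \in C$ with $b, c \in M$. Since $G(M,\BB)$ is a filter on $M \cap \BB$, for $M \in C^*$ one has $M \in T_{\BB\restriction(b \wedge c)}$ iff $b, c \in G(M,\BB)$ iff $M \in T_{\BB\restriction b} \cap T_{\BB\restriction c}$, and similarly $M \in T_{\BB\restriction \neg b}$ iff $b \notin G(M,\BB)$ iff $M \notin T_{\BB\restriction b}$ (the antichain $\{b,\neg b\}$ lies in $M$, so $G(M,\BB)$ meets it). Hence $\phi(b \wedge c) = \phi(b) \wedge \phi(c)$ and $\phi(\neg b) = \neg \phi(b)$ modulo $\NS$. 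For injectivity, if $b_0 \neq b_1$ then one of $b_0 \wedge \neg b_1,\ \neg b_0 \wedge b_1$ is nonzero, say $c = b_0 \wedge \neg b_1 \neq 0_\BB$; by hypothesis $T_{\BB\restriction c}$ is stationary, and by the identities just established it equals $T_{\BB\restriction b_0} \setminus T_{\BB\restriction b_1}$ modulo $\NS$, so $\phi(b_0) \neq \phi(b_1)$.

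\textbf{Arbitrary suprema --- the main obstacle.} Let $A \subseteq \BB$ with $b = \bigvee A$; refining $A$ if necessary, I may assume $A$ is a maximal antichain in $\BB \restriction b$. Monotonicity gives $\phi(a) \leq \phi(b)$ for each $a \in A$, so $\phi(b)$ is an upper bound of $\phi[A]$ in $P(T_\BB)/\NS$. To see it is the least upper bound it suffices to show $T_{\BB\restriction b} \setminus \bigcup_{a \in A} T_{\BB\restriction a}$ is nonstationary. Let $D \subseteq C$ be the club of countable $M \prec H_\theta$ with $A, b \in M$. For $M \in D \cap T_{\BB\restriction b}$ the unique correct generic $G(M,\BB)$ contains $b$; since $A \cup \{\neg b\}$ is a maximal antichain of $\BB$ lying in $M$, $G(M,\BB)$ meets it, and as $b$ is incompatible with $\neg b$ the hit element lies in $A$, giving some $a \in A$ with $M \in T_{\BB\restriction a}$. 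Hence the exceptional set avoids $D$ and is nonstationary. This last step is the delicate one: total rigidity plus $\MM^{++}$ reduce the behaviour of $\phi$ on arbitrary antichains to the deterministic behaviour of the canonical generics $G(M,\BB)$, and elementarity of $M$ combined with maximality of $A$ rules out any stationary leakage below $b$. Putting the three steps together, $\phi$ is an injective complete Boolean homomorphism, and the fact follows.
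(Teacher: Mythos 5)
The paper states this Fact without proof, so there is no official argument to compare against. Your reconstruction is surely the intended one: it runs entirely on the machinery the surrounding text sets up, namely the canonical generics $G(M,\BB)$ and the uniqueness half of Proposition~\ref{lem:eqtrbis} (for which you correctly import the standing hypotheses $\MM^{++}$ plus class many Woodin cardinals of this subsection). The finite-operation, complementation and injectivity steps are correct as written.

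One step needs tightening. In the suprema paragraph you reduce the least-upper-bound claim to ``$T_{\BB\restriction b}\setminus\bigcup_{a\in A}T_{\BB\restriction a}$ is nonstationary''. That reduction is not literally sufficient: if $S$ satisfies $T_{\BB\restriction a}\setminus S\in\NS$ for every $a\in A$, one cannot conclude that $\bigl(\bigcup_{a\in A}T_{\BB\restriction a}\bigr)\setminus S\in\NS$ once $|A|$ exceeds the completeness ($\omega_2$) of the nonstationary ideal on $P_{\omega_2}(H_{|\BB|^+})$, so the ordinary union is not an exact supremum of the $\phi(a)$'s. What you actually need --- and what your argument in fact delivers --- is the diagonal version: for club many $M\in T_{\BB\restriction b}$ with $A,b\in M$, the element of the maximal antichain $A\cup\{\neg b\}$ hit by $G(M,\BB)$ lies in $A\cap M$, so $M\in T_{\BB\restriction a}$ for some $a\in A\cap M$; that is, $T_{\BB\restriction b}\leq\bigvee\{T_{\BB\restriction a}:a\in A\}$ in the sense of the diagonal union $\bigvee$ of subsection~\ref{subsec:statsets}. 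By normality of $\NS$ this $\bigvee$ is the exact least upper bound in $P(T_{\BB})/\NS$, and since each $T_{\BB\restriction a}\leq T_{\BB\restriction b}$ you get $[T_{\BB\restriction b}]_{\NS}=\bigvee_{a\in A}[T_{\BB\restriction a}]_{\NS}$, which simultaneously shows that the image is a complete subalgebra of $P(T_{\BB})/\NS$ and that $\phi$ is an isomorphism onto it (infima then follow from suprema plus complementation). So simply phrase the reduction via the diagonal union rather than $\bigcup$ and your proof is complete.
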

Notice that in the above setting, $P(T_{\BB})/\NS$ may not be a complete boolean algebra and may
not be stationary set preserving,
while $\{T_{\BB\restriction b}:b\in\BB\}$ is an $\SSP$ and complete boolean subalgebra.

\begin{fact}
Assume $\MM^{++}$ holds.
Assume $\BB\geq_{\SSP}\QQ$ are totally rigid and complete boolean algebras.
Let $i:\BB\to\QQ$ be the unique $\SSP$-correct homomorphism between $\BB$ and $\QQ$.
Then for all $b\in\BB$ and $q\in\QQ$, 
$T_{\BB\restriction b}\wedge T_{\QQ\restriction q}$ is stationary
if and only if $i(b)\wedge_{\QQ} q>0_{\QQ}$.
\end{fact}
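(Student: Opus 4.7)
The plan is to pass through Lemma~\ref{lem:cmpUUdelta}, which under $\MM^{++}$ (together with the class many Woodin cardinals assumed throughout this subsection) characterizes stationarity of $T_{\BB\restriction b}\wedge T_{\QQ\restriction q}$ as $\UU^{\SSP,\SSP}$-compatibility of $\BB\restriction b$ and $\QQ\restriction q$. The nontrivial direction is then extracted by combining this compatibility with the total rigidity of $\BB$ to pin down the two natural arrows $\BB\to\CC$ that arise from any common refinement, forcing the equation $i(b)\wedge q>0_{\QQ}$.

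For the easy implication $i(b)\wedge q>0_{\QQ}\Rightarrow T_{\BB\restriction b}\wedge T_{\QQ\restriction q}$ stationary, I would simply set $\CC=\QQ\restriction(i(b)\wedge q)$. Composing $i\restriction(\BB\restriction b)\colon\BB\restriction b\to\QQ\restriction i(b)$ with the natural projection to $\CC$, together with the projection $\QQ\restriction q\to\CC$ given by $x\mapsto x\wedge i(b)$, yields correct arrows into $\CC$ from both $\BB\restriction b$ and $\QQ\restriction q$, exhibiting the required compatibility. Lemma~\ref{lem:cmpUUdelta} then delivers stationarity of $T_{\BB\restriction b}\wedge T_{\QQ\restriction q}$.

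For the converse, assume $T_{\BB\restriction b}\wedge T_{\QQ\restriction q}$ is stationary and use Lemma~\ref{lem:cmpUUdelta} to fix $\CC\in\UU^{\SSP,\SSP}$ with correct arrows $j_0\colon\BB\restriction b\to\CC$ and $j_1\colon\QQ\restriction q\to\CC$. Writing $\iota_b\colon\BB\to\BB\restriction b$ and $\iota_q\colon\QQ\to\QQ\restriction q$ for the canonical projections $a\mapsto a\wedge b$ and $a\mapsto a\wedge q$, I claim that both $j_0\circ\iota_b$ and $j_1\circ\iota_q\circ i$ are arrows $\BB\to\CC$ in $\UU^{\SSP,\SSP}$. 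By the total rigidity of $\BB$ they must then coincide. Evaluating at the element $b$ gives
\[
1_{\CC}=j_0(b)=j_1(i(b)\wedge q),
\]
and since $j_1$ is a Boolean homomorphism with $j_1(0_{\QQ\restriction q})=0_{\CC}\neq 1_{\CC}$, this forces $i(b)\wedge q\neq 0_{\QQ}$, as required.

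The only real technical point to monitor is the verification that every composite appearing above is a genuine arrow of $\UU^{\SSP,\SSP}$, i.e.\ a complete homomorphism with $\SSP$ quotient, so that total rigidity may be applied. This is routine: the projections $\iota_b,\iota_q$ are complete homomorphisms whose quotients are (when the relevant condition lies in the generic) trivial and thus vacuously $\SSP$, while $\SSP$-correctness is closed under composition by Proposition~\ref{lem:firstfctlem-1}. Once this bookkeeping is in place, the uniqueness granted by total rigidity of $\BB$ reduces the claim to the one-line computation displayed above.
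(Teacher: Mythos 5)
Your proof is correct, and on the harder direction it is actually more explicit than the argument the paper gives. The paper's proof handles the implication $i(b)\wedge_{\QQ}q>0_{\QQ}\Rightarrow T_{\BB\restriction b}\wedge T_{\QQ\restriction q}$ stationary directly: by $\MM^{++}$ the set $T_{\QQ\restriction (i(b)\wedge q)}$ is stationary, and any $M$ in it carries a correct $M$-generic filter for $\QQ$ containing $i(b)\wedge q$, whose preimage under $i$ witnesses $M\in T_{\BB\restriction b}^{H_{|\QQ|^+}}\wedge T_{\QQ\restriction q}$; your detour through $\CC=\QQ\restriction(i(b)\wedge q)$ and Lemma~\ref{lem:cmpUUdelta} proves the same thing at the cost of invoking the Woodin cardinals hidden in that lemma (they are in any case part of the standing hypotheses of this subsection, as you note, and are already needed for $T_{\QQ\restriction(i(b)\wedge q)}$ to interact with $\UU^{\SSP,\SSP}$ in the way Lemma~\ref{lem:cmpUUdelta} describes). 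For the converse the paper is essentially silent — the implicit argument runs through the uniqueness of correct $M$-generic filters from Proposition~\ref{lem:eqtrbis} — whereas you extract it cleanly from Lemma~\ref{lem:cmpUUdelta} plus total rigidity of $\BB$: the two composites $j_0\circ\iota_b$ and $j_1\circ\iota_q\circ i$ are arrows $\BB\to\CC$, hence equal, and evaluating at $b$ gives $j_1(i(b)\wedge q)=1_{\CC}\neq 0_{\CC}$. This is where the total-rigidity hypothesis of the Fact visibly does its work, which is a virtue of your write-up. The only point to keep an eye on is the one you already flagged: treating the projections $\iota_b\colon a\mapsto a\wedge b$ and $\iota_q$ as arrows of $\UU^{\SSP,\SSP}$ requires reading the correctness condition $\llbracket\QQ/i[\dot{G}_{\BB}]\in\SSP\rrbracket_{\BB}=1_{\BB}$ as applying only where the quotient is non-degenerate (equivalently, on the cokernel); this is exactly the convention under which the paper itself asserts $\BB\restriction b\leq_{\SSP}\BB$ throughout (e.g.\ in Lemma~\ref{lem:eqtr} and in the map $k_{\CC}\colon c\mapsto\CC\restriction c$), so no new issue arises.
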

\begin{proof}
Left to the reader.
\end{proof}

It is also immediate to check the following:
\begin{fact}\label{fac:funcrelvee}
For any family $\mathcal{A}\subset \UU^{\SSP}$ 
\[
T_{\bigvee\mathcal{A}}=\bigvee\{T_{\BB}:\BB\in\mathcal{A}\}.
\]
\end{fact}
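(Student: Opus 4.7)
The plan is to prove both inclusions by exploiting the explicit description of the lottery sum $\BB_{\mathcal{A}} = \bigvee \mathcal{A}$. Write $e_P \in \BB_{\mathcal{A}}$ for the element with $e_P(P) = 1_P$ and $e_P(Q) = 0_Q$ for $Q \in \mathcal{A} \setminus \{P\}$. Then $\{e_P : P \in \mathcal{A}\}$ is a maximal antichain in $\BB_{\mathcal{A}}$, and the map $p \mapsto p \cdot e_P$ is an isomorphism between $P$ and $\BB_{\mathcal{A}} \restriction e_P$. Picking $\eta$ large enough that $\mathcal{A}, \BB_{\mathcal{A}}, \{T_P\}_{P \in \mathcal{A}} \in V_\eta$, I will take $T_P$ and $T_{\BB_{\mathcal{A}}}$ to be represented by their lifts to $V_\eta$, which is permissible since these representations agree with the original ones modulo $\equiv$.

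For $\supseteq$, suppose $M \prec V_\eta$ lies in $\bigvee \{T_P : P \in \mathcal{A}\}$, so there is $P \in \mathcal{A} \cap M$ and a correct $M$-generic filter $H \subseteq P \cap M$. Let $G \subseteq \BB_{\mathcal{A}} \cap M$ be the upward closure in $\BB_{\mathcal{A}} \cap M$ of the image of $H$ under the isomorphism $P \cong \BB_{\mathcal{A}} \restriction e_P$. Any predense set $D \in M$ for $\BB_{\mathcal{A}}$ pulls back via this isomorphism to a predense set in $P$ (lying in $M$), which $H$ meets; hence $G$ meets $D$ and is $M$-generic for $\BB_{\mathcal{A}}$. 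Moreover, the transitive collapse of $M[G]$ coincides with the transitive collapse of $M[H]$, because the generic extensions given by $H$ and $G$ are the same forcing extension of $\pi_M[M]$. Correctness of $H$ therefore transfers to correctness of $G$, so $M \in T_{\BB_{\mathcal{A}}}$.

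For $\subseteq$, suppose $M \in T_{\BB_{\mathcal{A}}}$, witnessed by a correct $M$-generic filter $G \subseteq \BB_{\mathcal{A}} \cap M$. Since $\mathcal{A} \in M$, the antichain $\{e_P : P \in \mathcal{A}\}$ belongs to $M$ and is maximal in $\BB_{\mathcal{A}}$, so by $M$-genericity $G$ meets it at a unique element $e_P$ with $P \in \mathcal{A} \cap M$. Transporting $G$ through the isomorphism $\BB_{\mathcal{A}} \restriction e_P \cong P$ yields a filter $H \subseteq P \cap M$; any predense $D \in M$ for $P$ corresponds to a predense $D' \in M$ for $\BB_{\mathcal{A}} \restriction e_P$, and extending $D'$ by $\{\neg e_P\}$ gives a predense subset of $\BB_{\mathcal{A}}$ in $M$ that $G$ meets inside $D'$, so $H$ meets $D$. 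Thus $H$ is $M$-generic for $P$, and once again the collapsed extensions match, so $H$ is correct. Consequently $M \cap H_{|P|^+} \in T_P$, and $M$ belongs to $\bigvee \{T_P : P \in \mathcal{A}\}$.

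The only genuinely technical point is the transfer of correctness, but this is automatic from the isomorphism $\BB_{\mathcal{A}} \restriction e_P \cong P$, which guarantees that $\pi_M[M][G] = \pi_M[M][H]$ as forcing extensions and hence have the same nonstationary ideal on $\omega_1$. All remaining verifications are routine manipulations of filters and predense sets, so no large cardinal or iteration machinery is needed.
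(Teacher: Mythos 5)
Your proof is correct. The paper offers no argument for this Fact (it is introduced with ``It is also immediate to check the following''), and your verification --- using the maximal antichain $\{e_P : P \in \mathcal{A}\}$ of coordinate projections in the lottery sum, the isomorphisms $\BB_{\mathcal{A}}\restriction e_P \cong \BBB(P)$, and the observation that the collapsed generic extensions of $\pi_M[M]$ given by $G$ and $H$ coincide so that stationary-correctness transfers --- is exactly the intended routine check. The one piece of bookkeeping you elide is the passage between correctness relative to $M$ and relative to $M\cap H_{|P|^+}$ (the two collapse to different transitive sets); this is harmless because every subset of $\omega_1$ in the collapsed extension of $\pi_M[M]$ already has a nice name in $\pi_M[M\cap H_{|P|^+}]$, and the paper suppresses the same point elsewhere, e.g.\ in the proof of Lemma~\ref{lem:cmpUUdelta}.
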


All in all assuming $\MM^{++}$ and class many Woodin cardinals we are in the following
situation:

\begin{enumerate}
\item
$\MM^{++}$ can be defined as the statement that the class 
$\mathsf{PT}$ of presaturated 
towers of normal filters is dense in the category $\UU^{\SSP}$.
\item
We also have in the presence of $\MM^{++}$ a functorial map 
\[
F:\UU^{\SSP}\to \{S\in V: S\text{ is stationary and concentrate on }P_{\omega_2}\}
\]
defined by $\BB\mapsto T_{\BB}$ which: 
\begin{itemize}
\item
is order and incompatibility preserving,
\item
maps 
set sized suprema to set sized suprema in the respective class partial orders,
\item
gives a neat representation of the separative quotients of totally rigid partial orders 
and of the complete embeddings between them.
\end{itemize}
\end{enumerate}
It is now 
tempting to conjecture that it is possible to reflect this to some $V_\delta$ and obtain that 
the map
$F\restriction V_\delta$ defines a complete embedding of $\UU_\delta$ into 
$\TT^{\omega_2}_\delta$. However we just have that $F$ preserves suprema of set sized
subsets of $\UU^{\SSP}$ which would reflect to the fact
$F\restriction V_\delta$ defines a $<\delta$-complete embedding of $\UU_\delta$ into 
$\TT^{\omega_2}_\delta$. However we have no reason to expect that
$F\restriction V_\delta$ extends to a complete homomorphism of the respective boolean completion
because neither of the above posets is $<\delta$-CC.

On the other hand we have shown in the previous sections 
that in the presence of class many supercompact
cardinals we have that the class $\TR$ of totally rigid posets 
is dense in the category $\UU^{\SSP}$.
What about the intersection of the classes $\TR$ and $\mathsf{PT}$?
Can there be densely many presaturated towers which are also totally rigid
in $\UU^{\SSP}$?

A positive answer to this question leads to the definition of $\MM^{+++}$.

\subsection{Strongly presaturated towers and $\MM^{+++}$}

\begin{definition}
Let $\II=\{I_X:X\in V_\delta\}$ 
be a tower of normal filters of height $\delta$ which concentrates on $P_{\omega_2}$.
$S$ is a fixed point for $\II$ if $S\geq T_{\TT^{\II}_\delta\restriction S}$.

A normal tower $\TT=\TT^{\II}_\delta$ is strongly 
presaturated\footnote{There is a slight ambiguity between $\TT$ meant as 
a partial order and $\TT$ meant as a boolean algebra. We remark that 
any $S\in V_\delta$ such that $[S]_{\II}=0_{\TT}$ is a fixed point for $\TT$ since
$T_{\TT\restriction S}$ and $T_{\TT}\wedge S$ are always non-stationary in this case
(see Fact~\ref{fac:statfixpoint} below). 
Thus this definition
applies just to the $S\in V_\delta$ such that $S\not \in I_{\cup S}$.} 
if it is presaturated,
has a dense set of fixed point, and  $T_{\TT\restriction S}$ is stationary for all
fixed points $S\in\TT$ such that $S\not\in I_{\cup S}$.

$\SPT$ denotes the class of strongly presaturated towers of normal filters.
\end{definition}

We shall see below that for any $\BB$, $T_{\BB}$ is a fixed point of 
$\TT$ for any presaturated
tower $\TT$ to which $T_{\BB}$ belongs.
However it is well possible (and we conjecture that) for all $\delta$
$\TT^{\omega_2}_\delta$ is never strongly presaturated, 
since it is likely that the family $\{T_{\BB}:\BB\in \UU_\delta\}$ is not dense in 
$\TT^{\omega_2}_\delta$. On the other hand if $\UU_\delta$ is 
forcing equivalent to a presaturated normal tower $\TT$ we shall see
that $\{T_{\BB}:\BB\in \UU_\delta\}$ is dense in 
$\TT$. In particular this would give that $\UU_\delta\in \SPT$.
Moreover we shall see that (in almost all cases) a tower $\TT$ of height $\delta$
is strongly presaturated iff 
there is a family $D\subset \UU_\delta$ such that
$\{T_{\BB}:\BB\in D\}$ is dense in 
$\TT$.
These comments are a motivation for the following definition:

\begin{definition}\label{def:MM+++}
$\MM^{+++}$ holds if $\SPT$ is a dense class in $\UU^{\SSP}$.
\end{definition}

Notice that $\MM^{+++}$ strenghtens $\MM^{++}$ in view of Corollary~\ref{cor:MM++den},
we shall see in Lemma~\ref{lem:totrigSPTMM++} below 
that $\MM^{++}$ entails that any $\TT\in\SPT$ is totally rigid.
In particular assuming $\MM^{+++}$ the class of $\SPT$ posets is a subset of the intersection of 
the class of presaturated towers and  the class of totally rigid posets,
however we haven't been able to establish whether it is a proper subset or a characterization of the 
intersection.

The plan of the remainder of this section is the following:
\begin{enumerate}
\item
We shall introduce some basic properties of the elements $\TT$ of $\SPT$.
\item
We will show that if $\UU_\delta\in \SSP$ and $\SPT\cap V_\delta$ is dense in $\UU_\delta$,
then actually $\UU_\delta$ is forcing equivalent 
to a strongly presaturated normal tower.
\item We will use the previous item in combination with Theorem~\ref{lem:univ4} 
 to prove that the theory $\ZFC+\MM^{+++}+$\emph{there 
are class many $\Sigma_2$-reflecting cardinals $\delta$ which are a limit of $<\delta$-supercompact
cardinals} makes the the theory of the Chang model
$L(\Ord^{\leq\aleph_1})$ invariant under stationary set preserving forcings which preserve
$\MM^{+++}$.
\item
Finally we will show that essentially any ``iteration'' of length $\delta$ which produces 
a model of $\MM^{++}$ 
actually produce a model of $\MM^{+++}$, provided $\delta$ is super almost huge.
\end{enumerate}

\subsubsection*{Basic properties of $\SPT$}

\begin{lemma}\label{lem:basspt1}
Assume $\TT=\TT^{\II}_\delta$ is a strongly presaturated tower of normal filters.
Then for all $M\in T_{\TT}$
\[
G(M,\TT)=\{S\in M\cap V_\delta: M\cap\cup S\in S\}
\]
is the unique correct $M$-generic filter for $\TT$.
\end{lemma}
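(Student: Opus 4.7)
The plan is to fix an arbitrary correct $M$-generic filter $H$ for $\TT$ --- one such exists since $M\in T_{\TT}$ --- and to show $H=G(M,\TT)$; this establishes in a single stroke both that $G(M,\TT)$ is a correct $M$-generic filter and the uniqueness clause. The crucial preliminary is to check that $M$ belongs to the set $S(\II)$ from Burke's representation (Lemma~\ref{lem:burrepthm}), i.e.\ $M\cap X\notin A$ for every $X\in M\cap V_\delta$ and every $A\in I_X\cap M$. I would derive this by the standard generic-ultrapower route: letting $\pi_M:M\to N$ be the transitive collapse and $\bar H=\pi_M[H]$, $\bar H$ is $N$-generic for $\pi_M(\TT)$ and therefore induces $j_{\bar H}:N\to Ult(N,\bar H)$; the canonical lift $k([f]_{\bar H})=\pi_M^{-1}(f)(M\cap\pi_M^{-1}(X_f))$ is well-defined and elementary by correctness of $H$, and the statement $j_{\bar H}[\pi_M(X)]\notin j_{\bar H}(\pi_M(A))$, which holds in $Ult(N,\bar H)$ because $\pi_M(A)$ is null in $N$, transfers through $k$ to $M\cap X\notin A$.

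With $M\in S(\II)$ secured, the inclusion $H\subseteq G(M,\TT)$ follows by exploiting the density of fixed points supplied by strong presaturation. Given $S\in H$, $M$-genericity applied to the dense-in-$\TT\restriction S$ set of fixed points below $S$ --- a set lying in $M$ --- produces a fixed point $S^*\in H\cap M$ with $S^*\leq_{\II}S$. The defining property of $S^*$ being a fixed point reads $T_{\TT\restriction S^*}\setminus S^{*,Z}\in I_Z$ at the unified level $Z=H_{|\TT\restriction S^*|^+}$, while $H\restriction S^*$ being a correct $M$-generic filter for $\TT\restriction S^*$ puts $M\cap Z$ into $T_{\TT\restriction S^*}$. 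Since this $I_Z$-null set lies in $M$, the property $M\in S(\II)$ forces $M\cap\cup S^*\in S^*$, and a further application of $M\in S(\II)$ to the $I_W$-null set witnessing $S^{*,W}\subseteq S^W$ mod $I_W$ at $W=\cup S^*\cup\cup S$ upgrades this to $M\cap\cup S\in S$.

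For the reverse inclusion $G(M,\TT)\subseteq H$, suppose toward a contradiction that some $S\in G(M,\TT)$ fails to lie in $H$. By $M$-genericity applied below $\neg_{\BBB_{\II}}S$, there is $T\in H\cap M$ with $T\wedge S=0$ in $\BBB_{\II}$, i.e.\ $T^W\cap S^W\in I_W$ for $W=\cup T\cup\cup S$. The previous paragraph applied to $T\in H$ gives $M\cap\cup T\in T$, so $M\cap W\in T^W$; meanwhile $S\in G(M,\TT)$ gives $M\cap W\in S^W$; hence $M\cap W$ sits inside the $I_W$-null set $T^W\cap S^W\in M$, contradicting $M\in S(\II)$. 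The principal obstacle lies at the outset --- justifying that $M\in T_{\TT}$ forces $M\in S(\II)$ via the elementarity of the lift $k$, where correctness of $H$ must be used essentially --- since the remainder of the argument cascades cleanly from that fact combined with fixed-point density.
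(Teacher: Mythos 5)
Your overall skeleton is the paper's: fix an arbitrary correct $M$-generic filter $H$ (which exists because $M\in T_{\TT}$), use the density of fixed points to show $H\subseteq G(M,\TT)$, and conclude $H=G(M,\TT)$ from the fact that $H$ already decides every $S\in\TT\cap M$. But the step you yourself flag as the principal obstacle --- deriving $M\in S(\II)$ from $M\in T_{\TT}$ via the lift $k$ --- is circular. For $k([f]_{\bar{H}})=\pi_M^{-1}(f)(M\cap\pi_M^{-1}(X_f))$ to be well defined you must know that whenever the set $E=\{Z: g_1(Z\cap X_1)=g_2(Z\cap X_2)\}$ lies (mod $\II$) above some $T\in H\cap M$, then $M\cap\cup E\in E$; that is, you must already know that conditions in $H\cap M$ satisfy $M\cap\cup T\in T$ and that this membership transfers upward along $\leq_{\II}$ inside $M$. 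This is precisely the inclusion $H\subseteq G(M,\TT)$ that you prove in the next paragraph, and correctness of $H$ --- a statement about $\NS_{\omega_1}$ in the collapsed extension --- contributes nothing to it. Put differently, $M\in S(\II)$ is a consequence of the lemma, not an available hypothesis: for an $I_X$-null set $A$ the class $\{M\prec H_\theta: M\cap X\in A\}$ can perfectly well be stationary, and only the fixed-point argument excludes the given $M$ from it.

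There is a second, smaller misstep: you invoke ``$I_Z$ at the unified level $Z=H_{|\TT\restriction S^*|^+}$,'' but the tower $\II$ only carries ideals indexed by $X\in V_\delta$, and $H_{|\TT\restriction S^*|^+}$ lies far above $V_\delta$, so $M\in S(\II)$ says nothing about $M\cap Z$. The fixed-point property has to be read directly, as the paper does: $S^*\geq T_{\TT\restriction S^*}$ is an inequality in the plain stationary-set order on $P(H_{|\TT\restriction S^*|^+})$, so $T_{\TT\restriction S^*}\cap C_f\subseteq (S^*)^{H_{|\TT\restriction S^*|^+}}$ for a suitable $f$, and since $H$ restricted below $S^*$ witnesses $M\cap H_{|\TT\restriction S^*|^+}\in T_{\TT\restriction S^*}$, one reads off $M\cap\cup S^*\in S^*$ with no appeal to $S(\II)$ whatsoever. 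This is the non-circular entry point; once every fixed point in $H\cap M$ is known to lie in $G(M,\TT)$, density of fixed points and the maximality of $H$ on $\TT\cap M$ finish the argument essentially as in your last two paragraphs. So your forward and backward inclusions survive after this repair, but as written the proposal rests its foundation on its own conclusion.
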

\begin{proof}
Observe that if $M\in T_{\TT}$, $G$ is a correct 
$M$-generic for $\TT$, and $S\in G$ is a fixed point, we get that
$M\in T_{\TT\restriction S}$ and thus that $M\cap\cup S\in S$ since
$S\geq T_{\TT^{\II}_\delta\restriction S}$. 
In particular (since the set of fixed points is dense in $\TT$) 
we get that $G(M,\TT)\supseteq G$. 
Since $G$ is an ultrafilter on the boolean algebra
$\TT\cap M$, this gives that 
$G=G(M,\TT)$ is a correct $M$-generic ultrafilter for $\TT$.
The thesis follows.
\end{proof}

The following fact is almost self-evident:
\begin{fact}\label{fac:statfixpoint}
Assume $\II=\{I_X:X\in V_\delta\}$ is a tower of normal filters such that
$\TT=\TT^{\II}_\delta$ is strongly presaturated. Let $S\in V_\delta$ be a fixed point for $\TT$.
Then $S\wedge T_{\TT}=T_{\TT\restriction S}$.
Moreover the latter sets are stationary iff $S\in I_{\cup S}^+$.
\end{fact}
\begin{proof}
First assume $S\not\in I_{\cup S}^+$. We show that $S\wedge T_{\TT}$ and
$T_{\TT\restriction S}$ are both non stationary.
Since
$S\in I_{\cup S}$ we get that 
$[P(\cup S)\setminus S]_{\II}=1_{\TT}$ and $[S]_{\II}=0_{\TT}$.
Thus 
for all $M\in T_{\TT}$ we get that 
$P(\cup S)\setminus S\in G(M,\TT)$, i.e. that $M\cap\cup S\not\in S$.
This gives at the same time that $T_{\TT\restriction S}$ and 
$S\wedge T_{\TT}$ are both non-stationary.

Next assume $S\in I_{\cup S}^+$, i.e. $[S]_{\II}>0_{\TT}$.
Then $M\in T_{\TT}\wedge S$ iff $S\in G(M,\TT)$ iff $M\in T_{\TT\restriction S}$. 
This gives that $S\wedge T_{\TT}=T_{\TT\restriction S}$ also in this case.
Finally by definition of strong presaturation, $T_{\TT\restriction S}$ is always stationary
if $S\in I_{\cup S}^+$.
\end{proof}
\begin{fact}\label{fac:sptotp}
Assume 
$\TT=\TT^{\II}_\delta$ is a strongly presaturated tower of normal filters of height $\delta$
which makes $\delta$ the second uncountable cardinal.
Then for all $M\in T_{\TT}$ and all $h:P(X)\to\omega_2$ in $M\cap V_\delta$, we have that
$h(M\cap X)<\otp(M\cap\delta)$.
\end{fact}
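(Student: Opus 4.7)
The plan is to exploit the uniqueness of the correct $M$-generic filter $G(M,\TT)$ provided by the preceding lemma, transfer the strong presaturation through the Mostowski collapse $\pi_M\colon M\to N$, and then trace a witness from the collapsed generic ultrapower back through $\pi_M$. Write $\bar h=\pi_M(h)$, $\bar X=\pi_M(X)$, $\bar\delta=\pi_M(\delta)=\otp(M\cap\delta)$, $\bar\TT=\pi_M(\TT)$, and $\bar G=\pi_M[G(M,\TT)]$. By elementarity of $\pi_M^{-1}$, $\bar\TT$ is a strongly presaturated normal tower in $N$ of height $\bar\delta$ which makes $\bar\delta$ the second uncountable cardinal, while $\bar h$ maps $P(\bar X)$ into $\omega_2^N=M\cap\omega_2$. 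Correctness and $M$-genericity of $G(M,\TT)$ make $\bar G$ an honest $N$-generic filter for $\bar\TT$ in $V$; thus $\bar U=\mathrm{Ult}(N,\bar G)$ is a well-defined set of $V$, and absoluteness of well-foundedness between transitive models combined with the presaturation of $\bar\TT$ in $N$ yields that $\bar U$ is well-founded with $j_{\bar G}(\omega_2^N)=\bar\delta$.

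Inside $\bar U$ compute $j_{\bar G}(\bar h)(j_{\bar G}[\bar X])$: by elementarity it is an ordinal strictly below $j_{\bar G}(\omega_2^N)=\bar\delta$, and since $\bar U$ is transitive this ordinal is a genuine $V$-ordinal $\alpha<\bar\delta$. The analogue inside $\bar U$ of item~\ref{thm:wstf-4} identifies $V_{\bar\delta}^N$ with a transitive initial segment of $\bar U\cap\Ord$, so $[\pi_{\bar\xi}]_{\bar G}=\bar\xi$ as actual $V$-ordinals for all $\bar\xi<\bar\delta$; applied to $\bar\xi=\alpha$, this converts the equality $[M''\mapsto\bar h(M''\cap\bar X)]_{\bar G}=[\pi_\alpha]_{\bar G}$, read off via the representation formula for the ultrapower, into
\[
\bar S:=\{M''\colon \bar h(M''\cap\bar X)=\otp(M''\cap\alpha)\}\in\bar G.
\]

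Set $\zeta=\pi_M^{-1}(\alpha)\in M\cap\delta$. By elementarity of $\pi_M$, $\bar S=\pi_M(S)$ for
\[
S=\{M'\colon h(M'\cap X)=\otp(M'\cap\zeta)\}\in M\cap V_\delta,
\]
and $\bar S\in\bar G$ is equivalent to $S\in G(M,\TT)$. Choosing $S$ to live at a level $V_\gamma$ with $\gamma>\max(\rank X,\zeta)$ (passing to a finer condition if necessary, so that $\cup S=V_\gamma\supseteq X\cup\zeta$), the defining clause $M\cap\cup S\in S$ of $G(M,\TT)$ reads $h(M\cap X)=\otp(M\cap\zeta)=\pi_M(\zeta)=\alpha$; since $\alpha<\bar\delta=\otp(M\cap\delta)$, the required inequality follows.

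The main obstacle will be the careful bookkeeping between $V$-ordinals and ordinals of the collapsed ultrapower $\bar U$: one needs that the canonical copy of $V_{\bar\delta}^N$ inside $\bar U$ forms a transitive initial segment (so $[\pi_{\bar\xi}]_{\bar G}=\bar\xi$ literally as $V$-ordinals, not merely under the isomorphism) and one must arrange the level $V_\gamma$ of $S$ so that both $X\subseteq\cup S$ and $\zeta\subseteq\cup S$; both are routine, but getting the book-keeping wrong in the first point collapses the whole strategy into the weaker conclusion $h(M\cap X)<\omega_2$.
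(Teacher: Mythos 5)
Your proof is correct and follows essentially the same route as the paper's: collapse $M$ via $\pi_M$, use the uniqueness of the correct $M$-generic filter to get an honest $N$-generic $\bar G$ for $\pi_M(\TT)$, invoke presaturation to see $[\pi_M(h)]_{\bar G}<\pi_M(\delta)=j_{\bar G}(\omega_2^N)$, identify that ordinal with $[\pi_{\bar\xi}]_{\bar G}$ for some $\bar\xi=\pi_M(\gamma)$ with $\gamma\in M\cap\delta$, and pull the witnessing stationary set back into $G(M,\TT)$ to read off $h(M\cap X)=\otp(M\cap\gamma)<\otp(M\cap\delta)$. The extra bookkeeping you flag about identifying the copy of $V_{\bar\delta}^N$ with a transitive initial segment of the ultrapower is exactly what the paper's representation results for $V$-normal towers of ultrafilters supply, so no gap remains.
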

\begin{proof}
Pick $M\in T_{\TT}$ (which is stationary by our assumption that $\TT$ is strongly presaturated).
Observe that $G(M,\TT)$ is the unique correct $M$-generic filter for 
$\TT$. Let $H=\pi_M[G(M,\TT)]$ and $N=\pi_M[M]$.
Then $H$ is a correct $N$-generic filter for $\bar{\TT}=\pi_M(\TT)$ and $N$ models that
$\bar{\TT}$ is a presaturated tower of height $\bar{\delta}=\pi_M(\delta)$.
In particular if $h:P(X)\to\omega_2$ is in $M$ we get that
\[
Ult(N,H)\models[\pi_M(h)]_H<\pi_M(\delta)=\omega_2^{N[H]}.
\]
Thus there is $\gamma\in M\cap\delta$ such that 
\[
[\pi_M(h)]_H=\pi_M(\gamma)=[\rho_{\pi_M(\gamma)}]_H.
\]
This occurs if and only if 
\[
S=\{M'\prec V_{\gamma+1}: \otp(M'\cap\gamma)=
\rho_\gamma(M')=h(M'\cap X)\}\in G(M,\TT)=\{S\in \TT: M\cap\cup S\in S\}
\]
The conclusion follows.
\end{proof}
\begin{fact}
For any $\BB\in \SSP$,
$T_{\BB}$ is a fixed point of $\TT$ for any
presaturated normal tower  $\TT=\TT^{\II}_\delta$ of height $\delta$ such that $\BB\in V_\delta$. 
\end{fact}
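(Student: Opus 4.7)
\emph{Plan.} We must show $T_{\BB}\geq T_{\TT^{\II}_\delta\restriction T_{\BB}}$ in the order $\leq_{\II}$ on stationary sets. If $T_{\BB}\notin I^+_{H_{|\BB|^+}}$ the statement is trivial, so assume $T_{\BB}\in\TT$. Pick $\theta$ large enough that $\II,\TT,\BB,T_{\BB}\in H_\theta$, set $X=H_\theta$, and note $\cup T_{\BB}\cup\cup T_{\TT\restriction T_{\BB}}\subseteq X$. It then suffices to exhibit $f\colon P_\omega(X)\to X$ with $T_{\TT\restriction T_{\BB}}^X\cap C_f\subseteq T_{\BB}^X$. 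I will take $f$ to be a Skolem function for $(H_\theta,\in,\II,\TT,\BB,T_{\BB})$, so that each $M\in C_f$ is an elementary submodel of $H_\theta$ containing the named parameters.

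The task then reduces to the following claim: every $M\in T_{\TT\restriction T_{\BB}}^X\cap C_f$ satisfies $M\cap H_{|\BB|^+}\in T_{\BB}$, i.e.\ $M\in T_{\BB}^X$. By Burke's representation (Lemma~\ref{lem:burrepthm}), membership of $M$ in $T_{\TT\restriction T_{\BB}}$ reduces to $M\in S(\II)$ together with the existence of a correct $M$-generic filter $G$ for $\TT$ with $T_{\BB}\in G$. The main step is to extract $M\cap H_{|\BB|^+}\in T_{\BB}$ from $T_{\BB}\in G$, and this is precisely the content of normality of the tower: the set
\[
D=\{S\in\TT: S\leq_{\II} T_{\BB}\text{ and }M'\cap H_{|\BB|^+}\in T_{\BB}\text{ for every }M'\in S\}
\]
is dense below $T_{\BB}$ in $\TT$ because $I_{H_{|\BB|^+}}$ is the canonical projection of $I_{H_\theta}$, and $D\in M$ by elementarity. $M$-genericity of $G$ gives $S\in G\cap D$, and then the standard fact that $S\in G$ forces $M\cap \cup S\in S$ for $M$-generics on normal towers yields $M\cap H_{|\BB|^+}\in T_{\BB}$.

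Equivalently, normality of $\TT$ says that any condition $S\in V_\delta$ at level $\cup S$ is equivalent in $\TT$ to its lift $S^X=\{M'\in P(X): M'\cap \cup S\in S\}$ at any higher level $X\supseteq \cup S$; specializing to $S=T_{\BB}$ and $X=H_\theta$ delivers $T_{\TT\restriction T_{\BB}}^X\subseteq T_{\BB}^X$ on the club $C_f$ directly. Presaturation is not actually used beyond ensuring that $\TT$ is a normal tower in which $T_{\BB}$ appears as a genuine condition, and no substantive obstacle arises.
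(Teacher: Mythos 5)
Your argument breaks at the last step. You invoke ``the standard fact that $S\in G$ forces $M\cap\cup S\in S$ for $M$-generics on normal towers.'' This is not a fact. A correct $M$-generic filter $G$ for $\TT$ is merely a filter on the boolean algebra $\TT\cap M$ meeting every dense set lying in $M$, whose transitive collapse yields a stationarily correct extension of $\pi_M[M]$; nothing in this definition ties membership of a condition $S$ in $G$ to the external trace $M\cap\cup S$. (For $V$-generic filters one has $S\in G$ iff $j_G[\cup S]\in j_G(S)$ --- a statement inside the generic ultrapower; it does not specialize to countable $M$ and arbitrary $M$-generic filters.) The implication ``$S\in G$ for a correct $M$-generic $G$ implies $M\cap\cup S\in S$'' is precisely what it means for $S$ to be a fixed point of $\TT$, i.e.\ $S\geq T_{\TT\restriction S}$: it is the conclusion you are trying to prove for $S=T_{\BB}$, not a tool you may assume. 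Indeed, if your ``standard fact'' held for every $S$, then every condition of every presaturated normal tower would be a fixed point, strong presaturation would coincide with presaturation, and the distinction underlying $\MM^{+++}$ would collapse --- whereas the paper conjectures that $\TT^{\omega_2}_\delta$ is \emph{never} strongly presaturated. Your density argument for $D$ is fine (removing the null set $S_0^Y\setminus T_{\BB}^Y$ from the positive set $S_0^Y$ leaves a positive refinement all of whose elements project into $T_{\BB}$), but it leads nowhere without the false bridge from $S\in G$ to $M\cap\cup S\in S$. Two smaller inaccuracies: Burke's representation theorem concerns the ideals $I_X$ as projections of $\NS\restriction S(\II)$ and says nothing about $T_{\TT\restriction T_{\BB}}$; and normality does not identify $T_{\TT\restriction T_{\BB}}$ with a lift of $T_{\BB}$ --- these are entirely different sets.

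The paper's proof instead goes through the generic ultrapower, and presaturation is essential, contrary to your closing remark. One first observes that for $V$-generic $G$, $T_{\BB}\in G$ iff $j_G[H_{|\BB|^+}]\in T_{j_G(\BB)}$ computed in $Ult(V,G)$; since the ultrapower is ${<}\delta$-closed, $H_{\omega_2}^{Ult(V,G)}=H_{\omega_2}^{V[G]}$, so this produces a genuine correct $V$-generic filter for $\BB$ in $V[G]$. Given $M\in T_{\TT\restriction T_{\BB}}$ with correct $M$-generic filter $H_M$, one reflects this statement to the transitive collapse $N=\pi_M[M]$, applies it to the $N$-generic filter $\pi_M[H_M]$, and uses the stationary correctness of $N[\pi_M[H_M]]$ in $V$ to conclude that the resulting generic filter for $\pi_M(\BB)$ is correct in $V$'s sense, witnessing $M\cap\cup T_{\BB}\in T_{\BB}$. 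That transfer of correctness is exactly where both the presaturation of $\TT$ and the correctness of $H_M$ are used.
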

\begin{proof}
In case $T_{\BB}\equiv_{\II}\emptyset$, 
$T_{\TT\restriction T_{\BB}}$ is non stationary
and thus $T_{\BB}\geq T_{\TT\restriction T_{\BB}}$ is a fixed point of $\TT$.

Now observe that 
if $G$ is a $V$-generic filter for $\TT$,
then 
\[
T_{\BB}\in G\text{ iff }M=j_G[H_{|\BB|^+}]\in j_G(T_{\BB})=T_{j_G(\BB)}.
\]
This gives that $H$ in $Ult(V,G)$ is a correct $M$-generic filter for $j_G(\BB)$
iff $\bar{H}=\pi_M(H)$ is a $V$-generic filter for $\BB$ such that $V[\bar{H}]$ is correct about 
the stationarity of its subsets of $\omega_1$.
In particular we get that if
$M\in T_{\TT\restriction T_{\BB}}$, then $N=\pi_M[M]$ has an $N$-generic filter
for $\pi_M(\BB)$ which computes correctly the stationarity of its subsets of $\omega_1$.
Thus $M\cap\cup T_{\BB}\in T_{\BB}$.
All in all we have shown that $T_{\BB}\geq T_{\TT\restriction T_{\BB}}$, i.e.
$T_{\BB}$ is a fixed point of $\TT$. 
\end{proof}

\begin{lemma}\label{lem:presat-strpresat}
Assume $\UU_\delta\in\SSP$ forces that there is a $\UU_\delta$-name $\dot{G}$ for a 
$V$-normal tower of ultrafilters on $\delta$ such that
$j_{\dot{G}}:V\to Ult(V,\dot{G})$ is an elementary embedding with
$j_{\dot{G}}(\omega_2)=\delta$. 
Then $\UU_\delta$ is strongly presaturated.
\end{lemma}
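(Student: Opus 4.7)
The plan is to use Lemma~\ref{lem:presatequiv} to identify $\UU_\delta$ with a presaturated normal tower of height $\delta$ and then verify the two extra clauses of strong presaturation. Lemma~\ref{lem:embntcba} turns $\dot G$ into a tower $\II = \II(\dot G) \in V$ of normal ideals, and Proposition~\ref{prop:embntcba} supplies the $<\delta$-complete injective homomorphism
\[
i_{\dot G}: \TT^\II_\delta \to \UU_\delta, \qquad S \mapsto \llbracket S \in \dot G\rrbracket_{\UU_\delta}.
\]
The crucial step is to show that $i_{\dot G}$ has dense image; I claim in fact that $i_{\dot G}(T_\BB) = \BB$ in $\BBB(\UU_\delta)$ for every $\BB \in \UU_\delta$, so that $\BB \mapsto T_\BB$ plays the role of an inverse.

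For $i_{\dot G}(T_\BB) \leq \BB$, let $G$ be $V$-generic for $\UU_\delta$ with $T_\BB \in val_G(\dot G)$; $V$-normality reads $j_{val_G(\dot G)}[H_{|\BB|^+}] \in j_{val_G(\dot G)}(T_\BB) = T^{Ult(V,val_G(\dot G))}_{j_{val_G(\dot G)}(\BB)}$, and by elementarity this produces inside $Ult(V,val_G(\dot G))$ a correct $j_{val_G(\dot G)}[H_{|\BB|^+}]$-generic filter for $j_{val_G(\dot G)}(\BB)$; transitive-collapsing yields a correct $V$-generic filter for $\BB$ in $V[G]$, and by the final fact of the subsection on forcing properties of $\UU_\delta$ this is equivalent to $\BB \in G$. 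The reverse inequality $\BB \leq i_{\dot G}(T_\BB)$ requires re-importing such a filter back into $Ult(V,val_G(\dot G))$; since the filter has size $<\delta$ this reduces to the closure $Ult(V,val_G(\dot G))^{<\delta} \subseteq Ult(V,val_G(\dot G))$ in $V[G]$. This ultrapower closure is the main technical obstacle; I would derive it from the $(<\delta,<\delta)$-presaturation of $\UU_\delta$ proved previously: a $\UU_\delta$-name for a $\gamma$-sequence ($\gamma < \delta$) of equivalence classes in $Ult(V,\dot G)$ unpacks into $\gamma$ names for ground-model functions on sets in $V_\delta$, and presaturation refines below some $\CC$ so that each coordinate has fewer than $\delta$ possible values, which then assemble into a single representing function in $V$.

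With both density and ultrapower closure in hand, Lemma~\ref{lem:presatequiv} identifies $\BBB(\UU_\delta)$ with $\BBB(\TT^\II_\delta)$ as a presaturated normal tower. Stationarity of $T_\TT$ for $\TT = \TT^\II_\delta$ then follows from Burke's representation (Lemma~\ref{lem:burrepthm}): the set $S(\II)$ of $M \prec H_\theta$ self-generic for $\II$ is stationary, and for every sufficiently closed $M \in S(\II)$ the filter $\{S \in M \cap V_\delta : M \cap \cup S \in S\}$ is a correct $M$-generic filter for $\TT$, so $S(\II)$ witnesses $T_\TT$ stationarily often. Finally, the dense set of fixed points is provided by the earlier fact that each $T_\BB$ is a fixed point of any presaturated normal tower containing it, combined with the density of $\{T_\BB : \BB \in \UU_\delta\}$ in $\TT$ established above.
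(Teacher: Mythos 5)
Your overall strategy coincides with the paper's: the heart of the matter in both cases is the single identity $\llbracket T_{\BB}\in\dot{G}\rrbracket_{\BBB(\UU_\delta)}=\BB$ for every $\BB\in\UU_\delta$, proved by chasing a correct $V$-generic filter for $\BB$ back and forth between $V[H]$ and $Ult(V,G)$, after which Lemma~\ref{lem:presatequiv} gives the isomorphism with $\TT^{\II(\dot{G})}_\delta$ and the sets $T_{\BB}$ supply the dense family of fixed points. Your appeal to the terminal fact of the subsection on forcing properties of $\UU_\delta$ (``$\BB\in G$ iff $V[G]$ contains a correct $V$-generic filter for $\BB$'') is exactly the step the paper uses, and your treatment of the fixed points is the paper's.

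The one place where you go beyond the paper is also the one place where your argument does not close. You correctly isolate $Ult(V,G)^{<\delta}\subseteq Ult(V,G)$ as the load-bearing hypothesis (the paper simply asserts it inside the proof of its subclaim, in effect reading it into the hypothesis of the lemma so as to match the first bullet of Lemma~\ref{lem:presatequiv}), but your proposed derivation of it from the $(<\delta,<\delta)$-presaturation of $\UU_\delta$ is circular at the ``assemble into a single representing function'' step. Presaturation does give you, below some $\CC$, antichains $A_i\subseteq\UU_\delta$ of size $<\delta$ together with candidate functions $f^i_{\BB}$ for $\BB\in A_i$; but to glue these into one $F_i\in V$ with $[F_i]_G=[f^i_{\BB}]_G$ for the unique $\BB\in A_i\cap H$, you must be able to read off from $M$ (on a $G$-measure-one set) which $\BB\in A_i$ lies in $H$. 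The only available device for this is the family $\{T_{\BB}:\BB\in A_i\}$ together with the equivalence $\BB\in H\iff T_{\BB}\in G$ --- which is precisely the identity you are in the middle of proving, and whose forward direction itself uses the closure (to transfer the correct generic filter $H_0\in V_\delta[H]$ into $Ult(V,G)$, and to match $\NS_{\omega_1}^{Ult(V,G)}$ with $\NS_{\omega_1}^{V[H]}$). Note also that the backward direction $T_{\BB}\in G\Rightarrow\BB\in H$, which you present as the ``easy'' inequality, likewise needs $P(\omega_1)^{Ult(V,G)}=P(\omega_1)^{V[H]}$ to see that correctness of $H_0$ computed in $Ult(V,G)$ survives in $V[H]$. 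The standard ``presaturated implies $<\delta$-closed'' argument works for \emph{towers} because membership of an antichain element in the generic is literally membership of $M$ in a stationary set; $\UU_\delta$ is not yet known to be a tower at this stage, so that argument is not available. The honest fix is to take the closure $Ult(V,\dot{G})^{<\delta}\subseteq Ult(V,\dot{G})$ as part of the hypothesis on $\dot{G}$ (as Lemma~\ref{lem:presatequiv} does, and as is verified in the applications via Lemma~\ref{lem:forcpresat}), rather than to try to derive it.
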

\begin{proof}
Let for $X\in V_\delta$
\[
I^{\dot{G}}_X=\{S\subset P(X):
\llbracket j_{\dot{G}}[X]\in j_{\dot{G}}(S)\rrbracket_{\BBB(\UU_\delta)}=0_{\BBB(\UU_\delta)}\}
\]
and 
\[
\II(\dot{G})=\{I^{\dot{G}}_X:X\in V_\delta\}.
\]

By Lemma~\ref{lem:presatequiv} it is enough to show the following:
\begin{claim}
For all $\QQ\in\UU_\delta$
\[
\QQ=\llbracket T_{\QQ}\in\dot{G}\rrbracket_{\BBB(\UU_\delta)}.
\]
\end{claim}
For suppose the claim holds, then we have that the map 
\[
i:\TT^{\II(\dot{G})}_\delta\to \BBB(\UU_\delta)
\]
which maps 
\[
S\mapsto\llbracket j_{\dot{G}}[\cup S]\in j_{\dot{G}}(S)\rrbracket_{\BBB(\UU_\delta)}
\]
extends to an injective homomorphism of complete boolean algebras with a dense image, i.e.
to an isomorphism.
In particular this gives at the same time that $\TT^{\II(\dot{G})}_\delta$ is a presaturated
tower forcing (since it preserve the regularity of $\delta$) which is
equivalent to $\UU_\delta$ and that 
\[
\{T_{\BB}:\BB\in\UU_\delta\}
\]
is a dense subset of $\TT^{\II(\dot{G})}_\delta$ since its image is $\UU_\delta$ which is, by definition,
a dense subset of $\BBB(\UU_\delta)$.
However 
\[
\{T_{\BB}:\BB\in\UU_\delta\}
\]
is a family of fixed points of $\TT^{\II(\dot{G})}_\delta$ which gives that
$\TT^{\II(\dot{G})}_\delta$ is strongly presaturated.
So we prove the claim:
\begin{claimproof}
Let $H$ be $V$-generic for $\UU_\delta$
and $G=\val_H(\dot{G})$. 
We show the following:
\begin{subclaim}
$\BB\in H$ if and only if $T_{\BB}\in G$.
\end{subclaim} 
\begin{subclaimproof}
First assume that $\BB\in H$. 
Then we have that there is in $V_\delta[H]$ a $V$-generic filter $H_0$ for
$\BB$ such that $H_{|\BB|^+}[H_0]$ computes correctly the 
stationarity of its subsets of $\omega_1$ by Lemma~\ref{lem:chargenericityUdelta}.

Since $j_G(\omega_2)=\delta$ and $Ult(V,G)^{<\delta}\subset Ult(V,G)$, we have that 
\[
H_{\omega_2}^{V[H]}=V_\delta[H]=H_{\omega_2}^{Ult(V,G)}.
\]
Since $\BB\in V_\delta$, $H_0\subset\BB\in  V_\delta[H]=H_{\omega_2}^{Ult(V,G)}$.
Thus $H_0\in Ult(V,G)$ and
$Ult(V,G)$ models that $j_G[H_{|\BB|^+}]\in T_{j(\BB)}=j(T_{\BB})$ which occurs if and only if
$T_{\BB}\in G$.

Conversely assume that $T_{\BB}\in G$. We get that $j_G[H_{|\BB|^+}]\in T_{j(\BB)}$ and thus that
in $Ult(V,G)\subset V[H]$ there is a $V$-generic filter $H_0$ for
$\BB$ such that $H_{|\BB|^+}[H_0]$ computes correctly the 
stationarity of its subsets of $\omega_1$, since 
\[
H_{\omega_2}^{V[H]}=V_\delta[H]=H_{\omega_2}^{Ult(V,G)},
\]
we get that
$H_0\in V[H]$ is a correct $V$-generic filter for $\BB$. We conclude that $\BB\in H$ by 
Lemma~\ref{lem:chargenericityUdelta}.
\end{subclaimproof}
The claim is proved.
\end{claimproof}

The Lemma is proved in all its parts.
\end{proof}

Finally we cannot infer rightawat that a $\TT\in \SPT$ is totally rigid
since we are not able to exclude the case
that there could be two distinct correct $i_0:\TT\to\RR$ and $i_1:\TT\to\RR$ for some 
$\RR\in\SSP$ such that $T_{\RR}$ is non-stationary. However we can show that
$\MM^{++}$ entails that any $\SPT$ tower is totally rigid
by the following:
\begin{lemma}\label{lem:totrigSPTMM++}
Assume $\MM^{++}$ holds and there are class many Woodin cardinals.
Then any $\TT\in\SPT$ is totally rigid.
\end{lemma}
\begin{proof}
Immediate by Lemmas~\ref{lem:eqtrbis},~\ref{lem:basspt1}.
\end{proof}

\subsection{$\MM^{+++}$ and the presaturation of $\UU_\delta$}

\begin{theorem}\label{thm:udelpst}
Assume that any element of 
$\SPT$ is totally rigid,
$\UU_\delta\in\SSP$ preserves the regularity of $\delta$
and that
$\SPT\cap V_\delta$ is dense in $\UU_\delta$.
Then
$\UU_\delta\in\SPT$ as well.
\end{theorem}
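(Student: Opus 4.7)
The plan is to apply the preceding lemma: it is enough to exhibit a $\UU_\delta$-name $\dot G$ for a $V$-normal tower of ultrafilters on $\delta$ whose induced ultrapower satisfies $j_{\dot G}(\omega_2)=\delta$. Fix a $V$-generic filter $H$ for $\UU_\delta$; I will define $G$ in $V[H]$ and then let $\dot G$ be its canonical name.

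First I would observe that the family $\mathcal H:=H\cap\SPT\cap V_\delta$ contains strongly presaturated towers of height cofinal in $\delta$. Indeed, given $\gamma<\delta$, choose $\BB\in\UU_\delta$ with $|\BB|>\gamma$; by the density hypothesis there is $\TT\in\SPT\cap V_\delta$ with $\TT\leq\BB$ in $\UU^{\SSP,\SSP}$, and any such $\TT$ must have height exceeding $\gamma$, since $\TT$ absorbs $\BB$ into a presaturated tower collapsing its height to become $\omega_2$. A standard genericity argument then shows $\mathcal H$ meets, for every $\gamma<\delta$, the class of strongly presaturated towers of height at least $\gamma$. Next, for each $\TT\in\mathcal H$, the duality fact from Section~\ref{sec:univpartord} (that $\BB\in H$ iff some correct $V$-generic filter for $\BB$ exists in $V[H]$) yields a correct $V$-generic filter $H_\TT$ for $\TT$; by Lemma~\ref{lem:presatequiv}, $H_\TT$ is a $V$-normal tower of ultrafilters on $\alpha_\TT:=\mathrm{height}(\TT)$ with $j_{H_\TT}(\omega_2)=\alpha_\TT$ and $\mathrm{Ult}(V,H_\TT)^{<\alpha_\TT}\cap V[H]\subseteq\mathrm{Ult}(V,H_\TT)$.

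I would then define
\[
G:=\{S\in V_\delta : S\text{ is stationary and }\TT\restriction S\in H\text{ for some }\TT\in\mathcal H\text{ with }S\in\TT\}
\]
and verify that $G$ is a $V$-normal tower of ultrafilters on $\delta$. Membership is independent of the choice of witness: if $\TT,\TT'\in\mathcal H$ both contain $S$, then $\TT\restriction S\in H$ iff $\TT'\restriction S\in H$, using compatibility of $\TT$ and $\TT'$ inside $H$ together with the canonical projection property of normal tower ideals and Burke's representation (Lemma~\ref{lem:burrepthm}). Granted this, ultra-ness, normality and the tower condition follow respectively from the boolean completeness and normality of each $\TT\in\mathcal H$ and from the canonical projections between towers of different heights in $\mathcal H$. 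The ultrapower $j_G\colon V\to\mathrm{Ult}(V,G)$ then coincides with the direct limit of $\{j_{H_\TT}:\TT\in\mathcal H\}$, whence $j_G(\omega_2)=\sup\{\alpha_\TT:\TT\in\mathcal H\}=\delta$ by the cofinality step and the hypothesis that $\UU_\delta$ preserves the regularity of $\delta$. Letting $\dot G$ name $G$, the hypothesis of the preceding lemma is met and $\UU_\delta$ is strongly presaturated. The main obstacle is the independence of the definition of $G$ from the witnessing tower $\TT\in\mathcal H$; this requires unpacking how $\SSP$-correct arrows of $\UU^{\SSP,\SSP}$ restrict to canonical projections between normal tower ideals, and invoking the duality between generic filters for $\UU_\delta$ and correct $V$-generic filters for its conditions established in Section~\ref{sec:univpartord}.
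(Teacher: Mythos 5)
Your overall strategy is the paper's: use the density of $\SPT\cap V_\delta$ to get strongly presaturated towers of cofinal height into the generic filter $H$, glue the correct $V$-generic filters they induce into a single $V$-normal tower of ultrafilters $G$ on $\delta$ with $j_G(\omega_2)=\delta$, and invoke the preceding lemma. The difference is in how $G$ is defined, and that difference is exactly where your proof has a genuine gap. You define $G$ by an existential over witnessing towers ($S\in G$ iff $\TT\restriction S\in H$ for \emph{some} $\TT\in\mathcal{H}$ with $S\in\TT$) and then must prove coherence: if $S\in\TT\cap\TT'$ with $\TT,\TT'\in\mathcal H$, then $\TT\restriction S\in H$ iff $\TT'\restriction S\in H$. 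Without this, $G$ need not even be a filter. You flag this as the main obstacle but the tools you name for it are not the ones that do the job: the ``canonical projection property'' relates ideals on different $X\subseteq Y$ \emph{within one tower}, and Burke's representation says nothing about how two distinct towers of different heights, both containing $S$ as a condition, decide $S$. The mechanism that actually makes coherence work is strong presaturation itself: for every $M\in T_{\TT}$ the filter $G(M,\TT)=\{S\in M: M\cap\cup S\in S\}$ is the \emph{unique} correct $M$-generic filter, every $T_{\BB}$ is a fixed point of every presaturated tower containing it, and $T_{\TT\restriction T_{\BB}}=T_{\BB}\wedge T_{\TT}$. The paper sidesteps your coherence problem entirely by seeding $G$ only with the canonical sets $\{T_{\BB}:\BB\in H\}$ (whose membership in each pulled-back generic is decided by ``$\BB\in H$'', uniformly in the tower) and then using the density of fixed points to see that their upward closure already exhausts the unique correct generic for each $\RR\in\mathcal H$. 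If you want to keep your definition of $G$, you must prove the coherence lemma, and the proof will have to route through these fixed-point facts; as written, the step is asserted rather than established.

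Two smaller points. First, you silently identify the correct $V$-generic filter $H_\TT$ given by the duality fact with $\{S:\TT\restriction S\in H\}$; this presupposes that $S\mapsto\TT\restriction S$ is \emph{the} (unique) correct complete embedding of $\BBB(\TT)$ into $\BBB(\UU_\delta\restriction\TT)$, which needs the total-rigidity-type consequence of strong presaturation and should be said. Second, your computation of $j_G(\omega_2)$ via ``the direct limit of the $j_{H_\TT}$'' is too quick as stated: the factor maps $k_{\alpha_\TT}$ have critical point exactly $j_{H_\TT}(\omega_2)=\alpha_\TT$, so $j_G(\omega_2)=k_{\alpha_\TT}(\alpha_\TT)$ is not obviously $\sup_\TT\alpha_\TT$. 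The upper bound $j_G(\omega_2)\leq\delta$ requires showing $[h]_G<\delta$ for every $h:P(X)\to\omega_2$ in $V$, which the paper gets from Fact~\ref{fac:sptotp} (again a consequence of the uniqueness of $G(M,\TT)$); your argument can be repaired along these lines, but the repair again runs through strong presaturation rather than through the direct-limit structure alone.
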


\begin{proof}

Let for each $\QQ\in \UU_\delta$, 
$\RR(\QQ,\gamma)\leq\QQ$ be a strongly presaturated normal tower of height
$\gamma$ in $\UU_\delta$.

Let $H$ be $V$-generic for $\UU_\delta$ and 
$G$ be the set of $T_{\BB}$ for $\BB\in H$.
We aim to show that the upward closure of
$G$ generates a $V$-normal tower of ultrafilters $\bar{G}$ on $\delta$ and that
$Ult(V,\bar{G})$ is $<\delta$-closed in $V[H]$.
This suffices to prove the theorem by Lemma~\ref{lem:presat-strpresat}.

First we observe that 
for any $\BB\in H$ the set of $\RR(\QQ,\gamma)\in D=\SPT\cap V_\delta$ such that 
$\BB\geq \RR(\QQ,\gamma)$ is dense below $\BB$
and thus $H$ meets this dense set. In particular we get that
\[
A=\{\gamma: \exists\RR(\QQ,\gamma)\in D\cap H\}
\]
is unbounded in $\delta$. 
Notice that:
\begin{claim} 
For all $\gamma\in A$,
$G\cap V_\gamma$ generates the unique correct 
$V$-generic filter for $\RR(\QQ,\gamma)$ in $V[H]$.
\end{claim}
\begin{claimproof}
Notice that for all $\BB\in V_\gamma$, $T_{\BB}, T_{\RR(\QQ,\gamma)}\in G$ gives that
$T_{\BB}\wedge  T_{\RR(\QQ,\gamma)}$ is stationary.
Now by fact~\ref{fac:statfixpoint} 
we get that
\[
T_{\RR(\QQ,\gamma)\restriction T_{\BB}}= T_{\BB}\wedge  T_{\RR(\QQ,\gamma)}
\]
and that these sets are stationary iff $T_{\BB}$ is a positive element of
the forcing $\RR(\QQ,\gamma)$.
This gives that
$T_{\RR(\QQ,\gamma)\restriction T_{\BB}}\in G$ for all $\BB\in H\cap V_\gamma$.

Now observe that by assumption $\RR(\QQ,\gamma)$ is totally rigid, thus
the unique correct 
embedding $i$ of $\RR(\QQ,\gamma)$ into $\UU_\delta\restriction \RR(\QQ,\gamma)$ is given by
$S\mapsto \RR(\QQ,\gamma)\restriction S$.
Thus for all $\BB\in V_\gamma$,
$\BB\in H$ iff $T_{\BB}\in G\cap V_\gamma$, thus
$i^{-1}[H]= G\cap V_\gamma$ generates a correct $V$-generic filter for $\RR(\QQ,\gamma)$. 
Finally the strong presaturation of $\RR(\QQ,\gamma)$ grants that 
$G\cap V_\gamma$ generates the unique such.
\end{claimproof}

Since $G=\bigcup_{\gamma\in A}G\cap V_\gamma$ and 
$G\cap V_\gamma$ generates a $V$-normal tower of 
ultrafilters on $\gamma$ for all $\gamma\in A$, 
we get that 
$G\in V[H]$ and its upward closure $\bar{G}$ is a $V$-normal tower of ultrafilters for $\delta$. 

Now we apply Fact~\ref{fac:sptotp} to each $\RR(\QQ,\gamma)\in H$
and we get that for all
$h:P(X)\to\omega_2$ in $V$, and $\gamma\in A$ such that
$X\in V_\gamma$, we have that 
for all $M\in T_{\RR(\QQ,\gamma)}$, $h(M\cap X)<\otp(M\cap\gamma)$.

This gives in particular that for all
$h:P(X)\to\omega_2$ in $V$, and 
$X\in V_\delta$, we have that 
$[h]_{\bar{G}}<\delta$, since $T_{\RR(\QQ,\gamma)}\in G$ witnesses that
$[h]_{\bar{G}}<\gamma$ 
for all $h:P(X)\to \omega_2$ in $V_\gamma$.

All in all we get that
$j_{\bar{G}}:V\to Ult(V,\bar{G})$ is such that
$j_{\bar{G}}(\omega_2)=\delta$ and $Ult(V,\bar{G})^{<\delta}\subset Ult(V,\bar{G})$.

Since $G\in V[H]$ gives a $V$-normal tower of ultrafilters on $\delta$
with $j_G:V\to Ult(V,G)$ such that $j_G(\omega_2)=\delta=\omega_2^{V[H]}$,
we can conclude that $\UU_\delta$ is a presaturated normal tower by Lemma~\ref{lem:presat-strpresat}.
\end{proof}

\begin{corollary}\label{thm:eqMM+++}
Assume there are class many supercompact cardinals $\delta$ which are limit of $<\delta$-supercompact
cardinals.
The following are equivalent:
\begin{enumerate}
\item\label{thm:eqMM+++-1}
$\SPT$ is a dense subclass of $\UU^{\SSP}$ consisting of totally rigid posets.
\item\label{thm:eqMM+++-2}
$\UU_\delta$ is strongly presaturated for all 
$\Sigma_2$-reflecting cardinals $\delta$ which are limit of $<\delta$-supercompact
cardinals.
\end{enumerate}
\end{corollary}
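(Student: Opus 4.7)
The plan is to treat the two directions separately, deriving each from the structural results already established. For (\ref{thm:eqMM+++-1}) $\Rightarrow$ (\ref{thm:eqMM+++-2}) the strategy is a reflection argument that pushes a global strongly presaturated refinement below a strong cardinal, combined with Theorem~\ref{thm:udelpst}. For (\ref{thm:eqMM+++-2}) $\Rightarrow$ (\ref{thm:eqMM+++-1}) the strategy is to absorb an arbitrary $\BB \in \UU^{\SSP,\SSP}$ into a suitable restriction of $\UU_\delta$ by first refining to a totally rigid condition.

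For (\ref{thm:eqMM+++-1}) $\Rightarrow$ (\ref{thm:eqMM+++-2}), fix a strong cardinal $\delta$ which is a limit of $<\delta$-supercompact cardinals. By Theorem~\ref{thm:univ1}(\ref{thm:univ1-1}), $\UU_\delta \in \SSP$ and collapses $\delta$ to $\aleph_2$, so $\delta$ remains regular after forcing with $\UU_\delta$. By Theorem~\ref{thm:udelpst} it is enough to verify that $\SPT \cap V_\delta$ is dense in $\UU_\delta$. So fix $\BB \in \UU_\delta$; by (\ref{thm:eqMM+++-1}) there is some $\RR \leq \BB$ in $\SPT$, and we let $\lambda$ be a cardinal strictly above $\rank(\RR)$. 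Using the strongness of $\delta$, pick $j : V \to M$ with $\crit(j) = \delta$ and $V_\lambda \subseteq M$. Since $\rank(\BB) < \delta$ we have $j(\BB) = \BB$, and all witnesses to ``$\RR \in \SPT$'' and to ``$\RR \leq \BB$'' (the maximal antichains of $\RR$, the correct complete embedding $\BB \to \RR$, and the dense set of fixed points of $\RR$) are sets of rank less than $\lambda$ and therefore lie in $M$; consequently these properties hold of $\RR$ as computed in $M$. Thus $M$ satisfies ``there exists $\RR' \leq j(\BB)$ with $\rank(\RR') < j(\delta)$ and $\RR' \in \SPT$'', and elementarity of $j$ pushes this statement down to $V$ with $\delta$ in place of $j(\delta)$, yielding the desired $\RR' \in \SPT \cap V_\delta$ below $\BB$.

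For (\ref{thm:eqMM+++-2}) $\Rightarrow$ (\ref{thm:eqMM+++-1}), given $\BB \in \UU^{\SSP,\SSP}$ use the class-many hypothesis to pick a strong cardinal $\delta$, limit of $<\delta$-supercompacts, with $\BB \in V_\delta$. By Theorem~\ref{thm:totrigden} there is a totally rigid $\CC \leq \BB$ with $\CC \in V_\delta$, and the map $k_\CC : \CC \to \UU_\delta \restriction \CC$, $c \mapsto \CC \restriction c$, is a correct regular embedding. Composing with the witnessing arrow $\BB \to \CC$ produces an arrow $\BB \to \UU_\delta \restriction \CC$ in $\UU^{\SSP,\SSP}$, so $\UU_\delta \restriction \CC \leq \BB$. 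By (\ref{thm:eqMM+++-2}) $\UU_\delta$ is strongly presaturated, and strong presaturation transfers to $\UU_\delta \restriction \CC$ (antichains, generic ultrapower closure and the dense set of fixed points all relativize below $\CC$). Hence $\UU_\delta \restriction \CC$ is a member of $\SPT$ refining $\BB$, so $\SPT$ is dense in $\UU^{\SSP,\SSP}$.

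The main obstacle is the absoluteness check underlying the reflection step of (\ref{thm:eqMM+++-1}) $\Rightarrow$ (\ref{thm:eqMM+++-2}): one must confirm that both ``$\RR \in \SPT$'' and ``$\RR \leq \BB$ in $\UU^{\SSP,\SSP}$'' transfer upward from $V$ to any $M$ containing a sufficiently large initial segment $V_\lambda$. The decisive point is that each of the relevant clauses, namely $(<\delta_\RR, <\delta_\RR)$-presaturation, stationarity of $T_\RR$, density of fixed points, and existence of a correct complete embedding with $\SSP$ quotient, is first-order expressible over objects of rank bounded in terms of $\rank(\RR)$ and $\rank(\BB)$, so the quantifiers in $M$ and in $V$ range over exactly the same sets once $V_\lambda \subseteq M$.
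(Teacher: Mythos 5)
Your proposal is correct and follows essentially the same route as the paper: the forward direction is the same reflection argument (use strongness of $\delta$ to find $j:V\to M$ with the $\SPT$ witness $\RR$ and all its certifying data inside $V_\lambda\subseteq M$, reflect the existence of an $\SPT$ refinement below $j(\delta)$ back to $\delta$, then invoke Theorem~\ref{thm:univ1} and Theorem~\ref{thm:udelpst}), while the converse is the direction the paper dismisses as trivial and which you merely spell out via a totally rigid refinement and $k_{\CC}$.
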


\begin{proof}
We prove just the non-trivial direction.
Assume $\delta$ is a $\Sigma_2$-reflecting cardinal.
The statement $\phi(i,\QQ,\II,T_{\TT},\delta)$:
\begin{quote}
\emph{$i:\QQ\to\TT=\TT^{\II}_\delta$ is an $\SSP$-correct homomorphism and $\TT$
is a strongly presaturated tower of height $\delta$}
\end{quote} 
is a 
statement formalizable in $H_{\theta}$ for any $\theta>2^\delta$ using the
parameters $i,\QQ,\II,T_{\TT^{\II}_\delta}$, since the statement 
\emph{$\TT$
is a strongly presaturated tower} 
can be phrased as
\begin{quote}
$\delta$ is inaccessible and
$\II=\{I_X:X\in V_\delta\}$ is a tower of normal filters of height $\delta$
and $T_{\TT^{\II}_\delta}$ is stationary and 
$S\wedge T_{\TT^{\II}_\delta}=T_{\TT^{\II}_\delta\restriction S}$ for densely many
$S\in\TT^{\II}_\delta$.
\end{quote}
Moreover the statement \emph{$i:\QQ\to\TT$ is $\SSP$-correct} is also
definable in the parameters $i,\QQ,\TT$ in the structure
$H_{\theta}$
using the relevant parameters.
Now assume $\QQ\in \UU_\delta$. Then the statement
\[
\exists\,H_{\theta}\,[H_{\theta}\models\exists \TT,\delta,i,\II\,\phi(i,\QQ,\II,T_{\TT},\delta)]
\]
is a $\Sigma_2$-property in the parameter $\QQ$ and holds in $V$, thus it
holds in $V_\delta$, since $\delta$ is $\Sigma_2$-reflecting.
In particular this gives that $\SPT\cap V_\delta$ is dense in $\UU_\delta$.
Now by Theorem~\ref{thm:univ1} we also have that $\UU_\delta\in\SSP$ if
$\delta$ is an inaccessible limit of $<\delta$-supercompact cardinals.
We can now apply Theorem~\ref{thm:udelpst} to get that
$\UU_\delta\in\SPT$, completing the proof.
\end{proof}

\subsection{$\MM^{+++}$ and generic absoluteness of the theory of $L([\Ord]^{\leq\aleph_1})$}
Once we are able to infer that $\UU_\delta$ is a presaturated tower for any 
$\Sigma_2$-reflecting cardinal $\delta$ which is a limit of $<\delta$-supercompact cardinals,
the generic aboluteness results is an easy consequence of Theorem~\ref{lem:univ4}.

\begin{theorem}\label{cor:absres}
Assume $\MM^{+++}$ holds and there are class many $\Sigma_2$-reflecting cardinals $\delta$ 
which are limits of $<\delta$-supercompact 
cardinals.
Then 
\[
\langle L([\Ord]^{\leq\aleph_1})^V, P(\omega_1)^V,\in\rangle\equiv
\langle L([\Ord]^{\leq\aleph_1})^{V^P}, P(\omega_1)^V,\in\rangle
\]
for all $P$ which are stationary set preserving and preserve $\MM^{+++}$.
\end{theorem}

We leave to the reader to convert a proof of this result in a proof of Theorem~\ref{thm:mainthm}.
\begin{proof}

Let $\delta$ be a $\Sigma_2$-reflecting cardinal which is a limit of $<\delta$-supercompact cardinals. 
Let $H$ be $V$-generic for $\UU_\delta$.
By Theorem~\ref{thm:eqMM+++} 
$\UU_\delta$ is forcing equivalent to a strongly presaturated normal tower. Thus in
$V[H]$ we can define an elementary $j:V\to M$ such that $\crit(j)=\omega_2$ and 
$M^{<\delta}\subset M$.
In particular we get that
\[
L([\Ord]^{\leq\aleph_1})^{M}=L([\Ord]^{\leq\aleph_1})^{V[H]}.
\]
Thus
\[
\langle L([\Ord]^{\leq\aleph_1})^V, P(\omega_1)^V,\in\rangle\equiv
\langle L([\Ord]^{\leq\aleph_1})^{V[H]}, P(\omega_1)^V,\in\rangle.
\]

Now observe that if $P\in \UU_\delta$ forces $\MM^{+++}$ and $G$ is $V$-generic for $P$
we have that $\delta$ is still a $\Sigma_2$-reflecting cardinal which is a limit of
$<\delta$-supercompact cardinals in $V[G]$. We can first appeal to Lemma~\ref{lem:univ4}
to find:
\begin{itemize}
\item 
$\QQ\in\UU_\delta\restriction P$,
\item
$i_0:\BBB(P)\to\QQ$ a regular embedding,
\item
$i_1:\BBB(P)\to\UU_\delta\restriction\QQ$ also a regular embedding,
\end{itemize}
such that whenever $G$ is $V$-generic for $P$,
then
$V[G]$ models that the forcing
\[
(\UU^{\SSP}_\delta\restriction\QQ)^V/i_1[G]
\]
is identified with the forcing 
\[
(\UU^{\SSP}_\delta)^{V[G]}\restriction(\QQ/i_0[G])
\]
as computed in $V[G]$.

Now let $H$ be $V$-generic for $\UU_\delta\restriction \QQ$ and
 $G=i_1^{-1}[H]$ be $V$-generic for $P$.
 Then we have that
 $H/i_1[G]$ is $V[G]$-generic for 
 \[
 (\UU_\delta)^{V[G]}\restriction (\QQ/i_0[G])
 \] 
 as computed in $V[G]$ and
that $\delta$ is a $\Sigma_2$-reflecting cardinal which is a limit of
$<\delta$-supercompact cardinals in $V[G]$ as well. 
In particular by applying the above fact in $V$ and in $V[G]$ which are both models of 
$\MM^{+++}$
where $\delta$ is a $\Sigma_2$-reflecting cardinal which is a limit of
$<\delta$-supercompact cardinals we get that
 \[
\langle L([\Ord]^{\leq\aleph_1})^V, P(\omega_1)^V,\in\rangle\equiv
\langle L([\Ord]^{\leq\aleph_1})^{V[H]}, P(\omega_1)^V,\in\rangle.
\]
and
 \[
\langle L([\Ord]^{\leq\aleph_1})^{V[G]}, P(\omega_1)^{V[G]},\in\rangle\equiv
\langle L([\Ord]^{\leq\aleph_1})^{V[H]}, P(\omega_1)^{V[G]},\in\rangle.
\]
The conclusion follows.
\end{proof}

\subsection{Consistency of $\MM^{+++}$}
We now turn to the proof of the consistency of $\MM^{+++}$.
The guiding idea is to turn all the known proofs of the consistency of $\MM^{++}$ 
by means of iterations of length $\delta$ which collapse $\delta$ to $\omega_2$
in methods
to prove the consistency of $\MM^{+++}$ by just assuming stronger large cardinal properties of $\delta$,
i.e. super (almost) hugeness of $\delta$.
We shall give a fast and detailed proof and a more meditated and general (but sketchy) one.

\subsubsection{Fast proof of the consistency of $\MM^{+++}$}

\begin{lemma}
Assume $j:V\to M\subset V$ is an almost huge embedding with $\crit(j)=\delta$.
Assume that $G$ is $V$-generic
for $\UU_\delta$.

Then in $V[G]$ 
$\UU_{j(\delta)}$ is a strongly presaturated tower of normal filters.

\end{lemma}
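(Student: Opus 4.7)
The plan is to verify the hypotheses of Lemma~\ref{lem:presatequiv} for $(\UU_{j(\delta)})^{V[G]}$ in $V[G]$, by lifting $j$ to an elementary embedding $\bar{j}\colon V[G]\to M[H]$ via Lemma~\ref{lem:forcpresat}, and then reading off strong presaturation from the induced $V[G]$-normal tower of ultrafilters.

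First I would check the hypotheses of Lemma~\ref{lem:forcpresat} with $P=\UU_\delta$ and $j(P)=\UU_{j(\delta)}^V$. Since $\crit(j)=\delta$, we have $j\restriction V_\delta=\mathrm{id}$, so $j\restriction\UU_\delta$ is just the identity inclusion into $j(\UU_\delta)$. Because $j$ is almost huge we have $V_{j(\delta)}\subset M$, and since $\SSP$ depends only on $\omega_1$, we have $j(\UU_\delta)=\UU_{j(\delta)}^M=\UU_{j(\delta)}^V$. I would use the factorization in Lemma~\ref{lem:univ4} to argue that this inclusion extends to a regular complete embedding of the associated boolean completions. The required $(<\delta,<\delta)$- and $(<j(\delta),<j(\delta))$-presaturations of $\BBB(\UU_\delta)$ and $\BBB(\UU_{j(\delta)}^V)$ are already established in Section~\ref{sec:univpartord}.

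I would then let $H$ be $V$-generic for $\UU_{j(\delta)}^V$ extending the given filter $G$, so that $G=H\cap V_\delta=j^{-1}[H]$ is $V$-generic for $\UU_\delta$. By Lemma~\ref{lem:forcpresat} the extension $\bar{j}\colon V[G]\to M[H]$ is elementary, with $\crit(\bar{j})=\delta=\omega_2^{V[G]}$, $\bar{j}(\delta)=j(\delta)$, and $M[H]^{<j(\delta)}\subset M[H]$ in $V[H]$. Applying Lemma~\ref{lem:univ4} (with $j(\delta)$ in place of $\delta$), the quotient $\UU_{j(\delta)}^V/G$ in $V[G]$ is identified with $(\UU_{j(\delta)})^{V[G]}$ below a suitable condition, so $H/G$ is $V[G]$-generic for $(\UU_{j(\delta)})^{V[G]}$. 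In $V[H]$ define the $V[G]$-normal tower of ultrafilters $\bar{G}$ on $j(\delta)$ by $S\in\bar{G}\iff\bar{j}[\cup S]\in\bar{j}(S)$; its ultrapower sits inside $M[H]$ and hence is $<j(\delta)$-closed in $V[H]$, and $j_{\bar{G}}(\omega_2^{V[G]})=j(\delta)$.

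Finally, letting $\dot{\bar{G}}$ be the $(\UU_{j(\delta)})^{V[G]}$-name for $\bar{G}$, the map $S\mapsto\llbracket S\in\dot{\bar{G}}\rrbracket$ sends each $T_{\BB}$ (for $\BB\in(\UU_{j(\delta)})^{V[G]}$) to $\BB$ itself, via the equivalence $T_{\BB}\in\bar{G}\iff\BB\in H/G$, so its image is dense in $(\UU_{j(\delta)})^{V[G]}$. By Lemma~\ref{lem:presatequiv} applied in $V[G]$, $(\UU_{j(\delta)})^{V[G]}$ is forcing equivalent to a presaturated normal tower, and the dense family of $T_{\BB}$'s witnesses that it has densely many fixed points, yielding strong presaturation. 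The hard part will be the bookkeeping connecting the external tower extracted from $\bar{j}$ in $V[H]$ with the internal structure of $(\UU_{j(\delta)})^{V[G]}$ computed in $V[G]$, and in particular verifying that the identity inclusion $\UU_\delta\hookrightarrow\UU_{j(\delta)}^V$ extends to a regular complete embedding so Lemma~\ref{lem:forcpresat} applies; the factorization Lemma~\ref{lem:univ4} together with the rigidity and presaturation results of Section~\ref{sec:univpartord} handles both.
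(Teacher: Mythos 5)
Your proposal is correct and follows essentially the same route as the paper's proof: lift $j$ to $\bar{j}\colon V[G]\to M[H]$ via Lemma~\ref{lem:forcpresat}, identify $\UU_{j(\delta)}^{V}/G$ with $(\UU_{j(\delta)})^{V[G]}$ via Lemma~\ref{lem:univ4}, and read off strong presaturation from the tower $\{S:\bar{j}[\cup S]\in\bar{j}(S)\}$ together with the dense family of fixed points $T_{\BB}$. The only cosmetic difference is that the paper realizes the complete embedding of $\BBB(\UU_\delta)$ into $\BBB(\UU_{j(\delta)})\restriction\UU_\delta$ directly via $\BB\mapsto\UU_\delta\restriction\BB$ (and notes explicitly, using $V_{j(\delta)}\subset M$ and the closure of $M$, that $j(\delta)$ remains an inaccessible limit of $<j(\delta)$-supercompacts so that the results of Section~\ref{sec:univpartord} apply), rather than routing the extension of the identity inclusion through the freezing poset of Lemma~\ref{lem:univ4}.
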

\begin{proof}
The almost hugeness of $j$ grants that $j(\delta)$ is an inaccessible limit of $<j(\delta)$-supercompact
cardinals, since $M$ models that $j(\delta)$ is such, $V_{j(\delta)}\subset M$ and the closure of 
$M$ grant that $j(\delta)^{<j(\delta)}\subset M$, so any sequence in $V$ cofinal to $j(\delta)$
cannot have order type less than $j(\delta)$ otherwise it would be in $M$ contradicting the
inaccessibility of $j(\delta)$ in $M$.

Let $G$ be $V$-generic for $\UU_\delta$.
Then in $V[G]$ $j(\delta)$ is still an inaccessible limit of $<j(\delta)$-supercompact
cardinals. By Theorem~\ref{thm:univ1}, $\UU_{j(\delta)}^{V[G]}$ is in $V[G]$ an $\SSP$ poset.
By Theorem~\ref{lem:univ4}, $\UU_{j(\delta)}^{V[G]}$ is isomorphic with
$(\UU_{j(\delta)}\restriction \UU_\delta)^V/G$ in $V[G]$.
Moreover 
\[
\UU_{\delta}\geq_{\SSP}\UU_{j(\delta)}\restriction \UU_\delta
\] 
as witnessed by the map
$\BB\mapsto \UU_\delta\restriction\BB$.
Now let $K$ be $V[G]$-generic for $\UU_{j(\delta)}^{V[G]}$,
$H_0=\{\BB:[\BB]_G\in K\}$ and
\[
H=\uparrow H_0=\{\QQ\in V_{j(\delta)}:\exists \BB\in H_0, \QQ\geq_{\SSP}\BB\}.
\]
Then $H$ is $V$-generic for $(\UU_{j(\delta)}\restriction \UU_\delta)^V$ and
$j[G]=G\subset H$.
Thus we can apply Lemma~\ref{lem:forcpresat}
and extend $j$ to an elementary
$\bar{j}:V[G]\to M[H]$ letting $\bar{j}(\val_G(\tau))=\val_H(j(\tau))$.
Lemma~\ref{lem:forcpresat} grants that
in $V[H]$ $\bar{j}$ is definable
and $M[H]^{<\bar{j}(\delta)}\subset M[H]$. 

Thus in $V[G]$:
\begin{itemize}
\item 
By Lemma~\ref{lem:presatequiv}
$\UU_{j(\delta)}^{V[G]}$ is forcing equivalent to 
a presaturated normal tower,
as witnessed by the tower of $V[G]$-normal ultrafilters 
\[
\{S\in V_{j(\delta)}[G]:\bar{j}[\cup S]\in \bar{j}(S)\}
\] 
living in $V[H]$.
\item
By Lemma~\ref{lem:presat-strpresat}, 
we get that $\UU_{j(\delta)}^{V[G]}$ is
in $V[G]$ a strongly presaturated tower.
\end{itemize}
\end{proof}

\begin{corollary}\label{cor:MMM-easy}
Assume $\delta$ is super almost huge. Then 
$\UU_\delta$ forces $\MM^{+++}$.
\end{corollary}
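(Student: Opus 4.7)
The plan is to verify the defining density property of $\MM^{+++}$ in the generic extension $V[G]$, where $G$ is $V$-generic for $\UU_\delta$: I must show that below every condition $\QQ \in (\UU^{\SSP,\SSP})^{V[G]}$ there lies a strongly presaturated normal tower. The key input is the preceding lemma, which asserts that for every almost huge embedding $j\colon V \to M$ with $\crit(j)=\delta$, the forcing $(\UU_{j(\delta)})^{V[G]}$ is strongly presaturated in $V[G]$. Super almost hugeness provides a Laver function $f\colon \delta \to V_\delta$ for almost huge embeddings, which yields, for any prescribed $X \in V$, an almost huge $j$ with $j(f)(\delta) = X$; since $j(f)\colon j(\delta) \to V_{j(\delta)}$, this forces $X \in V_{j(\delta)}$ and hence $j(\delta) > \rank(X)$.

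Given an arbitrary $\QQ \in (\UU^{\SSP,\SSP})^{V[G]}$, I would first pick a $\UU_\delta$-name $\dot\QQ \in V$ for it. Applying the Laver property with $X = \dot\QQ$ produces an almost huge $j\colon V \to M$ with $\dot\QQ \in V_{j(\delta)}$, so that $\QQ$ has rank less than $j(\delta)$ in $V[G]$ and therefore is a condition in $(\UU_{j(\delta)})^{V[G]}$. By the preceding lemma, $(\UU_{j(\delta)})^{V[G]}$ is a strongly presaturated normal tower in $V[G]$. I would then set
\[
\TT = (\UU_{j(\delta)} \restriction \QQ)^{V[G]}
\]
and claim this is the desired witness. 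Presaturation is inherited by restriction to a positive condition; fixed points remain dense below $\QQ$, since for any $\RR \leq_{\SSP} \QQ$ in $(\UU_{j(\delta)})^{V[G]}$ the set $T_{\RR}$ is a fixed point of $\TT$; and $T_{\TT}$ is stationary because $\QQ$ itself, as an SSP condition, yields a stationary canonical witness. Finally, $\TT \leq_{\SSP} \QQ$ via the canonical regular embedding $q \mapsto \UU_{j(\delta)}^{V[G]} \restriction q$, which has an SSP quotient by the universality properties of $\UU_{j(\delta)}^{V[G]}$ established earlier (in particular the factorization of Lemma~\ref{lem:univ4}).

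The principal obstacle is bookkeeping rather than substance: one must verify that the strong presaturation of $(\UU_{j(\delta)})^{V[G]}$ localizes cleanly to the restriction $\TT$, and that the canonical embedding $\QQ \to \TT$ really is an arrow in $(\UU^{\SSP,\SSP})^{V[G]}$. Both points follow from general facts about presaturated towers (restriction preserves both presaturation and the density of fixed points) together with the universality/factorization results for $\UU_\delta$ proved in Section~\ref{sec:univpartord} applied inside $V[G]$. Since $\QQ$ was arbitrary, this establishes density of $\SPT$ in $(\UU^{\SSP,\SSP})^{V[G]}$, which is precisely $\MM^{+++}$, completing the proof.
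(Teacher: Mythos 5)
Your argument is correct and is essentially the paper's own proof: the paper likewise takes an arbitrary $\QQ\in(\UU^{\SSP,\SSP})^{V[G]}$, chooses an almost huge $j$ with $\crit(j)=\delta$ and $j(\delta)$ large enough that $\QQ\in\UU_{j(\delta)}^{V[G]}$, and exhibits $(\UU_{j(\delta)}\restriction\QQ)^{V[G]}$ as the strongly presaturated tower below $\QQ$ furnished by the preceding lemma. You merely make explicit the bookkeeping (using the Laver function to push $j(\delta)$ above the rank of a name for $\QQ$, and checking that strong presaturation localizes to the restriction) that the paper leaves implicit.
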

\begin{proof}
Let $G$ be $V$-generic for $\UU_\delta$.
The family
\[
\{(\UU_{j(\delta)}\restriction\BB)^{V[G]}:\BB\in \UU_{j(\delta)}^{V[G]},\, j:V\to M\subset V\text{ is 
super almost huge with $\crit(j)=\delta$}\}
\]
witnesses that the class of strongly presaturated towers is dense in $V[G]$. 
\end{proof}

\begin{remark}
Notice that (in view of Theorem~\ref{thm:udelpst}) in $V[G]$  
the class of strongly presaturated towers includes $\UU_\gamma^{V[G]}$ 
also for all $\gamma$ which are
not equal to $j(\delta)$ for some almost huge $j$ but which are still $\Sigma_2$-reflecting 
cardinals which are 
limits of $<\gamma$-supercompact cardinals.
It is the largeness of the family of $\gamma$ for which we can predicate in models
of $\MM^{+++}$ that $\UU_\gamma$ is strongly presaturated that allows to run the proof of
Corollary~\ref{cor:absres}: Assume $V,V[G]$ are both models of $\MM^{+++}$ with $V[G]$ 
a generic extension of $V$ by an $\SSP$ partial order.
Assume on the other hand that the class of
$\gamma$ such that $\UU_\gamma$ is strongly presaturated in $V$ and the class of
$\eta$ such that $\UU_\eta^{V[G]}$ is strongly presaturated in $V[G]$ have bounded intersection.
Then the proof of Corollary~\ref{cor:absres} would break down.
The fact that these two classes have an unbounded overlap is essential and is the content of
Theorem~\ref{thm:udelpst}.
\end{remark}

\subsubsection{A template for proving the consistency of $\MM^{+++}$}
We want to show that the consistency proof of $\MM^{+++}$ does not depend 
on the particular choice we made among the poset that can be used to obtain a model of $\MM^{++}$
(in the previous proof we chose category forcing). 
So we shall devise a template to establish the consistency of $\MM^{+++}$
which can be applied to potentially all forcings which can prove the consistency of $\MM^{++}$.
To this aim, we need to introduce a slight strengthening of total rigidity. Since the arguments of this section
are not essential to establish our main results but are just used to provide a deeper insight on the 
nature of the forcings that can produce a model of $\MM^{+++}$ we feel free to omit all proofs.

\subsubsection*{Rigidly layered partial orders}

\begin{definition}\label{def:riglayprop}
Given an atomless complete boolean algebra $\BB\in\SSP$, let
$\delta_{\BB}$ be the least size of a dense subset of $\BB^+=\BB\setminus\{0_{\BB}\}$.

$\BB$ is rigidly layered if there is a family
$\{\QQ_p:p\in D\}$
indexed by a dense subset of $\BB$ such that each $\QQ_p$ is a complete subalgebra of 
$\BB\restriction p$ satisfying the following properties
for all $p,q$ in $D$:
\begin{enumerate}
\item\label{def:riglayprop1}
$1_{\QQ_p}=p$, $\QQ_p$ has size less than $\delta_{\BB}$ and is totally rigid.
\item\label{def:riglayprop2}
 $\BB\restriction p\leq_{\SSP}\QQ_q$ iff $p\leq_{\BB} q$
iff the map defined
by $r\mapsto r\wedge_{\BB}p$ is the unique correct homomorphism of $\QQ_q$
into $\QQ_p$.
\item\label{def:riglayprop3}
 $\QQ_p$ is orthogonal to $\QQ_q$ in $\UU^{\SSP}$ iff $p\wedge_{\BB} q=0_{\BB}$.
\end{enumerate}
\end{definition}

\begin{definition}
Let $\BB\in\SSP$ be a complete boolean algebra.
A family
$\{\BB_\alpha:\alpha<\delta_{\BB}\}$ of complete
subalgebras of $\BB$ is a linear rigid layering of $\BB$ if
for all $\alpha<\delta_{\BB}$:
\begin{enumerate}
\item
$\BB_\alpha$ has size less than $\delta$ and is totally rigid,
\item
$\BB_\alpha\subset\BB_\beta$ and
the inclusion map of $\BB_\alpha$ in $\BB_\beta$ is a regular embedding
witnessing that $\BB_\alpha\geq_{\SSP}\BB_\beta$.
\item
$\bigcup_{\alpha<\delta}\BB_\alpha$ is dense in $\BB$. 
\item
Letting $\alpha_p$ be the least $\alpha$ such that $p\in \BB_\alpha$,
we have that the map $p\mapsto\alpha_p$ is order reversing.
\end{enumerate}
\end{definition}

\begin{lemma}
Assume $\{\QQ_\alpha:\alpha<\delta_{\BB}\}$ is a linear rigid layering of $\BB$
with union $D$. For each $p\in D$ let $\QQ_p=\QQ_{\alpha_p}\restriction p$.
Then $\{\QQ_p:p\in D\}$ is a rigid layering of $\BB$.
\end{lemma}

The usual forcings which force $\MM^{++}$ are rigidly layered:
\begin{enumerate}
\item
Assume $\MM^{++}$ holds,
$\UU_\delta\in \SSP$, $\delta$ is inaccessible,  
and the set of totally rigid partial orders is dense
in $\UU_\delta$. We show that $\UU_\delta$ is rigidly layered.

$\UU_\delta$ can be identified with the partial order 
$Rep(\UU_\delta)=\{T_{\BB}\wedge T_{\UU_\delta}:\BB\in\UU_\delta\}$ 
with the ordering given by the usual ordering on stationary sets.
Notice that this partial order is isomorphic to the separative
quotient of $\UU_\delta$.
The family
\[
\{\{T_{\BB\restriction b}\wedge T_{\UU_\delta}:b \in\BB\}:\BB\in\UU_\delta\text{ is totally rigid}\}
\] 
defines a rigid layering of $\BBB(Rep(\UU_\delta))$.

\item
The standard RCS iteration\footnote{We refer the reader to 
Subsection~\ref{subsec:itfor} for the relevant definitions and results on iterations
and to~\cite{VIAAUDSTE13} for a detailed account.}
\[
\FFF=\{i_{\alpha,\beta}:\BB_\alpha\to\BB_\beta:\alpha\leq\delta\}
\] 
of length $\delta$ which uses a Laver function $f:\delta\to V_\delta$ to 
decide $\BB_{\alpha+1}$ according to the value of $f(\alpha)$
shows that
$C(\FFF)$ forces that $\MM^{++}$ is rigidly layered.
To see this observe that $\BB_\alpha\subset V_\alpha$ is totally rigid
for stationary many $\alpha<\delta$, and for all $\alpha<\beta$ 
$i_{\alpha,\beta}$ witnesses that $\BB_\alpha\geq_{\SSP} \BB_\beta$.
Let $\QQ_\alpha$ be the boolean completion of
\[
\{f\in C(\FFF):\text{ the support of $f$ is at most $\alpha$}\}.
\]
We get that the inclusion map 
is the unique correct regular embedding of 
$\QQ_\alpha$ into $\QQ_\beta$ for all $\alpha$ such that $\QQ_\alpha$ is totally rigid.
We leave to the reader to check that the family
\[
\{\QQ_\alpha:\alpha<\delta,\text{ $\QQ_\alpha$ is totally rigid}\}
\]
is a linear rigid layering of $C(\FFF)$.
\item
In a similar way we can define a linear rigid layering of the lottery preparation forcing to force
$\MM^{++}$.
\end{enumerate}

\subsubsection*{Rigidly layered presaturated towers are strongly presaturated}

\begin{lemma}~\label{lem:rlf->SPT}
Assume that 
$\BB$ is a rigidly layered complete boolean algebra which
is forcing equivalent to a presaturated tower of normal filters of height $\delta$.
Then $\BB$ is forcing equivalent to a strongly presaturated tower of normal filters.
\end{lemma}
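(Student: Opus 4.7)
The plan is to exhibit a specific strongly presaturated tower forcing equivalent to $\BB$. By the hypothesis and Lemma~\ref{lem:presatequiv}, fix a $\BB$-name $\dot G$ for a $V$-normal tower of ultrafilters on $\delta$ such that $j_{\dot G}(\omega_2)=\delta$, $\mathrm{Ult}(V,\dot G)^{<\delta}\subseteq\mathrm{Ult}(V,\dot G)$, and the canonical embedding
\[
i_{\dot G}\colon\TT:=\TT^{\II(\dot G)}_\delta\to\BB,\qquad S\mapsto\llbracket S\in\dot G\rrbracket_{\BB},
\]
has dense image, presenting $\TT$ as presaturated and forcing equivalent to $\BB$. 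Stationarity of $T_{\TT}$ follows from presaturation, since in the generic extension $j_{\dot G}[H_{|\TT|^+}]$ lies in $j_{\dot G}(T_{\TT})$. Thus the only task is to produce a dense family of fixed points of $\TT$: the candidate is $\{S_p:p\in D\}$ with $S_p:=T_{\QQ_p}$, where $\{\QQ_p:p\in D\}$ is the given rigid layering. Each $S_p$ is already a fixed point, by the earlier fact that $T_{\BB'}$ is a fixed point of any presaturated normal tower of height $\delta$ with $\BB'\in V_\delta$.

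Density reduces to the key claim that $i_{\dot G}(S_p)=p$ in $\BB$ for every $p\in D$. For the forward inequality $p\leq i_{\dot G}(S_p)$, let $H$ be $V$-generic for $\BB$ with $p\in H$ and set $G:=\mathrm{val}_H(\dot G)$. Since $\QQ_p$ is a complete subalgebra of $\BB\restriction p$ with $1_{\QQ_p}=p$ and $\BB\restriction p\leq_{\SSP}\QQ_p$, the trace $K:=H\cap\QQ_p$ is a $V$-generic filter for $\QQ_p$ for which $V[K]$ is stationarily correct in $V[H]$. Because $|\QQ_p|^+<\delta$ and $V_\delta[H]\subseteq\mathrm{Ult}(V,G)$, the filter $K$ lies in $\mathrm{Ult}(V,G)$, and $j_G\restriction H_{|\QQ_p|^+}^V$ converts it into a correct $j_G[H_{|\QQ_p|^+}]$-generic filter for $j_G(\QQ_p)$ inside $\mathrm{Ult}(V,G)$, witnessing $j_G[H_{|\QQ_p|^+}]\in j_G(T_{\QQ_p})$, i.e.\ $S_p\in G$.

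The main obstacle is the reverse inequality $i_{\dot G}(S_p)\leq p$, which I establish by contradiction. Suppose $q\in D$ satisfies $q\wedge_{\BB}p=0_{\BB}$ and $q\leq i_{\dot G}(S_p)$; pick $V$-generic $H$ for $\BB$ with $q\in H$, and set $G:=\mathrm{val}_H(\dot G)$. The forward argument applied to $q$ produces a correct $V$-generic filter $K_q$ for $\QQ_q$ in $V[H]$, while the assumption $S_p\in G$ together with $\mathrm{Ult}(V,G)\subseteq V[H]$ yields, by collapsing a correct $M$-generic filter for $j_G(\QQ_p)$ at $M=j_G[H_{|\QQ_p|^+}]$, a correct $V$-generic filter $K_p$ for $\QQ_p$ in $V[H]$. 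Taking $\BB$-names $\dot K_p,\dot K_q$ and defining, for $r$ in the respective domain,
\[
\alpha_\ast(r):=q\wedge_{\BB}\llbracket r\in\dot K_\ast\rrbracket_{\BB}\qquad(\ast\in\{p,q\}),
\]
one obtains correct complete homomorphisms $\alpha_p\colon\QQ_p\to\BB\restriction q$ and $\alpha_q\colon\QQ_q\to\BB\restriction q$, correctness coming from the stationary correctness of $V[K_p],V[K_q]$ inside $V[H]$. Hence $\BB\restriction q$ is a common refinement of $\QQ_p$ and $\QQ_q$ in $\UU^{\SSP,\SSP}$, contradicting item~\ref{def:riglayprop3} of Definition~\ref{def:riglayprop}, which asserts $\QQ_p\perp\QQ_q$ precisely because $p\wedge_{\BB}q=0_{\BB}$.

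The key claim then makes $p\mapsto S_p$ an order-preserving injection of $D$ into $\TT$: given any $T\in\TT$, choose $p\in D$ with $p\leq i_{\dot G}(T)$, and $i_{\dot G}(S_p)=p\leq i_{\dot G}(T)$ forces $S_p\leq_{\TT}T$. Thus $\{S_p:p\in D\}$ is a dense family of fixed points of $\TT$, so $\TT$ is a strongly presaturated normal tower forcing equivalent to $\BB$, finishing the proof.
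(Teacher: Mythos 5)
Your proof is correct and follows essentially the same route as the paper's: both identify $\{T_{\QQ_p}:p\in D\}$ as the dense family of fixed points and reduce everything to the identity $\llbracket T_{\QQ_p}\in\dot G\rrbracket_{\BB}=p$, with the same argument for the forward inequality via the trace of the generic on the complete subalgebra $\QQ_p$ and the agreement of $H_{\omega_2}$ between $V[H]$ and the generic ultrapower. The only divergence is the reverse inequality: the paper pulls back a name for the correct $V$-generic filter for $\QQ_p$ to some $q\in H\cap D$ with $\BB\restriction q\leq_{\SSP}\QQ_p$ and invokes clause~\ref{def:riglayprop2} of Definition~\ref{def:riglayprop} to conclude $q\leq_{\BB}p$, whereas you exhibit $\BB\restriction q$ as a common $\SSP$-refinement of $\QQ_p$ and $\QQ_q$ for an incompatible $q$ and contradict clause~\ref{def:riglayprop3}; the two arguments are interchangeable.
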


\subsubsection*{Linear rigid layerings are inherited by generic quotients}

\begin{lemma}\label{lem:riglayquo}
Assume $\BB$ is a linearly rigidly layered complete boolean algebra
as witnessed by
$\{\QQ_\alpha:\alpha<\delta_{\BB}\}$.
Let
$D=\bigcup\{\QQ_\alpha:\alpha<\delta_{\BB}\}$ and
Assume $\QQ\geq_{\SSP} \QQ_\alpha\restriction p$ for some
$p\in D$ as witnessed by a correct homomorphism $k:\QQ\to \QQ_\alpha\restriction p$.
We can extend $k:\QQ\to \BB\restriction p$ ``composing'' it with the inclusion of
$\QQ_\alpha\restriction p$ into $\BB\restriction p$.

Let $G$ be $V$-generic for $\QQ$ and such that $0_{\BB}\not\in k[G]$.
Then $\BB\restriction p/k[G]$ 
is rigidly layered in $V[G]$.
\end{lemma}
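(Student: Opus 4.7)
I would show that $\BB':=\BB\restriction p/k[G]$ is in fact \emph{linearly} rigidly layered in $V[G]$ and deduce rigid layering from the Fact preceding the statement. The candidate layering is the obvious one: for each $\beta$ with $\alpha\leq\beta<\delta_{\BB}$ let $k_\beta:\QQ\to\QQ_\beta\restriction p$ be the composition of $k$ with the regular inclusion $\QQ_\alpha\restriction p\hookrightarrow\QQ_\beta\restriction p$, and put $\QQ'_\beta:=\QQ_\beta\restriction p/k_\beta[G]$. Since $\QQ_\beta\restriction p$ is a complete subalgebra of $\BB\restriction p$ in $V$ through which $k$ factors, $\QQ'_\beta$ is a complete subalgebra of $\BB'$ in $V[G]$, and $\QQ'_\beta\subseteq\QQ'_\gamma$ whenever $\beta\leq\gamma$.

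The heart of the argument is that each $\QQ'_\beta$ remains totally rigid in $V[G]$. Assume for contradiction that it is not; by Lemma~\ref{lem:eqtr}(\ref{lem:eqtr1}) there are $[b_0],[b_1]\in\QQ'_\beta$ with $[b_0]\wedge[b_1]=0_{\QQ'_\beta}$ such that $\QQ'_\beta\restriction[b_0]$ and $\QQ'_\beta\restriction[b_1]$ are compatible in the $\UU^{\SSP,\SSP}$ of $V[G]$, as witnessed by correct embeddings $j_l:\QQ'_\beta\restriction[b_l]\to\RR$ for some $\RR\in\SSP^{V[G]}$. By replacing each representative $b_l$ with $b_l\wedge k_\beta(r)$ for a suitable $r\in G$, I may assume that $b_0,b_1\in\QQ_\beta\restriction p$ satisfy $b_0\wedge b_1=0$ in $V$ while still $[b_l]\neq 0_{\QQ'_\beta}$. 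Applying Proposition~\ref{lem:firstfctlem-2}(2) to the correct $V$-homomorphisms $k_{\beta,l}:\QQ\to\QQ_\beta\restriction b_l$ given by $q\mapsto k_\beta(q)\wedge b_l$ (noting that $\QQ_\beta\restriction b_l/k_{\beta,l}[G]=\QQ'_\beta\restriction[b_l]$) and to the $V[G]$-witnesses $j_l$, I obtain in $V$ a stationary set preserving algebra $\tilde{\RR}$ together with correct embeddings $l_l:\QQ_\beta\restriction b_l\to\tilde{\RR}$ both factoring through a common $l:\QQ\to\tilde{\RR}$. These $l_l$ witness that $\QQ_\beta\restriction b_0$ is compatible with $\QQ_\beta\restriction b_1$ in $\UU^{\SSP,\SSP}$ of $V$, contradicting the total rigidity of $\QQ_\beta$ via Lemma~\ref{lem:eqtr}(\ref{lem:eqtr1}).

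The remaining conditions of a linear rigid layering for $\BB'$ transfer from $V$ to $V[G]$ by routine translation: the inclusion $\QQ'_\beta\hookrightarrow\QQ'_\gamma$ is a correct regular embedding witnessing $\QQ'_\beta\geq_{\SSP}\QQ'_\gamma$ by Proposition~\ref{lem:firstfctlem-1} applied to the $V$-inclusion $\QQ_\beta\restriction p\hookrightarrow\QQ_\gamma\restriction p$; the density of $\bigcup_\beta\QQ'_\beta$ in $\BB'$ is immediate from the density of $\bigcup_\beta(\QQ_\beta\restriction p)$ in $\BB\restriction p$ after passing to quotients; and the size bound on $\QQ'_\beta$ together with the order-reversal of the map $[q]\mapsto\alpha_{[q]}$ follow by selecting for each class $[q]\in\bigcup_\beta\QQ'_\beta$ a representative of minimal $\alpha$ in $\bigcup_\beta\QQ_\beta\restriction p$.

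The main obstacle is the preservation of total rigidity through the generic quotient. This is overcome by the pull-back machinery of Proposition~\ref{lem:firstfctlem-2}(2): any compatibility datum in the $V[G]$-category $\UU^{\SSP,\SSP}$ arising between two restrictions of $\QQ'_\beta$ can be lifted to a compatibility datum in the $V$-category $\UU^{\SSP,\SSP}$ between the corresponding restrictions of $\QQ_\beta$, where it is ruled out by the rigidity of $\QQ_\beta$ in the original layering.
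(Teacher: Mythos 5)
Your proof is correct and follows essentially the same route as the paper's: both hinge on pulling the $V[G]$-compatibility data back to $V$ via Proposition~\ref{lem:firstfctlem-2}(2) and then invoking the total rigidity of the layers $\QQ_\beta$ in $V$, with the remaining layering conditions transferred through Proposition~\ref{lem:firstfctlem-1}. The only difference is that you argue through characterization~(\ref{lem:eqtr1}) of Lemma~\ref{lem:eqtr} (incompatibility of disjoint restrictions) while the paper uses characterization~(\ref{lem:eqtr4}) (uniqueness of the correct homomorphism), which is a cosmetic choice between provably equivalent formulations.
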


\begin{remark}
Also rigid layerings of $\BB$ are inherited by generic quotients of $\BB$.
The proof of this fact is slightly more elaborate and we won't need it. 
\end{remark}

\subsubsection*{Other strongly presaturated towers}

The hypotheses on $\FFF$ of the following propositions 
are satisfied by the standard forcings 
of size $\delta$ which 
produce a model of $\MM^{++}$ collapsing a super almost huge cardinal $\delta$
to become $\omega_2$.

\begin{lemma}
Assume $j:V\to M\subset V$ is elementary with $\crit(j)=\delta$ and $M^{<j(\delta)}\subset M$.
Let $\FFF=\{i_{\alpha,\beta}:\alpha<\beta<\delta\}$ be a semiproper iteration system
contained in $V_\delta$
such that for stationarily many $\alpha$ 
\[
\RCS(\FF\restriction\alpha)=C(\FFF\restriction\alpha)
\] 
is totally rigid.

Then $C(\FFF)$ and $C(j(\FFF))$ are both in $\SSP$ 
and $j\restriction C(\FFF):C(\FFF)\to C(j(\FFF))$ is a
regular embedding.

Assume finally that $G$ is $V$-generic
for $C(\FFF)$.
Then in $V[G]$ we get that
$C(j(\FFF))/j[G]$ is a strongly presaturated tower of normal filters.

\end{lemma}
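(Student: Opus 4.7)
The plan is to reduce the conclusion to a combination of the earlier Lemmas~\ref{lem:forcpresat}, \ref{lem:presatequiv}, \ref{lem:riglayquo}, and \ref{lem:rlf->SPT}, organized around the linear rigid layering of $C(\FFF)$ supplied by the hypothesis. First I would dispose of the preliminary assertions. The fact that $C(\FFF),C(j(\FFF))\in\SSP$ is immediate from Shelah's iteration theorem (Remark~\ref{thmx:she}(\ref{thmx:she1})), since $\FFF$ and (by elementarity) $j(\FFF)$ are semiproper iteration systems and $C(\FFF)=RCS(\FFF)$ on a stationary set of stages, so the RCS limit is semiproper, hence $\SSP$. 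For the regular embedding, $\crit(j)=\delta$ forces $j$ to fix $V_\delta$ pointwise, so $j(\FFF)\restriction\delta=\FFF$ stage by stage; a thread $f\in C(\FFF)$ with support $\alpha<\delta$ is sent by $j$ to $j(f)\in C(j(\FFF))$ that agrees with $f$ below $\delta$ and extends it via the higher iteration maps of $j(\FFF)$. Elementarity of $j$ gives preservation of meets and joins, and the standard iteration calculus gives that $j\restriction C(\FFF)$ is regular with semiproper quotient.

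The main work is the last assertion, and the key structural input is the layering. By hypothesis, the set $S\subset\delta$ of $\alpha$ for which $C(\FFF\restriction\alpha)$ is totally rigid is stationary; I would verify that the family $\{C(\FFF\restriction\alpha):\alpha\in S\}$ (extended naturally to all $\alpha<\delta$ via the regular embeddings $i_{\alpha,\beta}$) defines a linear rigid layering of $C(\FFF)$ in the sense of the paragraph preceding Lemma~\ref{lem:riglayquo}. Applying $j$ and using elementarity together with $M^{<j(\delta)}\subset M$ and $V_{j(\delta)}\subset M$, the image $j(S)\subset j(\delta)$ is stationary and $\{C(j(\FFF)\restriction\beta):\beta\in j(S)\}$ is a linear rigid layering of $C(j(\FFF))$. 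This same layering, together with the size of the layers being bounded below $\delta$ (respectively $j(\delta)$) and the total rigidity of each layer, also yields that $C(\FFF)$ is $(<\delta,<\delta)$-presaturated and $C(j(\FFF))$ is $(<j(\delta),<j(\delta))$-presaturated.

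Now I would invoke Lemma~\ref{lem:forcpresat} with $P=C(\FFF)$, $j(P)=C(j(\FFF))$, and $q=1$. Let $H$ be $V$-generic for $C(j(\FFF))$ with $j[G]\subset H$ and $G=j^{-1}[H]$. The lemma produces an elementary lift $\bar j:V[G]\to M[H]$ inside $V[H]$ with $\crit(\bar j)=\delta$, $\bar j(\delta)=j(\delta)$, and $M[H]^{<j(\delta)}\subset M[H]$. Define in $V[H]$ the $V[G]$-normal tower of ultrafilters
\[
\bar G=\{S\in V_{j(\delta)}[G]:\bar j[\cup S]\in\bar j(S)\}
\]
on $j(\delta)$ associated to $\bar j$; since $\bar j(\omega_2^{V[G]})=j(\delta)$ and $\mathrm{Ult}(V[G],\bar G)$ is $<j(\delta)$-closed, Lemma~\ref{lem:presatequiv} applies in $V[G]$ and gives that $C(j(\FFF))/j[G]$ is forcing equivalent, in $V[G]$, to a presaturated normal tower of height $j(\delta)$.

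It remains to upgrade presaturation to strong presaturation. By Lemma~\ref{lem:riglayquo}, applied to $\BB=C(j(\FFF))$ with its linear rigid layering $\{C(j(\FFF)\restriction\beta):\beta<j(\delta)\}$ and the regular embedding $j\restriction C(\FFF):C(\FFF)\to C(j(\FFF))$ (which factors through each sufficiently high layer), the quotient $C(j(\FFF))/j[G]$ inherits a linear rigid layering in $V[G]$, in particular a rigid layering. Combining this with the presaturated tower conclusion of the previous paragraph, Lemma~\ref{lem:rlf->SPT} finishes the proof: a rigidly layered complete boolean algebra that is forcing equivalent to a presaturated normal tower is forcing equivalent to a strongly presaturated one. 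The most delicate step will be checking the hypotheses of Lemma~\ref{lem:forcpresat}, specifically that $j\restriction C(\FFF)$ extends to a complete embedding into $\BBB(C(j(\FFF)))$ respecting generics, and that the presaturation levels of both algebras match those demanded by the lemma; this is where the assumption that the totally rigid layers occur stationarily, and the closure $M^{<j(\delta)}\subset M$, are essential.
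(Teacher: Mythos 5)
Your proposal is correct and follows essentially the same route as the paper: establish $\SSP$-ness and the regular embedding via Shelah's iteration theorem and $j\restriction V_\delta=\id$, lift $j$ to $\bar j:V[G]\to M[H]$ via Lemma~\ref{lem:forcpresat}, get a presaturated tower from Lemma~\ref{lem:presatequiv}, transfer the linear rigid layering to the quotient via Lemma~\ref{lem:riglayquo}, and conclude with Lemma~\ref{lem:rlf->SPT}. The only cosmetic difference is that the paper obtains the $(<\delta,<\delta)$- and $(<j(\delta),<j(\delta))$-presaturation needed for Lemma~\ref{lem:forcpresat} from the $<\delta$-CC and $<j(\delta)$-CC of the direct limits (Baumgartner's lemma) rather than from the layering itself, which is the cleaner way to discharge the hypothesis you flag as delicate.
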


\begin{theorem}\label{thm:CONMM+++}
Assume $\delta$ is super almost huge.
Let 
\[
\FFF=\{i_{\alpha,\beta}:\alpha<\beta<\delta\}
\] 
be a semiproper iteration system
contained in $V_\delta$
such that: 
\begin{itemize}
\item
$C(\FFF)$ is linearly rigidly layered.
\item
For all $\QQ\leq_{\SSP} C(\FFF)$ in $\SSP$ there is 
$j:V\to M$ such that:
\begin{enumerate}
\item\label{thm:conMM+++closM}
$M^{<j(\delta)}\subseteq M$, 
\item
$\crit(j)=\delta$,
\item
$\QQ\geq_{\SSP}C(j(\FFF))\restriction q$ for some $q\in C(j(\FFF))$,
\item 
$\QQ\in V_{j(\delta)}$.
\end{enumerate}
\end{itemize}
Then,
whenever $G$ is $V$-generic for $C(\FFF)$, we have that
$V[G]$ models $\MM^{+++}$ as witnessed by the family of forcings
$C(j(\FFF))\restriction q/j[G]\in \SPT^{V[G]}$ as
$j:V\to M$ ranges among the witnesses of the super almost hugeness of $\delta$
and $q\in C(j(\FFF))$.
\end{theorem}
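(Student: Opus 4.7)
The plan is to verify $\MM^{+++}$ in $V[G]$ by showing that every $\QQ\in\SSP^{V[G]}$ admits a strongly presaturated tower $\SSP$-below it, and that such a tower can be taken of the announced form $C(j(\FFF))\restriction q/j[G]$. First I would lift: given $\QQ\in\SSP^{V[G]}$, pick a $C(\FFF)$-name $\dot{\QQ}$ with $val_G(\dot{\QQ})=\QQ$ and set $\QQ^{*}=\BBB(C(\FFF)\ast\dot{\QQ})$. By Remark~\ref{thmx:she} the RCS limit $C(\FFF)$ is semiproper, so $\QQ^{*}\leq_{\SSP}C(\FFF)$ and $\QQ^{*}/G\cong\QQ$ in $V[G]$. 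Applying the second hypothesis of the theorem to $\QQ^{*}$ produces an embedding $j:V\to M$ with $\crit(j)=\delta$, $M^{<j(\delta)}\subseteq M$, $\QQ^{*}\in V_{j(\delta)}$, a condition $q\in C(j(\FFF))$, and a correct homomorphism $k:\QQ^{*}\to C(j(\FFF))\restriction q$ witnessing $\QQ^{*}\geq_{\SSP} C(j(\FFF))\restriction q$ in $V$.

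Next I would invoke the lemma immediately preceding this theorem. Its hypothesis that $C(\FFF\restriction\alpha)$ is totally rigid at stationarily many $\alpha$ is supplied by the linear rigid layering of $C(\FFF)$, since uniqueness of correct maps on the totally rigid pieces $\QQ_\alpha$ propagates to their dense union. The lemma's conclusion gives that $C(j(\FFF))/j[G]$ is a strongly presaturated tower of normal filters in $V[G]$; restricting below $[q]_{j[G]}$ preserves this (strong presaturation is closed under restriction to positive conditions), so $C(j(\FFF))\restriction q/j[G]$ is strongly presaturated in $V[G]$.

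For the final descent step: since $C(\FFF)$ is itself totally rigid by the previous observation, the composition $C(\FFF)\hookrightarrow\QQ^{*}\xrightarrow{k} C(j(\FFF))\restriction q$ is forced to be the canonical map $r\mapsto j(r)\wedge q$. This commutative triangle feeds into Proposition~\ref{lem:firstfctlem-1}, yielding in $V[G]$ a correct homomorphism $k/G$ from $\QQ\cong\QQ^{*}/G$ into $C(j(\FFF))\restriction q/j[G]$, witnessing the desired $\QQ\geq_{\SSP} C(j(\FFF))\restriction q/j[G]$. The super almost hugeness of $\delta$ is what makes the reflection hypothesis nontrivially satisfiable via a Laver function, supplying proper class many usable $j$. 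The main technical obstacle will be this descent step itself, specifically verifying that $q$ is compatible with the filter generated by $j[G]$ (so that $[q]_{j[G]}$ is nonzero and the quotient is nontrivial) and that the quotient map $k/G$ is genuinely correct; this should follow from the correctness of $k\circ\iota$ together with Proposition~\ref{lem:firstfctlem-1}, possibly after refining $q$ below some element of $j[G]$ if necessary.
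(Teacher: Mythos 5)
Your proposal follows essentially the same route as the paper's proof: lift $\QQ\in\SSP^{V[G]}$ to $\QQ^{*}\leq_{\SSP}C(\FFF)$ in $V$, apply the reflection hypothesis to get $j$ and $q$ with $\QQ^{*}\geq_{\SSP}C(j(\FFF))\restriction q$, invoke the preceding lemma (whose total-rigidity hypothesis is supplied by the linear rigid layering) to see that $C(j(\FFF))/j[G]$ is strongly presaturated in $V[G]$, and descend to the quotients via Proposition~\ref{lem:firstfctlem-1}. Your added care about the commutativity of the triangle and the nontriviality of $[q]_{j[G]}$ (which indeed follows from correctness of the composite $k\circ\iota$) only makes explicit details the paper leaves implicit.
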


\begin{remark}
Theorem~\ref{thm:CONMM+++} establishes that the consistency of 
$\MM^{+++}$ can be obtained using 
Hamkins lottery preparation forcing or the standard iteration for producing a model of $\MM^{++}$
guided by a Laver function for a super almost huge cardinal. 
By~\cite[Theorem 12, Fact 13]{COX12}, this is 
a large cardinal assumption consistent relative to the existence of a $2$-huge cardinal.
\end{remark}

\section{Some comments}\label{sec:com}
We can sum up the results of this paper (as well as some other facts about forcing axioms) 
as density properties of the category $\UU^{\SSP}$ as follows:
\begin{itemize}
\item
$\ZFC+$\emph{ there are class many supercompact cardinals}
implies that the class of totally rigid partial orders which force
$\MM^{++}$ is dense in $\UU^{\SSP}$.
\item
$\ZFC+$\emph{ $\delta$ is an inaccessible limit of $<\delta$supercompact cardinals}
implies that 
\[
\UU^{\SSP}\cap V_\delta=\UU_\delta\in\SSP.
\]
\item
$\MM^{++}$ is equivalent (over the theory $\ZFC+$\emph{ there are class many Woodin cardinals})
to the statement that the class of presaturated towers is dense in the category $\UU^{\SSP}$.
\item
$\MM^{+++}$ asserts 
that the class of strongly presaturated towers is dense in the category $\UU^{\SSP}$.
\item
$\ZFC+$\emph{ there is an almost superhuge cardinal}
implies that the class of rigidly layered partial orders which force
$\MM^{+++}$ is dense in $\UU^{\SSP}$.
\item
$\ZFC+\MM^{+++}+$\emph{ $\delta$ is a $\Sigma_2$-reflecting cardinal which is a 
limit of $<\delta$ supercompact cardinals}
implies that $\UU_\delta$ is forcing equivalent to a presaturated tower.
\end{itemize}
In particular it appears that the categorial framework we introduced is particularly well suited to
express strong forcing axioms as density properties of the category $\UU^{\SSP}$.
This approach is being pursued further in~\cite{VIAAUD14} where we 
merge this categorial approach 
with the researches of Hamkins and Johnstone\cite{HAMJOH13} and of 
Tsaprounis~\cite{TSA13} on resurrection axioms and their unbounded versions.

Regarding the consistency strength of our results,
it is likely that supercompactness is not sufficient to get the consistency of $\MM^{+++}$.
The problem is the following:
Assume $P_\delta\subset V_\delta$ forces $\MM^{++}$ and collapses $\delta$ to become 
$\omega_2$. 
Assume $\delta$ is supercompact but not 
almost superhuge, then for any $j:V\to M$ such that $M^{<j(\delta)}\not\subseteq M$ we have 
no reason to expect that $j(P_\delta)$ is stationary set preserving in $V$.
We can just prove that it is stationary set preserving in $M$. 
On the other hand if $P_\delta\subseteq V_\delta$ is stationary set preserving and
$j:V\to M$ is an almost huge embedding (i.e. $M^{<j(\delta)}\subset M$), then $j(P_\delta)$ is
stationary set preserving in $V$ and we can argue that if $G$ is $V$-generic for 
$P_\delta$, than $j(P_\delta)/G\in V[G]$ is a strongly presaturated normal tower.  
This crucial difference
suggests why $\MM^{+++}$ is likely to have a stronger consistency strength than $\MM^{++}$.
On the other hand it can be checked that
all the forcings considered in this paper to obtain the consistency of 
$\MM^{+++}$ collapsing an inaccessible $\delta$ to become $\omega_2$ satisfy the
$\delta$-covering and $\delta$-approximation property (see~\cite[Def. 4.5]{VIAWEI11}). In particular 
by the results of~\cite{VIAWEI11}, we can infer that such a $\delta$ must at 
least a strongly compact cardinal. However this conclusion can already be inferred for the models of 
$\MM^{++}$ obtained by such forcings and in the present stage we are not able to extract any further 
indication regarding the consistency strength of $\MM^{+++}$ with respect to that of $\MM^{++}$.
It seems that we lack a combinatorial characterization of super almost hugeness in the same fashion 
of the one provided by the work of Jech, Magidor, and Weiss for supercompactness 
and strong compactness.

We want also to remark that 
the work of
Larson~\cite{larson-dwo} and Asper\'o~\cite{ASPPFA++} shows that our result are 
close to optimal and that we cannot hope to prove 
Theorem~\ref{thm:mainthm} for forcing axioms which are strictly
weaker than $\MM^{++}$:
\begin{itemize}
\item
Larson showed 
that there is a 
$\Sigma_3$ formula $\phi(x)$ such that over any model $V$ of
$\ZFC$ with large cardinals and for any $a\in H_{\omega_2}^V$
there is a \emph{semiproper} 
forcing extension $V[G]$ of $V$ which models $\MM^{+\omega}$ 
 and such that
$a$ is the unique set which satisfies $\phi(a)^{H_{\omega_2}^{V[G]}}$.
\item
Asper\'o showed 
that there is a 
$\Sigma_5$ formula $\psi(x)$ such that over any model $V$ of
$\ZFC$ with large cardinals and for any $a\in H_{\omega_2}^V$
there is a \emph{semiproper} forcing extension $V[G]$ of $V$ which models $\PFA^{++}$ 
 and such that
$a$ is the unique set which satisfies $\phi(a)^{H_{\omega_2}^{V[G]}}$.
\end{itemize}
These results show that the theory of $H_{\omega_2}$ in models of $\MM^{+\omega}$
(respectively $\PFA^{++}$) cannot be generically  
invariant with respect to $\SSP$-forcings which preserve 
these axioms, since we would get otherwise that all elements of 
$H_{\aleph_2}$ could be defined as the unique
objects satisfying $\phi$ (respectively $\psi$).
It remains nonetheless open whether the results we got on the forcing $\UU^{\SSP}$
can be declined for other category forcings given by suitable classes of forcings $\Gamma$. 
We conjecture that this is the case for many such $\Gamma$ among which the proper posets.
This requires to investigate for which class of forcings we can predicate the freezeability property,
since ultimately all the properties we were able to infer for $\UU^{\SSP}$ were obtained appealing to:
\begin{itemize}
\item closure of $\leq_{\SSP}^*$ under set sized descending sequences (obtained by identifying
$\SSP$ with $\SP$),
\item closure of $\SSP$ under two step iterations,
\item closure of $\SSP$ under set sized lottery sums,
\item a simple definition in terms of first order logic of the class $\SSP$,
\item the freezeability property.
\end{itemize}
Of the above list the unique property as yet not known to hold
for many other interesting categories of forcing notions
is the freezeability property. If we are able to infer such a property for other classes of forcings we are 
confident that the appropriate generic absoluteness result for the appropriate version of $\MM^{+++}$
 declined for these categories is at reach.

Finally the theory of $L(\mathbb{R})$ in the context of large cardinals is generically invariant and 
among other things this has led to the development of the rich theory of universally Baire subsets
of $\mathbb{R}$, sets whose properties are generically invariant and which played an 
important role to understand the theory of $L(\mathbb{R})$ under determinacy axioms.
The direction we want to investigate is that of isolating the correct notion of 
universally Baire subset of $2^{\omega_1}$ and to understand the property of these sets in 
models of $\MM^{+++}$, since for this theory we also have a notion of generic invariance.
Most likely a theory of universally Baire subsets of $2^{\omega_1}$ should complement 
the rich understanding we already have of the theory of $L(P(\omega_1))$ in the presence 
of strong forcing axioms.

\bibliographystyle{amsplain}
\bibliography{matteo-viale-JAMS-biblio}
\end{document}